\documentclass{amsart}

\usepackage[utf8]{inputenc}
\usepackage[T1]{fontenc}
\usepackage{verbatim}
\usepackage{graphicx}
\usepackage{graphicx,caption2,psfrag,float,color}
\usepackage{amssymb}
\usepackage{amscd}
\usepackage{amsmath}
\usepackage{mathtools}

\usepackage[T1]{fontenc}

\newtheorem{theorem}{Theorem}[section]

\newtheorem{cor}[theorem]{Corollary}
\newtheorem{lemma}[theorem]{Lemma}

\numberwithin{equation}{subsection}
\newtheorem{definition}[theorem]{Definition}




\pagestyle{plain}
\title{Recent results on large gaps between primes}
\author{Michael Th. Rassias}
\date{\today}
\address{Department of Mathematics and Engineering Sciences, Hellenic Military Academy, 16673 Vari Attikis, Greece }
\email{mthrassias@yahoo.com}
\thanks{}

\begin{document}

 \maketitle
 
\begin{abstract}
One of the themes of this paper is recent results on large gaps between primes. The first of these results has been achieved in the paper \cite{ford} by Ford, Green, Konyagin and Tao. It was later improved in the joint paper \cite{ford2} of these four authors with Maynard. One of the main ingredients of these results are old methods due to Erd\H{o}s and Rankin. Other ingredients are important breakthrough results due to Goldston, Pintz and Yildirim \cite{GPYI, GPYII, GPYIII}, and their extension by Maynard on small gaps between primes. All these previous results are discussed shortly. The results on the appearance of $k$-th powers of primes contained in those large gaps, obtained by the author in joint work with Maier \cite{MR, MR_Beatty, MR_PS} are based on a combination of the results just described with the matrix method of Maier.

\end{abstract}


\section{Introduction}

Let $p_n$ denote the $n$-th prime number, $d_n=p_{n+1}-p_n$. The topic of this  article are recent results on large values of $d_n$.\\
We start with a short overview on the historical development of the subject. The prime number theorem easily implies that $\frac{d_n}{\log p_n}$ is one on average:
$$\lim_{x\rightarrow \infty}\frac{1}{x}\sum_{n\leq x} \frac{d_n}{\log p_n}=1\:.$$
For an infinite sequence of values of $n$ this average value is superseded by a factor tending to infinity for $n\rightarrow \infty$. Let
$${G}(x):=\max_{p_{n+1}\leq x}(p_{n+1}-p_n)\:.$$
In 1931, Westzynthius \cite{Westzynthius}, improving on prior results of Backlund \cite{Backlund} and Brauer-Zeitz \cite{Brauer}, proved that 
$${G}(X) \gg \frac{\log X \log_2 X}{\log_3 X}.$$
(Here and in the sequel we define $\log_k x$ by $\log_1:=\log x$ and $\log_k:=\log(\log_{k-1} x)$, $(k>1)$).\\
In 1935, Erd\H{o}s \cite{erdos} sharpened this to 
$${G}(X)\gg \frac{\log X \log_2 X}{(\log_3 X)^2}$$
and in 1938 Rankin \cite{rankin} made a subsequent improvement
\[
{G}(X) \geq (c+o(1))\: \frac{\log X \log_2 X \log_4 X}{(\log_3 X)^2}\:. \tag{1.1}
\]
The constant $c$ was improved on several times (cf. \cite{maier_pom}, \cite{pintz}, \cite{rankin2}, \cite{Schonhage}). It was a famous prize problem of Erd\H{o}s to improve on the order of magnitude of the lower bound in (1.1). This problem was solved recently. In two independent papers, the paper \cite{ford} by the four authors K. Ford, B. Green, S. Konyagin and T. Tao, and the paper \cite{maynard2} by J. Maynard, it was shown that the constant in (1.1) could be taken to be arbitrarily large.\\
The methods of the two papers differed in some key aspects. The arguments in \cite{ford} used recent results from the papers \cite{green-tao}, \cite{green-tao2} by Green and Tao and the paper \cite{green-tao3} by Green, Tao and Ziegler on the number of solutions to linear equations in primes. 
The arguments in \cite{maynard2} by J. Maynard instead relied on multidimensional sieves introduced in \cite{maynard}, which in turn heavily relied on the breakthrough results of D. A. Goldston, J. Pintz and C. Y. Yildirim (cf. \cite{GPYI}, \cite{GPYII}, \cite{GPYIII}). \\
In this  article we shall restrict our description to the approach of the paper \cite{ford2} by all five authors. (We follow the notation the  version in \cite{ford2}, since this is also used in Maier-Rassias \cite{MR}).\\
Later on, the author of the present paper in a joint paper \cite{MR} with Maier obtained large gaps of the order of that in \cite{ford2} that contain a perfect $K$-th power of a prime for a fixed natural number $K\geq 2$. They combined the results and the methods of the paper \cite{ford2}, the method of the paper \cite{konyagin} of Ford, Heath-Brown and Konyagin with the Maier matrix method. The bulk of this  paper will deal with the description of the result of the paper \cite{ford2} and its $K$-version, the paper \cite{MR}.\\
The paper will be concluded with results about large gaps containing $K$-th powers of primes of special types: Beatty primes and Piatetski-Shapiro primes. 

\section{Short history of large gap results}

Starting with the papers \cite{erdos}-\cite{erdos2} of Erd\H{o}s, all the results on large gaps between primes are based on modifications of the Erd\H{o}s-Rankin method. Its basic features are as follows:\\
Let $x>1$. All steps are considered for $x\rightarrow \infty$. Let 
$$P(x):=\prod_{p<x} p\:,\ \ y>x\:.$$ 
By the prime number theorem we have  
$$P(x)=e^{x(1+o(1))}\:.$$
A system of congruence classes 
\begin{align*}
\{v\::\: v\equiv&\: h_{p_1} \bmod p_1\} \\
&\vdots \tag{2.1}\\
\{v\::\: v\equiv&\: h_{p_l} \bmod p_l\}\:,
\end{align*}
(with $p_1<\cdots < p_l$ being the primes less than $x$) is constructed, such that the congruence classes $h_{p_l} \bmod p_l$ cover the interval $(0, y]$.\\
Associated to the system (2.1) is the system of congruences 
\begin{align*}
\{m\equiv&\: -h_{p_1} \bmod p_1\} \\
&\vdots \tag{2.2}\\
\{m\equiv&\: -h_{p_l} \bmod p_l\}\:.
\end{align*}
By the Chinese Remainder Theorem the system (2.2) 
$$1\leq m< P(x)\:.$$
has a unique solution $m_0\bmod\: P(x)$\\
Let $u\in \mathbb{N}$, $1\leq u\leq y$. Then there is a $j$, $1\leq j\leq l$, such that 
$$u \equiv\: h_{p_j} \bmod p_j\:.$$
From (2.1) and (2.2)
$$m_0+u\equiv 0 \bmod p_j\:.$$
If $m_0$ is sufficiently large, then all integers $w\in (m_0, m_0+y]$ are composite. If 
$$p_n=\max\{ p \ \text{prime}\::\: p\leq m_0\}$$
then it follows that $p_{n+1}-p_n\geq y$, a large gap result.\\
The large gap problem has thus been reduced to a covering problem: Find a system of congruence classes that cover the interval $(0, y]$, where $y$ is as large as possible.\\
In all papers since Erd\H{o}s \cite{erdos, erdos2}, the covering system (2.1) has been constructed by a sequence of sieving steps. The set
$$\mathcal{S}(x):=\{p_1,\ldots, p_l\}$$
is partitioned into a disjoint union of subsets:
\[
 \mathcal{S}(x)=  \mathcal{S}_1\cup\cdots \cup\mathcal{S}_g\:.\tag{2.3}
\]
Associated to each sieving step $\mathfrak{st}_j$ is a choice of congruence-classes $f_p \bmod p$ for $p\in \mathcal{S}_j$. We also consider the sequence $\mathcal{R}_j$ of residual sets. It is recursively defined as follows:\\
The $0$-th residual set $\mathcal{R}_0$ covers the entire interval $(0, y]$. Thus
$$\mathcal{R}_0=\{n\in \mathbb{N}\cap (0, y]\}\:.$$
The $(j+1)^{st}$ residual set $\mathcal{R}_{j+1}$ is obtained by removing from $\mathcal{R}_j$ all the integers from $(0, y]$ congruent to $f_p \bmod p$ for some $p\in S_j$.
The sequence $(\mathfrak{st}_j)$, $1\leq j\leq g$ is complete, if $\mathcal{R}_{g-1}\neq \emptyset$, $\mathcal{R}_g=\emptyset$, that means all integers in $(0, y]$ have been removed. For a complete sequence of sieving steps the union 
$$\bigcup_{1\leq j\leq g}\:\bigcup_{p\in \mathcal{S}_j} (f_p\bmod p)$$ 
thus covers all of $(0, y]$ and the choice $h_p=f_p (p\in \mathcal{S}(x))$ in (2.1) gives a covering system of the desired kind.\\
In all versions of the Erd\H{o}s-Rankin method, the first sieving steps have been very similar.\\
We describe -- with minor modifications, adjusting to our notations -- the construction of the covering system (2.1) in Erd\H{o}s \cite{erdos, erdos2}.\\
One sets
$$X=\log x,\ y= \delta x \frac{\log x \log_3 x}{\log_2^2 x},\ Z=\frac{1}{2}\: x,$$
$$Y=\exp\left(\alpha\:  \frac{\log x \log_3 x}{\log_2 x}\right)=\exp\left(\alpha\:  \frac{\log x \log_3 x}{\log_2 x}\:(1+o(1)\right)\:.$$
The sets $\mathcal{S}_j$ of primes are defined as follows:
$$\mathcal{S}_1:=\{p\::\: p\in (0, X]\},\ \mathcal{S}_2:=\{p\::\: p\in (y, Z]\}\:.$$
$$\mathcal{S}_3:=\{p\::\: p\in (X, y]\},\ \mathcal{S}_4:=\{p\::\: p\in (Z, x]\},\ \mathcal{S}_5:=\{p\::\: p\in (x, y]\} \:.$$
For the first two sieving steps one defines the congruence classes 
$$h_{p_i} \bmod p_i,\ p_i\in \mathcal{S}_i\ (i=1, 2)$$
by 
$$h_{p_1}\equiv 0 \bmod p_1,\ h_{p_2}\equiv 0 \bmod p_2\:.$$
A simple consideration shows that for the second residual set $\mathcal{R}_2$ the intersection $\mathcal{R}_2\cap (x, y]$ is the union of a set $Q$ of prime numbers
$$Q:=\{ q\ \text{prime}\::\: x< q\leq y\}$$
with a set of $Z$-smooth integers, i.e. integers whose largest prime factor is $\leq Z$. A crucial fact in all variants of the Erd\H{o}s-Rankin method is, that the number of smooth integers is very small. This fact has been established by Rankin \cite{rankin} and Bruijn \cite{bruijn}.\\
A central idea of Rankin's method is $``$Rankin's trick$"$. Let us write $p^+(m)$ for the largest prime factor of $m$. Let $\Sigma'$ mean summation over all integers $n$ with $p^+(n)<y$. Then one has for $\eta>1$:
\begin{align*}
\sum_{\substack{m\leq x \\ p^+(m)\leq y}} 1&\leq \sum_{m\leq x} {\vphantom{\sum}}' 1 \leq  \sum_{m\leq x} {\vphantom{\sum}}' \left(\frac{x}{m}\right)^\eta\\
&=x^\eta \sum_{m\leq x} {\vphantom{\sum}}' \frac{1}{m^\eta}\leq x^\eta \prod_{p\leq y} \left(1-\frac{1}{p^\eta}\right)\:.
\end{align*}
The bound needed  follows by evaluating the product by the prime number theorem and by choosing $\eta$ optimally.\\
Thus, the elements of the second residual set essentially only consists of prime numbers, the number of $Z$-smooth numbers of the second residual set being negligible.\\
In the third sieving step in Erd\H{o}s \cite{erdos} the classes 
$$h_{p_3}\bmod p_3\ (p_3\in S_3)$$
are chosen by a greedy algorithm. In each step the congruence class not belonging to the previous congruence classes that contains the most elements of the residual set $\mathcal{R}_3$ is removed.\\
In each version of the Erd\H{o}s-Rankin method, there is a \textit{weak} sieving step, which we will not number, since this number might be different in different versions. Instead, we call it the \textit{weak sieving step}, since only few elements of the residual set are removed.\\
In the first paper \cite{erdos} of Erd\H{o}s, which is being discussed right now, in the fourth sieving step $\mathfrak{st}_4$ one uses the primes
$$p\in (0, x]\setminus (S_1\cup S_2 \cup S_3)$$
to remove the elements from the set $\mathcal{R}_4$.\\
An important quantity is the hitting number of the weak sieving step $\mathfrak{st}_{w_0}$. The hitting number of the prime $p\in \mathcal{S}_{w_0}$ is defined as the number of elements belonging to the congruence class $h_p \bmod p$. In all papers prior to \cite{maier_pom} this hitting number was bounded below by 1. Thus for each element $u$ of the residual set $\mathcal{R}_{w_0}$ a prime $p(u)\in \mathcal{S}_{w_0}$ could be found such that 
$$u\equiv h_p(u) \bmod p(u)$$
and thus the removal of a single element from the congruence class $(h_p(u) \bmod p(u))$ could be guaranteed. The progress in the papers was achieved not by changing the estimate for the hitting number, but by better estimates for the number of smooth integers.\\
In the paper \cite{maier_pom} by Maier-Pomerance, the hitting number in the weak sieving step for a positive proportion of the primes $p\in \mathcal{S}_{w_0}$ the hitting number was at least 2.\\
A further improvement was obtained in the paper \cite{pintz} by Pintz, where the hitting number was at least 2 for almost all primes in $\mathcal{S}_w$. We give a short sketch of these two papers.\\
The paper \cite{maier_pom} consists of an arithmetic part and a graph-theoretic part, combined with a modification of the Erd\H{o}s-Rankin method. The arithmetic information needed concerns the distribution of generalized twin primes in arithmetic progressions on average.\\
We recall definitions and theorems from \cite{maier_pom}. Fix some arbitrary, positive numbers $A, B$. For a given large number $N$, let $x_1, x_2$ satisfy
$$\frac{N}{(\log N)^A} \leq x_1< x_2 \leq N,\ x_2-x_1\geq \frac{N}{(\log N)^B}.$$
If $n$ is a positive integer, let
$$\bold{T}(n)=\{p\ \text{prime}\::\: x_1\leq p\leq x_2-n   \}$$
where as usual $p$ denotes a prime.\\
Further, if $l, M$ are positive integers, let
$$\bold{T}(n, l, M)=\{p\in \bold{T}(n)\::\: p\equiv l\bmod M\}.$$
Let 
$${T}(n,l,M)=|\bold{T}(n, l, M)|$$
and let
$$T(n)=\sum_{x_1<k\leq x_2-n} \frac{1}{\log k \log(k+n)}.$$
Let 
$$\alpha_0=2\prod_{p>2} \frac{p(p-2)}{(p-1)^2}=1,3203.$$
Then one has with a fixed constant $c_1>0$:
\begin{align*}
\sum_{\substack{n\leq x \\ n\equiv 0 \bmod 2}}\sum_{M\leq x^{c_1}} \max_{\substack{l\\ (l,m)=(n+l, M)=1}}&\max\left| T(x, n, l, M)-\frac{\alpha_0 T(x,n)}{\phi(M)}  \prod_{\substack{p\mid M \\ p>2}}\frac{p-1}{p-2} \right|\\
& \ll_{E} x^2(\log x)^{-E}. \tag{2.4}
\end{align*}
The result (2.4) is proven by application of the Hardy-Littlewood Circle method. We now come to the graph-theoretic part:\\
We have the following definitions:
\begin{definition}(\cite{maier_pom}, Definition 4.1')\\
Say that a graph $G$ is $N$-colored, if there is a function $\chi$ from the edge set of $G$ to $\{1, \ldots, N\}$.
\end{definition}
In the paper \cite{maier_pom}, first a graph is discussed, whose properties are idealized and thus simpler to formulate than the properties really needed for the applications. A proof of the existence of certain colored subgraphs (partial matchings) is given. Then the graphs with properties needed for the applications are discussed. The existence of certain colored subgraphs is given without proof. The proof can easily be obtained by a modification of the proof for the idealized graphs mentioned above. For the sketch of the details we cite (\cite{maier_pom}, Definition 4.2). \\
Say an $N$-colored graph $G$ is $K$-uniform, if $K\mid N$ and there are integers $S, T$ such that\\
(i) Each color in $\{1,\ldots, N\}$ is assigned to exactly $S$ edges of $G$.\\
(ii) For each $i=1, \ldots, K$ and each vertex $V$ in $G$, there are exactly $T/K$ edges $E$ coincident at $V$ with color in $((i-1)N/K, iN/k)$. Thus each vertex of $G$ has valence $T$.\\
One has 
\begin{theorem}(\cite{maier_pom}, Theorem 4.1)\\
Say $G$ is a $K$-uniform, $N$-colored graph with $N$ vertices, where $c\geq 1$. Then there is a set of $B$ mutually non-coincident edges with distinct colors such that
$$B>\frac{cN}{4}\left(1-\exp\left(-\frac{4}{c}+\frac{8}{c^2K}\right)\right)$$
\end{theorem}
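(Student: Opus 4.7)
The plan is a randomized greedy construction of the rainbow matching, built in $K$ stages corresponding to the $K$ color-blocks furnished by condition (ii). Write $\mathcal{C}_i := \{j \in \mathbb{Z} : (i-1)N/K < j \leq iN/K\}$ for the $i$-th block, and maintain a nested sequence of rainbow matchings $M_0 \subseteq M_1 \subseteq \cdots \subseteq M_K$ together with the sets $U_i$ of vertices not yet covered. At stage $i+1$, for each color $c \in \mathcal{C}_{i+1}$ draw an edge $e_c$ uniformly at random from the $S$ edges of color $c$, then add to $M_i$ exactly those $e_c$ whose endpoints both lie in $U_i$ and which share no vertex with any other $e_{c'}$ picked in this stage. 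After all $K$ blocks, $M_K$ is a random rainbow matching, and we control its expected size via the probabilistic method.

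The central computation rests on condition (ii): at every vertex $V \in U_i$, precisely $T/K$ of the $(N/K)\cdot S$ edges with colors in $\mathcal{C}_{i+1}$ are incident to $V$. Hence for fixed $c \in \mathcal{C}_{i+1}$ one has $\Pr[e_c \ni V] = (T/K)/S$, and by linearity the expected number of block-$(i+1)$ colors whose random edge hits $V$ equals $p := NT/(K^2 S)$. To first order, $V$ survives into $U_{i+1}$ with probability $1-p$, so writing $u_i := \mathbb{E}|U_i|$ we obtain the recursion $u_{i+1} \leq u_i\bigl(1 - p + O(p^2/K)\bigr)$. Iterating $K$ times gives $u_K \leq N\exp(-pK + O(p^2))$, and substituting the value of $p$ forced by the parameters of the $K$-uniform graph yields $u_K \leq N\exp\bigl(-4/c + 8/(c^2 K)\bigr)$. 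The number of matched edges is $\tfrac12(N - |U_K|)$, so fixing a realization attaining the expectation produces the rainbow matching of the size claimed in the theorem.

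The main obstacle is controlling the intra-stage collisions: two colors of $\mathcal{C}_{i+1}$ can simultaneously land on a common vertex, forcing a wasted discard. A naive linearity argument ignoring this overcounts the gain, so one must use condition (ii) a second time to bound the expected number of such collisions. Since each vertex sees exactly $T/K$ block-edges spread over $N/K$ colors, a direct second-moment calculation shows that the expected number of colliding color-pairs at any fixed vertex is $O(p^2)$. A one-for-one alteration step -- delete a single edge from each colliding pair -- then costs only the $O(p^2/K)$ term; when iterated and combined with the logarithmic expansion of the above recursion, this is precisely what produces the extra $+8/(c^2 K)$ correction inside the exponent. Everything else is a direct probabilistic calculation, and the proof concludes by passing from the expected matching size to a deterministic realization.
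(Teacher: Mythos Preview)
Your randomized staged construction is a natural idea, but it is \emph{not} the route taken in \cite{maier_pom}, and as written it has real gaps.

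\textbf{Comparison with the paper.} The paper (following \cite{maier_pom}) does not randomize at all. It runs a \emph{deterministic} greedy over the $K$ color-blocks: having built $B_1,\dots,B_{i-1}$, one takes $B_i$ to be a \emph{maximum} rainbow matching using colors in $((i-1)N/K,\,iN/K]$ whose edges avoid all vertices already used. One then sets $|B_i|=\beta_i N$ and proves a recursive lower bound on the $\beta_i$ by a counting argument, which sums to $\beta>\frac{c}{4}\bigl(1-\exp(-4/c+8/(c^2K))\bigr)$. The maximality of $B_i$ is what drives the recursion; no probability enters.

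\textbf{Gaps in your argument.} First, the pointwise claim $\Pr[e_c\ni V]=(T/K)/S$ for a \emph{fixed} color $c$ is false: condition (ii) only controls the \emph{sum} $\sum_{c\in\mathcal C_{i+1}} d_c(V)=T/K$, so $\sum_{c\in\mathcal C_{i+1}}\Pr[e_c\ni V]=T/(KS)$, not $NT/(K^2S)$. With the double-count $cN\cdot T=2NS$ (so $S=cT/2$), the correct expected hit count is $2/(cK)$; your $p$ is off by a factor of $N/K$, and neither value produces the exponent $-4/c+8/(c^2K)$ after $K$ iterations without further explanation.

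Second, and more seriously, the inequality $u_{i+1}\le u_i(1-p+O(p^2/K))$ is unjustified and points the wrong way. A vertex $V\in U_i$ gets matched at stage $i+1$ only if some $e_c\ni V$ has its \emph{other} endpoint in $U_i$ and survives the collision-deletion. The event ``$e_c\ni V$'' alone gives only $\Pr[V\text{ matched}]\le p$, hence $u_{i+1}\ge u_i(1-p)$, which is useless. To get an \emph{upper} bound on $u_{i+1}$ you must lower-bound the probability that the second endpoint lies in $U_i$; but $U_i$ is a random set determined by earlier stages, its density in the graph is shrinking, and nothing in conditions (i)--(ii) guarantees that $U_i$ inherits any uniform degree property. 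Your collision analysis addresses only intra-stage losses, not this inter-stage conditioning, and the $O(p^2)$ correction cannot absorb it.

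In short: the staged structure is right, but the deterministic extremal step (choose the \emph{largest} block-matching, then lower-bound it combinatorially) is doing essential work that your random sampling plus alteration does not replicate.
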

We describe the construction of these edges:\\
Let $S, T$ be as in (\cite{maier_pom}, Definition 4.2).\\
Let $B$, be the largest collection of mutually noncoincident edges with distinct colors in $(0, N/K]$. After $B_1, \ldots, B_{i-1}$ have been chosen and $i\leq K$, let $B_i$ be the largest collection of edges of $G$ with distinct colors in $((i-1)N/K, iN/K)$ such that the members of
$$B_1\cup \cdots \cup B_i$$ 
are mutually noncoincident. Let $\beta_i$ be such that $|B_i|=\beta_i N$ and let
$$\beta=\beta_1+\cdots+\beta_K.$$
It can be shown that 
$$\beta>\frac{c}{4}\left(1-\exp\left(-\frac{4}{c}+\frac{8}{c^2K}\right)\right).$$
We now describe the modifications suited for applications.
\begin{definition}(\cite{maier_pom}, Definition 4.2')\\
Let $K$ be a positive integer and let $C>0$, $\delta\geq 0$ be arbitrary. Say an $N$-colored graph $G$ with $N$ vertices is $(K, C, \delta)$-uniform, if there are numbers $S, T$ such that \\
(i) but for at most $\delta N$ exceptions, each color in $\{1, \ldots, N\}$ is assigned to between $(1-\delta)\mathcal{S}$ and $(1+\delta)\mathcal{S}$ edges of $G$,\\
(ii) if we let $n(V, i)$ denoted the number of edges coincident at the vertex $V$ with color in $((i-1)N/K, iN/K)$, then 
$$n(V, i)\leq CT/K$$ 
for each $i=1, \ldots, K$, but for at most $\mathcal{S} M$ exceptional vertices $V$, we have 
$$(1-\delta)T/K \leq n(V, i)\leq (1+\delta)T/K$$
for each $i=1, \ldots, K$.
\end{definition}
Then we have the following result:
\begin{theorem}(\cite{maier_pom}, Theorem 4.1')\label{rthm24}\\
Let $C>0$, $\eta>0$ be arbitrary. There is a number $K(C, \eta)$ such that for each integer $K\geq K(C, \eta)$ there is some 
$\delta=\delta(C, \eta, K)>0$ with the property that each $(K, C, \delta)$-uniform, $N$-colored graph with $cN$ vertices, where $c\geq 1$, has a set of $B$ mutually noncoincident edges with distinct  colors, where
$$B>(1-\eta)\frac{cN}{4}\left(1-\exp\left(-\frac{4}{C}\right)\right)\:.$$
\end{theorem}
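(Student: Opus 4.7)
My plan is to imitate the proof of Theorem 4.1 for the idealized $K$-uniform case, using the $(1-\eta)$ slack in the conclusion to absorb the perturbation errors from conditions (i) and (ii) of Definition 4.2'. I would run exactly the same greedy construction as described after Theorem 4.1: partition the $N$ colors into $K$ consecutive blocks of size $N/K$ and, for $i=1,\ldots,K$, let $B_i$ be a largest collection of mutually non-coincident edges of distinct colors in the $i$-th block that is also disjoint in vertices from $B_1\cup\cdots\cup B_{i-1}$.

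To bound $|B_{i+1}|$ from below I would analyze an auxiliary greedy rainbow matching inside the $(i+1)$-st color block. Let $U_i$ be the vertex set used by $B_1,\ldots,B_i$, so $|U_i|=2\sum_{j\le i}|B_j|$; the inner greedy halts at a matching $M$ precisely when every edge in block $i+1$ with both endpoints outside $U_i\cup V(M)$ has a color already used in $M$. The three counts that drove the idealized Theorem 4.1 recursion -- total edges in a block, edges in a block incident to a given vertex set, and edges per color -- now hold only up to $(1\pm\delta)$-multiplicative errors on good colors and vertices, supplemented by the uniform upper bound $n(V,i+1)\le CT/K$ on the at most $\delta N$ exceptional vertices. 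This is the single place where the parameter $C$ enters, and it is what ultimately produces the $\exp(-4/C)$ in the conclusion. Substituting these estimates into the greedy-halt inequality yields, with $B_i:=N^{-1}\sum_{j\le i}|B_j|$ and $\beta_{i+1}:=B_{i+1}-B_i$, a perturbed discrete recursion of the form
$$\beta_{i+1} \;\ge\; g_{c,C,K}(B_i) \;-\; \varepsilon(\delta,C,K),$$
which at $\delta=0$ and $C=1$ reduces to the recursion $\beta_{i+1}\ge(c-4B_i)/(cK+4)$ underlying Theorem 4.1.

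Iterating this recursion is a one-dimensional discrete Gronwall argument around its fixed point, and in the large-$K$ limit it produces
$$B_K \;\ge\; \frac{c}{4}\bigl(1-\exp(-4/C)\bigr)\;-\;K\,\varepsilon(\delta,C,K).$$
The theorem then follows by first taking $K\ge K(C,\eta)$ large enough that the explicit discrete iterate overshoots $(1-\eta/2)(cN/4)(1-\exp(-4/C))$, and only then taking $\delta=\delta(C,\eta,K)$ small enough that $K\varepsilon(\delta,C,K)<\eta cN/8$. The principal obstacle is precisely this error propagation: the recursion is iterated $K$ times and each step leaks $O(\delta(1+C))$, which forces the quantifier order $\eta\mapsto K\mapsto\delta$ that appears in the statement. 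A secondary bookkeeping task is discarding the at most $\delta N$ bad colors at a cost of $O(\delta)$ colors per block and verifying that this loss is absorbed by the $\eta$-budget.
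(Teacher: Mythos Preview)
The paper does not prove this theorem; it states explicitly that the result ``is given without proof'' and that ``the proof can easily be obtained by a modification of the proof for the idealized graphs mentioned above'' (i.e., of Theorem 4.1). Your proposal---run the same block-by-block greedy rainbow matching as in the sketch following Theorem 4.1 and absorb the $(1\pm\delta)$ perturbations and exceptional-vertex contributions into the $(1-\eta)$ slack, fixing $K$ first and then $\delta$---is exactly that modification, so you are aligned with the paper.

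One point to tighten before writing out the details: you assert that $C$ enters only through the at most $\delta N$ exceptional vertices and that this ``ultimately produces the $\exp(-4/C)$.'' But a contribution confined to $O(\delta N)$ vertices is of size $O(\delta C)$ and is absorbed into your error term $\varepsilon(\delta,C,K)$; the $\delta=0$ recursion $\beta_{i+1}\ge(c-4B_i)/(cK+4)$ that you correctly identify gives $(c/4)(1-\exp(-4/c))$ in the large-$K$ limit, with lowercase $c$, not $C$. Either the exponent in the displayed bound is a transcription slip for $c$ (which would be consistent with Theorem 4.1 as $K\to\infty$), or the intended argument uses the uniform bound $n(V,i)\le CT/K$ for \emph{all} vertices in the incidence count, in which case the recursion itself changes and both prefactor and exponent acquire $C$ in a different combination than stated. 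This is a bookkeeping issue rather than a strategic gap, but your narrative currently claims two things that are not simultaneously consistent.
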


We now describe the application of the Erd\H{o}s-Rankin method in the paper \cite{maier_pom} and its combination with the arithmetic and graph-theoretic results just mentioned.\\
Let
\begin{align*}
y&:= c' e^\gamma x\log x\log\log\log x(\log\log x)^{-2}\\
z&:=x/\log\log x\\
v&:=\exp\{(1-\epsilon)\log x\log\log\log x(\log\log x)^{-2}.
\end{align*}
The first two sieving steps are as follows:\\
For the system of congruence classes $h_{p_1}\bmod p_1$ as described in (2.1) we choose:
\begin{align*}
&h_{p_1}=0\ \text{for every prime } p_1\in \mathcal{S}_1:=(y, z]\:,\\
&h_{p_2}=1 \ \text{for every prime } p_2\in \mathcal{S}_2:=(1, y]\:.
\end{align*}
The first residual set $\mathcal{R}_1$ is the disjoint union $\mathcal{R}_{(1)}\cup \mathcal{R}_{(2)}$, where $\mathcal{R}_{(1)}$ is the set of integers in $(1, y]$ divisible by some prime $p>z$ and $\mathcal{R}_{(z)}$ is the set of $v$-smooth integers in $(1, y]$. Let $\mathcal{R}$ be the members of the second residual set that are in $\mathcal{R}_{(1)}$ and let $\mathcal{R}'$ be the members of the second residual set that are in $\mathcal{R}_{(2)}$. Then
$$\mathcal{R}=\bigcup_{m\leq Y/z} \mathcal{R}_m,$$
where
\begin{align*}
&\mathcal{R}_m:=\{ mp\::\: z< p\leq U/m,\ (mp-1, P(y))=1\}\:,\\
&\mathcal{R}':=\{ n\leq y\::\: p\mid n\ \Rightarrow\ p\leq y, q\mid n-1\ \Rightarrow\ q>y \}\:.
\end{align*}
It is again important that the number of smooth integers is small and it easily follows that 
$$|\mathcal{R}'|\ll \frac{x}{(\log x)^{1+\epsilon}}.$$
For the weak sieving step one now applies the graph-theoretic results (Theorem \ref{rthm24}).\\
One defines a graph whose vertex set is $\mathcal{R}_m$. Let
$$k_0:=\prod_{r<\log\log\log x} r\:.$$
Define 
$$r_m=\frac{\alpha_0}{m\log\log x}\:\prod_{r\mid m}\frac{r-1}{r-2}\:.$$
Let $Q_m$ denote the set of primes $q$ in the interval
$$\left(\left(1-\sum_{j=1}^m r_j\right),\ \left(1-\sum_{j=1}^{m-1} r_j\right) x \right]\:.$$
Let $Q_m$ be the graph with vertex set $\mathcal{R}_m$ and such that $mp, mp'\in \mathcal{R}_m$ are connected by an edge if and only if  $|p'-p|=k_0q$ for some $q\in Q_m$.\\
Define the $``$color$"$ of an edge by the prime $q$, so that $G_m$ is a $|Q_m|$-colored graph. From the arithmetic information, combined with standard sieves, it can easily be deduced that the graphs $G_m$ satisfy the conditions of the graph-theoretic result (\cite{maier_pom}, Definition 4.2). Thus the graphs $G_m$ contain a sufficient number of edges $(mp, mp')$ and thus pairs $(p, p')$ with
$$p\equiv p' (\bmod\: q).$$
We consider the system
\begin{align*}
\{v\::\: v&\equiv h_{p_1}\bmod\: p_1 \} \:,\\
&\vdots  \tag{2.1}\\
\{v\::\: v&\equiv h_{p_l}\bmod\: p_l \}\:,
\end{align*}
for $p_j=q\in Q_m.$\\
If we determine $$h_{p_j}\bmod\: p_j = h_q \bmod\: q$$ by 
$$h_q\equiv p\equiv p'\bmod \: q$$
then the hitting number for the prime $p_j=q$ is 2. Thus by weak sieving step, two members of the residual set are removed for each prime $q$. The weak sieving step is completed by removing one member of the residual set for the remaining primes.\\
The paper \cite{pintz} by Pintz, contains exactly the same arithmetic information as the paper \cite{maier_pom} by Maier and Pomerance, whereas the graph-theoretic construction is different. The edges of the graphs are obtained by a random construction and a hitting number of 2 for almost all primes in the weak sieving step is achieved. \\
The order of magnitude of $G(X)$ could finally be improved in the paper \cite{ford}. The result is:
$$G(x)\geq f(x)\:\frac{\log X\log_2 X\log_4 X}{(\log_3 X)^2}\:,$$
with $f(X)\rightarrow \infty$ for $X\rightarrow \infty$.\\
The paper is related to the work on long arithmetic progressions consisting of primes by Green and Tao  \cite{green-tao}, \cite{green-tao2} and work by Green, Tao and Ziegler \cite{green-tao3} on linear equations in primes. The authors manage to remove long arithmetic progressions of primes in the weak sieving step and thus are able to obtain a hitting number tending to infinity with $X$. We shall not describe any more details of this paper. Simultaneously and independently, James Maynard \cite{maynard} achieved progress based on multidimentional sieve methods. The authors of the paper \cite{ford} and Maynard in \cite{ford2} joined their efforts to prove
$$G(X)\geq C\log X\log_2 X\log_4 X (\log_3 X)^{-1}$$
for a constant $C>0$.\\
Again the hitting number in the weak sieving step tends to infinity for $x\rightarrow \infty$. Whereas in the papers \cite{maier_pom}, \cite{pintz} by Maier-Pomerance and Pintz, the pairs of the integers removed in the weak sieving step were interpreted as edges of a graph, now the tuplets of integers removed are seen as edges of a hypergraph. One uses a hypergraph covering theorem generalizing a result of Pippenger and Spencer \cite{pip} using the R\"odl nibble method \cite{rodl}.\\
The choice of sieve weights is related to the great breakthrough results on small gaps between consecutive primes, based on the Goldston-Pintz-Yildirim (GPY) sieve and Maynard's improvement of it. We give a short overview.

\section{Small gaps, GPY-Sieve and Maynard's improvement}
The first non-trivial bound was proved by Erd\H{o}s \cite{erdos, erdos2} who showed that
$$\liminf_{n\rightarrow \infty} \frac{d_n}{\log p_n} <1\:.$$
By the application of Selberg's sieve he showed that pairs of primes $(\tilde{p_1}, \tilde{p_2})$ with a fixed difference cannot appear too often.\\
The first major breakthrough was achieved by Bombieri-Davenport \cite{bombieri}, who could show that 
$$\liminf_{n\rightarrow \infty} \frac{d_n}{\log p_n} \leq 0.46650\ldots$$
Let 
\[
Z(2n):=\sum_{\substack{p, p'\leq x\\ p'-p=2n}} (\log p)(\log p')\:, \tag{3.1}
\]
\[
\mathcal{S}(\alpha):=\sum_{p \leq x}(\log p)e(p\alpha)  \tag{3.2}
\]
$$\mathcal{U}(\alpha):=\sum_{m=-L}^L e(2m\alpha)\ (e(\alpha)=e^{2\pi i \alpha},\ L\in\mathbb{N}).$$
Then 
$$T(\alpha):=|\mathcal{U}(\alpha)|^2=\sum_{j=-2L}^{2L} t(j)e(2j\alpha)\:,$$
with $t(j):=2L+1-|j|.$\\
One row considers the integral
\[
I(x)=\int_0^1 |\mathcal{S}(\alpha)|^2 T(\alpha) d\alpha\:. \tag{3.3}
\]
By orthogonality one obtains:
\[
I(x)=t(0)Z(0)+2\sum_{m=1}^{2L} t(m) Z(2m)\:.  \tag{3.4}
\]
One now tries to establish a lower bound for I(x). This bound can be combined with upper bounds for $Z(2m)$ for large values of $m$ to obtain estimates $Z(2m)>0$ for small values of $m$. Thus gaps of size $2m$ exist.

$$ $$
$$ $$
$$ $$
$$ $$

These estimates became possible by application of the Bombieri-Vinogradov theorem, proven one year before \cite{daven_2000}.\\ 
For its formulation the following definition will be useful:
\begin{definition}\label{defnr31}
Let 
$$\theta(x, q, a)= \sum_{\substack{p\leq x\\ p\equiv a (\bmod\: q)}}\log p\:.$$
We say that the primes have an admissible level of distribution $\theta$ if 
$$\sum_{q\leq x^{\delta-\epsilon}} \max_{(a,q)=1}\left| \theta(x; q, a)-\frac{x}{\phi(q)} \right| \ll \frac{x}{(\log x)^A}$$
holds for any $A>0$ and any $\epsilon>0$.
\end{definition}

The Bombieri-Vinogradov theorem now states that:\\ 
For any $A>0$, there is a $B=B(A)$ such that, for 
$$Q=x^{1/2}(\log x)^{-B}\::$$
\[
\sum_{q\leq Q} \max_{(a,q)=1}\left| \theta(x; q, a)-\frac{x}{\phi(q)} \right| \ll \frac{x}{(\log x)^A}\:.  \tag{3.4}
\]
This implies that the primes have admissible level of distribution $1/2$.

\begin{definition}\label{defn41}
We say that the primes have an \textbf{admissible level of ditribution} ${\vartheta}$, if (3.4) holds for any $A>0$ and any $\epsilon>0$ with $Q=x^{\vartheta-\epsilon}$.
\end{definition}
A great breakthrough was achieved in the paper \cite{GPYI}. They consider admissible $k$-tuples for which we reproduce the definition:

\begin{definition}\label{defn42}
$\mathcal{U}$ is called admissible, if for each prime $p$ the number $v_p(\mathcal{U})$ of distinct residue classes modulo $p$ occupied by elements of $\mathcal{U}$, satisfies $v_p(\mathcal{U})<p$.
\end{definition}

The two main results in the paper \cite{GPYI} of Goldston, Pintz and Yildirim are

\begin{theorem}\label{thm1} (\cite{GPYI}, Theorem 3.3)
\begin{align*}
&\text{Suppose the primes have level of distribution $\vartheta>1/2$. Then there exists}\\ 
&\text{an explicitly calculable constant $C(\vartheta)$ depending only on $\vartheta$ such that any}\\ 
&\text{admissible $k$-tuple with $k\geq C(\vartheta)$ contains at least two primes infinitely}\\
&\text{often. Specifically if $\vartheta\geq 0.971$ then this is true for $k\geq 6$.} 
\end{align*}
\end{theorem}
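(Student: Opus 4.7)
The plan is to apply the higher-dimensional Selberg sieve in the form introduced by Goldston, Pintz and Yildirim. Fix an admissible $k$-tuple $\mathcal{H} = \{h_1, \ldots, h_k\}$ and an auxiliary integer parameter $\ell \geq 1$. I would define the sieve weights
$$\Lambda_R(n; \mathcal{H}, \ell) := \frac{1}{(k+\ell)!} \sum_{\substack{d \mid P_{\mathcal{H}}(n) \\ d \leq R}} \mu(d) \left(\log \frac{R}{d}\right)^{k+\ell},$$
where $P_{\mathcal{H}}(n) := \prod_{i=1}^{k}(n+h_i)$ and the truncation level is $R = x^{\vartheta/2 - \varepsilon}$. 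Define further the prime-weighted function $\theta_0(m) := \log m$ if $m$ is prime and $0$ otherwise, and form
$$S := \sum_{x \leq n < 2x} \left( \sum_{i=1}^{k} \theta_0(n+h_i) \, - \, \log(3x) \right) \Lambda_R(n; \mathcal{H}, \ell)^2.$$
The strategy is to show $S > 0$ for all sufficiently large $x$, which forces some $n \in [x, 2x)$ with $\sum_{i} \theta_0(n+h_i) > \log(3x)$; since each $\theta_0(n+h_i) \leq \log(3x)$ individually, at least two of the $n+h_i$ must be prime.

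Next I would separately estimate the two pieces that arise. The pure second moment $S_1 := \sum_n \Lambda_R(n;\mathcal{H},\ell)^2$ is handled by elementary sieve theory: expanding the square reduces to counting integers $n \in [x,2x)$ in prescribed residue classes modulo $[d_1, d_2] \leq R^2 = x^{\vartheta - 2\varepsilon}$, and a standard Perron-contour evaluation yields a main term of the shape $c_1(k, \ell) \, x \, \mathfrak{S}(\mathcal{H}) (\log R)^{k+2\ell}$, where $\mathfrak{S}(\mathcal{H})$ is the singular series of $\mathcal{H}$. The prime-detecting sum $S_2 := \sum_n \theta_0(n + h_i) \Lambda_R(n;\mathcal{H},\ell)^2$ is the delicate piece: the primality condition on $n+h_i$ reduces the main term to counting primes in progressions to moduli up to $R^2$, and this is precisely where the level-of-distribution hypothesis $\vartheta$ (via Definition \ref{defn41} and inequality (3.4)) is invoked to dominate the error. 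The main term then emerges as $c_2(k, \ell) \, x \, \mathfrak{S}(\mathcal{H}) (\log R)^{k+2\ell+1}$ with an explicit binomial-type constant $c_2$.

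Assembling the two evaluations, positivity of $S$ in the limit $x \to \infty$ reduces to a purely numerical inequality in the parameters $\vartheta, k, \ell$. With the GPY choice $\ell$ of order $\sqrt{k}$ and $k \to \infty$, this inequality degenerates to the condition $2\vartheta > 1$, matching the hypothesis $\vartheta > 1/2$ exactly. Hence an effective threshold $C(\vartheta)$ can be extracted such that any admissible $k$-tuple with $k \geq C(\vartheta)$ produces at least two primes infinitely often. The concrete numerical specialization, $\vartheta \geq 0.971 \Rightarrow k = 6$, follows by inserting small integer values of $\ell$ into the closed-form inequality and optimizing.

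The principal obstacle is the asymptotic evaluation of $S_2$. Expanding $\Lambda_R^2$ brings in moduli up to $R^2 = x^{\vartheta - 2\varepsilon}$, and the key analytic task is to show that, after weighting by $\mu(d_i)(\log R/d_i)^{k+\ell}$ and double-summing over $d_1, d_2$, the Bombieri-Vinogradov-type error term remains genuinely of lower order than the main term. This step collapses for $\vartheta \leq 1/2$: the required moduli exceed the available distribution level, the error dominates, and no positive lower bound for $S$ can be salvaged. The hypothesis $\vartheta > 1/2$ is therefore not a technical artifact but the sharp threshold at which the method first becomes effective, and engineering the constants $c_1, c_2$ so that the optimization in $\ell$ gives a workable $C(\vartheta)$ is the combinatorial heart of the argument.
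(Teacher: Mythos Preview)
Your outline is a faithful sketch of the Goldston--Pintz--Yildirim argument and is essentially correct. Note, however, that the paper does not supply its own proof of this statement: it is quoted verbatim as a result from \cite{GPYI}, and the surrounding text only offers a very brief heuristic description (following Maynard's exposition in \cite{maynard0}) of the sum $S(N,\rho)=\sum_{N\le n\le 2N}\bigl(\sum_{i=1}^k \chi_{\mathbb{P}}(n+h_i)-\rho\bigr)w_n$ with weights $\lambda_d=\mu(d)F(\log(R/d))$ and the GPY choice $F(x)=x^{k+\ell}$. Your write-up expands exactly this sketch into the standard two-moment decomposition $S_1$, $S_2$ and the level-of-distribution optimization, so there is no divergence in approach---only in the level of detail, which the paper deliberately omits by deferring to the original source.
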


\begin{theorem}\label{thm1} (\cite{GPYI}, Theorem 3.4)\\
We have
\[
\Delta_1:=\liminf_{n\rightarrow \infty} \frac{p_{n+1}-p_n}{\log p_n}=0\:.
\]
\end{theorem}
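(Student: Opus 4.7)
The plan is to implement the Selberg-sieve argument of Goldston--Pintz--Yildirim, fueled by the Bombieri--Vinogradov theorem (3.4) as the unconditional level of distribution $\vartheta = 1/2$.

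\emph{Setup.} Fix $\eta > 0$. I aim to show that for all large $N$ there is a consecutive prime pair in $[N, 2N+H]$ at distance at most $H = \eta \log N$; letting $\eta \to 0$ then yields $\Delta_1 = 0$. Pick an admissible $k$-tuple $\mathcal{H} = \{h_1 < \cdots < h_k\} \subset [0, H]$, an auxiliary integer $\ell \geq 0$, and set $R = N^{1/4 - \epsilon}$. The cutoff $R \leq N^{1/4 - \epsilon}$ is dictated by the fact that the effective moduli arising in the expansion of $\Lambda_R^2$ reach $R^2 \leq N^{1/2 - 2\epsilon}$, which is exactly the Bombieri--Vinogradov range in (3.4). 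Attach to $\mathcal{H}$ the truncated divisor sum
\[
\Lambda_R(n; \mathcal{H}, \ell) := \frac{1}{(k+\ell)!} \sum_{\substack{d \mid P_\mathcal{H}(n) \\ d \leq R}} \mu(d)\, \left(\log \frac{R}{d}\right)^{k+\ell}, \qquad P_\mathcal{H}(n) := \prod_{i=1}^k (n+h_i).
\]

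\emph{Detector.} Consider
\[
\mathcal{T}(N) := \sum_{N < n \leq 2N} \left(\sum_{i=1}^k \theta(n+h_i) - \log(3N)\right)\, \Lambda_R(n; \mathcal{H}, \ell)^2.
\]
A strict inequality $\mathcal{T}(N) > 0$ forces some $n \in (N, 2N]$ with $\sum_i \theta(n+h_i) > \log(3N)$; since any single prime $n+h_i \leq 2N+H$ contributes only $\log(2N+H) < \log(3N)$, at least two of $n+h_1, \ldots, n+h_k$ must then be prime, producing a consecutive prime pair at distance $\leq H = \eta \log N$.

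\emph{Asymptotic evaluation.} Expanding $\Lambda_R^2$ as a double divisor sum and swapping summation order gives the standard GPY asymptotics
\[
\sum_n \Lambda_R(n; \mathcal{H}, \ell)^2 \sim \binom{2\ell}{\ell}\, \frac{(\log R)^{k+2\ell}}{(k+2\ell)!}\, \mathfrak{S}(\mathcal{H})\, N,
\]
\[
\sum_n \theta(n+h_i)\, \Lambda_R(n; \mathcal{H}, \ell)^2 \sim \binom{2\ell+2}{\ell+1}\, \frac{(\log R)^{k+2\ell+1}}{(k+2\ell+1)!}\, \mathfrak{S}(\mathcal{H})\, N,
\]
where $\mathfrak{S}(\mathcal{H}) = \prod_p (1 - v_p(\mathcal{H})/p)(1 - 1/p)^{-k}$ is the singular series of $\mathcal{H}$. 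The first asymptotic is a purely combinatorial Selberg-sieve computation; the second requires (3.4) to dispatch arithmetic-progression errors uniformly over the moduli $[d_1, d_2] \leq R^2$ arising in the cross-terms.

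\emph{Optimisation and closure.} The weighted ratio
\[
\frac{\sum_i \sum_n \theta(n+h_i)\, \Lambda_R^2}{\sum_n \Lambda_R^2} \sim \frac{2k(2\ell+1)}{(\ell+1)(k+2\ell+1)}\, \log R
\]
is optimised by taking $2\ell+1 \sim \sqrt{k}$, which turns the coefficient into $4k/(\sqrt{k}+1)^2 \to 4$ as $k \to \infty$. Since $\log R = (1/4 - \epsilon)\log N$, the ratio approaches $\log N$ from below, exactly the borderline against the comparison value $\log(3N)$. Positivity of $\mathcal{T}(N)$ is won by letting $k = k(N) \to \infty$, $\ell = \ell(N) \sim \tfrac{1}{2}\sqrt{k}$, and $\epsilon = \epsilon(N) \to 0$ at compatible rates, exploiting the sharp comparison $\log(2N+H) < \log(3N)$ and the saving in the singular-series contribution of long admissible tuples in $[0, \eta \log N]$. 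Once positivity is secured for all large $N$, each such $N$ yields a consecutive prime pair with gap $\leq \eta \log N$, and $\eta \to 0$ gives $\Delta_1 = 0$.

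\emph{Main obstacle.} The genuinely delicate point is precisely this borderline character of the level-$1/2$ case: the optimal main-term ratio converges to the comparison value in the limit rather than exceeding it, so positivity of $\mathcal{T}(N)$ hinges on a careful balancing of the parameters $k(N), \ell(N), \epsilon(N)$ against the shrinking window $H = \eta \log N$, together with sufficiently uniform control of the Selberg-sieve error terms via (3.4). A secondary technical difficulty is the rigorous justification of the asymptotic expansions for $\sum_n \Lambda_R^2$ and $\sum_n \theta(n+h_i)\Lambda_R^2$ when $\mathcal{H}$ and $k$ are allowed to vary with $N$, which requires uniform bounds on the singular series for admissible tuples of length $k(N)$ in intervals of length $\eta \log N$.
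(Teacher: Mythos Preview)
The paper does not actually prove this theorem; it merely cites it from \cite{GPYI} and then gives a brief schematic description of the GPY detector sum $S(N,\rho)$ and the general shape of the sieve weights, without carrying out any computation. So there is no detailed argument in the paper to compare against.

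Your outline is the correct skeleton of the GPY argument, and your asymptotic formulas and the optimisation $2\ell+1\sim\sqrt{k}$ are right. However, there is a real gap at the decisive step. As you yourself observe, with $\log R=(\tfrac14-\epsilon)\log N$ the optimised ratio
\[
\frac{2k(2\ell+1)}{(\ell+1)(k+2\ell+1)}\log R
\]
tends to $\log N$ \emph{from below}; no choice of $k(N),\ell(N),\epsilon(N)$ changes the sign of that deficit, and the singular series $\mathfrak{S}(\mathcal{H})$ cancels between numerator and denominator, so there is no ``singular-series saving'' for a fixed tuple. The sentence claiming positivity ``at compatible rates'' is precisely the point where the proof is missing.

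What GPY actually do to cross the threshold at level $\vartheta=\tfrac12$ is different from your detector: they replace $\sum_{i=1}^k\theta(n+h_i)$ by $\sum_{1\le h_0\le H}\theta(n+h_0)$, summing over \emph{all} shifts in $[1,H]$, and then average over admissible $k$-tuples $\mathcal{H}\subset[1,H]$. The off-tuple shifts $h_0\notin\mathcal{H}$ contribute a positive secondary main term, and the tuple average is controlled by Gallagher's estimate $\sum_{\mathcal{H}}\mathfrak{S}(\mathcal{H}\cup\{h_0\})\sim H^k$. This extra input is what tips the balance and yields $\mathcal{T}(N)>0$ for every $\eta>0$. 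Your proposal needs to incorporate this averaging step (and Gallagher's theorem) to become a proof.
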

The method of Goldston, Pintz, Yildirim has also become known as the GPY-sieve.\\
There are several overview articles on the history of the GPY-method (cf. \cite{Broughan, GPYIII}).\\
The overview article most relevant for this paper is due to Maynard \cite{maynard0}, whose improvements of the GPY-sieve is of crucial importance for the large gap results described in this paper.\\
Before we recall Maynard's description, we should mention another milestone which however is not relevant for large gap results. The result of Yitang Zhang \cite{zhang} from 2014. He proves the existence  of infinitely many bounded gaps. He does not establish an admissible level of distribution $\vartheta>1/2$, which by (4.5) would imply the result, but succeeds to replace the sum
$$\sum_{q\leq Q}\max_{(a,q)=1}\left|\theta(x,q,a)-\frac{x}{\phi(q)}  \right|$$
by a sum over smooth moduli.\\
We now come to the short description of the GPY-method and its improvement by Maynard, closely following the papers ``Small gaps between primes$"$ by Maynard \cite{maynard0}. One of the main results of \cite{maynard0} is:
\begin{theorem}\label{thm11}(of \cite{maynard0})\\
Let $m\in\mathbb{N}$. We have
$$\liminf_{n\rightarrow \infty} (p_{n+m}-p_n)\ll m^3 e^{4m}\:.$$
\end{theorem}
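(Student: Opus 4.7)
The plan is to apply Maynard's multidimensional GPY-type sieve. Fix a parameter $k$, to be chosen later in terms of $m$, let $\mathcal{H}=\{h_1,\ldots,h_k\}$ be an admissible $k$-tuple, and assign to each $n$ in a window $[N,2N)$ the nonnegative weight
$$w_n=\Bigg(\sum_{\substack{d_i\mid n+h_i\\1\le i\le k}}\lambda_{d_1,\ldots,d_k}\Bigg)^2,$$
where the $\lambda_{d_1,\ldots,d_k}$ are Maynard-type sieve coefficients supported on $k$-tuples with $\prod_i d_i\le R=N^{\vartheta/2-\epsilon}$ and built from a smooth symmetric test function $F$ on the simplex $\sum_i x_i\le 1$. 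After the standard $W$-trick (restricting $n$ to a residue class coprime to $W=\prod_{p\le D_0}p$), I would consider
$$S=\sum_{n=N}^{2N-1}\Bigg(\sum_{i=1}^k \mathbf 1_{\mathbb{P}}(n+h_i)-m\Bigg)w_n \;=\; S_2 - m\,S_1,$$
so that $S>0$ forces some shift $n+\mathcal{H}$ to contain at least $m+1$ primes, and hence $p_{n+m}-p_n\le\mathrm{diam}(\mathcal{H})$.

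Next I would evaluate $S_1$ and $S_2$ asymptotically. Expanding the squares, applying the Chinese Remainder Theorem in each multiplicative variable, and -- for $S_2$ -- invoking the Bombieri-Vinogradov theorem (Definition \ref{defn41} with $\vartheta=1/2$), standard manipulations yield
$$S_1\sim\frac{N(\log R)^k}{W^k}\,I_k(F),\qquad S_2\sim\frac{N(\log R)^{k+1}}{W^k\log N}\sum_{i=1}^k J_k^{(i)}(F),$$
where
$$I_k(F)=\int_{[0,\infty)^k}F^2,\qquad J_k^{(i)}(F)=\int_{[0,\infty)^{k-1}}\Bigg(\int_0^\infty F\,dx_i\Bigg)^2\prod_{j\ne i}dx_j.$$
Setting $M_k:=\sup_F k J_k(F)/I_k(F)$ over admissible symmetric $F$, the positivity $S>0$ reduces, with $R=N^{1/4-\epsilon}$, to the clean analytic inequality $M_k>4m$.

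The analytic heart of the proof is the lower bound $M_k\ge \log k-2\log\log k-2$ for all sufficiently large $k$, which I would establish by exhibiting an explicit, genuinely multivariate (i.e.\ not of product form) $F$ on the simplex and carrying out the resulting Rayleigh-quotient computation. Granting this, the condition $M_k>4m$ is met as soon as $k\asymp m^2 e^{4m}$, the factor $m^2$ being needed to absorb the loss $2\log\log k\sim 2\log m$. A greedy construction using the primes in $(k,2k\log k]$ then produces an admissible $k$-tuple inside an interval of length $\le k(\log k+O(\log\log k))$, so
$$p_{n+m}-p_n\ \le\ \mathrm{diam}(\mathcal{H})\ \ll\ k\log k\ \ll\ m^3 e^{4m}$$
for infinitely many $n$, as claimed.

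The main obstacle is precisely the lower bound $M_k\gg\log k$. The product ansatz $F=\prod_i g(x_i)$ used in the original GPY work yields only $M_k=O(1)$, which is not enough even for $m=1$; the crucial new ingredient is to pass to genuinely $k$-variable test functions and to restrict to a class in which the Rayleigh quotient can still be computed. Maynard's trick is to work in radial coordinates anchored by $s=x_1+\cdots+x_k$, choose a one-parameter family of profiles $g(s)$, and optimise; the logarithmic gain so obtained is what distinguishes this approach from GPY and is what allows arbitrarily many primes in bounded-length windows. The remaining steps -- the arithmetic asymptotics for $S_1,S_2$, the $W$-trick, and the diameter bound for admissible tuples -- are by now routine.
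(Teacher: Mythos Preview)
Your proposal correctly identifies Maynard's strategy and matches the brief sketch the paper actually gives (the paper does not prove this theorem; it quotes it from \cite{maynard0} and only outlines the weighted sum $S(N,\rho)$ and the multidimensional sieve weights). The arithmetic reduction to the Rayleigh quotient $M_k$, the target inequality $M_k\ge\log k-2\log\log k-2$, the choice $k\asymp m^2e^{4m}$, and the admissible-tuple diameter bound $\ll k\log k$ are all as in Maynard's paper.

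One genuine inaccuracy: your description of the key analytic step has the GPY/Maynard dichotomy essentially backwards. The original GPY weights correspond to test functions depending only on the \emph{sum} $s=x_1+\cdots+x_k$ (since $\lambda_d=\mu(d)F(\log(R/d))$ depends only on $d=d_1\cdots d_k$), not to a product $\prod_i g(x_i)$; and it is precisely such choices $F=G(s)$ that are limited to $M_k=O(1)$ (in fact $M_k\to 4$). Maynard's construction achieving $M_k\gg\log k$ is, on the contrary, a rescaled product $F(t_1,\ldots,t_k)=\prod_{i=1}^k g(kt_i)$ restricted to the simplex, with $g$ a function of one variable then optimised (roughly $g(t)=1/(1+At)$ on $[0,T]$). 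So if you actually implemented the idea you describe --- ``work in radial coordinates anchored by $s=\sum x_i$, choose a profile $g(s)$'' --- you would reproduce the GPY limitation and not obtain the logarithmic gain, and the argument would fail at exactly the point you flag as the main obstacle. Apart from this mischaracterisation of the test function, the outline is sound.
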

Tao (in private communication to Maynard) has independently proven Theorem \ref{thm11} (with a slightly weaker bound at much the same time).\\
Theorem \ref{thm11} implies that for every $H>0$ there exist intervals whose lengths depend only on $H$ with arbitrarily large initial point, that contain at least $H$ primes.\\
Now we follow \cite{maynard0} for a short description of the GPY-sieve and its improvement.\\
Let $\mathcal{U}:=\{h_1, \ldots, h_k\}$ be an admissible $k$-tuple. One considers the sum
\[
S(N, \rho)=\sum_{N\leq n\leq 2N}\left(\sum_{i=1}^k \chi_{\mathbb{P}}(n+h_i)-\rho\right)w_n\:. 
\]
Here $\chi_{\mathbb{P}}$ is the characteristic function of the primes, $\rho>0$ and $w_n$ are non-negative weights. If one can show that $S(N,\rho)>0$, then at least one term in the sum over $n$ must have a positive contribution. By the non-negativity of $w_n$, this means that there must be some integer $n\in [N, 2N]$ such that at least $\lfloor \rho+1\rfloor$ of the $n+h_i$, are prime.\\
The weights $w_n$ are typically  chosen  to mimic Selberg sieve weights. Estimating (4.6) can be interpreted as a $``$$k$-dimensional$"$ sieve problem. The standard Selberg $k$-dimensional weights are
$$w_n:=\left(\sum_{\substack{d\mid \prod_{i=1}^k (n+h_i)\\ d<R}}\lambda_d\right),\ \ \lambda_d:=\mu(d)\left(\log \frac{R}{d}\right)^k\:.$$
The key new idea in the paper \cite{GPYI} of Goldston, Pintz, Yildirim was to consider more general sieve weights of the form
$$\lambda_d:=\mu(d)F\left(\log \frac{R}{d}\right)$$
for a suitable smooth function $F$.\\
Goldston, Pintz and Yildirim chose $F(x):=x^{k+l}$ for suitable $l\in\mathbb{N}$, which has been shown to be essentially optimal, when $k$ is large.\\
The new ingredient in Maynard's method is to consider a more general form of the sieve weights
$$w_n=\left(\sum_{d_i\mid n+h_i,\forall i} \lambda_{d_1\ldots d_k}\right)^2\:.$$
The results of \cite{maynard0} have been modified and extended in the paper \cite{maynard} $``$Dense clusters of primes in subsets$"$ of Maynard. Some of his results and their applications will be described later in this paper.

\section{Large gaps with improved order of magnitude and its $K$-version, Part I}

Here we state the theorems from \cite{ford2} and \cite{MR} and sketch their proofs.\\
We number definitions and theorems in the following manner:\\
Definition (resp. Theorem) $X$ of paper $[Y]$ (in the list of references) are referred to as ($[Y]$, Definition (resp. Theorem) $X$).\\
We start with a list of the theorems from \cite{ford2} and the definitions relevant for them:

\begin{theorem}\label{thm41} (\cite{ford2}, Theorem 1)(Large prime gaps)\\
For any sufficiently large $X$, one has
\[
G(X)\gg \frac{\log X \log_2 X \log_4 X}{\log_3 X}  \tag{4.1}
\]
for sufficiently large $X$. The implied constant is effective. 
\end{theorem}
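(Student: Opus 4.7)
The plan is to execute the Erdős--Rankin reduction described in Section~2, but with a weak sieving step whose hitting number tends to infinity as $X\to\infty$. Set $x=\log X$ (so $P(x)\leq e^{x(1+o(1))}\leq X$) and aim to construct a covering of the interval $(0,y]$ with
$$y \;\asymp\; \frac{x\log x\log_3 x}{\log_2 x}\;=\;\frac{\log X\,\log_2 X\,\log_4 X}{\log_3 X}.$$
Given such a covering by congruence classes $h_p\bmod p$ with $p<x$, the Chinese Remainder Theorem applied to the shifted system $m\equiv -h_p\bmod p$ produces an $m_0\leq P(x)\leq X$ for which every element of $(m_0,m_0+y]$ is composite, and hence $G(X)\geq y$.

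First I would run the now-standard preliminary sieving steps. Partition the primes below $x$ into four blocks $\mathcal{S}_1,\dots,\mathcal{S}_4$: a block of very small primes, a block $\mathcal{S}_{\mathrm{smooth}}$ used with $h_p=0$ to kill integers in $(0,y]$ having a large prime factor (via the Rankin-trick bound on smooth numbers), a block $\mathcal{S}_w$ reserved for the weak sieving step, and a final cleanup block. After the first two steps the surviving residual set $\mathcal R$ is, up to a negligible smooth contribution, a union of primes $q\in(x,y]$. This reduces the problem to efficiently covering a set of primes in $(x,y]$ by residue classes modulo primes $p\in\mathcal{S}_w$.

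The novelty is the weak sieving step. For each $p\in\mathcal{S}_w$ I would pick a random residue class $a_p\bmod p$ and form a $k$-uniform hypergraph $H$ whose vertex set is $\mathcal R$ and whose edges are $k$-tuples $(q_1,\dots,q_k)$ of residual primes lying in a common class $a_p\bmod p$ for some $p\in\mathcal{S}_w$. The central task is to choose these random classes using Maynard-type multidimensional sieve weights $w_n=\bigl(\sum_{d_i\mid n+h_i}\lambda_{d_1\cdots d_k}\bigr)^2$ adapted to an admissible tuple $\mathcal U=\{h_1,\dots,h_k\}$, so that each prime $p\in\mathcal{S}_w$ is expected to hit $\asymp k$ primes of $\mathcal R$, with sharp second-moment (codegree) control. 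Once the hypergraph is shown to be approximately regular with controlled codegrees, I would apply the Pippenger--Spencer / R\"odl nibble covering theorem to extract a near-perfect matching, which chooses the $h_p\bmod p$ so that each $p\in\mathcal{S}_w$ removes roughly $k$ primes from $\mathcal R$. Choosing $k\to\infty$ with $X$ transforms the $\log_4 X/(\log_3 X)^2$ of Rankin into $\log_4 X/\log_3 X$. The cleanup block then mops up the small residual set with hitting number $1$ per prime.

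The main obstacle is the input to the covering theorem: one must show that, on average over random choices of $a_p$, the hyperedge count has the right mean, and that codegrees (pairs of vertices lying in many common edges) are small. This is exactly where the multidimensional Maynard/GPY sieve is essential, together with the Bombieri--Vinogradov theorem and, in the strongest form, the Green--Tao--Ziegler results on linear equations in primes to handle correlations between the many linear forms $n+h_i$ restricted to arithmetic progressions with moduli from $\mathcal{S}_w$. The rest is bookkeeping to match the parameters $k$, $|\mathcal S_w|$, $|\mathcal R|$ and $y$ so that the product of the hitting number and $|\mathcal S_w|$ exceeds $|\mathcal R|$ by the required factor.
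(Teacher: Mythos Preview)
Your sketch is essentially correct and follows the same architecture as the paper: Erd\H{o}s--Rankin reduction, preliminary sieving to reduce the residual set to primes in $(x,y]$, a weak sieving step using Maynard-type multidimensional weights to produce near-regular random hyperedges, a Pippenger--Spencer/R\"odl-nibble covering theorem to select residue classes with hitting number $r\to\infty$, and a final cleanup with one prime per survivor. One correction: the Green--Tao--Ziegler machinery is \emph{not} needed here---that was the input in the earlier four-author paper \cite{ford}, whereas the five-author paper \cite{ford2} (which this survey follows) obtains the required mean and codegree estimates purely from Maynard's sieve together with Bombieri--Vinogradov, and indeed this is what makes the implied constant effective.
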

\begin{definition}(\cite{ford2}, Definition (3.1))\\
$$y:= cx\:\frac{\log x\log_3 x}{\log_2 x}\:,$$
where $c$ is a certain (small) fixed positive constant.
\end{definition}
\begin{definition}(\cite{ford2}, Definition (3.2))\\
$$z:=x^{\log_3 x/(4\log_2 x)}.$$
\end{definition}
\begin{definition}(\cite{ford2}, Definitions (3.3)-(3.5))\\
\begin{align*}
S&:=\{s\ \text{prime}\::\: \log^{20} x< s\leq z\}\\
P&:=\{p\ \text{prime}\::\: x/2<p\leq x\}\\
Q&:=\{q\ \text{prime}\::\: x<q\leq y\}.
\end{align*}
\end{definition}
For congruence classes
\[
\vec{a}:=(a_s \bmod s)_{s\in S} 
\]
and
\[
\vec{b}:=(b_p \bmod p)_{p\in P} 
\]
define the sifted sets
\[
S(\vec{a}):=\{n\in\mathbb{Z}\::\: n\not\equiv a_s (\bmod s)\ \text{for all $s\in S$}\} 
\]
and likewise
\[
S(\vec{b}):=\{n\in\mathbb{Z}\::\: n\not\equiv b_p (\bmod p)\ \text{for all $p\in P$}\} 
\]
\begin{theorem}\label{thm42}(\cite{ford2}, Theorem 2  - Sieving primes)\\
Let $x$ be sufficiently large and suppose that $y$ obeys (3.1). Then there are vectors 
$$\vec{a}=(a_s \bmod s)_{s\in S}\ \ \text{and}\ \ \vec{b}=(b_p \bmod p)_{p\in P},$$
such that 
\[
\#(Q\cap S(\vec{a})\cap S(\vec{b}))\ll \frac{x}{\log x}\:. 
\]
\end{theorem}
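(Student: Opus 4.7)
The plan is to carry out the two sievings in turn: first choose $\vec a$ so as to reduce $Q$ to a smaller residual set with good distributional properties, and then choose $\vec b$ via a hypergraph covering argument to remove all but $O(x/\log x)$ of what remains.

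\textbf{Stage 1: choice of $\vec a$.} A uniform random choice of each $a_s \bmod s$ would, by Mertens' theorem, leave a residual set $|Q\cap S(\vec a)|\sim|Q|\prod_{s\in S}(1-1/s)$, which with the parameters above is $\asymp x\log_2 x/\log x$ -- a factor $\log_2 x$ too large, and moreover gives no control on how $Q\cap S(\vec a)$ sits modulo primes $p\in P$. Following \cite{ford2}, I would instead sample each $a_s$ from a distribution weighted by the Maynard-GPY multidimensional sieve weights attached to a fixed admissible $k$-tuple $(h_1,\ldots,h_k)$, the effect of the weights being that sieving preferentially retains primes $q\in Q$ whose shifts $q-h_i$ are themselves prime for many $i$. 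The output is a random congruence vector $\vec a$ such that, with positive probability, the residual set $Q':=Q\cap S(\vec a)$ is of controlled size and, crucially, is approximately equidistributed in residue classes modulo each large prime $p\in P$.

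\textbf{Stage 2: choice of $\vec b$.} I would then form a hypergraph $\mathcal{H}$ whose vertex set is $Q'$, and whose hyperedges, one for each pair $(p,b_p)$ with $p\in P$ and $b_p\in\mathbb{Z}/p$, are the fibres $\{q\in Q':q\equiv b_p\pmod p\}$. A hypergraph covering theorem generalising Pippenger-Spencer (proved via the R\"odl nibble and stated as a separate theorem in \cite{ford2}) then applies, provided $\mathcal{H}$ is approximately regular with edges of moderate size, and produces a selection of one hyperedge per $p\in P$ covering all but $\ll x/\log x$ of the vertices. That selection is the sought vector $\vec b$.

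\textbf{Main obstacle.} The delicate point is verifying the approximate-regularity hypothesis of the covering theorem, which reduces to a joint equidistribution statement for primes: for most vertices $q\in Q'$ the number of pairs $(p,b_p)$ capable of covering $q$ should be close to its expectation, and symmetrically the edges should not be too much larger than typical. This is a counting problem for primes lying simultaneously in several arithmetic progressions, well beyond the reach of Bombieri-Vinogradov alone, and it is precisely here that \cite{ford2} brings in the Green-Tao and Green-Tao-Ziegler results on linear equations in primes together with Maynard's multidimensional sieve. Once those inputs are in place, the remaining ingredients -- the Mertens-type products controlling $|Q'|$, the standard sieve bound for the smooth-part contribution, and the verification of the covering theorem's hypotheses -- are essentially routine.
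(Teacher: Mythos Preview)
Your two-stage outline is right in spirit but misplaces the key idea. In the paper's argument the $a_s$ are \emph{not} sampled from a Maynard-weighted distribution: each $a_s\bmod s$ is drawn \emph{uniformly} from $\mathbb{Z}/s\mathbb{Z}$, and the resulting residual set $Q\cap S(\vec a)$ really is of size $\asymp (x/\log x)\log_2 x$ --- the factor $\log_2 x$ is accepted, not avoided. The Maynard--GPY weights enter instead at Stage~2, in the \emph{definition of the random hyperedges}: for each $p\in P$ one draws a random integer $\mathbf n_p$ with density proportional to the sieve weight $w(p,n)$ supplied by Theorem~5 of \cite{ford2}, and the edge attached to $p$ is
\[
\mathbf e_p(\vec a)=\{\mathbf n_p+h_ip:1\le i\le r\}\cap Q\cap S(\vec a),
\]
\emph{not} the full fibre $\{q\in Q':q\equiv b_p\pmod p\}$. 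This distinction is essential: the edges have size $\le r$ (needed for the covering theorem, Corollary~4), and thanks to the Maynard weights the shifts $\mathbf n_p+h_ip$ land in $Q\cap S(\vec a)$ with the right frequency, which is precisely what verifies the uniform-covering hypothesis. The covering theorem then outputs $\mathbf e'_p$ in the essential support of $\mathbf e_p$, and one takes $b_p=n'_p$.

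Your ``main obstacle'' paragraph also misidentifies the arithmetic input. The Green--Tao and Green--Tao--Ziegler machinery is used in \cite{ford}, not in \cite{ford2}; the latter replaces it entirely by Maynard's multidimensional sieve (Theorem~5 / Theorem~\ref{thm55}), and the equidistribution it requires is no deeper than Bombieri--Vinogradov (Hypothesis~1, verified in Lemma~\ref{lem54}). So the genuinely hard step is not a ``linear equations in primes'' estimate but the construction of the weight $w(p,n)$ with the properties listed in Theorem~5; once that is in hand, the verification of the covering hypotheses is indeed routine, as you say.
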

\begin{theorem}\label{thm43}(\cite{ford2}, Theorem 3)(Probabilistic covering)\\
There exists a constant $C_0\geq 1$ such that the following holds. Let $D, r, A$, \mbox{$0<\kappa\leq 1/2$,} and let $m\geq 0$ be an integer. Let $\delta>0$ satisfy the smallness bound
\[
\delta\leq \left(\frac{\kappa^A}{C_0\exp(AD)}  \right)^{10^{m+2}} 
\]
\end{theorem}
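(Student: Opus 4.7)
\textbf{Proof proposal for Theorem \ref{thm43}.} Since the conclusion of the theorem is a probabilistic covering/matching statement for a hypergraph obeying regularity hypotheses with parameters $D, r, A, \kappa$, and since the paper explicitly advertises the use of the R\"odl nibble / Pippenger--Spencer machinery, the natural plan is the semi-random method applied iteratively, with $m$ playing the role of the number of ``nibble'' rounds. I would first unwind the hypothesis by interpreting $D$ as a maximum degree parameter, $r$ as the uniformity/size of the hyperedges, $\kappa$ as a lower bound on edge weights or a pseudorandomness density, and $\delta$ as the admissible pointwise irregularity/codegree parameter. The conclusion to aim for is that a suitable random selection of edges produces a matching (or, in the weighted probabilistic formulation, a random subset that almost perfectly covers every prescribed vertex).

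The central step is a single \emph{nibble lemma}: sample each candidate edge independently with probability $p = \eta / D$ for some small $\eta$ depending on $\kappa, A$, then delete any vertex that is covered and any edge that uses a covered vertex. I would compute the expected degrees in the residual hypergraph (they shrink by roughly $e^{-\eta}$) and establish tight concentration using Talagrand's inequality or a bounded-difference (Azuma) argument on a suitable exposure martingale, absorbing the irregularity tolerance $\delta$ into the deviation budget. The output is a new instance of the hypothesis with updated parameters $D' \approx e^{-\eta} D$, a slightly enlarged $\delta' \leq C_1 \delta^{1/10}$ (say), and a controlled enlargement of $A$ and $\kappa^{-1}$.

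Iterating this nibble lemma $m+O(1)$ times drives the fraction of uncovered vertices below any prescribed threshold; the tenfold loss in $\delta$ per round is precisely what forces the tower-exponent $10^{m+2}$ in the stated smallness bound for $\delta$. After the last nibble, a short deterministic (or greedy) clean-up argument handles the surviving vertices, using the fact that $\kappa > 0$ guarantees enough edges remain. A final union bound over all failure events (concentration failures at each round, plus the clean-up step) absorbs into the constant $C_0$.

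The main obstacle, as in all R\"odl-type arguments, will be propagating the regularity hypothesis through the rounds: the conditions in Theorem \ref{thm43} are of a ``pseudorandom codegree'' flavour, and one must check that the conditional versions of these conditions survive the random deletion. This is where the bound $\delta \leq (\kappa^A/(C_0 e^{AD}))^{10^{m+2}}$ is used critically--- it provides enough slack to absorb the amplifications at each step while still ending up with a nontrivial irregularity parameter at round~$m$. I expect the bookkeeping of the parameter evolution (rather than any individual concentration inequality) to be the real technical load, and would set up the induction on $m$ with explicit recurrences for $D_j, \delta_j, \kappa_j$ at round $j$, verified against the stated hypothesis before invoking the single-round nibble lemma as a black box.
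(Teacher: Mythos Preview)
The paper does not prove this theorem at all. Immediately after completing the statement (the hypotheses continue past the displayed smallness bound, and the conclusion is parts (a) and (b) about the random variables $\bold{e}_i'$), the paper writes: ``The proof, which we will not give in this paper, is given in Section~5 of \cite{ford2}.'' So there is nothing to compare your proposal to within this manuscript; the author simply cites the original source.

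Your sketch is broadly consistent with what the paper says about the method in \cite{ford2} (a R\"odl-nibble/Pippenger--Spencer style iteration), and your reading of $m$ as the number of rounds and of the exponent $10^{m+2}$ as arising from a roughly tenfold loss in the error parameter per round matches the shape of the error term $\delta^{1/10^{J+1}}$ in conclusion~(b). One small correction: the theorem does \emph{not} assert a near-perfect cover followed by a greedy clean-up. Its conclusion is a quantitative formula for the survival probability $\mathbb{P}\bigl(e\subseteq V\setminus\bigcup_{j\leq J}\bigcup_{i\in I_j}\bold{e}_i'\bigr)\approx P_J(e)$ for small test sets $e$; the ``covering'' application (Corollary~\ref{cor44}) is derived afterwards by choosing $m$ so that $P_m$ is small. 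So in a faithful write-up you would stop after the $m$ rounds and record the survival probabilities, not append a deterministic finishing step.
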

Let $I_1, \ldots, I_m$ be disjoint finite non-empty sets and let $V$ be a finite set. For each $1\leq j\leq m$ and $i\in I_j$, let $\bold{e}_i$ be a random finite subset of $V$. Assume the following:\\
$\bullet$ (Edges not too large) With probability 1, we have for all $j=1,\ldots,m$ and $i\in I_j$
$$\# \bold{e}_i\leq r_i$$
$\bullet$ (Each sieving step is sparse) For all $j=1,\ldots,m$, $i\in I_j$ and $v\in V$,
\[
\mathbb{P}(v\in \bold{e}_i)\leq \frac{\delta}{(\# I_j)^{1/2}} 
\]
$\bullet$ (Very small codegrees) For every $j=1,\ldots, m$ and distinct $v_1, v_2\in V$,
\[
\sum_{i\in I_j} \mathbb{P}(v_1, v_2\in \bold{e}_i)\leq \delta 
\]
$\bullet$ (Degree bound) If for every $v\in V$ and $j=1,\ldots, m$ we introduce the normalized degrees
\[
d_{I_j}(v):=\sum_{i\in I_j}\mathbb{P}(v\in \bold{e}_i)
\]
and then recursively define the quantities $P_j(v)$ for $j=0,\ldots, m$ and $v\in V$ by setting
\[
P_0(v):=1 
\]
and
\[
P_{j+1}(v):= P_j(v)\exp(-d_{I_{j+1}}(v)/P_j(v)) 
\]
for $j=0, \ldots, m-1$ and $v\in V$, then we have 
\[
d_{I_j}(v)\leq DP_{j-1}(v),\ \ (1\leq j\leq m, v\in V)
\]
and
\[
P_j(v)\geq \kappa \ \ (0\leq j\leq m, v \in V)\:. 
\]
Then we can find random variables $e_i'$ for each $i\in \bigcup_{j=1}^m I_j$ with the following properties:\\
(a) For each $i\in \bigcup_{j=1}^m I_j$, the essential support of $\bold{e}_i'$ is contained in the essential support of $\bold{e}_i$, union the empty set singleton $\{\emptyset\}$. In other words, almost surely $\bold{e}_i'$ is either empty, or is a set that $\bold{e}_i$ also attains with positive probability.\\
(b) For any $0\leq J\leq m$ and any finite subset $e$ of $V$ with $\# e\leq A-2vJ$, one has
\[ 
\mathbb{P}\left( e\subseteq V\setminus \bigcup_{j=1}^J\bigcup_{i\in I_j} \bold{e}_i'\right)=\left(1+O_{\leq}\left(\delta^{1/10^{J+1}}\right) \right)P_j(e) 
\]
where
\[
P_j(e):=\prod_{v\in e} P_j(v)\:. 
\]
\begin{cor}\label{cor44} (\cite{ford2}, Corollary 4)\\
Let $x\rightarrow \infty$. Let $P', Q'$ be sets with $\# P'\leq x$ and $ (\log_2 x)^3< \# Q' \leq x^{100}$. For each $p\in P'$, let $\bold{e}_p$ be a random subset of $Q'$ satisfying the size bound:
\[
\# \bold{e}_p \leq r=O\left(\frac{\log x\log_3 x}{\log_2^2 x}\right)\ \ (p\in P') 
\]
Assume the following:\\
$\bullet$ (Sparcity) For all $p\in P'$ and $q\in Q'$
\[
\mathbb{P}(q\in \bold{e}_p)\leq x^{-1/2-1/10}\:.  
\]
$\bullet$ (Small codegrees) For any distinct $q_1, q_2\in Q'$
\[
\sum_{p\in P'} \mathbb{P}(q_1, q_2\in \bold{e}_p)\leq x^{-1/20}\:. 
\] 
$\bullet$ (Elements covered more than once in expectation) For all but at most $\frac{1}{(\log_2 x)^2}\# Q'$\  elements $q\in Q'$ we have:
\[
\sum_{q\in P'}\mathbb{P}(q\in \bold{e}_p)=C+O_{\leq}\left(\frac{1}{(\log_2 x)^2}\right)  
\]
for some quantity $C$, independent of $q$, satisfying
\[
\frac{5}{4}\log 5\leq C\leq 1\:.  
\]
Then for any positive integer $m$ with 
\[
m\leq \frac{\log_3 x}{\log 5}
\]
We can find random sets $\bold{e}_p'\subseteq Q'$ for each $p\in P'$ such that $\bold{e}_p'$ is either empty or a subset of $Q'$ which $\bold{e}_p$ attains with positive probability, and that 
$$\#\{q\in Q'\::\: q\not\in \bold{e}_p'\ \text{for all}\ p\in P'\}\sim 5^{-m}\# Q'$$
with probability $1-o(1)$. More generally, for any $Q''\subset Q'$ with cardinality at least \mbox{$(\# Q')/\sqrt{\log_2 x}$} one has
$$\#\{q\in Q''\::\: q\not\in \bold{e}_p'\ \text{for all}\ p\in P'\}\sim 5^{-m}\# Q''$$
with probability $1-o(1)$. The decay rates in the $o(1)$ and $\sim$ notation are uniform in $P', Q', Q''$.\\
\end{cor}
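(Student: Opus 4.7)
The plan is to reduce Corollary \ref{cor44} to Theorem \ref{thm43} by splitting the single family $(\bold{e}_p)_{p\in P'}$ into $m$ sieving rounds whose per-round degree contributions form a geometric sequence of ratio $1/5$. Concretely, let $\pi_j := 5^{-(j-1)}(\log 5)/C$ for $j=1,\dots,m$; by the geometric identity $\sum_{j\geq 1}5^{-(j-1)}\log 5 = \tfrac{5}{4}\log 5$ and the hypothesis $C \geq \tfrac{5}{4}\log 5$, one has $\sum_{j=1}^m \pi_j \leq 1$. For each $p \in P'$ independently assign a round label $j(p) \in \{0,1,\dots,m\}$ with $\mathbb{P}(j(p)=j)=\pi_j$ for $j \geq 1$ (and $j(p)=0$ with the leftover probability), and set $I_j := \{p \in P' : j(p)=j\}$ for $1 \leq j \leq m$. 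This yields the disjoint decomposition of the index set required by Theorem \ref{thm43}.

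Next I would verify the hypotheses of Theorem \ref{thm43} for this decomposition. For a ``good'' $q \in Q'$ (one for which the covering hypothesis of Corollary \ref{cor44} is satisfied), the normalized degree in round $j$ is
$$d_{I_j}(q) \;=\; \pi_j\sum_{p \in P'}\mathbb{P}(q \in \bold{e}_p) \;=\; \pi_j\bigl(C+O((\log_2 x)^{-2})\bigr) \;=\; 5^{-(j-1)}\log 5\,(1+o(1)).$$
The recursion $P_{j}(q) = P_{j-1}(q)\exp(-d_{I_j}(q)/P_{j-1}(q))$ then yields, by induction on $j$, the identity $P_{j}(q) = 5^{-j}(1+o(1))$; in particular $P_j(q) \geq \kappa := 5^{-m-1}$ and $d_{I_j}(q) \leq D\,P_{j-1}(q)$ for an absolute constant $D$. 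Sparsity and codegree bounds in Theorem \ref{thm43} follow from the corresponding bounds $x^{-1/2-1/10}$ and $x^{-1/20}$ of Corollary \ref{cor44} combined with $\#I_j \leq x$ and $\pi_j \leq 1$; the at most $\#Q'/(\log_2 x)^2$ ``bad'' vertices are set aside. The edge-size bound passes verbatim, and the smallness condition on $\delta$ in Theorem \ref{thm43} reduces, since $m \leq \log_3 x/\log 5$ forces $10^{m+2}$ to be only a small power of $\log_2 x$, to an inequality satisfied once $A$, $D$, and $\kappa$ are chosen suitably.

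Finally, apply Theorem \ref{thm43}(b) to produce output edges $\bold{e}_p'$ (each either empty or a value attained by $\bold{e}_p$ by construction, with $\bold{e}_p' := \emptyset$ for unassigned $p$) and translate the one-vertex marginal into a statement about the count
$$N \;:=\; \#\{q \in Q'' : q \notin \bold{e}_p' \text{ for every } p \in P'\}.$$
The $\#e = 1$ case of the conclusion of Theorem \ref{thm43} yields $\mathbb{E}[N] = (1+o(1))\sum_{q\in Q''}P_m(q) \sim 5^{-m}\#Q''$, the hypothesis $\#Q'' \geq \#Q'/\sqrt{\log_2 x}$ ensuring that the bad vertices contribute negligibly. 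The $\#e = 2$ case gives $\mathbb{P}(q_1,q_2\text{ both survive}) = (1+o(1))P_m(q_1)P_m(q_2)$, hence $\mathrm{Var}(N) = o((\mathbb{E}[N])^2)$, and Chebyshev's inequality delivers $N \sim 5^{-m}\#Q''$ with probability $1-o(1)$. The main technical obstacle I anticipate is arranging the parameter $A$ in Theorem \ref{thm43} large enough to accommodate $\#e=2$ through all rounds $J\leq m$ (one needs $A \geq 2 + 2rm$) while keeping the double-exponential smallness bound on $\delta$ compatible with the polynomial sparsity/codegree hypotheses of the corollary; this remains feasible because $rm = o(\log x)$ and $10^{m+2}$ is only polylogarithmic in $x$ under the cap on $m$.
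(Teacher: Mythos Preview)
The paper itself does not give a proof of this corollary; it simply writes ``For the proof we refer to \cite{ford2}.'' Your outline is essentially the argument carried out in \cite{ford2}: split $P'$ into $m$ rounds with assignment probabilities $\pi_j = 5^{-(j-1)}(\log 5)/C$, verify that the resulting per-round degrees feed the recursion $P_j(q)\sim 5^{-j}$, apply Theorem~\ref{thm43}, and finish with a first/second moment computation via conclusion~(b) with $\#e=1,2$ and Chebyshev. The parameter accounting you sketch ($A\approx 2rm$, $\kappa=5^{-m-1}$, $\delta\approx x^{-1/10}$, and the observation that $10^{m+2}\cdot A\,|\log\kappa|=o(\log x)$) is the right one.

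There is one point you should tighten. Your displayed identity $d_{I_j}(q)=\pi_j\sum_{p\in P'}\mathbb{P}(q\in\bold{e}_p)$ is only the \emph{expectation} of $d_{I_j}(q)$ over the random assignment $p\mapsto j(p)$; it is not literally true for a given partition, and Theorem~\ref{thm43} requires the $I_j$ to be fixed before its hypotheses are checked. The fix is routine: since each summand $\mathbb{P}(q\in\bold{e}_p)\leq x^{-1/2-1/10}$, a Chernoff bound gives $d_{I_j}(q)=\pi_j C(1+o(1))$ with failure probability $\exp(-x^{c})$ for each good $q$ and each $j$, and a union bound over at most $x^{100}\cdot m$ events yields a partition on which all these estimates hold simultaneously. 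Condition on such a partition, take $V$ to be the set of good vertices of $Q'$ (the $\leq \#Q'/(\log_2 x)^2$ bad vertices are handled at the end, as you say, since $5^{-m}\geq 1/\log_2 x$ makes their contribution negligible), and then apply Theorem~\ref{thm43} exactly as you describe.
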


\begin{theorem}\label{thm44} (\cite{ford2}, Theorem 4)(Random construction)
Let $x$ be a sufficiently large real number and define $y$ by (3.1). Then there is a quantity $\mathcal{C}$ with
\[
\mathcal{C}\asymp \frac{1}{c} 
\]
with the implied constants independent of $c$, a tuple of positive integers $(h_1,\ldots, h_r)$ with $r\leq \sqrt{\log x}$, and some way to choose random vectors $\vec{\bold{a}}=(\bold{a}_s\ \bmod\: s)_{s\in S}$ and $\vec{\bold{n}}=(\bold{n}_p)_{p\in P}$ of congruence classes $\bold{a}_s \bmod s$ and integers $\bold{n}_p$ respectively, obeying the following:\\
$\bullet$ For every $\vec{a}$ in the essential range of $\vec{\bold{a}}$, one has
\[
\mathbb{P}(q\in \bold{e}_p(\vec{a})\ |\ \vec{\bold{a}}=\vec{a})\leq x^{1/2-1/20}\ \ (p\in P)\:,  
\]
where 
$$\bold{e}_p(\vec{a}):=\{\bold{n}_p+h_ip\::\: 1\leq i\leq r\}\cap Q\cap S(\vec{a})\:.$$
$\bullet$ With probability $1-o(1)$ we have that
\[
\#(Q\cap S(\vec{\bold{a}}))\sim 80\:c\:\frac{x}{\log x}\log_2 x.  
\]
$\bullet$  Call an element $\vec{a}$ in the essential range of $\vec{\bold{a}}$ good if, for all but at most $\frac{x}{\log x\log_2 x}$ elements $q\in Q\cap S(\vec{a})$, one has
\[
\sum_{p\in P}\mathbb{P}(q\in \bold{e}_p(\vec{a})\ |\ \vec{\bold{a}}=\vec{a})=\mathcal{C}+O_{\leq}\left(\frac{1}{(\log_2 x)^2} \right) 
\]
Then $\vec{\bold{a}}$ is good with probability $1-o(1)$.
\end{theorem}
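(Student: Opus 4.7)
The plan is to build $\vec{\mathbf{a}}$ and $\vec{\mathbf{n}}$ as two independent random objects. Draw each $\mathbf{a}_s$ uniformly from $\mathbb{Z}/s\mathbb{Z}$, independently over $s\in S$. Separately, fix an admissible $r$-tuple $(h_1,\dots,h_r)$ with $r\le\sqrt{\log x}$ coming from the Maynard--GPY construction, and for each $p\in P$, conditionally on $\vec{\mathbf a}$, draw $\mathbf{n}_p$ from a discrete distribution with density proportional to a Selberg--Maynard weight of the shape
\[
w_{p,\vec a}(n)\;\propto\;\mathbf{1}_{\{n+h_i p\in S(\vec a)\ \forall i\}}\,\Bigl(\sum_{d_i\mid n+h_ip,\ \forall i}\lambda_{d_1,\dots,d_r}\Bigr)^{2},
\]
with $\lambda_{d_1,\dots,d_r}$ the multidimensional Selberg weights of Maynard \cite{maynard}, truncated well below the Bombieri--Vinogradov barrier.

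For the second bullet, by independence of the $\mathbf{a}_s$ and Mertens' theorem,
\[
\mathbb{E}\,\#\bigl(Q\cap S(\vec{\mathbf a})\bigr)=\#Q\prod_{s\in S}\Bigl(1-\tfrac1s\Bigr)\;\sim\;\#Q\cdot\frac{\log(\log^{20}x)}{\log z}.
\]
Substituting $\log z=\tfrac14\log x\,\log_3 x/\log_2 x$ and $\log(\log^{20}x)=20\log_2 x$, and using $\#Q\sim y/\log y\sim cx\log_3 x/\log_2 x$, this simplifies to $(1+o(1))\cdot 80c\,x\log_2 x/\log x$. A straightforward second-moment computation, exploiting that each prime $s$ contributes an independent sieving factor, yields a variance of order $o(\mathbb{E}^2)$; Chebyshev then gives the required concentration with probability $1-o(1)$.

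The first bullet is the pointwise probability bound. For fixed $p\in P$, the event $q\in\mathbf{e}_p(\vec a)$ forces $\mathbf{n}_p\in\{q-h_1p,\dots,q-h_rp\}$, a set of size $r\le\sqrt{\log x}$; the normalization of $w_{p,\vec a}$, computed by a standard Selberg calculation using Bombieri--Vinogradov (Definition~\ref{defnr31}), is of the order indicated and yields the claimed bound. For the third bullet, summing
\[
\sum_{p\in P}\mathbb{P}\bigl(q\in\mathbf{e}_p(\vec a)\bigm|\vec{\mathbf a}=\vec a\bigr)=\sum_{p\in P}\sum_{i=1}^r\mathbb{P}\bigl(\mathbf{n}_p=q-h_ip\bigm|\vec{\mathbf a}=\vec a\bigr)
\]
and inserting the Maynard main-term evaluation produces a value $\mathcal C$ whose scaling is governed by the density ratio $\#Q/\#(Q\cap S(\vec a))$, which yields $\mathcal C\asymp 1/c$.

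The hard part is the uniformity inside the third bullet: to show that $\vec{\mathbf a}$ is good with probability $1-o(1)$, the display above must hold, up to error $O_{\le}(1/(\log_2 x)^2)$, for all but $o(x/\log x\,\log_2 x)$ of the $q\in Q\cap S(\vec a)$, for almost every $\vec a$. This reduces, via an $L^2$ bound in $q$ over $Q\cap S(\vec a)$, to off-diagonal sums indexed by pairs $(p_1,p_2)\in P\times P$, whose Maynard-weighted correlations must be controlled on average. The key analytic input is the Bombieri--Vinogradov theorem, applied with level of distribution strictly below $1/2$ to the Selberg--Maynard density, together with the admissibility of $(h_1,\dots,h_r)$ to suppress local obstructions in the resulting singular series. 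A Markov-type argument then excludes the $o(1)$-fraction of bad $\vec a$, completing the proof.
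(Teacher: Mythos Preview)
Your outline matches the paper's approach closely: $\vec{\mathbf a}$ is chosen uniformly and independently over $s\in S$, the second bullet is handled by a first/second moment computation exactly as you describe, and the third bullet is obtained via a second moment in $q$ followed by Chebyshev/Markov, with the analytic input being the Maynard sieve-weight theorem (which in turn rests on Bombieri--Vinogradov).

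There is one technical point you slide past. You define $\mathbf n_p$ directly with density proportional to
\[
w_{p,\vec a}(n)\;=\;\mathbf{1}_{\{n+h_ip\in S(\vec a)\ \forall i\}}\Bigl(\sum_{d_i\mid n+h_ip}\lambda_{d_1,\dots,d_r}\Bigr)^{2},
\]
but the normalizing constant $\sum_n w_{p,\vec a}(n)$ is a random quantity in $\vec a$, and you need it to be uniformly close to its mean $\sigma^r\sum_n w(p,n)$ in order to extract the main term $\mathcal C$ with the required $O_\le(1/(\log_2 x)^2)$ error. The paper handles this by a two-step construction: first define an auxiliary $\tilde{\mathbf n}_p$ with density $\propto w(p,n)$ \emph{independent of $\vec a$}; then set $X_p(\vec a)=\mathbb P(\tilde{\mathbf n}_p+h_ip\in S(\vec a)\ \forall i)$, prove via a correlation lemma (their Lemma~6.1, here Lemma~\ref{lem449}) and a second moment that $X_p(\vec a)=(1+O(1/\log^3 x))\sigma^r$ for all $p$ outside a negligible exceptional set $P\setminus P(\vec a)$; finally define $\mathbf n_p$ on $P(\vec a)$ by your formula and set $\mathbf n_p=0$ off $P(\vec a)$. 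Without this decoupling, your $L^2$ computation in $q$ would have to carry the random normalization through, which is awkward. This is a genuine ingredient you should make explicit; once it is in place, your sketch goes through exactly as in the paper.
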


\textit{Theorem and Definitions from \cite{MR}}

\begin{theorem}(\cite{MR}, Theorem 1.1)\\
There is a constant $c>0$ and infinitely many $n$, such that
$$p_{n+1}-p_n\geq c\: \frac{\log p_n \log_2 p_n \log_4 p_n}{\log_3 p_n}$$
and the interval $[p_n, p_{n+1}]$ contains the $K$-th power of a prime.
\end{theorem}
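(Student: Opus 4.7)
The plan is to start from the large--gap construction of Theorem~\ref{thm41}, which yields for each sufficiently large $x$ a residue class $m_0\bmod P(x)$ such that every integer in $(m_0, m_0+y]$ is composite, with $y\asymp x\log x\log_3 x/\log_2 x$. Because each prime $<x$ divides $P(x)$, every shift $m_0+tP(x)$ enjoys the same property: the window $(m_0+tP(x),\,m_0+tP(x)+y]$ consists entirely of composites. The task is then to choose $t$ so that this window contains $p^K$ for some prime $p$. Since $K\geq 2$, the value $p^K$ is itself composite, so it is compatible with the all--composite window, and the consecutive primes $p_n, p_{n+1}$ bracketing the window automatically satisfy $p_{n+1}-p_n\geq y$ and $p^K\in[p_n,p_{n+1}]$, which is exactly the required conclusion.

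The natural vehicle is a Maier matrix with rows indexed by $t\in[T,2T]$ and columns by $u\in[1,y]$, entry $(t,u)$ being $m_0+tP(x)+u$. A pair is \emph{good} if this entry equals $p^K$ for some prime $p$. The number of good pairs equals
\[
\mathcal{N}(T):=\#\bigl\{p\text{ prime}:\, p^K\in[m_0+TP(x),\,m_0+2TP(x)+y],\ (p^K-m_0)\bmod P(x)\in[1,y]\bigr\}.
\]
If $\mathcal{N}(T)\geq 1$, at least one row of the matrix contains a $K$--th power of a prime in its composite window; varying $x$ then supplies infinitely many such $n$.

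The heart of the argument is a positive lower bound on $\mathcal{N}(T)$. Heuristically,
\[
\mathcal{N}(T)\approx\frac{(TP(x))^{1/K}}{\log(TP(x))}\cdot\frac{y}{P(x)},
\]
which exceeds $1$ once $T\gg P(x)^{K-1}(\log P(x))^{K}/y^{K}$. Since $P(x)=e^{x(1+o(1))}$, this forces $T$ to be exponential in $x$, but this is harmless for the gap bound: at scale $p_n\asymp TP(x)$ one has $\log p_n\asymp Kx$, $\log_2 p_n\asymp\log x$, $\log_3 p_n\asymp\log_2 x$, $\log_4 p_n\asymp\log_3 x$, so $y\asymp\log p_n\log_2 p_n\log_4 p_n/\log_3 p_n$, with the factor of $K$ absorbed into the constant $c$. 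Turning the heuristic into a rigorous estimate is the principal obstacle: the modulus $P(x)$ lies far beyond the Bombieri--Vinogradov range, so one cannot simply invoke prime--in--AP results. Instead, one adapts the method of Ford, Heath-Brown and Konyagin~\cite{konyagin}, which exploits the multiplicative structure of the $K$--th power map on $(\Z/P(x))^{\ast}$ together with sieve arguments to produce an averaged distribution bound for primes $p$ whose $K$--th power lies in a prescribed union of residue classes modulo $P(x)$; it is precisely an averaged bound of this kind that the matrix method requires.

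Once $\mathcal{N}(T)\to\infty$ is established for a suitable $T=T(x)$, a pigeonhole step selects a row $t=t(x)$ whose composite window contains a prime $K$--th power, and the consecutive primes bracketing this window produce an $n$ with $p_{n+1}-p_n\gg\log p_n\log_2 p_n\log_4 p_n/\log_3 p_n$ and $p^K\in[p_n,p_{n+1}]$. Varying $x$ yields infinitely many such $n$. The only essential difficulty is the $K$--th power distribution estimate; everything else is a routine combination of Theorem~\ref{thm41} with the Maier matrix bookkeeping.
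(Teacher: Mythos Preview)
Your strategy diverges from the paper's in a way that creates a genuine gap. You take the residue class $m_0$ from the \emph{unmodified} construction of \cite{ford2}, form the matrix with entries $m_0+tP(x)+u$, and then seek a row containing a prime $K$-th power. This forces you to count primes $p$ with $(p^K-m_0)\bmod P(x)\in[1,y]$, i.e.\ to establish equidistribution of $p^K$ in a window of density $y/P(x)\asymp x(\log x)/e^x$ modulo the enormous modulus $P(x)$. You wave at \cite{konyagin} for this, but that paper does not supply such an estimate; it does not treat distribution of $K$-th powers of primes in short residue intervals modulo a primorial. Indeed the image of the $K$-th power map modulo $P(x)$ is a proper subset of $(\Z/P(x))^*$ (cut out by $K$-th power residue conditions at each prime factor), so one cannot even hope for naive equidistribution, and no averaged substitute is provided. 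The step you label ``the only essential difficulty'' is in fact the entire difficulty, and it is left open.

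The paper avoids this problem by a different design of both the matrix and the sieving. The matrix entries are
\[
a_{r,u}=(m_0+1+rP(x))^K+u-1,
\]
so that the first column $u=1$ is automatically a $K$-th power of an element of the arithmetic progression $m_0+1\pmod{P(x)}$. Counting primes in that column then reduces to counting primes in an arithmetic progression to a ``good'' modulus, which is handled by Gallagher's theorem (Lemma~\ref{lem45}). The price is that compositeness of the remaining entries $a_{r,u}$ for $2\le u\le y$ no longer follows from the congruences $m_0\equiv -a_s\pmod s$; one must instead arrange $m_0\equiv c_s\pmod s$ so that $(m_0+1)^K+u-1\equiv 0\pmod s$ whenever $u\equiv a_s\pmod s$, which forces the sieving classes to lie in the restricted sets $\mathcal{A}_{(K)},\mathcal{B}_{(K)}$ (i.e.\ $a_s\equiv 1-(c_s+1)^K$). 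This is why the paper must rerun the entire Ford--Green--Konyagin--Maynard--Tao machinery (Theorems 3.1, 4.3, 6.2 of \cite{MR}) under these constraints; that, together with the cleanup step borrowed from \cite{konyagin} for the exceptional set $V$, is the real content of the proof. Your proposal bypasses all of this structure and, as written, does not reach a complete argument.
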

\begin{definition}(\cite{MR}, Definitions (3.1)-(3.5))\\
$$y:= cx\:\frac{\log x\log_3 x}{\log_2 x}\:,$$
where $c$ is a fixed positive constant. Let
$$z:=x^{\log_3 x/(4\log_2 x)}.$$
and introduce the three disjoint sets of primes
\begin{align*}
S&:=\{s\ \text{prime}\::\: \log^{20} x< s\leq z\}\\
P&:=\{p\ \text{prime}\::\: x/2<p\leq x\}\\
Q&:=\{q\ \text{prime}\::\: x<q\leq y\}.
\end{align*}
For residue classes
\[
\vec{a}:=(a_s \bmod s)_{s\in S} 
\]
and
\[
\vec{b}:=(b_p \bmod p)_{p\in P} 
\]
define the sifted sets
\[
S(\vec{a}):=\{n\in\mathbb{Z}\::\: n\not\equiv a_s (\bmod s)\ \text{for all $s\in S$}\} 
\]
and likewise
\[
S(\vec{b}):=\{n\in\mathbb{Z}\::\: n\not\equiv b_p (\bmod p)\ \text{for all $p\in P$}\}. 
\]
We set
\begin{align*}
\mathcal{A}_{(K)}&:=\{\vec{a}=(a_s \bmod s)_{s\in S}\::\: \exists\ c_s\ \text{such that}\\
&\ \  a_s\equiv 1-(c_s+1)^K(\bmod s), c_s\not\equiv -1 (\bmod s)\}  
\end{align*}
\begin{align*}
\mathcal{B}_{(K)}&:=\{\vec{b}=(b_p \bmod p)_{p\in P}\::\: \exists\ d_p\ \text{such that}\\
&\ \  b_p\equiv 1-(d_p+1)^K(\bmod p), b_p\not\equiv -1 (\bmod p)\} . 
\end{align*}
\end{definition}
\begin{theorem}(\cite{MR}, Theorem 3.1)(Sieving primes)\\
Let $x$ be sufficiently large and suppose that $y$ obeys (3.1). Then there are vectors $\vec{a}\in\mathcal{A}_{(K)}$ and $\vec{b}\in\mathcal{B}_{(K)}$, such that 
\[
\#(Q\cap S(\vec{a})\cap S(\vec{b}))\ll \frac{x}{\log x}\:. 
\]
\end{theorem}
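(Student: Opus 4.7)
The plan is to adapt the proof of Theorem \ref{thm42} (i.e., Theorem 2 of \cite{ford2}) so as to accommodate the $K$-th power restriction on the congruence classes. The original proof of Theorem \ref{thm42} proceeds in two stages: a Maynard-Tao type first stage that produces a random $\vec{\bold{a}}$ after which $Q\cap S(\vec{\bold{a}})$ has size $\sim 80c\,\frac{x}{\log x}\log_2 x$ with probability $1-o(1)$ (the content of Theorem \ref{thm44}), and a second stage that invokes Corollary \ref{cor44} on a random hypergraph to further reduce the residual count to $O(x/\log x)$. My adaptation keeps this two-stage structure intact; the only change is that the random vectors are constrained to take values in $\mathcal{A}_{(K)}$ and $\mathcal{B}_{(K)}$, and one must check that the probabilistic estimates survive this restriction.

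For the first stage I would sample $\vec{\bold{a}}$ by independently drawing, for each $s\in S$, a residue $\bold{c}_s \in \mathbb{Z}/s\mathbb{Z}$ subject to $\bold{c}_s \not\equiv -1 \pmod{s}$, and then setting
\[
\bold{a}_s := 1 - (\bold{c}_s + 1)^K \pmod{s},
\]
so that $\vec{\bold{a}} \in \mathcal{A}_{(K)}$ by construction. The distribution on $\bold{c}_s$ is chosen so that the induced distribution on $\bold{a}_s$ mirrors the one used in \cite{ford2} on its image set $\{1 - t^K : t \in (\mathbb{Z}/s)^*\}$. For the density-one set of primes $s$ with $\gcd(K, s-1) = 1$ the $K$-th power map on $(\mathbb{Z}/s)^*$ is a bijection and the restriction is vacuous; for the exceptional primes one loses only a bounded factor $\gcd(K, s-1) \leq K$ in the density of admissible classes. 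The residual-count analysis of Theorem \ref{thm44} then carries through with the constant $c$ in the definition of $y$ absorbing this bounded loss, yielding $\#(Q \cap S(\vec{\bold{a}})) \sim 80c'\,\frac{x}{\log x}\log_2 x$ with probability $1 - o(1)$.

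For the second stage I would modify Theorem \ref{thm44} so that the random shifts $\bold{n}_p$ appearing in the edges
\[
\bold{e}_p(\vec{a}) = \{\bold{n}_p + h_i p : 1 \leq i \leq r\} \cap Q \cap S(\vec{a})
\]
are conditioned to satisfy $\bold{n}_p \equiv (\bold{d}_p + 1)^K - 1 \pmod{p}$ for a sampled $\bold{d}_p$ with the restriction required by the definition of $\mathcal{B}_{(K)}$; then the vector $\vec{\bold{b}}$ defined by $\bold{b}_p \equiv -\bold{n}_p \pmod{p}$ satisfies $\bold{b}_p \equiv 1 - (\bold{d}_p + 1)^K \pmod{p}$ and so lies in $\mathcal{B}_{(K)}$. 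The four hypotheses of Corollary \ref{cor44} must then be reverified: the per-edge size bound $\#\bold{e}_p \leq r$ is unaffected; the sparsity bound $\mathbb{P}(q \in \bold{e}_p) \leq x^{-1/2-1/10}$ loses at most a factor of $K$ because $\bold{n}_p \bmod p$ is confined to a coset of a subgroup of index $\gcd(K,p-1)$; the small-codegree bound is preserved by the same Bombieri-Vinogradov input used in \cite{ford2}, weighted by the $K$-th power residue indicator; and the expected-covering hypothesis on the constant $\mathcal{C}$ is maintained by an appropriate choice of $c$.

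The main obstacle will be the reverification of the codegree and expected-covering conditions in the second stage: each reduces to a count of primes $p\in P$ with $q \equiv \bold{n}_p + h_i p \pmod{p}$ where $\bold{n}_p$ is constrained to lie in the image of the map $d \mapsto (d+1)^K - 1 \bmod p$. This amounts to an equidistribution statement for primes in arithmetic progressions twisted by the $K$-th power residue character, which I would handle by partitioning the sum over $p$ according to the value of $\gcd(K,p-1)$ (a finite list of possible values, since $K$ is fixed) and applying the Bombieri-Vinogradov theorem within each class, incurring only a bounded-in-$K$ loss. Once these estimates are in place, Corollary \ref{cor44} applies to yield
\[
\#(Q \cap S(\vec{a}) \cap S(\vec{b})) \sim 5^{-m}\,\#(Q \cap S(\vec{a}))
\]
with probability $1 - o(1)$; choosing $m$ as large as permitted by the corollary gives the desired bound $O(x/\log x)$, and a standard derandomization produces deterministic $\vec{a} \in \mathcal{A}_{(K)}$ and $\vec{b} \in \mathcal{B}_{(K)}$ as required.
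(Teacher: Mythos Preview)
Your overall architecture is right and matches the paper: one proves a $K$-restricted analogue of Theorem \ref{thm44} (this is Theorem \ref{thm443}) and then feeds it into Corollary \ref{cor44} exactly as in \cite{ford2}. But there is a genuine gap in how you handle the first stage, and it is not a bounded-constant issue.

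First, a concrete error: the set of primes $s$ with $\gcd(K,s-1)=1$ is \emph{not} of density one. For $K=2$ every odd prime $s$ has $2\mid s-1$; for general $K$ with prime factor $p$, a proportion $1/(p-1)$ of primes $s$ satisfy $p\mid s-1$. So a positive proportion of $s\in S$ genuinely feel the $K$-th power restriction.

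More seriously, the restriction does not merely cost a bounded factor in the density of admissible classes: it makes the probability $\mathbb{P}(n\in S(\vec{\bold{a}}))$ depend on $n$. With your sampling, $\mathbb{P}(n\equiv \bold{a}_s\pmod s)$ is either $0$ or $\gcd(K,s-1)/(s-1)$ according to whether $1-n$ is a nonzero $K$-th power residue modulo $s$. Hence
\[
\mathbb{P}(n\in S(\vec{\bold{a}}))\;=\;\prod_{u:(u,K)=1}\ \prod_{s\in S_u}\Bigl(1-\epsilon(n,s)\,\frac{d(u)}{s}+O(s^{-2})\Bigr),
\]
where $d(u)=\gcd(u-1,K)$ and $\epsilon(n,s)\in\{0,1\}$ records the residue condition. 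This product is asymptotic to $\sigma=\prod_{s\in S}(1-1/s)$ only when $\sum_{s\in S_u}\epsilon(n,s)/s$ is close to $d(u)^{-1}\sum_{s\in S_u}1/s$ for every $u$, which is a nontrivial condition on $n$. The paper handles this by introducing the set $\mathfrak{G}$ of ``good'' integers (Definition \ref{def412}) for which these sums are balanced, proving the correlation bound (Lemma \ref{lem413}) only for $n_1,\ldots,n_t\in\mathfrak{G}$, and then building the weight $w_{(K)}(p,\cdot)$ to be supported on $\mathfrak{G}(p)$ so that every $\tilde{\bold n}_p+h_ip$ that enters the analysis is automatically good. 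Verifying that restricting to $\mathfrak{G}(p)$ costs only an acceptable error is itself nontrivial (Theorems \ref{thm51818} and \ref{thm522522}) and uses Dirichlet-character decompositions (Lemma \ref{lem56}), the P\'olya--Vinogradov inequality (Theorem \ref{thm57}), and character sums over shifted primes (Lemma \ref{lem58}, Theorem \ref{thm59}) rather than Bombieri--Vinogradov alone. Your proposal omits this entire mechanism; without it the analogue of Lemma \ref{lem449} fails and neither the size estimate for $Q\cap S(\vec{\bold{a}})$ nor the uniform-covering hypothesis of Corollary \ref{cor44} can be verified.
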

\begin{theorem}(\cite{MR}, Theorem 4.1)\\
(Has wording identical to (\cite{ford2}, Theorem 3).
\end{theorem}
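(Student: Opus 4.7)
The plan is to adapt the proof of (\cite{ford2}, Theorem 2), replacing random sampling of $\vec{a}$ and $\vec{b}$ from all residue classes by sampling from the restricted families $\mathcal{A}_{(K)}$ and $\mathcal{B}_{(K)}$, and then verifying that Theorem~\ref{thm44} and Corollary~\ref{cor44} adapt to this restriction with constants depending only on $K$.

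First I would record the key density fact. For any prime $s > K$, the map $c \mapsto 1-(c+1)^K \bmod s$ on $(\mathbb{Z}/s\mathbb{Z}) \setminus \{-1\}$ has image $1 - H_s$, where $H_s$ is the set of non-zero $K$-th power residues modulo $s$, a subgroup of $(\mathbb{Z}/s\mathbb{Z})^*$ of size $(s-1)/\gcd(K,s-1) \geq (s-1)/K$. The analogous bound holds for $p \in P$. Thus uniform sampling of $a_s$ (respectively $b_p$) from the admissible set differs from uniform sampling on $\mathbb{Z}/s\mathbb{Z}$ (respectively $\mathbb{Z}/p\mathbb{Z}$) by a factor $O_K(1)$, and in particular the admissible set is nonempty and has positive relative density uniformly in $s$ and $p$.

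I would then redo the random construction of Theorem~\ref{thm44} conditioned on $\vec{\bold{a}} \in \mathcal{A}_{(K)}$, and choose the integers $\bold{n}_p$ so that the induced edge $\bold{e}_p$ forces $b_p \in \mathcal{B}_{(K)}$. The GPY--Maynard sieve weights used to estimate $\#(Q \cap S(\vec{\bold{a}}))$ and to verify the ``good vector'' criterion depend on the equidistribution of primes $q \in Q$ in residue classes modulo $s \in S$; since $s \leq z = x^{\log_3 x/(4\log_2 x)}$ lies well inside the Bombieri--Vinogradov range and the admissible set is a union of residue classes modulo $s$, equidistribution persists after restriction, with main terms multiplied by the relative density (absorbed into a constant depending on $K$). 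Consequently the conclusions of Theorem~\ref{thm44} hold with a modified constant $\mathcal{C}_K$, and the sparsity and small-codegree bounds of Corollary~\ref{cor44} hold up to an $O_K(1)$ factor. Invoking Corollary~\ref{cor44} then produces random edges $\bold{e}_p'$ covering all but $\ll 5^{-m}\#(Q \cap S(\vec{a}))$ of the surviving primes; taking $m \sim \log_3 x/\log 5$ reduces the uncovered remainder to $\ll x/\log x$, and the residue classes encoded by $\bold{e}_p'$ form a vector $\vec{b} \in \mathcal{B}_{(K)}$ realising the desired bound.

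The main obstacle I anticipate is calibrating the parameter $c$ in the definition of $y$ so that the restricted good-vector constant $\mathcal{C}_K$ lies in the window $[(5/4)\log 5,\, 1]$ demanded by Corollary~\ref{cor44}: the restriction to $K$-th power classes scales $\mathcal{C}$ by a $K$-dependent factor, and since $\mathcal{C} \asymp 1/c$ by Theorem~\ref{thm44}, one must choose $c = c(K)$ accordingly. All remaining estimates, in particular the sifting analysis downstream of Theorem~\ref{thm44}, transcribe directly from the proof in (\cite{ford2}, Theorem 2), yielding the claimed bound with $\vec{a} \in \mathcal{A}_{(K)}$ and $\vec{b} \in \mathcal{B}_{(K)}$.
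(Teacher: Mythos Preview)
You have misidentified the theorem. The statement in question is (\cite{MR}, Theorem~4.1), which has wording \emph{identical} to (\cite{ford2}, Theorem~3), the purely combinatorial ``Probabilistic covering'' result about random edges $\bold{e}_i$ in an abstract finite set $V$, with hypotheses on edge sizes, sparsity, codegrees, and normalized degrees. That theorem has no arithmetic content whatsoever: there are no primes, no residue classes, no $K$-th powers. Consequently its proof in the $K$-version paper is literally the proof from Section~5 of \cite{ford2}, applied verbatim; the paper itself says as much (``The proof, which we will not give in this paper, is given in Section 5 of \cite{ford2}'').

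Your proposal is instead a sketch for (\cite{MR}, Theorem~3.1), the ``Sieving primes'' statement asserting the existence of $\vec{a}\in\mathcal{A}_{(K)}$, $\vec{b}\in\mathcal{B}_{(K)}$ with $\#(Q\cap S(\vec{a})\cap S(\vec{b}))\ll x/\log x$. That is a different node in the dependency graph: in the paper's scheme, Theorem~3.1 is deduced from Theorem~4.3 (the random construction) via Corollary~4.2, and Theorem~4.1 is the abstract covering lemma underlying Corollary~4.2. So even as a proof of Theorem~3.1 your outline skips a layer: the genuine work in the $K$-version is not re-proving the hypergraph covering theorem but building a modified weight $w_{(K)}$ and a restricted random $\vec{\bold{a}}$ (sampling $a_s$ uniformly from the admissible $K$-th-power-shift classes), then checking the correlation Lemma~\ref{lem413} for integers in the set $\mathfrak{G}$ of ``good'' $n$. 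None of that is needed for the statement actually asked about.
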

\begin{cor}(\cite{MR}, Corollary 4.2)\\
(Has wording identical to (\cite{ford2}, Corollary 3).
\end{cor}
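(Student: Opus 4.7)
The plan is to derive the corollary from a single application of the probabilistic covering theorem ([MR, Theorem 4.1]), where the index decomposition $I_1,\dots,I_m$ is built from a random geometric splitting of $P'$ into $m$ rounds. The splitting will be calibrated so that the normalized degree in round $j$ equals $(\log 5)\,P_{j-1}(q)$ at every ``typical'' vertex $q \in Q'$, which forces $P_j(q) = 5^{-j}$ inductively. Applying part (b) of the covering theorem first to singletons $\{q\}$ and then to pairs $\{q_1,q_2\}$ gives the pointwise survival probability $5^{-m}(1+o(1))$ together with a matching second-moment bound; first-moment and Chebyshev then promote this to the cardinality assertion with probability $1-o(1)$.

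For the splitting I would set $\pi_j := (\log 5)\cdot 5^{-(j-1)}/C$, assign each $p\in P'$ independently to at most one round $j\in\{1,\dots,m\}$ with probability $\pi_j$, and let $I_j := \{(p,j) : p \text{ is in round } j\}$, with $\mathbf{e}_{(p,j)} := \mathbf{e}_p$ if $p$ is in round $j$ and $\mathbf{e}_{(p,j)} := \emptyset$ otherwise. The lower bound on $C$ (namely $\tfrac{5}{4}\log 5$) gives $\sum_j \pi_j \leq 1$. With $\delta := x^{-1/20}$ the edge-size, sparsity (using $|I_j|\leq |P'|\leq x$ together with $\mathbb{P}(q\in\mathbf{e}_p)\leq x^{-1/2-1/10}\leq \delta/\sqrt{x}$), and codegree hypotheses all transfer directly from the corollary. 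For every typical $q$, that is one with $\sum_p \mathbb{P}(q\in\mathbf{e}_p) = C + O_{\leq}((\log_2 x)^{-2})$, one obtains $d_{I_j}(q) = \pi_j\cdot (C+O_{\leq}(\cdot)) = (\log 5)\,5^{-(j-1)}(1+o(1))$, so the recursion $P_{j+1} = P_j \exp(-d_{I_{j+1}}/P_j)$ yields $P_j(q) = 5^{-j}(1+o(1))$, and the degree bound $d_{I_j}(q) \leq D\cdot P_{j-1}(q)$ holds with, say, $D = 2\log 5$.

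The main obstacle is the quantitative smallness condition $\delta \leq (\kappa^A/(C_0 e^{AD}))^{10^{m+2}}$. Here $\kappa = 5^{-m} \geq 1/\log_2 x$ by the hypothesis $m\leq \log_3 x/\log 5$, and the same hypothesis forces $10^{m+2} \leq 100\cdot (\log_2 x)^{\log 10/\log 5}$, a polylogarithmic quantity in $x$. Taking logarithms, the required inequality becomes $\tfrac{1}{20}\log x \gtrsim A\cdot 10^{m+2}\log_3 x + O(10^{m+2})$, which holds for all sufficiently large $x$ because the right-hand side is $o(\log x)$.

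With all hypotheses in place, part (b) of the probabilistic covering theorem applied to $e = \{q\}$ (respectively $e = \{q_1,q_2\}$) yields $\mathbb{P}(q\notin\bigcup_p \mathbf{e}_p') = P_m(q)(1+o(1)) = 5^{-m}(1+o(1))$ (respectively $\mathbb{P}(q_1, q_2 \notin \bigcup_p \mathbf{e}_p') = 5^{-2m}(1+o(1))$), uniformly over typical vertices. Linearity of expectation gives $\mathbb{E}\,\#\{q\in Q'\setminus\bigcup_p \mathbf{e}_p'\} = 5^{-m}\#Q'(1+o(1))$, with the at most $\#Q'/(\log_2 x)^2$ atypical vertices absorbed since $5^{-m} \gg (\log_2 x)^{-2}$. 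The pairwise estimate makes the variance $o\bigl((5^{-m}\#Q')^2\bigr)$, and Chebyshev concentrates the cardinality around its mean. The refinement for $Q''\subseteq Q'$ with $\#Q''\geq \#Q'/\sqrt{\log_2 x}$ is identical, since the atypical contribution remains $o(5^{-m}\#Q'')$ because $5^{-m}\#Q'' \geq \#Q'/(\log_2 x)^{3/2}$.
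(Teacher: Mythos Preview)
The paper itself gives no proof here; it simply writes ``For the proof we refer to \cite{ford2}.'' Your geometric-splitting approach (assign $p$ to round $j$ with probability $\pi_j=(\log 5)5^{-(j-1)}/C$, so that the recursion forces $P_j(q)=5^{-j}$) is precisely the argument that appears in \cite{ford2}, so in substance you are reproducing the intended proof.

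Two points deserve tightening. First, when you invoke part (b) on pairs $\{q_1,q_2\}$ at level $J=m$ for the variance bound, the constraint $\#e\le A-2rJ$ forces $A\ge 2+2rm$; since $r=O(\log x\log_3 x/\log_2^2 x)$ and $m\le \log_3 x/\log 5$, this $A$ is of size roughly $\log x(\log_3 x)^2/\log_2^2 x$, not $O(1)$. Your smallness check must therefore absorb $A\cdot 10^{m+2}\log_3 x\asymp \log x\,(\log_3 x)^3/(\log_2 x)^{2-\log 10/\log 5}$, which is still $o(\log x)$ because $\log 10/\log 5<2$---but you should say so explicitly rather than leaving $A$ unspecified. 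Second, the degree bound $d_{I_j}(v)\le DP_{j-1}(v)$ and the floor $P_j(v)\ge\kappa$ must hold for \emph{every} $v\in V$, so take $V$ to be only the typical $q$ (those satisfying the uniform-covering estimate); the $\le \#Q'/(\log_2 x)^2$ atypical $q$ are then handled exactly as you say, by absorption into the main term. With these two clarifications your argument is complete and matches the reference.
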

\begin{theorem}(\cite{MR}, Theorem 4.3)(Random construction)\\
(Has wording identical to (\cite{ford2}, Theorem 4).
\end{theorem}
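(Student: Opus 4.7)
The plan is to mirror the random construction of (\cite{ford2}, Theorem 4) verbatim at the structural level, modifying only the probability measures that define $\vec{\bold{a}}$ and $\vec{\bold{n}}$ so that the output lies in $\mathcal{A}_{(K)}$ and induces a companion vector in $\mathcal{B}_{(K)}$. Concretely, I would begin by fixing an admissible tuple $(h_1,\ldots,h_r)$ of positive integers with $r\leq \sqrt{\log x}$ exactly as in \cite{ford2}, and then, for each $s\in S$, push the Maynard-weighted probability measure on residues $\bmod\,s$ used there through the map
$$\varphi_s\colon c\bmod s\longmapsto 1-(c+1)^K\bmod s,\quad c\not\equiv -1\pmod s,$$
yielding a measure supported on the constrained set $\mathcal{A}_{(K),s}$. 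Since $s>\log^{20} x$, the fibres of $\varphi_s$ have size $\gcd(K,s-1)\leq K$, so the push-forward densities differ from the Ford--Green--Konyagin--Maynard--Tao densities by multiplicative constants $O_K(1)$. The integers $\bold{n}_p$, $p\in P$, are chosen conditional on $\vec{\bold{a}}$ by the same Maynard weighting as in \cite{ford2}, but with support restricted to those $\bold{n}_p$ for which $-\bold{n}_p\bmod p$ lies in $\mathcal{B}_{(K),p}$.

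With this setup I would next verify the first two bullet points. The edge-size bound is immediate from the definition $\bold{e}_p(\vec{a})=\{\bold{n}_p+h_i p:1\leq i\leq r\}\cap Q\cap S(\vec{a})$, and the sparsity estimate reduces to the uniform bound
$$\mathbb{P}(q\in \bold{e}_p(\vec{a})\mid \vec{\bold{a}}=\vec{a})=O_K(r/\#\mathcal{B}_{(K),p}),$$
which is negligible because $\#\mathcal{B}_{(K),p}=(p-1)/\gcd(K,p-1)\gg_K x$. For the asymptotic
$$\#(Q\cap S(\vec{\bold{a}}))\sim 80c\,\frac{x}{\log x}\log_2 x,$$
I would run the first- and second-moment computation of \cite{ford2}: for each $q\in Q$ one has $\mathbb{P}(\bold{a}_s\equiv q\pmod s)=O_K(1/s)$, and the product $\prod_{s\in S}(1-\mathbb{P}(\bold{a}_s\equiv q\pmod s))$ evaluated via a Mertens-type estimate over $S$ gives the same asymptotic density once the bounded $K$-dependent constants are absorbed into the choice of $c$; independence of the $\bold{a}_s$ across distinct primes $s$ then keeps the second moment comparable to the square of the first moment.

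The principal obstacle is the degree-equidistribution condition, namely
$$\sum_{p\in P}\mathbb{P}(q\in \bold{e}_p(\vec{a})\mid \vec{\bold{a}}=\vec{a})=\mathcal{C}+O_{\leq}\!\left(\frac{1}{(\log_2 x)^2}\right)$$
for all but $O(x/(\log x\log_2 x))$ primes $q\in Q\cap S(\vec{a})$. This is the only step in which the $K$-th-power restriction interacts with arithmetic in a nontrivial way, because the inner probability is computed against the conditional distribution of $\bold{n}_p$ restricted to primes whose negative shift lies in $\mathcal{B}_{(K),p}$. I would parameterize that conditional distribution by an explicit density $\rho_p(n;\vec{a})$ which differs from its Ford et al.\ counterpart by a factor of $1/\gcd(K,p-1)$ on its support and zero off it, and then apply the Bombieri--Vinogradov theorem to arithmetic progressions of modulus $\prod_{s\in S'}s$ for small $S'\subseteq S$, exactly as in \cite{ford2}. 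Since the $K$-th-power restriction amounts to fixing a bounded number of residue classes modulo $p$, the large-sieve/Bombieri--Vinogradov input is unaffected and the equidistribution goes through with the same constant $\mathcal{C}\asymp 1/c$, independent of $c$. The asserted goodness of $\vec{\bold{a}}$ with probability $1-o(1)$ then follows from Markov's inequality applied to the expected number of exceptional $q$, completing the construction.
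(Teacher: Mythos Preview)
Your proposal has a genuine gap at the second-moment step for $\#(Q\cap S(\vec{\bold a}))$ and, consequently, at the degree-equidistribution step. When $\bold a_s$ is drawn uniformly from $\mathcal A_{(K),s}$, one has
\[
\mathbb P(\bold a_s\equiv q\pmod s)=\frac{\epsilon(q,s)\,d(u_s)}{s-1-d(u_s)},
\]
where $u_s\equiv s\pmod K$, $d(u)=\gcd(K,u-1)$, and $\epsilon(q,s)\in\{0,1\}$ records whether $1-q$ is a nonzero $K$-th power residue mod $s$. Hence
\[
\log\mathbb P\bigl(q\in S(\vec{\bold a})\bigr)\approx -\sum_{(u,K)=1} d(u)\sum_{s\in S_u}\frac{\epsilon(q,s)}{s}.
\]
This is \emph{not} $-\sum_{s\in S}1/s$ up to an $O_K(1)$ multiplicative constant that can be absorbed into $c$: the inner sum $\sum_{s\in S_u}\epsilon(q,s)/s$ depends on $q$, and for some $q$ it can deviate substantially from its heuristic value $d(u)^{-1}\sum_{s\in S_u}s^{-1}$. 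Your Mertens-type evaluation therefore does not go through uniformly in $q$, and the first- and second-moment argument for $\#(Q\cap S(\vec{\bold a}))$ collapses.

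The paper addresses exactly this by introducing the ``good'' set $\mathfrak G$ of integers $n$ for which $|r(n,u)-r^*(u)|\le(\log x)^{-1/40}$ for all $u$ (Definition~6.1 / Definition~\ref{def412}), together with the $p$-dependent version $\mathfrak G(p)$. The modified weight $w_{(K)}(p,n)$ of Theorem~6.2 is required to vanish unless $n\in\mathfrak G(p)$, so that every integer entering the correlation bound (Lemma~\ref{lem413}, the $K$-analogue of the paper's Lemma~\ref{lem449}) is automatically good; only then does one recover $\mathbb P(n_1,\ldots,n_t\in S(\vec{\bold a}))=(1+o(1))\sigma^t$. Establishing that the $\mathfrak G(p)$-restriction costs only a negligible portion of the weight requires the additional work in Section~5 (character sums over shifted primes, the second-moment estimates of Lemmas~\ref{lem514} and \ref{lem520520}). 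None of this apparatus is present in your plan, and without it the claimed asymptotic $\#(Q\cap S(\vec{\bold a}))\sim 80c\,(x/\log x)\log_2 x$ and the goodness of $\vec{\bold a}$ cannot be obtained.
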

\begin{definition}(\cite{MR}, Definition 6.1)\\
An admissible $r$-tuple is a tuple $(h_1, \ldots, h_r)$ of distinct integers that do not cover all residue classes modulo $p$ for any prime $p$.\\
For $(u, K)=1$ we define
$$S_u:=\{s\::\: s\ \text{prime},\ s\equiv u\:(\bmod\: K),\ (\log x)^{20}< s\leq z\}$$
$$d(u)=(u-1, K),\ r^*(u)=\frac{1}{d(u)}\sum_{s\in S_u} s^{-1}.$$
For $n\in [x, y]$ let
$$r(n, u)=\sum_{\substack{s\in S_u\::\: \exists c_s\::\: n\equiv 1-(c_s+1)^K(\bmod s)\\ c_s\not\equiv -1 (\bmod s)}} s^{-1}\:.$$
We set
\begin{align*}
\mathcal{G}&=\{n\::\: n\in [x, y], |r(n,u)-r^*(u)|\leq (\log x)^{-1/40}\\ 
&\text{for all}\ u(\bmod K),\ (u, K)=1\}.
\end{align*}
For an admissible $r$-tuple to be specified later and for primes $p$ with  $x/2<p\leq x$ we set
$$\mathcal{G}(p)=\{n\in\mathcal{G}\::\: n+(h_i-h_l)p\in\mathcal{G}, \forall i, l\leq r\}.$$
\end{definition}
\begin{theorem}\label{thm45} (\cite{MR}, Theorem 6.2  - Existence of good sieve weights)\\
Let $x$ be a sufficiently large real number and let $y$ be any quantity obeying (3.1). Let $P, Q$ be defined by (3.2)-(3.5). Let $r$ be a positive integer with 
\[
r_0\leq r\leq \log^{\eta_0} x 
\]
for some sufficiently large absolute constant $r_0$ and some sufficinetly small $\eta_0>0$.\\ 
Let $(h_1, \ldots, h_r)$ be an admissible $r$-tuple contained in $[2r^2]$. Then one can find a positive quantity
\[
\tau \geq x^{-o(1)} 
\]
and a positive quantity $u=u(r)$ depending only on $r$
\[
u\asymp \log r 
\]
and a non-negative function $w\::\: P\times\mathbb{Z}\rightarrow\mathbb{R}^+$ supported on $P\times(\mathbb{Z}\cap [-y, y])$ with the following properties:\\
$\bullet$ $w(p, n)=0$ unless 
$$n\equiv 1-(d_p+1)^K (\bmod\: p)\:,\ \text{for some $d_p\in\mathbb{Z}$}$$
$$d_p\not\equiv -1\:(\bmod\:p)\ \text{and}\ n\in\mathcal{G}(p).$$
$\bullet$ Uniformly for every $p\in P$, one has
\[
\sum_{n\in\mathbb{Z}} w(p, n)=\left(1+O\left(\frac{1}{\log_2^{10} x}\right)\right)\tau\:\frac{y}{\log x} 
\]
$\bullet$ Uniformly for every $q\in Q$ and $i=1,\ldots r$ one has
\[
\sum_{p\in P} w(p, q-h_i p)=\left(1+O\left(\frac{1}{\log_2^{10} x}\right)\right)\tau\:\frac{u}{r}\:\frac{x}{2\log^r x} 
\]
$\bullet$ Uniformly for every $h=O(y/x)$ that is not equal to any of the $h_i$, one has
\[
\sum_{q\in Q}\sum_{p\in P} w(p, q-h_p)=O\left(\frac{1}{\log_2^{10} x}\:\tau\: \frac{x}{\log^r x}\:\frac{y}{\log\log x} \right) 
\]
uniformly for all $p\in P$ and $n\in\mathbb{Z}$.
\[
w(p, q)=O\left(x^{1/3+o(1)} \right).  
\]
\end{theorem}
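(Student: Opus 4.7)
The plan is to follow the proof of (\cite{ford2}, Theorem 5), modifying the weight construction to respect the extra $K$-th power restrictions from $\mathcal{B}_{(K)}$ and the equidistribution set $\mathcal{G}(p)$. The underlying engine is a Maynard-type multidimensional Selberg sieve applied to the admissible $r$-tuple $(h_1, \ldots, h_r)$, and the strategy is to separately handle the extra cut-offs as small perturbations of the ``clean'' Maynard weight.

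Concretely, I would set
$$w(p,n) := \mathbf{1}_{n \in \mathcal{G}(p)} \cdot \mathbf{1}_{\mathcal{B}_{(K)}}(p,n) \cdot \mathbf{1}_{|n| \le y} \cdot \Bigl(\sum_{\substack{d_1, \ldots, d_r \\ d_i \mid n + h_i p \ \forall i}} \lambda_{d_1, \ldots, d_r}\Bigr)^2,$$
where $\mathbf{1}_{\mathcal{B}_{(K)}}(p,n)$ encodes $n \equiv 1 - (d_p+1)^K \pmod p$ for some $d_p \not\equiv -1$, and the Maynard weights are
$$\lambda_{d_1, \ldots, d_r} = \Bigl(\prod_{i=1}^r \mu(d_i)\Bigr) F\Bigl(\tfrac{\log d_1}{\log R}, \ldots, \tfrac{\log d_r}{\log R}\Bigr),$$
with $F$ a smooth symmetric function supported on the simplex $\{t_i \ge 0, \sum t_i \le 1\}$ and $R = x^{\theta/2}$ for $\theta$ slightly below $1/2$. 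The function $F$ will be chosen, as in Maynard's work, so as to maximize a certain ratio of quadratic forms, which is what will produce the factor $u \asymp \log r$. The normalizing constant $\tau$ will collect the local densities arising from $\mathcal{B}_{(K)}$ and the Euler factors $\prod_{p \le R}(1 - 1/p)^r$, and the bound $\tau \ge x^{-o(1)}$ will follow from Mertens' theorem.

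Next I would verify the four moment-type identities. For the first identity, I expand the square and apply the Chinese Remainder Theorem to reduce to counting integers in $[-y,y]$ lying in prescribed residue classes modulo $p \prod_i [d_i, d_i']$; the factor $\mathbf{1}_{\mathcal{B}_{(K)}}$ contributes a local density $\asymp 1/\gcd(K, p-1)$ at $p$, and the additional constraint $\mathcal{G}(p)$ removes only a set of density $O(\log_2^{-10} x)$ by the defining estimate $|r(n,u) - r^*(u)| \le (\log x)^{-1/40}$. The second identity is obtained by fixing $q \in Q$, expanding the square and applying the Bombieri--Vinogradov theorem to the inner sum over $p \in P$ in arithmetic progressions modulo $\prod_j [d_j, d_j']$, using the substitution $p \mapsto (q - n)/h_i$ to turn the divisibility constraint $d_j \mid q - h_i p + h_j p$ into a condition on $p \pmod{[d_j, d_j']}$. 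The third identity, for $h \not\in \{h_1, \ldots, h_r\}$, follows from the admissibility of the enlarged tuple: one obtains an extra local factor $\prod_p (1 - v_p/p)$ where $v_p$ is the number of distinct residues occupied, forcing the expression to be smaller by a factor $\log^{-1} x$. The pointwise bound $w(p,q) = O(x^{1/3+o(1)})$ follows from the divisor bound: the inner sum has at most $\tau_r(\prod_i(q + h_i p))^2 = x^{o(1)}$ terms, each of size $R^{O(1)} = O(x^{1/4+o(1)})$.

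The main obstacle is the interaction of the $\mathcal{G}(p)$ cut-off with the Maynard sieve, because $\mathcal{G}(p)$ requires simultaneous equidistribution of $r(n+(h_i-h_l)p, u)$ for all $i,l$ and all $u \pmod K$ coprime to $K$. Showing that replacing the indicator $\mathbf{1}_{n \in \mathcal{G}(p)}$ by $1$ changes every moment by at most $O(\log_2^{-10} x)$ reduces to a second-moment estimate of the form
$$\sum_{n \in [x,y]} \bigl(r(n,u) - r^*(u)\bigr)^2 \ll \frac{y}{(\log x)^{1/20}},$$
which is handled by expanding the square, using orthogonality of residues modulo $s s'$, and invoking Bombieri--Vinogradov for primes $s \in S_u$ in arithmetic progressions. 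The remaining steps are technical adaptations of the calculations in (\cite{ford2}, proof of Theorem 5), with the $K$-th power density factors $1/\gcd(K, p-1)$ and $1/\gcd(K, s-1)$ absorbed cleanly into the constant $\tau$ via the definition of $r^*(u)$ and the primes-in-progressions counts.
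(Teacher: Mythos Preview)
Your overall architecture — start from the Maynard weight $w_n(\mathcal{L}_p)$ for $\mathcal{L}_p=\{n\mapsto n+h_ip\}$, multiply by the two indicators $\mathbf 1_{\mathcal{B}_{(K)}}$ and $\mathbf 1_{\mathcal{G}(p)}$, and show that each cut-off perturbs the Maynard asymptotics only by admissible errors — is exactly the paper's plan. The second-moment idea for $\mathcal{G}(p)$ is also correct, but note that the estimate you wrote is unweighted, and that is not enough: a pointwise bound $w_n\ll x^{1/3+o(1)}$ combined with an unweighted count of bad $n$ gives nothing. The paper computes instead the \emph{weighted} variance $\sum_{n}w_n(\mathcal L_p)\bigl(r(\tilde n,u)-r^*(u)\bigr)^2$, expanding the square and evaluating each cross term $\sum_{n\equiv e\ (s_1s_2)}w_n(\mathcal L_p)$ by reapplying the Maynard asymptotic with $\mathcal A$ a progression modulo $s_1s_2$ (Lemma~\ref{lem514} and Theorem~\ref{thm51818}). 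Your sketch should be amended in this direction.

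The substantive gap is your treatment of $\mathbf 1_{\mathcal{B}_{(K)}}$. For the first identity your ``local density via CRT'' works, since $p$ is fixed and coprime to every $d_i\le R<x/2$, so the $K$-th power residue classes mod $p$ combine cleanly with the sieve support. But in the second identity, with $q$ fixed and $p$ ranging over $P$, the condition becomes ``$1-q$ is a nonzero $K$-th power residue modulo $p$''; this is \emph{not} a congruence on $p$ to any fixed modulus, so Bombieri--Vinogradov cannot detect it. The paper's device is to write the indicator as a character average $\tfrac{1}{D}\sum_{l=0}^{D-1}\chi_l(1-n)$ with $\chi_l$ of order dividing $D=(K,p-1)$ (Lemma~\ref{lem56}); the principal character recovers the unrestricted Maynard sum, while for $l\ge 1$ the resulting twisted sums $\sum_n 1_{\mathcal P}(L(n))\,w_n\,\chi_l(1-n)$ are controlled by P\'olya--Vinogradov in the integer case (Theorem~\ref{thm57}) and, crucially, by the Friedlander--Gong--Shparlinski bound $\sum_{n\le N}\Lambda(n)\chi(n+a)\ll (N^{7/8}q^{1/9}+N^{33/32}q^{-1/18})q^{o(1)}$ for character sums over shifted primes (Lemma~\ref{lem58}, feeding Theorem~\ref{thm59}) in the prime case. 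This character-sum input is the analytic heart of the $K$-version and is absent from your proposal; without it the second and third bullets cannot be established.
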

\noindent In \cite{ford2} we have the following dependency graph for the proof of (\cite{ford2}, Theorem 1).

\[
\text{(\cite{ford2}, Thm. 5)}\ \Rightarrow\  \text{(\cite{ford2}, Thm. 4)}\ \Rightarrow\   \text{(\cite{ford2}, Thm. 2)}\ \Rightarrow\ \text{(\cite{ford2}, Thm. 1)}. \tag{4.2}
\]
Replacing these theorems by their $K$-versions we obtain the following dependency graph for the $K$-version (\cite{ford2}, Thm. 1.1):
\[
\text{(\cite{MR}, Thm. 6.2)} \Rightarrow  \text{(\cite{MR}, Thm. 4.4)} \Rightarrow   \text{(\cite{MR}, Thm. 3.1)} \Rightarrow \text{(\cite{MR}, Thm. 1.1)}. \tag{4.3}
\]
The graphs (4.2) and (4.3) can be combined to the graph:

\vspace{-3mm}
\begin{figure}[H]
\begin{minipage}{0.01\textwidth}
(4.4) 
\end{minipage}
\hfill
\begin{minipage}{0.95\textwidth}
\begin{center}
\includegraphics[scale=0.35]{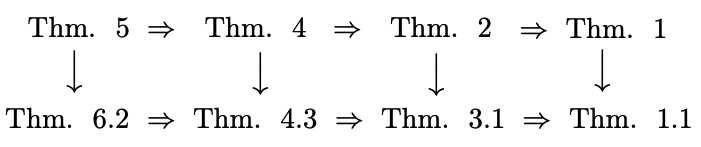}
\end{center}
\end{minipage}
\captionsetup{skip=-6pt}
\end{figure}
\vspace{-3mm}
(with Theorems 1, 2, 4, 5 corresponding to \cite{ford2} and Theorems 1.1, 3.1, 4.3, 6.2 corresponding to \cite{MR})

The horizontal arrows indicate the deduction of Thm. B from Thm. A, the vertical arrows indicate the transition from Thm. A to its $K$-version Thm. A'.\\
Part I of $``$Large gaps with improved order of magnitude and its $K$-version$"$ (Section 4) deals with the  graph (4.5). The end of the graph, Thm. 5 and its $K$-version, Thm. 6.2 is deduced from results of Maynard's paper \cite{maynard} $``$Dense clusters of primes in subsets$"$. The $K$-version, Thm. 6.2 is deduced from its $K$-version. These deductions make up Part II and are the contents of Section 5.\\
The graph (4.4) consists of segments, the last one being
\vspace{-3mm}
\begin{figure}[H]
\begin{minipage}{0.01\textwidth}
(4.5) 
\end{minipage}
\hfill
\begin{minipage}{0.95\textwidth}
\begin{center}
\includegraphics[scale=0.35]{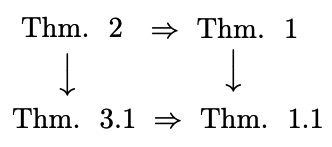}
\end{center}
\end{minipage}
\captionsetup{skip=-3pt}
\end{figure}
\vspace{-3mm}
(with Theorems 1, 2 corresponding to \cite{ford2} and Theorems 1.1, 3.1 corresponding to \cite{MR})

We shall proceed segment by segment starting with (4.5). in this way the transition from a theorem to its $K$-version should become more transparent. 

We start with the upper string in (4.5):
$$\text{Thm. 2}\ \Rightarrow\ \text{Thm. 1}.$$

Let $\vec{a}$ and $\vec{b}$ be as in (\cite{ford2}, Definitions (3.3)-(3.5)). We extend the tuple $(a_p)_{p\leq x}$ of congruence classes $a_p\bmod p$ for all primes $p\leq x$ by setting $a_p:=b_p$ for $p\in P$ and $a_p:=0$ for $p\not\in S\cup P$ and consider the sifted set
$$\mathcal{T}:=\{n\in [y]\setminus[x]\::\: n\not\equiv a_p (\bmod p)\ \text{for all}\ p\leq x\}\:.$$
As in previous versions, one shows that the second residual set consists of a negligible set of smooth numbers and the set $Q$ of primes. Thus we find that 
$$\#\mathcal{T}\ll\frac{x}{\log x}\:.$$
Next let $C$ be a sufficiently large constant such that $\#\mathcal{T}$ is less than the number of primes in $[x, Cx]$. By matching each of these surviving elements to a distinct prime in $[x, Cx]$ and choosing congruence classes appropriately, we thus find congruence classes $a_p \bmod p$ for $p\leq Cx$ which cover all of the integers in $(x,y)$. This finishes the deduction of Theorem 1 from Theorem 2.\\

\textit{$K$-version deduction of (\cite{MR}, Theorem 1.1) from (\cite{MR}, Theorem 3.1})\\
The first two sieving steps are the same as in the $``$upper string$"$ of (\cite{ford2}, \mbox{Thm. 2 $\Rightarrow$ Thm. 1}). Thus the second residual set is again $Q$ apart from a negligible set of smooth integers. The random  choice in the remaining sieving steps now has to be modified.\\
Let 
\begin{align*}
\mathcal{A}_{(K)}&:=\{\vec{a}=(a_s \bmod s)_{s\in S}\::\: \exists\ c_s\ \text{such that}\tag{4.17}\\
&\ \  a_s\equiv 1-(c_s+1)^K(\bmod s), c_s\not\equiv -1 (\bmod s)\}  
\end{align*}
\begin{align*}
\mathcal{B}_{(K)}&:=\{\vec{b}=(b_p \bmod p)_{p\in P}\::\: \exists\ d_p\ \text{such that}\tag{4.18}\\
&\ \  b_p\equiv 1-(d_p+1)^K(\bmod p), b_p\not\equiv -1 (\bmod p)\} . 
\end{align*}
One then has:
\begin{theorem}(\cite{MR}, Theorem 3.1)\label{thm431}\\
Let $x$ be sufficiently large and suppose that $y$ obeys (4.8). Then there are vectors $\vec{a}\in\mathcal{A}_{(K)}$ and $\vec{b}\in\mathcal{B}_{(K)}$, such that 
\[
\#(Q\cap S(\vec{a})\cap S(\vec{b}))\ll \frac{x}{\log x}\:. \tag{4.19}
\]
\end{theorem}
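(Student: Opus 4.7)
The plan is to deduce (\cite{MR}, Theorem 3.1) from the $K$-version of the random construction (\cite{MR}, Theorem 4.3) together with the hypergraph covering corollary (\cite{MR}, Corollary 4.2), mirroring the deduction of (\cite{ford2}, Theorem 2) from (\cite{ford2}, Theorem 4). I would first invoke (\cite{MR}, Theorem 4.3) to obtain random vectors $\vec{\mathbf{a}}=(\mathbf{a}_s\bmod s)_{s\in S}$, random integers $(\mathbf{n}_p)_{p\in P}$, and an admissible tuple $(h_1,\dots,h_r)$. By the support conditions built into the sieve weights of (\cite{MR}, Theorem 6.2),
$$\mathbf{a}_s\equiv 1-(c_s+1)^K\pmod{s},\qquad \mathbf{n}_p\equiv 1-(d_p+1)^K\pmod{p},$$
with $c_s,d_p\not\equiv -1$ modulo $s$ and $p$ respectively, so $\vec{\mathbf{a}}\in\mathcal{A}_{(K)}$ almost surely and every $\mathbf{n}_p$ lies in an admissible residue class for $\mathcal{B}_{(K)}$. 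With probability $1-o(1)$ the vector $\vec{\mathbf{a}}$ is \emph{good} in the sense of (\cite{MR}, Theorem 4.3) and simultaneously $\#(Q\cap S(\vec{\mathbf{a}}))\sim 80 c\,x\log_2 x/\log x$; I would fix any deterministic realisation $(\vec{a},(n_p))$ meeting both of these properties.

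With that realisation fixed, I would apply (\cite{MR}, Corollary 4.2) to $P'=P$, $Q'=Q\cap S(\vec{a})$, and edges $\mathbf{e}_p:=\{n_p+h_ip : 1\le i\le r\}\cap Q\cap S(\vec{a})$. The three hypotheses, namely sparsity $\mathbb{P}(q\in\mathbf{e}_p\mid\vec{\mathbf{a}}=\vec{a})\le x^{-1/2-1/20}$, small codegrees, and the expectation equality with constant $\mathcal{C}\asymp 1/c$ placed inside the admissible window by taking the absolute constant $c$ sufficiently small, are precisely what ``$\vec{a}$ is good'' encodes. Choosing $m$ to be the largest integer at most $\log_3 x/\log 5$ produces random subsets $\mathbf{e}_p'\in\{\emptyset,\mathbf{e}_p\}$ with
$$\#\bigl\{q\in Q' : q\notin\mathbf{e}_p' \text{ for all } p\in P\bigr\}\sim 5^{-m}\#Q'\ll\frac{x}{\log x}$$
with probability $1-o(1)$, since $5^{-m}\log_2 x\asymp 1$. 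Fix such a realisation of the $\mathbf{e}_p'$ and set $b_p:=n_p\bmod p$ for every $p\in P$; by the support property from the first step, $\vec{b}\in\mathcal{B}_{(K)}$. Any $q\in S(\vec{b})$ satisfies $q\not\equiv n_p\pmod{p}$ and hence lies outside $\mathbf{e}_p\supseteq\mathbf{e}_p'$ for every $p\in P$, so
$$Q\cap S(\vec{a})\cap S(\vec{b})\subseteq\bigl\{q\in Q' : q\notin\mathbf{e}_p' \text{ for all } p\in P\bigr\},$$
and the required bound $\#(Q\cap S(\vec{a})\cap S(\vec{b}))\ll x/\log x$ drops out.

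The combinatorial half of the deduction is essentially a transcription of the argument in \cite{ford2}; the real work, and what I expect to be the main obstacle, is checking that restricting $\mathbf{a}_s$ and $\mathbf{n}_p$ to the rather thin $K$-th power residue classes defining $\mathcal{A}_{(K)}$ and $\mathcal{B}_{(K)}$ does not spoil any of the quantitative hypotheses of (\cite{MR}, Corollary 4.2): the expected edge counts must still equal $\mathcal{C}+o(1)$ uniformly for almost all $q$, and the pairwise codegree sums must remain at most $x^{-1/20}$. That arithmetic input is precisely what the set $\mathcal{G}(p)$ and the normalised density $r^{*}(u)$ in (\cite{MR}, Theorem 6.2) are engineered to supply, so the genuine difficulty is delegated to the construction of the good sieve weights carried out in Section 5.
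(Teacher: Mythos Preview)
Your overall approach is exactly the paper's: deduce the theorem from (\cite{MR}, Theorem 4.3) via (\cite{MR}, Corollary 4.2), and rely on the support condition in (\cite{MR}, Theorem 6.2) to guarantee $\vec a\in\mathcal A_{(K)}$ and $\vec b\in\mathcal B_{(K)}$. That is the correct route, and the paper explicitly says the wording of this deduction is identical to the deduction of (\cite{ford2}, Theorem 2) from (\cite{ford2}, Theorem 4).

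There is, however, a slip in your write-up. You fix a deterministic realisation $(\vec a,(n_p))$ before applying the corollary. Once the $n_p$ are fixed, the edges $\mathbf e_p=\{n_p+h_ip\}\cap Q\cap S(\vec a)$ are deterministic sets, so the probabilities $\mathbb P(q\in\mathbf e_p)$ are $0$ or $1$ and the sparsity and uniform-covering hypotheses of the corollary cannot possibly hold. The correct procedure is to fix only a good $\vec a$ and apply the corollary to the \emph{random} edges $\mathbf e_p(\vec a)=\{\mathbf n_p+h_ip\}\cap Q\cap S(\vec a)$, with the $\mathbf n_p$ still random conditional on $\vec{\mathbf a}=\vec a$; this is where the conditional probabilities $\mathbb P(q\in\mathbf e_p(\vec a)\mid\vec{\mathbf a}=\vec a)$ you quote actually come from. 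Correspondingly, the output of the corollary is not $\mathbf e_p'\in\{\emptyset,\mathbf e_p\}$ for a fixed $\mathbf e_p$, but rather $\mathbf e_p'$ lying in the essential range of the random $\mathbf e_p(\vec a)$ together with $\emptyset$. Each nonempty value is of the form $\{n_p'+h_ip\}\cap Q\cap S(\vec a)$ for some $n_p'$ in the support of $\mathbf n_p$, and one then sets $b_p:=n_p'$ (not the original $n_p$). The support property of $w_{(K)}$ still forces $n_p'\equiv 1-(d_p+1)^K\pmod p$, so $\vec b\in\mathcal B_{(K)}$ as you need. With this correction the rest of your argument goes through verbatim.
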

We now sketch the deduction of (\cite{MR}, Theorem 1.1) from (\cite{MR}, Theorem 3.1).\\
Let $\vec{a}$ and $\vec{b}$ be as in (\cite{MR}, Theorem 3.1). We extend the tuple $\vec{a}$ to a tuple $(a_p)_{p\leq x}$ of congruence classes $a_p(\bmod p)$ for all primes $p\leq x$ by setting $a_p:= b_p$ for $p\in P$ and $a_p:=0$ for $p\not\in S\cup P$. Again the sifted set 
$$\mathcal{T}:=\{n\in [y]\setminus[x]\::\: n\not\equiv a_p (\bmod p)\ \text{for all}\ p\leq x\}\:,$$
differs from the set $Q\cap S(\vec{a})\cap S(\vec{b})$ only by a negligible set of $z$-smooth integers. We find
\[
\#\mathcal{T}\ll\frac{x}{\log x}\:.\tag{\cite{MR}, Lemma 3.2}
\]
As in the $``$upper string deduction$"$ (\cite{ford2}, Thm. 2) $\Rightarrow$ (\cite{ford2}, Thm. 1) we now further reduce the sifted set $\mathcal{T}$ by using the prime numbers from the interval $[x, C_0x]$, $C_0>1$ being a sufficiently large constant.\\
One follows - with some modification in the notation - the papers \cite{konyagin, MR}. One distinguishes the cases $K$ odd and $K$ even. We recall the following definition:
\begin{definition}(\cite{MR}, Definition 3.3)\label{defn43}
Let
\begin{equation*}
    \tilde{P} = 
    \begin{cases}
      p\::\: & x<p\leq C_0x,\ p\equiv 2(\bmod\: 3),\ \text{if $K$ is odd}\\
      p\::\: & x<p\leq C_0x,\ p\equiv 3(\bmod\: 3K),\ \text{if $K$ is even.}
    \end{cases}
\end{equation*}
For $K$ even and $\delta>0$, we set
$$U=\{u\in[0,y]\::\:\left(\frac{-u}{p}\right)=1\ \text{for at most}\ \frac{\delta x}{\log x}\ \text{primes}\ p\in\tilde{P}\}.$$
\end{definition}
By \cite{konyagin} we have:\\
\begin{lemma}\label{lem4646} $\#U\ll_{\epsilon} x^{1/2+\epsilon}$.
\end{lemma}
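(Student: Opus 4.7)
The plan is to reduce the counting of $U$ to a question about the squarefree kernel of $u$, and then to exploit a large-sieve style bound for real characters. First observe that, for $K$ even, the congruence condition defining $\tilde{P}$ is arranged precisely so that $\left(\frac{-1}{p}\right)=-1$ for every $p\in\tilde{P}$. Hence $\left(\frac{-u}{p}\right)=1$ if and only if $\left(\frac{u}{p}\right)=-1$, and $u\in U$ is the statement that $u$ is a quadratic residue modulo at least $|\tilde{P}|-\delta x/\log x$ of the primes of $\tilde{P}$. Writing $u=a^2 b$ with $b$ squarefree, one has $\left(\frac{u}{p}\right)=\left(\frac{b}{p}\right)$ whenever $p\nmid a$; since at most $\omega(u)=O(\log y)$ primes of $\tilde{P}$ can divide $a$, membership in $U$ forces
\[
\#\left\{p\in\tilde{P}\::\:\left(\tfrac{b}{p}\right)=1\right\}\ \geq\ |\tilde{P}|-\delta x/\log x-O(\log y).
\]

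Next I would split by the squarefree part $b$. The case $b=1$ just counts perfect squares $u\leq y$, which contribute $\lfloor\sqrt{y}\rfloor=x^{1/2+o(1)}$ and already saturate the target bound. For $b>1$, the character $\chi_b(n)=\left(\frac{b}{n}\right)$ is a non-principal real Dirichlet character modulo $4b$, and one expects $\chi_b(p)=1$ for only about half of the $p\in\tilde{P}$, which would contradict near-totality. In the range $b\leq(\log x)^A$ this is made rigorous by Siegel--Walfisz / the prime number theorem for arithmetic progressions, yielding $\sum_{p\in\tilde{P}}\chi_b(p)=o(|\tilde{P}|)$ and ruling out $u=a^2 b\in U$ for such $b$ as soon as $\delta$ is taken sufficiently small.

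For the intermediate range $(\log x)^A<b\leq y$, the decisive input is Heath-Brown's quadratic large sieve. On a dyadic block $b\in[B,2B]$ I would estimate
\[
\sum_{\substack{b\in[B,2B]\\ b\text{ squarefree}}}\left|\sum_{p\in\tilde{P}}\left(\tfrac{b}{p}\right)\right|^{2k}
\]
by expanding the $2k$-th power into a sum over tuples of primes of $\tilde{P}$, separating the diagonal (where the primes pair into a perfect square, giving a principal-character inner sum of size $B^{1+o(1)}|\tilde{P}|^k$) from the off-diagonal (where the quadratic large sieve produces square-root cancellation). The upshot is that the number of squarefree $b\in[B,2B]$ admitting $\bigl|\sum_{p\in\tilde{P}}\chi_b(p)\bigr|\gg|\tilde{P}|$ is $O_\epsilon(B^\epsilon)$ once $k$ is chosen large enough. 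Summing the contribution from $a^2 b\leq y$ over such $b$ gives at most $\sum_b \sqrt{y/b}\ll_\epsilon y^{1/2+\epsilon}\ll x^{1/2+\epsilon}$.

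Combining the two cases yields $\#U\ll_\epsilon x^{1/2+\epsilon}$, with the tight term coming from the unavoidable count of perfect squares. The main obstacle is the intermediate range of squarefree parts: cheap bounds like Polya--Vinogradov give only trivial savings once $b$ is of polynomial size, and one really needs the square-root cancellation on average over $b$ that Heath-Brown's quadratic large sieve provides via quadratic reciprocity, swapping the roles of modulus and summation variable. Once that analytic input is in hand, dyadic summation of the $b>1$ contributions and the trivial count of squares finish the proof.
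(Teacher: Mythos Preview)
The paper does not prove this lemma itself; it simply cites the bound from Ford--Heath-Brown--Konyagin \cite{konyagin}. Your sketch is essentially the argument given there: pass to the squarefree kernel $b$ of $u$, note that perfect squares ($b=1$) already contribute $O(\sqrt{y})=x^{1/2+o(1)}$, and for $b>1$ use a large-sieve bound for real characters to show that only $x^{o(1)}$ values of $b$ in each dyadic range can satisfy $\bigl|\sum_{p\in\tilde P}(b/p)\bigr|\gg|\tilde P|$, after which summing $\sqrt{y/b}$ over the exceptional $b$ finishes the count. Your identification of Heath-Brown's quadratic large sieve as the decisive input is correct: the ordinary multiplicative large sieve loses a factor of order $B^{2}/x$, which is fatal once $B>\sqrt{x}$, and here $b$ ranges up to $y\asymp x\log x$.

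One step is not right as written, though it is easily repaired. In your $2k$-th moment expansion, after swapping the sums over $b$ and over $(p_1,\dots,p_{2k})$, the inner sum is $\sum_{b}(b/N')$ where $N'$ is the squarefree part of $p_1\cdots p_{2k}$; since $N'$ can be as large as $x^{2k}$, neither P\'olya--Vinogradov nor any ``square-root cancellation'' controls this individual sum usefully, and the off-diagonal swamps the diagonal. The clean route---and the one actually used in \cite{konyagin}---is to apply Heath-Brown's inequality directly to the \emph{second} moment,
\[
\sum_{\substack{B<b\leq 2B\\ b\ \text{squarefree}}}\Bigl|\sum_{p\in\tilde P}\Bigl(\frac{b}{p}\Bigr)\Bigr|^{2}\ \ll_\epsilon\ (B+x)(Bx)^{\epsilon}\,|\tilde P|,
\]
which already yields $\ll_\epsilon x^{\epsilon}$ exceptional $b$ per dyadic block for every $B\leq y$. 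No higher moment is needed, and the separate Siegel--Walfisz treatment of small $b$ is then also unnecessary.
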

\begin{lemma}\label{lem_eupu} There are pairs $(u, p_u)$ with $u\in\mathcal{T}$, $p_u\in\tilde{P}$, such that all $u\in\mathcal{T}$ satisfy a congruence
$$u\equiv 1-(e_u+1)^K(\bmod p_u)\ \text{where}\ e_u\not\equiv-1(\bmod\: p_u)$$
with the possible exceptions of $u$ from an exceptional set $V$ with 
$$\#V\ll x^{1/2+2\epsilon}.$$
\end{lemma}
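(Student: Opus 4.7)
The congruence $u \equiv 1 - (e_u+1)^K \pmod{p_u}$ with $e_u \not\equiv -1 \pmod{p_u}$ is equivalent to requiring that $1-u$ be a non-zero $K$-th power residue modulo $p_u$. My plan is therefore to view the lemma as a bipartite matching problem between $\mathcal{T}$ and $\tilde{P}$: first use Definition \ref{defn43} to collapse the $K$-th power condition into a simple divisibility or Legendre-symbol condition, then execute the matching by a greedy/Hall-type argument, letting the (necessarily small) set of unmatchable $u$'s form the exceptional set $V$.

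The congruence conditions on $p \in \tilde{P}$ in Definition \ref{defn43} are chosen so that $\gcd(K, p-1)$ is as small as possible. For $K$ odd with $p \equiv 2 \pmod{3}$, $\gcd(K, p-1) = 1$, so every non-zero residue modulo $p$ is a $K$-th power and the constraint reduces to $p_u \nmid 1-u$. For $K$ even with $p \equiv 3 \pmod{3K}$, $\gcd(K, p-1) = 2$, so non-zero $K$-th power residues coincide with quadratic residues and the condition becomes $\left(\frac{1-u}{p_u}\right) = 1$. In both cases $\#\tilde{P} \gg_K C_0 x/\log x$ by the prime number theorem for arithmetic progressions, which can be made much larger than $\#\mathcal{T} \ll x/\log x$ by taking $C_0$ large.

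In the odd case each $u \in \mathcal{T}$ is compatible with all but $O(\log x)$ primes of $\tilde{P}$ (those dividing $1-u$), so a single greedy sweep through $\mathcal{T}$ assigns distinct valid primes with room to spare, giving $V = \emptyset$. In the even case I would apply Lemma \ref{lem4646} after the trivial change of variables relating $\left(\frac{-u}{p}\right)$ to $\left(\frac{1-u}{p}\right)$ to produce a set $V_1 \subseteq \mathcal{T}$ with $\#V_1 \ll_{\epsilon} x^{1/2+\epsilon}$ such that every $u \in \mathcal{T} \setminus V_1$ has at least $\delta x/\log x$ primes $p \in \tilde{P}$ satisfying the Legendre-symbol condition. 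A Hall-type matching on the resulting bipartite graph produces distinct $p_u$ for every $u \in \mathcal{T} \setminus V_1$; setting $V := V_1$ (augmented by any residual unmatchable $u$'s) yields $\#V \ll x^{1/2+2\epsilon}$.

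The main obstacle is the verification of the matching step in the even case: one needs the parameter $\delta$ from Lemma \ref{lem4646} to exceed the implicit constant in $\#\mathcal{T} \ll x/\log x$ so that Hall's expansion condition $\#N(A) \geq \#A$ holds for all $A \subseteq \mathcal{T} \setminus V_1$ (short subsets are handled directly by the min-degree bound, long subsets by a double-count using the fact that each prime in $\tilde{P}$ is compatible with at most $\#\mathcal{T}$ elements of $\mathcal{T}$). Since Lemma \ref{lem4646} holds for every $\delta > 0$, this can be arranged by choosing $\delta$ and $C_0$ sufficiently large at the outset.
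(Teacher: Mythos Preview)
Your approach matches the paper's: the proof there is a two-line reduction to the $K$-th power residue condition, dispatched via the congruence class defining $\tilde P$ (every residue is a $K$-th power for odd $K$, quadratic residue condition for even $K$) together with Lemma~\ref{lem4646} for the exceptional set. Your Hall-type/greedy matching to secure \emph{distinct} primes $p_u$ is additional work not present in the paper's argument and, strictly speaking, not demanded by the lemma as stated; it is, however, implicitly needed for the Chinese Remainder step in (4.21), so you have correctly filled a gap the paper leaves tacit.

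One caution: your assertion that $p\equiv 2\pmod 3$ forces $\gcd(K,p-1)=1$ for every odd $K$ is false (take $K=5$, $p=11$); you have inherited what looks like a slip in the paper's Definition~\ref{defn43} (the intended condition on $p$ should guarantee $\gcd(K,p-1)=1$, not merely $3\nmid p-1$). Separately, the passage from $\bigl(\tfrac{-u}{p}\bigr)$ in Lemma~\ref{lem4646} to the relevant symbol $\bigl(\tfrac{1-u}{p}\bigr)$ is not literally a change of variables in $u$, but the large-sieve estimate underlying that lemma is indifferent to any fixed additive shift, so the point is harmless.
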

\textit{Proof.} If $K$ is odd, the congruence 
$$u\equiv 1-(e_u+1)^K(\bmod \: p)\ (\text{with the variable $e_u$})$$
is solvable, whenever $p\equiv 2(\bmod\: 3)$.\\
If $K$ is even, the congruence is solvable whenever $p\equiv 3(\bmod 2K)$ and $\left(\frac{-u}{p}\right)=1$. The claim now follows from (4.20) and Lemma \ref{lem4646}.\qed\\
We now conclude the deduction of Theorem 1.1 by the application of the matrix method. The following definition is borrowed from \cite{maier}.
\begin{definition}\label{defn444}
Let us call an integer $q>1$ a $``$good$"$ modulus, if $L(s,\chi)\neq 0$ for all characters $\chi\bmod q$ and all $s=\sigma+it$ with 
$$\sigma>1-\frac{c_2}{\log(q(|t|+1))}\:.$$
This definition depends on the size of $c_2>0$.
\end{definition}
\begin{lemma}\label{lem444}
There is a constant $c_2>0$, such that, in terms of $c_2$, there exist arbitrarily large values of $x$, for which the modulus
$$P(x)=\prod_{p<x} p$$
is good.
\end{lemma}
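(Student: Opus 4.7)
The plan is to derive the lemma from the classical Landau--Page theorem, strengthened where needed by Siegel's theorem. Recall that there is an absolute constant $c_1>0$ such that, for every $T\geq 2$, each Dirichlet $L$-function $L(s,\chi)$ with $\chi$ primitive of conductor $q\leq T$ is non-vanishing throughout the region $\sigma>1-c_1/\log(T(|t|+1))$, with the sole possible exception of a single real simple zero $\beta^*$ belonging to the $L$-function of one real primitive character $\chi^*$ of some exceptional conductor $q^*\leq T$. In the strip $\sigma>1/2$ the zeros of $L(s,\chi)$ for any $\chi\bmod q$ coincide with those of $L(s,\chi^*)$ for its primitive inducer $\chi^*$, since the extra Euler factors vanish only on $\sigma=0$. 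Hence it suffices to consider primitive characters of conductor $d\mid P(x)$, and because $P(x)$ is squarefree such $d$ is automatically squarefree. Thus $P(x)$ is $c_2$-good, for any $c_2\leq c_1$, exactly when no squarefree $d\mid P(x)$ admits a real primitive character with Siegel zero exceeding $1-c_2/\log P(x)$.

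I would then fix $c_2<c_1$ and argue by contradiction: assume $P(x)$ is bad for every $x>X_0$. For each such $x$, let $\chi^*_x$ denote the obstructing real primitive character, of squarefree conductor $d_x\mid P(x)$, whose Siegel zero $\beta^*_x$ satisfies $\beta^*_x>1-c_2/\log P(x)$; Landau--Page uniqueness at level $T=P(x)$ makes $\chi^*_x$ well defined. A single fixed character cannot play this role for all large $x$, for then its constant Siegel zero $\beta^*<1$ would have to exceed $1-c_2/\log P(x)\to 1$, an impossibility. Consequently the sequence $(\chi^*_x)$ must cycle through infinitely many distinct real primitive characters $\chi^*_1,\chi^*_2,\ldots$, with conductors $d_1<d_2<\cdots$ tending to infinity and Siegel zeros $\beta^*_i\to 1$.

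The final contradiction is extracted from the deep sparsity of exceptional moduli. Tracking how the unique exceptional character changes with the level $T=P(x)$, each $\chi^*_i$ remains the exception only while $T<\exp(c_2/(1-\beta^*_i))$, so the takeover index $X_{i+1}$ satisfies $X_{i+1}\lesssim c_2/(1-\beta^*_i)$. The repulsion of zeros encoded in the Deuring--Heilbronn phenomenon then forces $1-\beta^*_{i+1}\gg (1-\beta^*_i)\log\log(d_id_{i+1})/\log(d_id_{i+1})$, so the conductors $d_i$ must grow too rapidly to stay bounded by the primorials $P(X_i)=e^{X_i(1+o(1))}$ containing them. Combining these chained inequalities with Siegel's ineffective bound $1-\beta^*_i\gg_\varepsilon d_i^{-\varepsilon}$ yields the contradiction, which establishes the existence of arbitrarily large $x$ with $P(x)$ good. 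The main obstacle is the ineffective nature of Siegel's theorem, which prevents any explicit determination of these good $x$; however, the lemma asks only for their existence, so this ineffectivity causes no difficulty for the intended application.
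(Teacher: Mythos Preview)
The paper does not give a proof here; it simply cites Lemma~1 of Maier~\cite{maier}. Your proposal attempts an actual argument and correctly identifies the key inputs: Landau--Page reduces the question to ruling out a persistent Siegel zero, and a single fixed exceptional character cannot obstruct $P(x)$ for all large $x$ since its zero $\beta^*<1$ while the threshold $1-c_2/\log P(x)\to 1$.

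However, your finishing argument has a genuine gap. The Deuring--Heilbronn inequality you state is not in standard form (the usual version has $\log\bigl(1/((1-\beta^*_i)\log q)\bigr)$ in the numerator, not $(1-\beta^*_i)\log\log q$), and in any case the chain you describe does not produce a contradiction: tracing through, one obtains only upper bounds of the type $X_{i+2}\ll X_{i+1}^2$ from your Deuring--Heilbronn step and $X_{i+1}\leq e^{\varepsilon X_i}$ from Siegel, neither of which conflicts with $X_i\to\infty$. The assertion that ``the conductors $d_i$ must grow too rapidly to stay bounded by the primorials'' is not substantiated by the inequalities you have written down.

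Both Deuring--Heilbronn and Siegel are in fact unnecessary, and the result is fully effective. Take $c_2<c_0/3$, where $c_0$ is the Landau--Page constant. If $P(x)$ were bad for every $x\geq X_0$, then for any $X_0\leq x'<x$ with $x<2x'$ the exceptional characters $\chi_{x}$ and $\chi_{x'}$ must coincide: both have conductors dividing $P(x)$, and both have real zeros exceeding $1-c_0/\log P(x)$ (using $\log P(x)/\log P(x')<3=c_0/c_2$ for large $x$), so Landau--Page uniqueness at level $P(x)$ forces them to agree. Chaining over overlapping dyadic intervals shows that a single character $\chi^*$ with fixed zero $\beta^*<1$ obstructs every $x\geq X_0$, whence $\beta^*>1-c_2/\log P(x)$ for all such $x$ and hence $\beta^*\geq 1$, a contradiction. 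This is essentially Maier's original argument.
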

\text{Proof.} This is Lemma 1 of \cite{maier}.\qed
\begin{lemma}\label{lem45}
Let $q$ be a good modulus. Then 
$$\pi(x;q,a)\gg \frac{x}{\phi(q)\log x}\:,$$
where $\phi(\cdot)$ denotes Euler's totient function, uniformly for $(a,q)=1$ and $x\geq q^D$. Here the constant $D$ depends only on on the value of $c_2$ in Lemma \ref{lem444}.
\end{lemma}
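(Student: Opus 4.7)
The plan is to prove this via the standard explicit formula for Dirichlet $L$-functions, exploiting the uniform zero-free region implied by $q$ being a good modulus in the sense of Definition \ref{defn444}. First I would reduce to counting prime powers: by orthogonality of characters,
\[
\psi(x;q,a)=\frac{1}{\phi(q)}\sum_{\chi \bmod q} \bar\chi(a)\,\psi(x,\chi),\qquad \psi(x,\chi):=\sum_{n\leq x}\chi(n)\Lambda(n),
\]
so the principal character produces the main term $x/\phi(q)+O(\log q)$, and the remaining task is to show that the non-principal characters contribute a lower-order term, uniformly for $x\geq q^D$.

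Next I would apply the truncated Perron/explicit formula: for $\chi\neq\chi_0$ and a truncation parameter $T\leq x$,
\[
\psi(x,\chi)=-\sum_{\substack{\rho=\beta+i\gamma\\L(\rho,\chi)=0\\ |\gamma|\leq T}}\frac{x^{\rho}}{\rho}+O\!\left(\frac{x\log^{2}(qxT)}{T}\right),
\]
a formula that is standard and requires no hypothesis beyond $L(s,\chi)$ being entire for $\chi$ non-principal (with the usual modification for primitive-vs-imprimitive characters, which only costs negligibly many prime powers). Now invoke the hypothesis that $q$ is good: every non-trivial zero $\rho=\beta+i\gamma$ with $|\gamma|\leq T$ satisfies $\beta\leq 1-c_2/\log(q(|\gamma|+1))$, so $x^{\beta}\leq x\exp(-c_2\log x/\log(qT))$. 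Combined with the density estimate $N(T,\chi)\ll T\log(qT)$, partial summation over the zeros yields
\[
\psi(x,\chi)\ll x\exp\!\left(-\frac{c_2\log x}{\log(qT)}\right)\log^{2}(qT)+\frac{x\log^{2}(qxT)}{T}.
\]

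Choosing $T$ as a fixed positive power of $x$, say $T=x$, and using $x\geq q^{D}$ so that $\log(qT)\leq (1+1/D)\log x$, the first error term becomes $\ll x\exp(-c_3 D)\cdot(\log x)^{2}$ for some $c_3=c_3(c_2)>0$, while the second is $\ll(\log x)^{2}$. Summing over the $\phi(q)$ characters and comparing with the main term $x/\phi(q)$, I would pick $D=D(c_2)$ large enough that $\exp(-c_3 D)\cdot\phi(q)(\log x)^{2}\leq \tfrac12\cdot x/\phi(q)$ in the range $x\geq q^{D}$ (using $\phi(q)\leq q\leq x^{1/D}$), which guarantees
\[
\psi(x;q,a)\geq\frac{x}{2\phi(q)}.
\]
Finally, partial summation converts this lower bound for $\psi$ into $\pi(x;q,a)\gg x/(\phi(q)\log x)$, after subtracting the contribution of prime powers $p^k$, $k\geq 2$, which is trivially $O(x^{1/2}\log x)$.

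The main obstacle is the bookkeeping in Step 2: one must execute the truncated explicit formula with error terms fully uniform in $q$, and then balance $T$ so that the zero-sum bound and the truncation error are simultaneously smaller than the main term $x/\phi(q)$ across the full range $x\geq q^{D}$. The key leverage is that the \emph{classical} zero-free region (which would give Siegel–Walfisz only for $q\leq (\log x)^{A}$) is replaced here by the much stronger region from Definition \ref{defn444}, so the standard analysis goes through for $q$ as large as $x^{1/D}$ provided $D$ is chosen large relative to $1/c_2$.
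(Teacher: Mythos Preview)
Your argument contains a genuine gap. The claim that with $T=x$ the zero-sum contributes $\ll x\exp(-c_{3}D)(\log x)^{2}$ is a miscalculation: from $\log(qT)\leq(1+1/D)\log x$ one gets
\[
\exp\Bigl(-\frac{c_{2}\log x}{\log(qT)}\Bigr)\;\geq\;\exp\Bigl(-\frac{c_{2}D}{D+1}\Bigr)\;\geq\;e^{-c_{2}},
\]
so the exponent is a bounded constant, not $c_{3}D$. Once corrected, the per-character estimate reads $\psi(x,\chi)\ll x\,e^{-c_{2}}(\log x)^{2}$, which is useless: after averaging over the $\phi(q)$ characters the error in $\psi(x;q,a)$ is still $\asymp x\,e^{-c_{2}}(\log x)^{2}$, and this swamps the main term $x/\phi(q)$ as soon as $\phi(q)\to\infty$. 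No alternative choice of $T$ rescues the argument, because for $q$ as large as $x^{1/D}$ one always has $\log(qT)\geq\log q\geq(\log x)/D$, so the zero-free region can save at most a fixed constant factor per character---never enough to absorb the trivial sum over $\phi(q)$ characters.

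The missing ingredient is Gallagher's log-free zero-density estimate $\sum_{\chi\bmod q}N(\sigma,T,\chi)\ll(qT)^{c(1-\sigma)}$, which controls the zeros collectively across all characters rather than one character at a time; combined with the zero-free region, this is what drives the total zero contribution below $x/\phi(q)$ for $q$ up to a fixed power of $x$. This is precisely the content of \cite{gallagher}, and it is why the paper does not attempt a self-contained proof here but simply records the lemma as Gallagher's result, quoted as Lemma~2 of \cite{maier}.
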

\textit{Proof.} This result, which is due to Gallagher \cite{gallagher}, is Lemma 2 from \cite{maier}. \qed

We now define the matrix $\mathcal{M}$.
\begin{definition}
Choose $x$, such that $P(C_0x)$ is a good modulus. Let $\vec{a}\in\mathcal{A}_{(K)}$ and $\vec{b}\in\mathcal{B}_{(K)}$ be given. From the definition of $\mathcal{A}_{(K)}$ and $\mathcal{B}_{(K)}$, there are 
$$(c_s \bmod\: s)_{s\in S}\ \ \text{and}\ \ (d_p \bmod\: p)_{p\in P}\:,\ c_s\not\equiv -1 (\bmod \: s)\:,\ d_p\not\equiv -1 (\bmod\: p)\:,$$
such that
$$\vec{a}=(1-(c_s+1)^K \bmod s)_{s\in S}\ \ \text{and}\ \ \vec{b}=(1-(d_p+1)^K\bmod p)_{p\in P}\:.$$
We now determine $m_0$ by
$$1\leq m_0< P(C_0x)$$
and the congruences
\begin{align*}
m_0&\equiv c_s (\bmod\: s)\\
m_0&\equiv d_p (\bmod\: p)\\
m_0&\equiv 0 (\bmod\: q)\:,\ q\in(1, x],\: q\not\in S\cup P\tag{4.21}\\
m_0&\equiv e_u (\bmod\: p_u)\:, (e_u, p_u)\ \text{given by Lemma \ref{lem_eupu}}\\
m_0&\equiv g_p (\bmod\: p)\:,\ \text{for all other primes $p\leq C_0x$, $g_p$ arbitrary}.
\end{align*}
\end{definition}
By the Chinese Remainder Theorem $m_0$ is uniquely determined. We let
$$\mathcal{M}=(a_{r,u})_{\substack{1\leq r\leq P(x)^{D-1}\\ 1\leq u\leq y}}$$
with 
$$a_{r,u}=(m_0+1+rP(x))^K+u-1\:.$$
For $1\leq r\leq P(x)^{D-1}$, we denote by
$$R(r)=(a_{r,u})_{0\leq u\leq y}$$
the $r$-th row of $\mathcal{M}$ and for $0\leq u\leq y$, we denote by
$$C(u)=(a_{r, u})_{1\leq r\leq P(x)^{D-1}}$$
the $u$-th column of $\mathcal{M}$.
\begin{lemma}\label{lem46}
We have that $a_{r,u}$, $2\leq u\leq y$ is composite unless $u\in V$.
\end{lemma}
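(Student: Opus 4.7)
The plan is, for each $u\in[2,y]\setminus V$, to exhibit a prime $p$ dividing $a_{r,u}=N^K+u-1$ (where $N:=m_0+1+rP(x)$) that is much smaller than $a_{r,u}$; since $N^K$ is astronomically large, any such divisor forces $a_{r,u}$ to be composite. I would split the range of $u$ according to whether it is eliminated by the small-prime sieve that defined $\mathcal{T}$.

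First, suppose there exists a prime $p\leq x$ with $u\equiv a_p\pmod{p}$. Since $p\mid P(x)$, the shift $rP(x)$ vanishes mod $p$ and $N\equiv m_0+1\pmod{p}$ uniformly in $r$. I would then verify $p\mid a_{r,u}$ in each of the three subcases dictated by (4.21): if $p=s\in S$, the congruence $m_0\equiv c_s\pmod{s}$ gives $N^K\equiv(c_s+1)^K\equiv 1-a_s\pmod{s}$, so $a_{r,u}\equiv u-a_s\equiv 0\pmod{s}$; if $p\in P$, the identical computation with $d_p$ in place of $c_s$ works; and if $p\leq x$ lies outside $S\cup P$, then $m_0\equiv 0\pmod{p}$ yields $N^K\equiv 1\pmod{p}$ and $a_{r,u}\equiv u\equiv a_p\equiv 0\pmod{p}$. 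In every subcase $p<a_{r,u}$, so $a_{r,u}$ is composite. This automatically absorbs every $u\in[2,x]$ that carries a prime factor outside $S\cup P$, in particular every $u$ with a factor in $[2,(\log x)^{20}]\cup(z,x/2]$.

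The remaining $u$ lie in $(x,y]\cap\mathcal{T}$. If $u\notin V$, Lemma \ref{lem_eupu} produces $p_u\in\tilde{P}$ with $u\equiv 1-(e_u+1)^K\pmod{p_u}$, and (4.21) enforces $m_0\equiv e_u\pmod{p_u}$, giving $N\equiv e_u+1\pmod{p_u}$ once the matrix row-shift is absorbed by $p_u$. Raising to the $K$-th power yields $N^K\equiv(e_u+1)^K\equiv 1-u\pmod{p_u}$, hence $p_u\mid a_{r,u}$, and compositeness follows exactly as before.

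The main obstacle is the uniformity in $r$ of this last congruence: since $p_u>x$ one has $p_u\nmid P(x)$ literally, so one must interpret the row-shift $rP(x)$ as a multiple of the modulus $P(C_0x)$ used to pin down $m_0$ by the Chinese Remainder Theorem, so that $p_u$ divides it for every $r$. Once that bookkeeping is granted the argument is a direct case check, resting entirely on the defining congruences of $m_0$ in (4.21) and on the $K$-th power structure built into $\mathcal{A}_{(K)}$ and $\mathcal{B}_{(K)}$ in (4.17) and (4.18).
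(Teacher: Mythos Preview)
Your approach is the same as the paper's: verify directly from the defining congruences of $m_0$ in (4.21) that $a_{r,u}$ is divisible by one of the sieving primes. The paper's proof is in fact nothing more than the terse display of these same implications, so your case split (with $N=m_0+1+rP(x)$) and the three subcases for $p\le x$ are exactly what is being asserted there, only written out in more detail.

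One sentence deserves care. Your transition ``The remaining $u$ lie in $(x,y]\cap\mathcal{T}$'' is not fully justified: an integer $u\in[2,x]$ all of whose prime factors lie in $S\cup P$ --- for instance a prime $u\in S$, a prime $u\in P$, or a $z$-smooth $u$ with no factor $\le(\log x)^{20}$ --- need not satisfy $u\equiv a_p\pmod p$ for any $p\le x$, since for $p\in S\cup P$ the residue class $a_p$ is not $0$. The paper's proof is equally silent on this point (it only lists the implications, not a coverage argument), so this is less a defect in your reading than a lacuna in the survey's formulation; in the underlying paper such $u$ are either absorbed into the exceptional set or the range is taken to be $u>x$.

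You are right to flag the row shift: with the matrix written as $rP(x)$ one has $p_u>x$, so $p_u\nmid P(x)$, and the congruence $N\equiv e_u+1\pmod{p_u}$ fails for general $r$. The argument only goes through with step $P(C_0x)$, which is consistent with the paper's choice of $P(C_0x)$ as the good modulus a few lines earlier and is evidently a typographical slip in the matrix definition rather than a genuine obstacle.
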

\begin{proof}
From the congruences
$$m_0 \equiv c_s (\bmod\: s)\:,\ \text{resp.}\ m_0\equiv d_p (\bmod\: p)\:,\ m_0\equiv 0 (\bmod\: q)\:,\ m_0\equiv e_u (\bmod\: p_u)$$
in (4.21), it follows that for 
$$u\equiv 1-(d_p+1)^K (\bmod\: p)\:,\ u\equiv 0 (\bmod\: q)\:, u\equiv 1-(e_u+1)^K (\bmod\: p_u)$$
we have
$$a_{r,u}\equiv 0 (\bmod\: s)\:, \text{resp.}\ a_{r,u}\equiv 0 (\bmod\: p)\:,\ a_{r,u}\equiv 0 (\bmod\: q)\:,\ a_{r,u}\equiv 0 (\bmod\: p_u)\:.$$
\end{proof}

\begin{definition}\label{defn46}
Let 
$$\mathcal{R}_0(\mathcal{M}):=\{r\::\: 1\leq r\leq P(x)^{D-1}\:,\ m_0+1+rP(x)\ \text{is prime}\}\:,$$
$$\mathcal{R}_1(\mathcal{M}):=\{r\::\: 1\leq r\leq P(x)^{D-1}\:,\ r\in\mathcal{R}_0(\mathcal{M})\:, R(r)\ \text{contains a prime number}\}\:.$$
\end{definition}
\noindent\textit{Remark.} We observe that each $a_{r-1}$ row $R(r)$ with $r\in\mathcal{R}_0(\mathcal{M})$ has as its first element 
$$a_{r-1}=(m_0+1+rP(x))^K\:,$$
the $K$-th power of the prime $m_0+1+rP(x)$.\\
If $r\in \mathcal{R}_0(\mathcal{M})\setminus\mathcal{R}_1(\mathcal{M})$, $a_{r,1}$ is the $K$-th power of a prime of the desired kind.
To deduce Theorem \ref{thm11} from Theorem \ref{thm431} it thus remains to show that \mbox{$\mathcal{R}_0(\mathcal{M})\setminus\mathcal{R}_1(\mathcal{M})$} is nonempty.
\begin{lemma}\label{lem47} We have
$$\#\mathcal{R}_0(\mathcal{M})\gg \frac{P(x)^D}{\phi(P(x))\log(P(x)^D)}\:.$$
\end{lemma}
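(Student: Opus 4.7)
The plan is to recognize $\mathcal{R}_0(\mathcal{M})$ as (essentially) a count of primes in the arithmetic progression $m_0+1 \pmod{P(x)}$ and then apply Gallagher's theorem (Lemma \ref{lem45}), which is available because $x$ has been chosen so that $P(x)$, and hence $P(C_0 x)$, is a good modulus in the sense of Definition \ref{defn444} (Lemma \ref{lem444}).

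First I would verify that $(m_0+1,P(x))=1$. This is a prime-by-prime check across the primes $p<x$, using the congruences (4.21). For $s\in S$, the relation $m_0\equiv c_s\pmod s$ together with $c_s\not\equiv -1\pmod s$ forces $m_0+1\not\equiv 0\pmod s$; for $p\in P$ with $p<x$, the analogous relation $m_0\equiv d_p\pmod p$ with $d_p\not\equiv -1\pmod p$ gives $m_0+1\not\equiv 0\pmod p$; and for every remaining prime $q\in(1,x]\setminus(S\cup P)$, we have $m_0\equiv 0\pmod q$, so $m_0+1\equiv 1\pmod q$. Hence $m_0+1$ is a unit modulo $P(x)$.

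Second, as $r$ ranges over $1\leq r\leq P(x)^{D-1}$, the numbers $n=m_0+1+rP(x)$ are precisely the integers in the interval $\bigl(P(x),\,P(x)^D+P(x)\bigr]$ lying in the residue class $m_0+1\pmod{P(x)}$ (using $1\leq m_0<P(x)$). Therefore
$$\#\mathcal{R}_0(\mathcal{M})=\pi\bigl(P(x)^D+P(x);\,P(x),\,m_0+1\bigr)-\pi\bigl(P(x);\,P(x),\,m_0+1\bigr),$$
which differs from $\pi\bigl(P(x)^D;\,P(x),\,m_0+1\bigr)$ by $O(1)$.

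Finally, since $P(x)$ is a good modulus and $(m_0+1,P(x))=1$, Lemma \ref{lem45} applied with $q=P(x)$, $a=m_0+1$, and $x$ replaced by $P(x)^D$ (which clearly satisfies $P(x)^D\geq P(x)^D$) yields
$$\pi\bigl(P(x)^D;\,P(x),\,m_0+1\bigr)\gg \frac{P(x)^D}{\phi(P(x))\log(P(x)^D)},$$
and combined with the preceding step this gives the claimed lower bound for $\#\mathcal{R}_0(\mathcal{M})$. The only substantive point is the coprimality verification; everything else is a direct bookkeeping with Gallagher's bound.
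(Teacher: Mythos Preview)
Your approach is exactly the paper's: its proof is the single line ``This follows from Lemma~\ref{lem45}.'' One minor correction to your bookkeeping: by construction $1\le m_0<P(C_0x)$ rather than $m_0<P(x)$ (and it is $P(C_0x)$ that is chosen good, from which $P(x)$ inherits goodness with a comparable constant, not the reverse), so the interval containing the primes $m_0+1+rP(x)$ is shifted by $m_0$; this does not disturb the conclusion once $D$ is taken large relative to $C_0$.
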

\begin{proof}
This follows from Lemma \ref{lem45}.
\end{proof}

We obtain an upper estimate for $\mathcal{R}_1(\mathcal{M})$ by the observation that, if $R(r)$ contains a prime number, then 
$$m_0+1+rP(x)\ \ \text{and}\ \ (m_0+1+rP(x))^K+v-1$$
are primes for some $v\in V$.\\
The number
\begin{align*}
t(v)=&\#\{r\::\: 1\leq r\leq P(x)^{D-1},\ m_0+1+rP(x)\\
&\ \ \text{and}\ (m_0+1+rP(x))^K+v-1\ \text{are primes}\}
\end{align*}
is estimated by standard sieves as in Lemma 6.1 of \cite{konyagin}.\\
This concludes the deduction of Theorem \ref{thm11} from Theorem \ref{thm431}.
We now come to the next section in the graph (4.5).
\vspace{-3mm}
\begin{figure}[H]
\begin{center}
\includegraphics[scale=0.35]{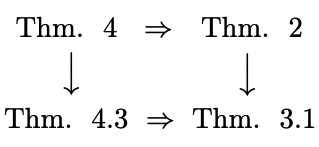}
\captionsetup{skip=-3pt}
\end{center}
\end{figure}
\vspace{-3mm}
We first state a hypergraph covering theorem (Theorem 3 of \cite{ford2}) of a purely combinatorial nature, generalizing a result of Pippenger and Spencer \cite{pip} using the R\"odl nibble method \cite{rodl}. We also state a corollary.\\
Both the deduction of Theorem 2 (Theorem \ref{thm42}) from Theorem 4 (Theorem \ref{thm44}) and its $K$-version, the deduction of Theorem \ref{thm431} from Theorem \ref{thm443} are based on Theorem 3 of \cite{ford2}.
%
%
%
%

\begin{theorem}\label{thm43}(Theorem 3 of \cite{ford2})(Probabilistic covering)\\
There exists a constant $C_0\geq 1$ such that the following holds. Let $D, r, A\geq 1$ and let \mbox{$0<\kappa\leq 1/2$,}  $m\geq 0$ be an integer. Let $\delta>0$ satisfy the smallness bound
\[
\delta\leq \left(\frac{\kappa^A}{C_0\exp(AD)}  \right)^{10^{m+2}} \tag{4.22}
\]
\end{theorem}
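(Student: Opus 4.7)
My plan is to prove Theorem 4.3 by the R\"odl nibble method, processing the $m$ stages sequentially and maintaining by induction a factorised form of the joint survival probabilities. Writing
$$V_j := V \setminus \bigcup_{j' \leq j} \bigcup_{i \in I_{j'}} \mathbf{e}_i'$$
for the alive set after stage $j$, I would establish by induction on $j \in \{0, 1, \dots, m\}$ the estimate
$$\mathbb{P}(e \subseteq V_j) \;=\; \bigl(1 + O_{\leq}(\delta^{1/10^{j+1}})\bigr)\, P_j(e), \qquad (\ast_j)$$
valid for every finite $e \subseteq V$ with $\#e \leq A - 2rj$. The base case $j=0$ is immediate since $V_0 = V$ and $P_0 \equiv 1$.

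For the inductive step, condition on the $\sigma$-algebra $\mathcal{F}_{j-1}$ generated by the previous $j-1$ nibbles and, independently across $i \in I_j$, declare
$$\mathbf{e}_i' \;:=\; \mathbf{e}_i \cdot \mathbf{1}\bigl[\mathbf{e}_i \subseteq V_{j-1},\ \xi_i = 1\bigr],$$
interpreted as $\mathbf{e}_i' = \emptyset$ when the indicator vanishes. The $\xi_i$ are independent Bernoullis whose retention probabilities are tuned so that, at an alive vertex $v$, the aggregated per-vertex kill probability matches the target factor $1 - \exp(-d_{I_j}(v)/P_{j-1}(v))$ needed to pass from $P_{j-1}(v)$ to $P_j(v)$. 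Property (a) of the conclusion is built into this construction. The sparsity bound $\mathbb{P}(v \in \mathbf{e}_i) \leq \delta/\sqrt{\#I_j}$ then permits a second-order Taylor expansion
$$\prod_{i \in I_j}\!\bigl(1 - q_i\,\mathbb{P}(v \in \mathbf{e}_i \mid \mathcal{F}_{j-1})\bigr) \;=\; \exp\!\bigl(-d_{I_j}(v)/P_{j-1}(v)\bigr) + O(\delta),$$
with the lower bound $P_{j-1}(v) \geq \kappa$ keeping the exponent and its derivatives tame.

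The harder part is lifting the pointwise statement to the joint statement $(\ast_j)$ for a set $e$. Two couplings must be controlled. First, two distinct $v_1, v_2 \in e$ can be killed by a common edge $\mathbf{e}_i$; the codegree hypothesis bounds the pairwise correlation by $\sum_{i \in I_j} \mathbb{P}(v_1, v_2 \in \mathbf{e}_i) \leq \delta$, and summing over $\binom{A}{2}$ pairs contributes $O(A^2 \delta)$. Secondly, each selected edge removes up to $r$ new vertices, so to reduce $(\ast_j)$ for a set $e$ of size $A - 2rj$ to the inductive hypothesis $(\ast_{j-1})$ applied to $e$ enlarged by $\mathbf{e}_i$, one needs the enlarged set to still have size at most $A - 2r(j-1)$; this is precisely why the admissible test-set size shrinks by $2r$ each stage and why the edge-size bound $r$ enters the statement. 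Combining the Taylor estimate with the codegree correction multiplies the inductive error by $\bigl(1 + O(A^2 \delta) + O(\exp(AD)\delta)\bigr)$, which must be re-absorbed into $1 + O_{\leq}(\delta^{1/10^{j+1}})$.

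The principal obstacle is this quantitative balancing: the relative error deteriorates by a tenth-root per stage ($\delta^{1/10^j} \to \delta^{1/10^{j+1}}$) while an additive error of size $\exp(O(AD))\,\delta$ is injected each round. Choosing the constant $C_0$ in (4.22) large enough to dominate the implicit pair-count and Taylor remainders, the smallness hypothesis
$$\delta \leq \left(\frac{\kappa^A}{C_0 \exp(AD)}\right)^{10^{m+2}}$$
is exactly the budget that lets the induction close at every level $j \leq m$. Iterating $(\ast_j)$ up to $j = m$ gives conclusion (b); conclusion (a) is automatic from the coupling. The entire argument therefore reduces to two technical lemmas to be verified carefully: a single-vertex Taylor/exponential comparison, and a pairwise-correlation bound via the codegree hypothesis, both glued together by induction on the stage index.
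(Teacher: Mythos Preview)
The paper does not actually prove this theorem: immediately after the statement and its conclusions it writes ``The proof, which we will not give in this paper, is given in Section~5 of \cite{ford2}.'' So there is no in-paper argument to compare against; your sketch is supplying something the paper deliberately omits.

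That said, your outline is in the spirit of the R\"odl-nibble argument that \cite{ford2} uses: process the $m$ stages in order, maintain by induction the approximate factorisation $\mathbb{P}(e\subseteq V_j)\approx P_j(e)$ for test sets $e$ whose size shrinks by $2r$ per stage, and control the error via the sparsity and codegree hypotheses together with the smallness bound on~$\delta$. The role you assign to each hypothesis (edge-size cap $r$, per-edge sparsity, codegree, the $\kappa$ lower bound, the budget~(4.22)) matches the actual proof.

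One step in your sketch is not right as written. You set $\mathbf{e}_i'=\mathbf{e}_i\cdot\mathbf{1}[\mathbf{e}_i\subseteq V_{j-1},\ \xi_i=1]$ and say the Bernoulli parameters $q_i=\mathbb{P}(\xi_i=1)$ are ``tuned'' so that the aggregated kill probability at an alive vertex $v$ equals $1-\exp(-d_{I_j}(v)/P_{j-1}(v))$. But $q_i$ is a single number per edge $i$, while the target depends on $v$ through both $d_{I_j}(v)$ and $P_{j-1}(v)$; you cannot in general solve for $(q_i)_{i\in I_j}$ to hit a vertex-dependent target simultaneously for all $v$. In the actual argument no such tuning occurs: the factor $1/P_{j-1}(v)$ in the exponent arises from conditioning --- given $v\in V_{j-1}$, the event $\{\mathbf{e}_i\subseteq V_{j-1}\}$ has (by the inductive hypothesis applied to the enlarged set $e\cup\mathbf{e}_i$) probability close to $P_{j-1}(\mathbf{e}_i)/P_{j-1}(v)$ relative to the conditioning, and it is this ratio, summed over $i$, that produces the normalised degree. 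Your identification of ``why the admissible test-set size shrinks by $2r$'' is exactly the mechanism that makes this conditioning legitimate. So the fix is not to introduce the $\xi_i$ at all, but to let the renormalisation come out of the inductive hypothesis on the joint survival probabilities; once you make that change, the rest of your outline goes through and matches \cite{ford2}, Section~5.
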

Let $I_1, \ldots, I_m$ be disjoint finite non-empty sets and let $V$ be a finite set. For each $1\leq j\leq m$ and $i\in I_j$, let $\bold{e}_i$ be a random finite subset of $V$. Assume the following:\\
$\bullet$ (Edges not too large) Almost surely for all $j=1,\ldots,m$ and $i\in I_j$, we have
$$\# \bold{e}_i\leq r_i$$
$\bullet$ (Each sieving step is sparse) For all $j=1,\ldots,m$, $i\in I_j$ and $v\in V$,
\[
\mathbb{P}(v\in \bold{e}_i)\leq \frac{\delta}{(\# I_j)^{1/2}} \tag{4.23}
\]
$\bullet$ (Very small codegrees) For every $j=1,\ldots, m$ and distinct $v_1, v_2\in V$,
\[
\sum_{i\in I_j} \mathbb{P}(v_1, v_2\in \bold{e}_i)\leq \delta \tag{4.24}
\]
$\bullet$ (Degree bound) If for every $v\in V$ and $j=1,\ldots, m$ we introduce the normalized degrees
\[
d_{I_j}(v):=\sum_{i\in I_j}\mathbb{P}(v\in \bold{e}_i) \tag{4.25}
\]
and then recursively define the quantities $P_j(v)$ for $j=0,\ldots, m$ and $v\in V$ by setting
\[
P_0(v):=1  \tag{4.26}
\]
and
\[
P_{j+1}(v):= P_j(v)\exp(-d_{I_{j+1}}(v)/P_j(v))  \tag{4.27}
\]
for $j=0, \ldots, m-1$ and $v\in V$, then we have 
\[
d_{I_j}(v)\leq DP_{j-1}(v),\ \ (1\leq j\leq m, v\in V) \tag{4.28}
\]
and
\[
P_j(v)\geq \kappa \ \ (0\leq j\leq m, v \in V)\:. \tag{4.29}
\]
Then we can find random variables $e_i'$ for each $i\in \bigcup_{j=1}^m I_j$ with the following properties:\\
(a) For each $i\in \bigcup_{j=1}^m I_j$, the essential support of $\bold{e}_i'$ is contained in the essential support of $\bold{e}_i$, union the empty set singleton $\{\emptyset\}$. In other words, almost surely $\bold{e}_i'$ is either empty, or is a set that $\bold{e}_i$ also attains with positive probability.\\
(b) For any $0\leq J\leq m$ and any finite subset $e$ of $V$ with $\# e\leq A-2rJ$, one has
\[ 
\mathbb{P}\left( e\subseteq V\setminus \bigcup_{j=1}^J\bigcup_{i\in I_j} \bold{e}_i'\right)=\left(1+O_{\leq}\left(\delta^{1/10^{J+1}}\right) \right)P_j(e) \tag{4.30}
\]
where
\[
P_j(e):=\prod_{v\in e} P_j(v)\:. \tag{4.31}
\]
The proof, which we will not give in this paper, is given in Section 5 of \cite{ford2}.\\
We have the following:
\begin{cor}\label{cor44} (Corollary 4 of \cite{ford2})\\
Let $x\rightarrow \infty$. Let $P', Q'$ be sets with $\# P'\leq x$ and $\# Q' > (\log_2 x)^3$. For each $p\in P'$, let $\bold{e}_p$ be a random subset of $Q'$ satisfying the size bound:
\[
\# \bold{e}_p \leq r=O\left(\frac{\log x\log_3 x}{\log_2^2 x}\right)\ \ (p\in P') \tag{4.32}
\]
Assume the following:\\
$\bullet$ (Sparsity) For all $p\in P'$ and $q\in Q'$
\[
\mathbb{P}(q\in \bold{e}_p)\leq x^{-1/2-1/10}\:.  \tag{4.33}
\]
$\bullet$ (Uniform covering) For all but at most $\frac{1}{(\log_2 x)^2}\# Q'$\  elements $q\in Q'$ we have:
\[
\sum_{p\in P'}\mathbb{P}(q\in \bold{e}_p)=C+O_{\leq}\left(\frac{1}{(\log_2 x)^2}\right)  \tag{4.34}
\]
for some quantity $C$, independent of $q$, satisfying
\[
\frac{5}{4}\log 5\leq C\leq 1\:.  \tag{4.35}
\]
$\bullet$ (Small codegrees) For any distinct $q_1, q_2\in Q'$
\[
\sum_{p\in P'} \mathbb{P}(q_1, q_2\in \bold{e}_p)\leq x^{-1/20}\:. \tag{4.36}
\] 
Then for any positive integer $m$ with 
\[
m\leq \frac{\log_3 x}{\log 5} \tag{4.37}
\]
we can find random sets $\bold{e}_p'\subseteq Q'$ for each $p\in P'$ such that
$$\#\{q\in Q'\::\: q\in \bold{e}_p'\ \text{for all}\ p\in P'\}\sim 5^{-m}\# Q'$$
with probability $1-o(1)$. More generally, for any $Q''\subset Q'$ with cardinality at least \mbox{$(\# Q')/\sqrt{\log_2 x}$} one has
$$\#\{q\in Q''\::\: q\not\in \bold{e}_p'\ \text{for all}\ p\in P'\}\sim 5^{-m}\# Q''$$
with probability $1-o(1)$. The decay rates in the $o(1)$ and $\sim$ notation are uniform in $P', Q', Q''$.\\
\end{cor}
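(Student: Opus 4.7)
The plan is to reduce the corollary to Theorem~\ref{thm43} by running $m$ sieving rounds in sequence, choosing the per-round degrees so that the survival probabilities $P_j(v)$ produced by the recursion (4.26)--(4.27) equal $5^{-j}(1+o(1))$ for every non-exceptional $v\in Q'$. Since (4.30) with $J=m$ and $e=\{q\}$ identifies $P_m(q)$ with the probability that $q$ escapes all $m$ rounds of sieving, this will give the expected residual $\sim 5^{-m}\#Q'$, while (4.30) with $e=\{q_1,q_2\}$ will furnish the second moment needed for concentration.

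Concretely, I would take disjoint index sets $I_j := P'\times\{j\}$ for $j=1,\dots,m$ and, for each $(p,j)$, define a diluted edge $\mathbf{e}_{(p,j)}$ by sampling an independent copy of $\mathbf{e}_p$ and keeping it with probability $q_j := \tfrac{\log 5}{C}\cdot 5^{-(j-1)}$, otherwise setting $\mathbf{e}_{(p,j)}:=\emptyset$, all Bernoulli dilutions mutually independent. The hypothesis $C\geq \tfrac{5}{4}\log 5$ ensures $q_1\leq 1$, and the geometric decay $5^{-(j-1)}$ is chosen so that, for each non-exceptional $v$,
\begin{equation*}
d_{I_j}(v)\;=\;q_j\sum_{p\in P'}\mathbb{P}(v\in \mathbf{e}_p)\;=\;(\log 5)\cdot 5^{-(j-1)}+O\!\bigl((\log_2 x)^{-2}\bigr),
\end{equation*}
which by a one-line induction on (4.26)--(4.27) gives $P_j(v)=5^{-j}(1+o(1))$. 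Hence the degree bound (4.28) holds with $D=\log 5 + o(1)$ and the lower bound (4.29) with $\kappa := \tfrac12\cdot 5^{-m}$.

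The remaining hypotheses fall out mechanically: sparsity (4.23) from (4.33) and $\#I_j\leq x$, very small codegrees (4.24) from (4.36), and the edge size bound from (4.32). For the smallness condition (4.22) I would choose $\delta$ to be a fixed negative power of $x$; since (4.37) forces $10^{m+2}\leq (\log_2 x)^{O(1)}$, both (4.22) and the error term $\delta^{1/10^{m+1}}$ appearing in (4.30) remain in an acceptable range. Invoking Theorem~\ref{thm43} now produces sets $\mathbf{e}_{(p,j)}'$; I would then set $\mathbf{e}_p'$ to be the (a.s.\ unique) non-empty $\mathbf{e}_{(p,j)}'$ among $j=1,\dots,m$, or $\emptyset$ if none exists, which by property (a) lies in the support of $\mathbf{e}_p$. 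Applying (4.30) with $J=m$, $e=\{q\}$ gives $\mathbb{P}(q\text{ survives}) = (1+o(1))\,5^{-m}$ for each non-exceptional $q$; a second application with $e=\{q_1,q_2\}$ shows that two-point survival probabilities factorize up to $o(5^{-2m})$, so that Chebyshev converts the expectation $\sim 5^{-m}\#Q'$ into the claimed high-probability statement. The restricted version for $Q''$ is identical, the assumption $\#Q''\geq \#Q'/\sqrt{\log_2 x}$ being exactly what is needed to absorb the $\#Q'/(\log_2 x)^2$ exceptional vertices into the error.

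The main obstacle is calibrating the smallness bound (4.22) against the iterated lower bound $\kappa = 5^{-m}$: one needs $-\log\delta \gtrsim mA\cdot 10^{m+2}\cdot \log 5$, while simultaneously the error $\delta^{1/10^{m+1}}$ in the conclusion must still tend to zero. The restriction $m\leq \log_3 x/\log 5$ in (4.37) is precisely the regime in which both can be met with $\delta$ a small fixed negative power of $x$; once this is arranged, the rest of the argument is a careful but routine propagation of the exceptional-vertex bookkeeping through the $m$ rounds.
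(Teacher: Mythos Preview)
The paper does not give its own proof of this corollary: it simply writes ``For the proof we refer to \cite{ford2}.'' Your outline is therefore far more substantive than what appears here, and it follows exactly the route taken in \cite{ford2}: split the sieve into $m$ rounds with geometrically decaying thinning probabilities $q_j=\frac{\log 5}{C}\,5^{-(j-1)}$ so that the recursion (4.26)--(4.27) yields $P_j(v)=5^{-j}(1+o(1))$, verify (4.22)--(4.29), and finish with first and second moments via (4.30).

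There is one genuine slip. With \emph{independent} Bernoulli dilutions across $j$, it is not true that the non-empty $\mathbf{e}_{(p,j)}'$ is almost surely unique: several rounds can be active for the same $p$, and nothing in Theorem~\ref{thm43}(a) prevents several of the corresponding $\mathbf{e}_{(p,j)}'$ from being non-empty. The clean fix, and the one actually used in \cite{ford2}, is to randomly \emph{partition} $P'$ rather than take independent copies: assign each $p\in P'$ to round $j$ with probability $q_j$ (and discard it with the leftover probability $1-\sum_j q_j$), which is legitimate precisely because your hypothesis $C\geq \tfrac{5}{4}\log 5$ gives $\sum_{j\geq 1}q_j=\frac{\log 5}{C}\cdot\frac{5}{4}(1-5^{-m})\leq 1$. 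With this modification each $p$ contributes to at most one $I_j$, so setting $\mathbf{e}_p':=\mathbf{e}_{(p,j)}'$ for the unique active $j$ (or $\emptyset$ if $p$ was discarded) is well-defined and lies in the essential support of $\mathbf{e}_p$ as required downstream. All of your degree, sparsity, codegree and smallness calculations go through verbatim after this change.
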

\begin{proof}
For the proof we refer to \cite{ford2}.
\end{proof}
\begin{theorem}\label{thm44} (\cite{ford2}, Theorem 4)(Random construction)\\
Let $x$ be a sufficiently large real number and define $y$ by (4.8). Then there is a quantity $\mathcal{C}$ with
\[
\mathcal{C}\approx \frac{1}{c} \tag{4.38}
\]
with the implied constants independent of $c$, a tuple of positive integers $(h_1,\ldots, h_r)$ with $r\leq \sqrt{\log x}$, and some way to choose random vectors $\vec{\bold{a}}=(\bold{a}_s\ \bmod\: s)_{s\in S}$ and $\vec{\bold{n}}=(\bold{n}_p)_{p\in P}$ of congruence classes $\bold{a}_s \bmod s$ and integers $\bold{n}_p$ respectively, obeying the following:\\
$\bullet$ For every $\vec{a}$ in the essential range of $\vec{\bold{a}}$, one has
\[
\mathbb{P}(q\in \bold{e}_p(\vec{a})\ |\ \vec{\bold{a}}=\vec{a})\leq x^{1/2-1/10}\ \ (p\in P)\:,  \tag{4.39}
\]
where 
$$\bold{e}_p(\vec{a}):=\{\bold{n}_p+h_ip\::\: 1\leq i\leq r\}\cap Q\cap S(\vec{a})\:.$$
$\bullet$ With probability $1-o(1)$ we have that
\[
\#(Q\cap S(\vec{\bold{a}}))\sim 80\:c\:\frac{x}{\log x}\log_2 x.  \tag{4.40}
\]
$\bullet$  Call an element $\vec{a}$ in the essential range of $\vec{\bold{a}}$ good if, for all but at most $\frac{x}{\log x\log_2 x}$ elements $q\in Q\cap S(\vec{a})$, one has
\[
\sum_{p\in P}\mathbb{P}(q\in \bold{e}_p(\vec{a})\ |\ \vec{\bold{a}}=\vec{a})=\mathcal{C}+O_{\leq}\left(\frac{1}{(\log_2 x)^2} \right)  \tag{4.41}
\]
Then $\vec{\bold{a}}$ is good with probability $1-o(1)$.
\end{theorem}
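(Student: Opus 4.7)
The strategy is to feed the sieve weights from Theorem 6.2 (equivalently, Theorem 5 of \cite{ford2}) into an explicit random construction and then verify each bullet by direct computation. I would first invoke Theorem 6.2 with a parameter $r$ chosen in the allowed range and constrained by $r\leq\sqrt{\log x}$, obtaining an admissible $r$-tuple $(h_1,\ldots,h_r)\subset[2r^2]$, a normalization $\tau\geq x^{-o(1)}$, a quantity $u\asymp\log r$, and a non-negative weight function $w(p,n)$ with the stated total mass $W(p):=\sum_n w(p,n)=(1+O(\log_2^{-10}x))\tau y/\log x$ uniform in $p\in P$, the one-dimensional marginal $\sum_{p\in P}w(p,q-h_i p)=(1+O(\log_2^{-10}x))\tau(u/r)x/(2\log^r x)$ uniform in $q\in Q$ and $i$, and the pointwise bound $w(p,q)=O(x^{1/3+o(1)})$. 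Then take $\bold{a}_s$ independent and uniform on $\mathbb{Z}/s\mathbb{Z}$ for each $s\in S$, and, independently of $\vec{\bold{a}}$, take $\bold{n}_p$ for each $p\in P$ with $\mathbb{P}(\bold{n}_p=n)=w(p,n)/W(p)$.

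For the first bullet (sparsity), independence of $\bold{n}_p$ from $\vec{\bold{a}}$ yields
$$\mathbb{P}(q\in\bold{e}_p(\vec{a})\mid\vec{\bold{a}}=\vec{a})\leq\sum_{i=1}^r\frac{w(p,q-h_i p)}{W(p)}\leq\frac{r\cdot x^{1/3+o(1)}}{\tau y/\log x},$$
which, using $\tau\geq x^{-o(1)}$, $y\geq x$, and $r\leq\sqrt{\log x}$, is $\leq x^{-1/2-1/10}$ as desired. For the second bullet, uniformity of $\vec{\bold{a}}$ makes the sieve events independent across $s\in S$, so $\mathbb{P}(q\in S(\vec{\bold{a}}))=\prod_{s\in S}(1-1/s)$. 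Mertens' theorem together with the definitions of $S$ and $z$ gives $\prod_{s\in S}(1-1/s)\sim 80\log_2^2 x/(\log x\log_3 x)$, and multiplying by $|Q|\sim cx\log_3 x/\log_2 x$ yields the expected cardinality $80cx\log_2 x/\log x$. Concentration with probability $1-o(1)$ follows from a standard second-moment computation: for distinct $q,q'$ the joint survival probability factors across $s\in S$ up to a harmless correction supported on the few primes dividing $q-q'$.

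For the third bullet (goodness), for any $q\in Q\cap S(\vec{a})$ independence gives
$$\sum_{p\in P}\mathbb{P}(q\in\bold{e}_p(\vec{a})\mid\vec{\bold{a}}=\vec{a})=\sum_{i=1}^r\sum_{p\in P}\frac{w(p,q-h_i p)}{W(p)}.$$
Substituting the two uniform asymptotics reduces the right-hand side to a value $\mathcal{C}+O(\log_2^{-10}x)$ independent of $q$; expressing $\mathcal{C}$ in terms of the parameters $r,u$ and $y=cx\log x\log_3 x/\log_2 x$, a short calculation (after a suitable choice of $r$ in the admissible range, so that the $u$-factor balances the logarithmic scales) produces $\mathcal{C}\asymp 1/c$ with implied constants independent of $c$. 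Because the $O(\log_2^{-10}x)$ error is uniform in $q$, the exceptional set in the third bullet is in fact empty, so the good event holds deterministically and certainly with probability $1-o(1)$.

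The genuinely hard part is entirely hidden inside Theorem 6.2, whose uniform-in-$q$ asymptotic for $\sum_{p}w(p,q-h_i p)$ rests on Maynard's multidimensional sieve together with Bombieri--Vinogradov-type equidistribution for primes in $P$ and $Q$; without this uniformity, the goodness bullet would only hold on $Q$-average rather than pointwise, and could not be used to apply Corollary 4.4. Granted Theorem 6.2, the only remaining non-trivial step above is the second-moment concentration for $\#(Q\cap S(\vec{\bold{a}}))$, which is a standard Mertens-type estimate and presents no essential obstacle.
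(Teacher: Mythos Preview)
Your construction takes $\bold{n}_p$ independent of $\vec{\bold{a}}$, and this forces $\mathcal{C}$ to be far too small. With the correct normalizations from Theorem~5 of \cite{ford2} (both asymptotics carry $\log^r x$ in the denominator; the $\log x$ you quote for $W(p)$ is a typo), your displayed identity gives
\[
\sum_{i=1}^r\sum_{p\in P}\frac{w(p,q-h_i p)}{W(p)}=(1+o(1))\,\frac{u\,x}{2y}.
\]
Substituting $y=cx\log x\log_3 x/\log_2 x$ and $u\asymp\log r$, this is $\ll\frac{(\log_2 x)^2}{c\,\log x\,\log_3 x}=o(1)$, and no admissible choice of $r\leq\log^{\eta_0}x$ rescues it: one would need $\log r\asymp(\log x\log_3 x)/\log_2 x$. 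The missing factor is exactly $\sigma^{-1}\asymp(\log x\log_3 x)/(\log_2 x)^2$, and (4.38) is not decorative: in the deduction of Theorem~2 one must take $c$ small so that $\mathcal{C}$ exceeds the threshold $\tfrac{5}{4}\log 5$ of Corollary~4, which is impossible if $\mathcal{C}=o(1)$.

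The paper recovers this factor by \emph{not} letting $\bold{n}_p$ be independent of $\vec{\bold{a}}$. One first introduces auxiliary variables $\tilde{\bold{n}}_p$ with your density $w(p,\cdot)/W(p)$, and then, conditionally on $\vec{\bold{a}}=\vec{a}$ (and for $p$ in a set $P(\vec{a})$ where the normalizing constant is well-behaved), defines
\[
\mathbb{P}(\bold{n}_p=n\mid\vec{\bold{a}}=\vec{a})=\frac{Z_p(\vec{a};n)}{X_p(\vec{a})},\qquad Z_p(\vec{a};n)=1_{\,n+h_jp\in S(\vec{a})\ \forall j}\,\mathbb{P}(\tilde{\bold{n}}_p=n),
\]
with $X_p(\vec{a})=\sum_n Z_p(\vec{a};n)\approx\sigma^r$. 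For $q\in Q\cap S(\vec{a})$ and $n=q-h_ip$ the indicator imposes only the $r-1$ conditions $q+(h_j-h_i)p\in S(\vec{a})$ for $j\neq i$, so heuristically one gains $\sigma^{r-1}/\sigma^{r}=\sigma^{-1}$ and obtains $\mathcal{C}=\dfrac{u}{\sigma}\cdot\dfrac{x}{2y}\asymp\dfrac{1}{c}$. This conditioning, however, destroys your claim that the exceptional set in the third bullet is empty: the sum $\sum_p\mathbb{P}(q\in\bold{e}_p(\vec{a})\mid\vec{\bold{a}}=\vec{a})$ now genuinely depends on $\vec{a}$ through these extra indicators, and a second-moment computation over $\vec{\bold{a}}$ (together with control of the exceptional primes $p\notin P(\vec{\bold{a}})$) is needed to show concentration for all but $O\big(x/(\log x\log_2 x)\big)$ of the $q\in Q\cap S(\vec{\bold{a}})$. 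That second-moment step, not the invocation of Theorem~5, is the substantive content of the deduction.
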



We now show that Theorrem \ref{thm44} implies Theorem \ref{thm43}. By (4.38) we may choose $0<c<1/2$ small enough so that (4.35) holds. Take
$$m:=\left[ \frac{\log_3 x}{\log 5} \right]\:.$$
Now let $\vec{\bold{a}}$ and $\vec{\bold{n}}$ be the random vectors guaranteed by Theorem \ref{thm44}. Suppose that we are in the probability $1-o(1)$ event that $\vec{\bold{a}}$ takes a value $\vec{a}$ which is good and such that (4.40) holds. Fix some $\vec{a}$ within this event.
We may apply Corollary \ref{cor44} with $P' = P$ and $Q' = Q \cap S(\vec{a})$ for the random variables $\bold{n}_p$ conditioned to
$\vec{\bold{a}} = \vec{a}$. A few hypotheses of the Corollary must be verified. First, (4.34) follows from (4.10). The small
codegree condition (4.36) is also quickly checked. Indeed, for distinct $q_1, q_2\in  Q',$ if $q_1, q_2 \in e_p(\vec{a})$ then
$p|q_1-q_2$. But $q_1-q_2$ is a nonzero integer of size at most $x \log x$, and is thus divisible by at most one prime
$p_0 \in P'$. Hence
$$\sum_{p\in P'}\mathbb{P}(q_1, q_2\in \bold{e}_p(\vec{a}))=\mathbb{P}(q_1, q_2\in \bold{e}_{p_0}(\vec{a}))\leq x^{-1/2-1/10}\:,$$
the sum on the left side being zero if $p_0$ doesn't exist.\\ 
By Corollary \ref{cor44}, there exist random variables $\bold{e}'_p(\vec{a})$,
whose essential range is contained in the essential range of $\bold{e}_p(\vec{a})$ together with $\emptyset$, and satisfying
$$\{q\in Q\cap S(\vec{a})\::\: q\not\in \bold{e}_p'(\vec{a})\ \text{for all}\ p\in P\}\sim 5^{-m}\#(Q\cap S(\vec{a}))\ll \frac{x}{\log x}$$
with probability $1-o(1)$, where we have used (4.40). Since
$$\bold{e}_p'(\vec{a})=\{\bold{n}_p'+h_ip\::\: 1\leq i \leq r\}\cap Q\cap S(\vec{a})$$
for some random integer $\bold{n}_p'$, it follows that
$$\{q\in Q\cap S(\vec{a})\::\: q\not\equiv \bold{n}_p'(\bmod p)\ \text{for all}\ p\in P\}\ll \frac{x}{\log x}$$
with probability $1-o(1)$. Taking a specific $\vec{\bold{n}'} = \vec{n'}$ for which this relation holds and setting $b_p=n_p'$ for all
$p$ concludes the proof of the claim (4.17) and establishes Theorem \ref{thm42} (Theorem 2 of \cite{ford2}).

We now come to the $K$-version of the deduction Thm. 4 $\Rightarrow$ Thm. 2, the $``$lower string$"$ Thm. 4.3 $\Rightarrow$ Thm. 3.1 of the section
\vspace{-3mm}
\begin{figure}[H]
\begin{center}
\includegraphics[scale=0.35]{graph3}
\captionsetup{skip=-3pt}
\end{center}
\end{figure}
\vspace{-3mm}

\begin{theorem}\label{thm443} (\cite{MR}, Theorem 4.18 - Random construction)\\
Let $x$ be a sufficiently large real number and define $y$ by (4.8). Then there is a quantity $C$ with 
\[
C\asymp\frac{1}{c}
\]
with the implied constants independent of $c$, a tuple of positive integers
$(h_1,\ldots,h_r)$ with $r\leq \sqrt{\log x}$, and some way to choose random vectors 
$\vec{\bold{a}}=\: (\bold{a}_s\:\bmod\: s)_{s\in\mathcal{S}}$ and ${\vec{\bold{n}}}=(\bold{n}_p)_{p\in\mathcal{P}}$ of congruence classes $\bold{a}_s\:\bmod\: s$
and integers $\bold{n}_p$ respectively, obeying the following:
\begin{itemize}
\item For every $\vec{a}$ in the essential range of $\vec{\bold{a}}$, one has
$$\mathbb{P}(q\in\bold{e}_p(\vec{a})\:|\:\vec{\bold{a}}=\vec{a})\leq x^{-1/2-1/10}\ \ (p\in\mathcal{P})\:,$$
where $\bold{e}_p(\vec{a}):=\{ \bold{n}_p+h_ip\::\: 1\leq i\leq r \}\cap {Q}\cap S(\vec{a})$.
\item With probability $1-o(1)$ we have that 
\[
\#({Q}\cap S(\vec{\bold{a}}))\sim 80c\:\frac{x}{\log x}\: \log_2 x\:.
\]
\item Call an element $\vec{a}$ in the essential range of $\vec{\bold{a}}$ good if,
for all but at most $\frac{x}{\log x\log_2 x}$ elements $q\in{Q}\cap S(\vec{a})$, one has
\[
\sum_{p\in\mathcal{P}}\mathbb{P}(q\in\bold{e}_p(\vec{a})\:|\:\vec{\bold{a}}=\vec{a})=C+O_{\leq}\left(\frac{1}{(\log_2 x)^2} \right)\:.
\]
Then $\vec{\bold{a}}$ is good with probability $1-o(1)$.
\end{itemize}
\end{theorem}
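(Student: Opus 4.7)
My plan is to adapt the random construction of (\cite{ford2}, Theorem 4), substituting the $K$-version sieve weights from (\cite{MR}, Theorem 6.2) for those of (\cite{ford2}, Theorem 5). Since the weights $w(p,n)$ of Theorem 6.2 are already supported on residues $n\equiv 1-(d_p+1)^K\pmod{p}$ with $d_p\not\equiv -1\pmod{p}$ and on the good set $\mathcal{G}(p)$, the $\mathcal{B}_{(K)}$ constraint on the $\bold{n}_p$'s and the compatibility with $\vec{\bold{a}}$ will be built into the random measure from the outset.

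Concretely, I would fix an admissible $r$-tuple $(h_1,\ldots,h_r)\subset[2r^2]$ with $r_0\leq r\leq\log^{\eta_0}x$ and the weight $w$ from (\cite{MR}, Theorem 6.2). I would then sample $\bold{a}_s$ for each $s\in S$ independently and uniformly at random from
$$\{1-(c+1)^K\bmod s : c\in\mathbb{Z}/s\mathbb{Z},\ c\not\equiv -1\pmod{s}\},$$
forcing $\vec{\bold{a}}\in\mathcal{A}_{(K)}$, and conditional on $\vec{\bold{a}}=\vec{a}$ I would sample $\bold{n}_p$ independently across $p\in P$ from the law
$$\mathbb{P}(\bold{n}_p=n\mid\vec{\bold{a}}=\vec{a})=\frac{w(p,n)}{\sum_m w(p,m)},$$
so that $\vec{\bold{b}}=(\bold{n}_p\bmod p)_{p\in P}$ automatically lies in $\mathcal{B}_{(K)}$. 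The first bullet then follows from the pointwise estimate $w(p,n)=O(x^{1/3+o(1)})$ combined with the normalisation $\sum_m w(p,m)\asymp\tau y/\log x\gg x^{1-o(1)}$ from Theorem 6.2, since $q\in\bold{e}_p(\vec{a})$ forces $\bold{n}_p\in\{q-h_i p:1\leq i\leq r\}$ with $r\leq\sqrt{\log x}$, yielding a probability of size at most $x^{-2/3+o(1)}\leq x^{-1/2-1/10}$.

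The cardinality asymptotic $\#(Q\cap S(\vec{\bold{a}}))\sim 80c\,(x/\log x)\log_2 x$ would be established by a first and second moment computation exploiting the independence of the $\bold{a}_s$ together with the density data $r^*(u)$ and the good set $\mathcal{G}$ from (\cite{MR}, Definition 6.1), followed by Chebyshev's inequality. For the uniform covering identity I would compute
$$\sum_{p\in P}\mathbb{P}(q\in\bold{e}_p(\vec{a})\mid\vec{\bold{a}}=\vec{a}) = \sum_{p\in P}\sum_{i=1}^r\frac{w(p,q-h_i p)}{\sum_m w(p,m)},$$
and evaluate the expectation over $\vec{\bold{a}}$ using bullets 2 and 3 of Theorem 6.2 together with $u\asymp\log r$; as in \cite{ford2}, Section 6, an appropriate tuning of the absolute constants produces a mean $\mathcal{C}\asymp 1/c$. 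Concentration around $\mathcal{C}$ to within $O(1/\log_2^2 x)$ for all but $O(x/(\log x\log_2 x))$ values of $q$ would follow from a standard variance estimate.

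The main obstacle is precisely this uniform concentration: the second-moment bound requires that off-diagonal contributions, arising from coincidences among the shifted primes $q-h_i p$ for distinct $(p,i)$, be negligible. This is exactly what bullet 4 of (\cite{MR}, Theorem 6.2) provides, by estimating $\sum_{q\in Q}\sum_{p\in P}w(p,q-hp)$ uniformly for $h=O(y/x)$ with $h\neq h_i$. Beyond this step, the $K$-restrictions have already been absorbed into the construction of $w$ and the sampling law of $\vec{\bold{a}}$, so the remaining probabilistic analysis will run parallel to \cite{ford2}, Section 6 with only cosmetic modifications.
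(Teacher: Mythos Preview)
Your construction has a genuine gap in the definition of $\bold{n}_p$. You sample $\bold{n}_p$ from the law $\mathbb{P}(\bold{n}_p=n\mid\vec{\bold{a}}=\vec{a})=w(p,n)/\sum_m w(p,m)$, which does not actually depend on $\vec{a}$. With this law the covering sum you write down evaluates, via bullets~2 and~3 of Theorem~\ref{thm62}, to $(1+o(1))\,u\,x/(2y)$. Since $u\asymp\log r\asymp\log_2 x$ and $y=cx\,\log x\,\log_3 x/\log_2 x$, this is $\asymp\log_2^2 x/(c\,\log x\,\log_3 x)\asymp\sigma/c\to 0$, not $\asymp 1/c$. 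No tuning of absolute constants can fix this, because $\sigma$ is determined by the sieve $\prod_{s\in S}(1-1/s)$.

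The paper resolves this with a two-stage construction. One first takes the auxiliary variable $\tilde{\bold{n}}_p$ with law $w_{(K)}(p,\cdot)/\sum_m w_{(K)}(p,m)$ (your proposal), and then defines the actual $\bold{n}_p$, conditional on $\vec{\bold{a}}=\vec{a}$, by \emph{further conditioning} on the event that $\tilde{\bold{n}}_p+h_jp\in S(\vec{a})$ for every $j=1,\ldots,r$: for $p$ in a regular set $P(\vec{a})$ one sets
\[
\mathbb{P}(\bold{n}_p=n\mid\vec{\bold{a}}=\vec{a})=\frac{Z_p(\vec{a};n)}{X_p(\vec{a})},\qquad Z_p(\vec{a};n)=1_{n+h_jp\in S(\vec{a})\ \forall j}\,\mathbb{P}(\tilde{\bold{n}}_p=n),
\]
with $X_p(\vec{a})=\sum_n Z_p(\vec{a};n)\approx\sigma^r$. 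This conditioning forces the whole block $\{\bold{n}_p+h_ip\}_i$ into $S(\vec{a})$, so $\bold{e}_p(\vec{a})$ genuinely has about $r$ elements, and boosts the covering sum by exactly the missing factor $\sigma^{-1}$, giving $C=\frac{u}{\sigma}\frac{x}{2y}\asymp 1/c$. The support condition $n\in\mathfrak{G}(p)$ on $w_{(K)}$ is what makes the $K$-correlation bound (Lemma~\ref{lem413}) applicable, ensuring $X_p(\vec{a})\approx\sigma^r$ with high probability; the concentration in the third bullet then comes from this lemma together with the second-moment argument of Lemmas~\ref{lem414}--\ref{lem415}, not from bullet~4 of Theorem~\ref{thm62} as you suggest.
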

\textit{Remark.} The wording of Theorem \ref{thm443} is the same as the wording of  (\cite{ford2}, Theorem 4). However, the contents of these two theorems are different, since the term \textit{essential range} has different meaning.\\
In Theorem \ref{thm44} $\vec{\bold{a}}$ resp. $\vec{\bold{b}}$ assume values of the form $\vec{a}\in \bigtimes_{s\in S} (a_s\bmod s)_{s\in S}$ resp. $\bigtimes_{p\in P} (b_p\bmod p)_{p\in P}$, whereas in Theorem \ref{thm443} they are of the form
$$\vec{a}=(a_s\bmod s,\ s\in S, \exists c_s\ \text{such that}\ a_s\equiv 1-(c_s+1)^K (\bmod s), c_s\not\equiv -1 (\bmod s))$$
$$\vec{b}=(b_p\bmod p,\ p\in P, \exists d_p\ \text{such that}\ b_p\equiv 1-(d_p+1)^K (\bmod p), d_p\not\equiv -1(\bmod p)).$$
Also the wording of the deduction of Theorem \ref{thm431} from Theorem \ref{thm443} is the same as the deduction of Theorem \ref{thm42} (Theorem 2 of \cite{ford2}) from Theorem \ref{thm44} (Theorem 4 of \cite{ford2}).\\
We come to the section:
\[
\textcolor{white}{}\tag{4.42}
\]
\vspace{-12mm}
\begin{figure}[H]
\begin{center}
\includegraphics[scale=0.35]{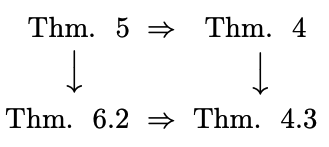}
\captionsetup{skip=-3pt}
\end{center}
\end{figure}
\vspace{-3mm}
of the graph (4.5).

The proof of this theorem relies on the estimates for multidimensional prime-detecting sieves established by the fourth author in \cite{ford2}. \\
We show now that Theorem \ref{thm45} implies Theorem \ref{thm44}.\\
Let $x, y, z, S, P, Q$ be as in Theorem \ref{thm44}. We set $r$ to be the maximum value permitted by Theorem \ref{thm45}, namely
\[ 
r=[\log^{1/5} x] \tag{4.50}
\]
and let $(h_1, \ldots, h_r)$ be the admissible $r$-tuple consisting of the first $r$ primes larger than $r$, thus $h_i=p_{\pi(r)+i}$ for $i=1, \ldots, r$. From the prime number theorem we have $h_i=O(r\log r)$ for $i=1,\ldots, r$ and so we have $h_i=[2r^2]$ for $i=1,\ldots, r$ if $x$ is large enough. We now invoke Theorem \ref{thm45} to obtain quantities $\tau, u$ and a weight $w\::\: P\times\mathbb{Z}\rightarrow\mathbb{R}^+$ with the stated properties. \\
For each $p\in P$, let $\tilde{\bold{n}}_p$ denote the random integer with probability density
$$\mathbb{P}(\tilde{\bold{n}}_p=n):=\frac{w(p, n)}{\sum_{n'\in\mathbb{Z}}w(p, n')}$$
for all $n\in\mathbb{Z}$ (we will not need to impose any independence condition on the $\tilde{\bold{n}}_p$. From (4.46), (4.47) we have
\[
\sum_{p\in P} \mathbb{P}(q=\tilde{n}_p+h_ip)=\left(1+O\left(\frac{1}{\log_2^{10} x}\right)\right)\:\frac{u}{r}\:\frac{x}{2y}\ \ (q\in Q, 1\leq i\leq r) \tag{4.51}
\]
Also from (4.46), (4.49), (4.44) one has
\[
\mathbb{P}(\tilde{\bold{n}}_p=n)\ll x^{-1/2-1/6+o(1)}  \tag{4.52}
\]
for all $p\in P$ and $n\in\mathbb{Z}$.\\
We choose the random vector $\vec{\bold{a}}:=(a_s \bmod\: s)_{s\in S}$ by selecting each $a_s  \bmod \: s$ uniformly at random from $\mathbb{Z}/s\mathbb{Z}$, independently in $s$ and independently of the $\tilde{n}_p$.\\
The resulting sifted set $S(\vec{\bold{a}})$ is a random periodic subset of $\mathbb{Z}$ with density 
$$\sigma:=\prod_{s\in S}\left(1-\frac{1}{s}\right)\:.$$
From the prime number theorem (with sufficiently strong error term), (4.9) and (4.10),
$$\sigma =\left(1+O\left(\frac{1}{\log_2^{10} x}\right)\right)\: \frac{\log(\log^{20} x)}{\log z}= \left(1+O\left(\frac{1}{\log_2^{10} x}\right)\right) \:\frac{80 \log_2 x}{\log x \log_2 x/\log_2 x}\:, $$
so in particular we see from (4.8) that
\[
\sigma y=\left(1+O\left(\frac{1}{\log_2^{10} x}\right)\right)\: 80 cx\log_2 x\:. \tag{4.53}
\]
We also see from (4.50) that 
$$\sigma^r=x^{o(1)}\:.$$
We have a useful correlation bound:
\begin{lemma}\label{lem449}
Let $t\leq \log x$ be a natural number and let $n_1, \ldots, n_t$ be distinct integers of magnitude $O(x^{O(1)})$. Then one has
$$\mathbb{P}(n_1, \ldots, n_t\in S(\vec{\bold{a}}))=  \left(1+O\left(\frac{1}{\log^{16} x}\right)\right)\sigma^t\:.$$
\end{lemma}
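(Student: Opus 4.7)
The plan is to exploit independence of the residues $a_s \bmod s$ across $s \in S$, and then compare the resulting product to $\sigma^t$ by an additive error analysis separating "generic" primes from those that create a collision among the residues of the $n_i$.

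First, since the $a_s$ are chosen independently and uniformly at random in $\mathbb{Z}/s\mathbb{Z}$, the event $\{n_1,\dots,n_t\in S(\vec{\bold a})\}$ factors as
$$\mathbb{P}(n_1,\ldots,n_t\in S(\vec{\bold a}))=\prod_{s\in S}\left(1-\frac{\nu_s}{s}\right),$$
where $\nu_s=\nu_s(n_1,\ldots,n_t)$ denotes the number of distinct residues of $n_1,\ldots,n_t$ modulo $s$. We compare each factor with $(1-1/s)^t$. Call $s\in S$ \emph{good} if $\nu_s=t$ and \emph{bad} otherwise; equivalently, $s$ is bad iff $s$ divides some nonzero difference $n_i-n_j$.

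For a good prime $s$, since $s>\log^{20}x$ and $t\leq\log x$, a Taylor expansion gives
$$\log\frac{1-t/s}{(1-1/s)^t}=O\!\left(\frac{t^2}{s^2}\right),$$
so the total good contribution to $\log$ is $O\!\left(t^2\sum_{s>\log^{20}x}s^{-2}\right)=O(\log^{-18}x)$. For bad primes, each factor satisfies
$$\log\frac{1-\nu_s/s}{(1-1/s)^t}=O\!\left(\frac{t}{s}\right)$$
(using only $\nu_s\geq 1$, $t/s\ll 1$). It remains to bound $\sum_{s\text{ bad}}1/s$. Every bad $s$ divides some $n_i-n_j$ with $i<j$, and since $|n_i-n_j|\leq x^{O(1)}$ there are $\binom{t}{2}\leq\log^2 x$ such differences, each having at most $O(\log x/\log_2 x)$ prime divisors exceeding $\log^{20}x$. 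Hence
$$\sum_{s\text{ bad}}\frac{1}{s}\;\leq\;\sum_{i<j}\sum_{\substack{s\mid n_i-n_j\\ s\in S}}\frac{1}{s}\;\ll\;\log^2 x\cdot\frac{\log x/\log_2 x}{\log^{20}x}\;=\;O(\log^{-17}x),$$
so the bad contribution to $\log$ is $O(t\cdot\log^{-17}x)=O(\log^{-16}x)$.

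Summing the good and bad logarithmic contributions and exponentiating yields
$$\prod_{s\in S}\left(1-\frac{\nu_s}{s}\right)=\bigl(1+O(\log^{-16}x)\bigr)\prod_{s\in S}\left(1-\frac{1}{s}\right)^{t}=\bigl(1+O(\log^{-16}x)\bigr)\sigma^t,$$
which is the desired bound. The only subtle step is the second one: one needs the primes of $S$ to be large enough (here $>\log^{20}x$) both to make the Taylor error $t^2/s^2$ summable to a small quantity and to make the total reciprocal sum over prime divisors of the $n_i-n_j$ genuinely negligible. Everything else is straightforward bookkeeping.
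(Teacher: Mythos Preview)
Your proof is correct and follows essentially the same approach as the paper: factor the probability over $s\in S$ by independence, separate the primes where all $n_i$ are distinct mod $s$ from the $O(\log^3 x)$ ``bad'' primes dividing some $n_i-n_j$, and control each piece using $s>\log^{20}x$ and $t\le\log x$. The only cosmetic difference is that you work additively with logarithms and bound $\sum_{s\text{ bad}}1/s$ directly, whereas the paper tracks multiplicative factors $(1+O(\log^{-19}x))$ over the bad primes; the arithmetic is the same.
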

\begin{proof}
For each $s\in S$, the integers $n_1,\ldots, n_t$ occupy $t$ distinct residue classes modulo $s$, unless $s$ divides one of $n_i-n_j$ for $1\leq i< t$. Since $s\geq \log^{20} x$ and the \mbox{$n_i-n_j$} are of size $O(x^{O(1)})$, the latter possibility occurs at most \mbox{$O(t^2\log x)=O(\log^3 x)$} times. Thus the probability that $\vec{\bold{a}}\bmod s$ avoids all of the $n_1, \ldots, n_t$ is equal to $1-\frac{t}{s}$ except for $O(\log^3 x)$ values of $s$, where it is instead 
$$\left(1+O\left(\frac{1}{\log^{19} x}\right)\right)\left(1-\frac{t}{s}\right)\:.$$
Thus, 
\begin{align*}
\mathbb{P}(n_1, \ldots, n_t\in S(\vec{\bold{a}}))&=  \left(1+O\left(\frac{1}{\log^{19} x}\right)\right)^{O(\log^3 x)}\prod_{s\in S}\left(1-\left(\frac{t}{s}\right)\right)\\
&= \left(1+O\left(\frac{1}{\log^{16} x}\right)\right)\:\sigma^t\:\prod_{s\in S}\left(1+O\left(\frac{t^2}{s^2}\right)\right)\\
&= \left(1+O\left(\frac{1}{\log^{16} x}\right)\right)\:\sigma^t\:.
\end{align*}
\end{proof}
Among other things, this gives the claim (4.40):
\begin{cor}
With probability $1-o(1)$, we have
$$\mathbb{E}\#(Q\cap S(\vec{\bold{a}}))=\left(1+O\left(\frac{1}{\log^{16} x}\right)\right)\:\sigma\#Q$$
and
$$\mathbb{E}\#(Q\cap S(\vec{\bold{a}}))^2=\left(1+O\left(\frac{1}{\log^{16} x}\right)\right)\left(\sigma\#Q+\sigma^2(\#Q)(\#Q-1)\right)$$
and so by the prime number theorem we see that the random variable $\#Q\cap S(\vec{\bold{a}})$ has mean 
$$\left(1+o\left(\frac{1}{\log_2 x}\right)\right)\sigma\frac{y}{\log x}$$ 
and variance 
$$O\left(\frac{1}{\log^{16} x}\left( \sigma\: \frac{y}{\log x} \right)^2 \right).$$
The claim then follows from Chebyshev's inequality (with plenty of
room to spare). 
\end{cor}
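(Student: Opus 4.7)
The plan is to compute the first and second moments of the random variable $N := \#(Q\cap S(\vec{\bold{a}}))$ by linearity and Lemma \ref{lem449}, then derive a variance bound small enough that Chebyshev's inequality forces concentration around the mean, and finally match the mean to the right-hand side of (4.40) using (4.53) and the prime number theorem.

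\textbf{Step 1 (first moment).} Writing $N = \sum_{q\in Q} \mathbf{1}_{q\in S(\vec{\bold{a}})}$ and applying Lemma \ref{lem449} with $t=1$ to each $q\in Q$ (noting $q = O(y) = O(x^{1+o(1)})$), linearity of expectation gives
\[
\mathbb{E} N = \sum_{q\in Q} \mathbb{P}(q\in S(\vec{\bold{a}})) = \left(1+O\!\left(\tfrac{1}{\log^{16} x}\right)\right)\sigma\,\#Q.
\]

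\textbf{Step 2 (second moment).} Expanding $N^2$ and splitting into diagonal and off-diagonal terms,
\[
\mathbb{E} N^2 = \sum_{q\in Q}\mathbb{P}(q\in S(\vec{\bold{a}})) + \sum_{\substack{q_1,q_2\in Q\\ q_1\neq q_2}}\mathbb{P}(q_1,q_2\in S(\vec{\bold{a}})).
\]
Applying Lemma \ref{lem449} with $t=1$ and $t=2$ (the $q_1,q_2$ are distinct integers of size $O(x^{1+o(1)})$) yields
\[
\mathbb{E} N^2 = \left(1+O\!\left(\tfrac{1}{\log^{16} x}\right)\right)\!\left(\sigma\,\#Q + \sigma^2(\#Q)(\#Q-1)\right).
\]

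\textbf{Step 3 (variance).} Subtracting $(\mathbb{E} N)^2$ from $\mathbb{E} N^2$, the leading $\sigma^2(\#Q)^2$ terms cancel up to the error factor, leaving
\[
\mathrm{Var}(N) = O\!\left(\sigma\,\#Q + \tfrac{1}{\log^{16} x}\,\sigma^2(\#Q)^2\right) = O\!\left(\tfrac{1}{\log^{16} x}\bigl(\sigma\,\tfrac{y}{\log x}\bigr)^2\right),
\]
where we used the prime number theorem $\#Q = (1+o(1))\,y/\log x$ (noting $\log y = \log x + O(\log_2 x)$) and the fact that $\sigma\,\#Q \to \infty$ fast enough that the diagonal term is absorbed into the off-diagonal error. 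Combining with Step 1 and (4.53), the mean satisfies
\[
\mathbb{E} N = \left(1+o\!\left(\tfrac{1}{\log_2 x}\right)\right)\sigma\,\tfrac{y}{\log x}.
\]

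\textbf{Step 4 (Chebyshev and conclusion).} Chebyshev's inequality gives that for any $\lambda \to \infty$ with $\lambda = o(\log^{8} x/\log_2 x)$, we have $|N - \mathbb{E} N| \leq \lambda \cdot \sqrt{\mathrm{Var}(N)}$ with probability $1-o(1)$; choosing e.g.\ $\lambda = \log_2 x$, this bounds the deviation by $O(\sigma y/(\log x \log^{7} x))$, which is $o(\sigma y/\log x)$ with room to spare. Hence with probability $1-o(1)$, $N \sim \sigma\,y/\log x$, and substituting (4.53) finally delivers
\[
N = \#(Q\cap S(\vec{\bold{a}})) \sim 80\,c\,\tfrac{x}{\log x}\log_2 x,
\]
which is exactly (4.40). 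The only nontrivial point is confirming that the cancellation in Step 3 really produces a $1/\log^{16} x$ factor in the variance; this is where Lemma \ref{lem449}'s uniform error term is essential, and no separate input is required.
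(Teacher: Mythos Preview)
Your proof is correct and follows essentially the same approach as the paper: compute the first and second moments of $\#(Q\cap S(\vec{\bold{a}}))$ directly from Lemma~\ref{lem449}, deduce the variance bound, and conclude via Chebyshev's inequality and the prime number theorem. The paper's argument (which is embedded in the corollary statement itself) is precisely this first/second moment computation, so there is no substantive difference.
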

For each $p\in P$, we consider the quantity 
\[
X_p(\vec{a}):=\mathbb{P}(\vec{\bold{n}}_p+h_ip\in S(\vec{a})\ \text{for all}\ i=1,\ldots, r), \tag{4.56}
\]
and let $P(\vec{a})$ denote  the set of all the primes $p \in P$ such that
\[
X_p(\vec{a}):=\left(1+O_{\leq}\left(\frac{1}{\log^3 x}\right)\right) \sigma^r. \tag{4.57}
\]
In light of Lemma \ref{lem449}, we expect most primes in $P$ to lie in $P(\vec{a})$, and this will be confirmed below
Lemma \ref{lem411}. We now define the random variables $\bold{n}_p$ as follows. Suppose we are in the event $\vec{\bold{a}} = \vec{a}$ for
some $\vec{a}$ in the range of $\vec{\bold{a}}$. If $p \in P\setminus P(\vec{a})$, 
we set $\bold{n}_p = 0$. Otherwise, if $p \in P(\vec{a})$, we define $n_p$ to be 
the random integer with conditional probability distribution
\[
\mathbb{P}(\vec{n}_p=n\:|\: \vec{\bold{a}}=\vec{a}):=\frac{Z_p(\vec{a};n)}{X_p(\vec{a})}\:,\ \ Z_p(\vec{a}; n)=1_{n+h_j p\in S(\vec{a})\ \text{for}\ j=1,\ldots r} \:\mathbb{P}(\tilde{\bold{n}}_p=n)\:,  \tag{4.58}
\]
with the $\bold{n}_p$ $(p \in P(\vec{a}))$ jointly independent, conditionally on the event $\vec{\bold{a}} = \vec{a}$. From (4.56) we see that these
random variables are well defined.
\begin{lemma}\label{lem410}
With probability $1-o(1)$, we have
\[
\sigma^{-r}\sum_{i=1}^r\sum_{p\in P(\vec{\bold{a}}} Z_p(\vec{\bold{a}}; q-h_ip)= \left(1+O\left(\frac{1}{\log_2^{3} x}\right)\right)\frac{u}{\sigma}\frac{x}{2y} \tag{4.59}
\]
for all but at most $\frac{x}{2\log x\log_2 x}$ of the primes $q\in Q\cap S(\vec{\bold{a}})$.
\end{lemma}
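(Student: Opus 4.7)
The plan is a first–moment / second–moment (Chebyshev + Markov) argument in the randomness of $\vec{\bold{a}}$. Write
\[
L_q(\vec a) \;:=\; \sigma^{-r}\sum_{i=1}^{r}\sum_{p\in P(\vec a)} Z_p(\vec a; q-h_ip), \qquad T \;:=\; \frac{u}{\sigma}\cdot\frac{x}{2y},
\]
so our task is to show that, with probability $1-o(1)$ over $\vec{\bold{a}}$, $L_q(\vec{\bold{a}}) = (1+O(\log_2^{-3}x))T$ for all but $\tfrac{x}{2\log x\log_2 x}$ primes $q\in Q\cap S(\vec{\bold{a}})$. Note $Z_p=0$ off the event $\{q\in S(\vec{a})\}$, so $L_q$ is automatically supported on $\{q\in S(\vec{\bold{a}})\}$.

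\textbf{First moment.} Using
\(
Z_p(\vec a; q-h_ip) = \mathbb{P}(\tilde{\bold{n}}_p = q-h_ip)\cdot \mathbf{1}_{\,q+(h_j-h_i)p\in S(\vec a)\;\forall j},
\)
the $r$ shifts $\{q+(h_j-h_i)p\}_{j=1}^r$ are distinct integers of polynomial size, so by Lemma~\ref{lem449} all lie in $S(\vec{\bold{a}})$ with probability $(1+O(\log^{-16}x))\sigma^r$. Summing over $i,p\in P$ and invoking (4.51) gives
\[
\mathbb{E}_{\vec{\bold{a}}}\sum_{i,\,p\in P} Z_p(\vec{\bold{a}}; q-h_ip) = \bigl(1+O(\log_2^{-10}x)\bigr)\,\sigma^r\, u\, \frac{x}{2y}.
\]
Restricting the $p$-sum to $P(\vec{\bold{a}})$ costs negligibly: a Chebyshev bound on $X_p(\vec{\bold{a}})$ (using Lemma~\ref{lem449} with $t=r$ and $t=2r$), as announced just after (4.57), forces $|P\setminus P(\vec{\bold{a}})|\ll |P|/\log_2^{10}x$ with probability $1-o(1)$. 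Conditioning on $q\in S(\vec{\bold{a}})$ (an event of probability exactly $\sigma$) yields $\mathbb{E}[L_q\mid q\in S(\vec{\bold{a}})] = (1+O(\log_2^{-10}x))\,T$, matching the claim.

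\textbf{Second moment.} Expand
\[
\mathbb{E}_{\vec{\bold{a}}}\Bigl(\sum_{i,p} Z_p(\vec{\bold{a}}; q-h_ip)\Bigr)^{\!2}
=\sum_{i,i',p,p'}\mathbb{P}(\tilde{\bold{n}}_p = q-h_ip)\,\mathbb{P}(\tilde{\bold{n}}_{p'} = q-h_{i'}p')\,\mathbb{P}(\mathcal E),
\]
where $\mathcal E$ is the event that the (at most $2r$) shifts $q+(h_j-h_i)p,\;q+(h_{j'}-h_{i'})p'$ all lie in $S(\vec{\bold{a}})$. In the \emph{generic regime} $p\neq p'$ with no algebraic coincidence, exactly $2r-1$ distinct shifts appear ($q$ repeats when $j=i$, $j'=i'$), so Lemma~\ref{lem449} gives $(1+O(\log^{-16}x))\sigma^{2r-1}$. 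Multiplying by $\sigma^{-2r}$ and then dividing by $\sigma$ for the conditioning, this piece reproduces $T^2$ up to the $(1+O(\log_2^{-10}x))^2$ factor coming from (4.51). The \emph{degenerate} contributions come from $p=p'$ or from algebraic collisions $(h_j-h_i)p=(h_{j'}-h_{i'})p'$ with $(j,i)\neq(j',i')$; the first is controlled by the pointwise density bound $\mathbb{P}(\tilde{\bold{n}}_p=n)\ll x^{-1/2-1/6+o(1)}$ from (4.52), and the second by the "off-shift" weight identity (4.48), which is tailored precisely for sums $\sum_{q,p}w(p,q-hp)$ with $h\notin\{h_1,\ldots,h_r\}$. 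Collating the $\log_2^{-10}$ savings from (4.45)--(4.48) produces $\mathrm{Var}(L_q\mid q\in S(\vec{\bold{a}}))\ll T^2/\log_2^{A} x$ for any fixed $A$.

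\textbf{Completion and main obstacle.} Chebyshev for each $q$ bounds $\mathbb{P}(|L_q-T|>T/\log_2^3 x\mid q\in S(\vec{\bold{a}}))\ll \log_2^{6-A}x$. Markov applied to $\sum_{q\in Q}(L_q-T)^2\mathbf 1_{q\in S(\vec{\bold{a}})}$ then gives that, with probability $1-o(1)$, the number of exceptional $q\in Q\cap S(\vec{\bold{a}})$ is $\ll \sigma\#Q\cdot\log_2^{6-A}x$. Using $\sigma\#Q\asymp \sigma y/\log x \asymp cx\log_2 x/\log x$ (from (4.53)), choosing $A\geq 10$ drives this below $\tfrac{x}{2\log x\log_2 x}$. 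The hard part is step 2: one must systematically enumerate the configurations $(i,i',p,p')$ that produce fewer than $2r-1$ distinct shifts (each missing shift inflates the contribution by a factor $\sigma^{-1}$), and check that the measure of such configurations is suppressed by (4.48). This is the only point at which the full strength of the "good sieve weights" from Theorem~\ref{thm45} — specifically the off-shift estimate — is essential.
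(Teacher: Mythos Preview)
Your approach is essentially the paper's: compute first and second moments (in $\vec{\bold a}$) of the sum, then Markov/Chebyshev to bound the number of bad $q$. The paper packages this by taking moments of $\sum_{q\in Q\cap S(\vec{\bold a})}(\cdots)$ and $\sum_{q\in Q\cap S(\vec{\bold a})}(\cdots)^2$ directly rather than conditioning $q$-by-$q$, but that is cosmetic.

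Two corrections. First, your worry about algebraic collisions $(h_j-h_i)p=(h_{j'}-h_{i'})p'$ with $p\neq p'$, and the claim that the off-shift estimate (4.48) is ``essential'' here, are misplaced. Since $p,p'\in(x/2,x]$ are distinct primes while $|h_{j'}-h_{i'}|\le 2r^2<x/2$, the relation forces $p\mid(h_{j'}-h_{i'})$, hence $h_{j'}=h_{i'}$, and then $h_j=h_i$; so the only coincidence is the trivial one at $q$. The paper accordingly does \emph{not} use (4.48) in this proof; the only degenerate case is $p_1=p_2$, dispatched by the pointwise bound (4.52).

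Second, your passage from $P$ to $P(\vec{\bold a})$ is too brisk: knowing $|P\setminus P(\vec{\bold a})|$ is small does not by itself bound the contribution of those $p$ to the weighted sum $\sum_n Z_p$. The paper handles this by computing $\mathbb{E}\sum_n\sigma^{-r}\sum_{p\in P}Z_p(\vec{\bold a};n)$ and $\mathbb{E}\sum_n\sigma^{-r}\sum_{p\in P(\vec{\bold a})}Z_p(\vec{\bold a};n)$ separately (the first via Lemma~\ref{lem449}, the second via the defining relation (4.57) for $P(\vec a)$ together with Lemma~\ref{lem411}) and subtracting; this is the content of (4.61)--(4.62).
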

Let $\vec{a}$ be good and $q\in Q\cap S(\vec{{a}})$. Substituting definition (4.58) into the left hand side of (4.59), using
(4.57), and observing that $q = \bold{n}_p + h_ip$ is only possible if $p \in P(\vec{\bold{a}})$, we find that
\begin{align*}
\sigma^{-r}\sum_{i=1}^r\sum_{p\in P(\vec{{a}})} Z_p(\vec{{a}}; q-h_ip)&=\sigma^{-r} \sum_{i=1}^r\sum_{p\in P(\vec{{a}})}  X_p(\vec{a})\mathbb{P}(\bold{n}_p=q-h_ip\:|\: \vec{\bold{a}}=\vec{a})\\
&=\left(1+O\left(\frac{1}{\log^{3} x}\right)\right)\sum_{i=1}^r\sum_{p\in P(\vec{{a}})}\mathbb{P}(\bold{n}_p=q-h_ip\:|\: \vec{\bold{a}}=\vec{a})\\
&=\left(1+O\left(\frac{1}{\log^{3} x}\right)\right)\sum_{p\in P}\mathbb{P}(q\in\bold{e}_p(\vec{a})\:|\: \vec{\bold{a}}=\vec{a}).
\end{align*}
where 
$$\bold{e}_p(\vec{a})=\{\bold{n}_p+h_ip\::\: 1\leq i\leq r\}\cap Q\cap S(\vec{a})$$
is as defined in Theorem \ref{thm44} (Theorem 4 of \cite{ford2}). Relation (4.41) (that is, $\vec{\bold{a}}$ is good with probability $1-o(1)$) follows upon noting that by (4.45), (4.50) and (4.53),
$$C:=\frac{u}{\sigma}\frac{x}{2y}\sim\frac{1}{c}.$$
Before proving Lemma \ref{lem410}, we first confirm that $P\setminus P(\vec{\bold{a}})$ is small with high probability.
\begin{lemma}\label{lem411}
With probability $1-O(1/\log^3 x)$, $P(\vec{a})$ contains all but $O\left(\frac{1}{\log^3 x}\frac{x}{\log x}\right)$ of the primes $p\in P$. In particular,
$$\mathbb{E}\#P(\vec{\bold{a}})=\#P\left(1+O\left(\frac{1}{\log^3 x}\right)\right).$$
\end{lemma}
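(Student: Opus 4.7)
The plan is to estimate the first and second moments of $X_p(\vec{\bold{a}})$ using Lemma \ref{lem449}, apply Chebyshev's inequality for each individual $p \in P$, and then aggregate over $p$ by linearity of expectation and Markov's inequality.

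For the first moment, fix $p \in P$ and condition on the (independent) random variable $\tilde{\bold{n}}_p$ taking any value $n$ in its support. The $r$ integers $n + h_1 p, \ldots, n + h_r p$ are pairwise distinct (since the $h_i$ are distinct and $p > 0$), of magnitude $O(x^{O(1)})$, and their number $r = [\log^{1/5} x]$ is at most $\log x$. Hence Lemma \ref{lem449} yields the joint sieve probability $(1 + O(1/\log^{16} x))\sigma^r$, and averaging over the law of $\tilde{\bold{n}}_p$ gives $\mathbb{E}[X_p(\vec{\bold{a}})] = (1 + O(1/\log^{16} x))\sigma^r$. For the second moment, introduce an independent copy $\tilde{\bold{n}}'_p$ of $\tilde{\bold{n}}_p$ and expand $X_p(\vec{\bold{a}})^2$ as a double expectation. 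Split the resulting sum over $(n,n')$ into a \emph{generic case} in which the $2r$ shifts $\{n + h_i p\} \cup \{n' + h_i p\}$ are pairwise distinct, and a \emph{collision case} in which $n' - n = (h_i - h_j)p$ for some $i \ne j$. In the generic case Lemma \ref{lem449} applies (since $2r \le \log x$) and yields $(1 + O(1/\log^{16} x))\sigma^{2r}$. In the collision case, the pointwise sparsity bound (4.52), $\mathbb{P}(\tilde{\bold{n}}_p = m) \ll x^{-1/2 - 1/6 + o(1)}$ uniformly in $m$, bounds the total mass of collisions by $O(r^2 x^{-1/2 - 1/6 + o(1)})$, which is negligible compared with $\sigma^{2r}/\log^{16} x$ because $\sigma^{r} = x^{o(1)}$ (so $\sigma^{2r} \ge x^{-o(1)}$). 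Combining these, $\mathrm{Var}(X_p(\vec{\bold{a}})) \ll \sigma^{2r}/\log^{16} x$.

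To finish, Chebyshev's inequality bounds the probability that $X_p(\vec{\bold{a}})$ deviates from its mean by more than $\sigma^r/(2 \log^3 x)$ by $O(1/\log^{10} x)$; combined with the first-moment estimate this gives $\mathbb{P}(p \notin P(\vec{\bold{a}})) \ll 1/\log^{10} x$ for every $p \in P$. Summing over $p \in P$, linearity of expectation yields $\mathbb{E}\,\#(P \setminus P(\vec{\bold{a}})) \ll \#P/\log^{10} x \ll (x/\log x)/\log^{10} x$, which immediately gives the displayed expectation identity for $\mathbb{E}\,\#P(\vec{\bold{a}})$. Markov's inequality then shows that $\#(P \setminus P(\vec{\bold{a}})) \ll (1/\log^3 x)(x/\log x)$ outside an event of probability $O(1/\log^{7} x)$, which is stronger than the $O(1/\log^3 x)$ claimed.

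The main technical step is the variance estimate, and specifically the control of the collision contribution to the second moment. This relies on the non-trivial pointwise density bound on $\tilde{\bold{n}}_p$ furnished by (4.52), which in turn rests on the $L^\infty$-type bound (4.49) of Theorem \ref{thm45}, combined with the observation that $r = [\log^{1/5} x]$ is small enough that $\sigma^{2r}$ dominates any polynomial saving in $x$.
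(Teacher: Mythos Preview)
Your proof is correct and follows the same route as the paper: first and second moments of $X_p(\vec{\bold a})$ via Lemma~\ref{lem449}, the collision contribution to the second moment controlled by the pointwise density bound (4.52) together with $\sigma^{2r}=x^{o(1)}$, then Chebyshev for each $p$ followed by linearity of expectation and Markov. One minor slip: your collision case should also include the diagonal $n=n'$ (where the $2r$ shifts collapse to $r$ and Lemma~\ref{lem449} would give $\sigma^r$, not $\sigma^{2r}$), but this event has probability $O(x^{-1/2-1/6+o(1)})$ by the same (4.52) bound and is equally negligible.
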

\begin{proof}
By linearity of expectation and Markov's inequality, it suffices to show that for each $p \in P$, we have $p \in P(\vec{\bold{a}})$ with probability $1-O(\frac{1}{\log^6 x})$. It suffices to show that
\[
\mathbb{E}X_p(\vec{\bold{a}})=\mathbb{P}(\tilde{\bold{n}}_p+h_ip\in S(\vec{\bold{a}})\ \text{for all}\ i=1,\ldots r)=\left(1+O\left(\frac{1}{\log^{12} x}\right)\right)\sigma^r  \tag{4.59}
\]
and
\[
\mathbb{E}X_p(\vec{\bold{a}})^2=\mathbb{P}(\tilde{\bold{n}}_p^{(1)}+h_ip, \tilde{\bold{n}}_p^{(2)}+h_ip\in S(\vec{\bold{a}})\ \text{for all}\ i=1,\ldots r)=\left(1+O\left(\frac{1}{\log^{12} x}\right)\right)\sigma^{2r}  \tag{4.60}
\]
where $\tilde{\bold{n}}_p^{(1)}$, $\tilde{\bold{n}}_p^{(2)}$ are independent copies of $\tilde{\bold{n}}_p$ that are also independent of $\vec{\bold{a}}$.
\end{proof}
The claim (4.59) follows from Lemma \ref{lem449} (performing the conditional expectation over $\tilde{\bold{n}}_p$ first). A
similar application of Lemma \ref{lem449} allows one to write the left-hand side of (4.60) as
$$\left(1+O\left(\frac{1}{\log^{16} x}\right)\right) \mathbb{E} \sigma^{\#\{\tilde{\bold{n}}_p^{(l)}+h_ip\::\: i=1,\ldots, r\:;\: l=1,2\}}.$$
From (4.51) we see that the quantity $\#\{\tilde{\bold{n}}_p^{(l)}+h_ip\::\: i=1,\ldots, r\:;\: l=1,2\}$ is equal to $2r$ with probability
$1-O(x^{-1/2-1/6+o(1)})$, and is less than $2r$ otherwise. The claim now follows from (4.53). \\
\textit{Proof of Lemma \ref{lem410}}. We first show that replacing $P(\vec{\bold{a}})$ with $P$ has negligible effect on the sum, with probability 
$1-o(1)$. Fix $i$ and substitute $n = q - h_ip$. By Markov's inequality, it suffices to show that
\[
\mathbb{E}\sum_{n}\sigma^{-r}\sum_{p\in P\setminus P(\vec{\bold{a}})} Z_p(\vec{\bold{a}}; n)=o\left(\frac{u}{\sigma}\frac{x}{2y}\frac{1}{r}\frac{1}{\log_2^3 x}\frac{x}{\log x\log_2 x}  \right). \tag{4.61}
\]
by Lemma \ref{lem449}, we have
\begin{align*}
\mathbb{E}\sum_{n}\sigma^{-r}\sum_{p\in P} Z_p(\vec{\bold{a}}; n)&=
\sigma^{-r}\sum_{p\in P} \sum_{n} \mathbb{P}(\tilde{\bold{n}}_p=n)\mathbb{P}(n+h_jp\in S(\vec{\bold{a}})\ \text{for}\ j=1,\ldots, r)\tag{4.62}\\
&=\left(1+O\left(\frac{1}{\log^{16} x}\right)\right)\# P.
\end{align*}
Next, by (4.56) and Lemma \ref{lem411} we have
\begin{align*}
\mathbb{E}\sum_{n}\sigma^{-r}\sum_{p\in P(\vec{\bold{a}})} Z_p(\vec{\bold{a}}; n)&=\sigma^{-r}\sum_{\vec{a}}\mathbb{P}(\vec{\bold{a}}=\vec{a})\sum_{p\in P(\vec{\bold{a}})} X_p(\vec{a})\\
&=\left(1+O\left(\frac{1}{\log^{3} x}\right)\right)\mathbb{E}\#P(\vec{\bold{a}})=\left(1+O\left(\frac{1}{\log^{3} x}\right)\right) \# P;
\end{align*}
subtracting, we conclude that the left-hand side of (4.61) is \mbox{$O(\#P/ \log^3 x) = O(x/ \log^4 x)$.} The claim then
follows from (4.8) and (4.42).
By (4.61), it suffices to show that with probability $1-o(1)$, for all but at most 
$\frac{x}{2 \log x \log_2 x}$ primes $q \in Q \cap S(\vec{\bold{a}})$, one has
\[
\sum_{i=1}^r \sum_{p\in P} Z_p(\vec{\bold{a}} ; q-h_i p)=\left(1+O_{\leq}\left(\frac{1}{\log_2^3 x}\right)\right) \sigma^{r-1}u\frac{x}{2y}.  \tag{4.63}
\]
Call a prime $q\in  Q$ \text{bad} if $q \in Q \cap S(\vec{\bold{a}})$ but (4.63) fails. Using Lemma \ref{lem449} and (4.51), we have
\begin{align*}
&\mathbb{E}\left[ \sum_{q\in Q\cap S(\vec{\bold{a}})}\sum_{i=1}^r \sum_{p\in P} Z_p(\vec{\bold{a}} ; q-h_i p)\right]\\
&\ \ \ =\sum_{q, i, p}\mathbb{P}(q+(h_j-h_i)p\in S(\vec{\bold{a}})\ \text{for all}\ j=1,\ldots, r)\mathbb{P}(\tilde{\bold{n}}_p=q-h_ip)\\
&\ \ \ =\left(1+O\left(\frac{1}{\log_2^{10} x}\right)\right) \frac{\sigma y}{\log x}\sigma^{r-1} u\frac{x}{2y}
\end{align*}
and
\begin{align*}
&\mathbb{E}\left[ \sum_{q\in Q\cap S(\vec{\bold{a}})}\left(\sum_{i=1}^r \sum_{p\in P} Z_p(\vec{\bold{a}} ; q-h_i p)\right)^2\right]\\
&=\sum_{\substack{p_1, p_2, q \\ i_1, i_2}} \mathbb{P}(q+(h_j-h_{i_l})p_l\in S(\vec{\bold{a}})\ \text{for all}\ j=1,\ldots, r; l=1,2)\\
&\ \ \times \mathbb{P}(\tilde{\bold{n}}^{(1)}_{p_1}=q-h_{i_1}p_1) \mathbb{P}(\tilde{\bold{n}}^{(2)}_{p_2}=q-h_{i_2}p_2)\\
&=\left(1+O\left(\frac{1}{\log_2^{10} x}\right)\right) \frac{\sigma y}{\log x}\left(\sigma^{r-1} u\frac{x}{2y} \right)^2,
\end{align*}
where $(\tilde{\bold{n}}^{(1)}_{p_1})_{p_1\in P}$ and $(\tilde{\bold{n}}^{(2)}_{p_2})_{p_2\in P}$ are independent copies of  $(\tilde{\bold{n}}_{p})_{p\in P}$ over $\vec{\bold{a}}$. In the last step we used the
fact that the terms with $p_1 = p_2$ contribute negligibly.\\
By Chebyshev's inequality it follows that the number of bad $q$ is 
$$\ll\frac{\sigma y}{\log x}\frac{1}{\log_2^3 x}\ll\frac{x}{\log x\log_2^2 x} $$
with probability $1-O(1/ \log_2 x)$. \qed

We now come to the $K$-version, the $``$lower string$"$ Thm. 6.2 $\Rightarrow$ Thm. 4.3 of the section (4.41).\\
Like in the $``$upper string$"$ in Theorem 5 of \cite{ford2}, a certain weight function $w$ is of importance.  The construction of $w$ will be modelled on  
the construction of the function $w$ in \cite{ford2}, Theorem 5.\\
The restrictions $\vec{a}\in \mathcal{A}_{(K)}$, $\vec{b}\in \mathcal{B}_{(K)}$ bring some additional complications. The function $w(p, n)$ will be different from zero only if $n$ belongs to a set $\mathfrak{G}(p)$ of $p$-good integers. The definition of $\mathfrak{G}(p)$ is based  on the set $\mathfrak{G}$ of good integers.
\begin{definition}\label{def412}
For $(u, K)=1$ we define 
$$S_u:=\{ s\::\: s\ \text{prime}, s\equiv u (\bmod\: K), (\log x)^{20}< s\leq z\}$$
$$ d(u)=(u-1, K), r^*(u)=\frac{1}{d(u)}\sum_{s\in S_u} s^{-1}.$$
For $n\in [x,y]$ let 
$$r(n,u):=\sum_{\substack{s\in S_u\::\: \exists c_s, n\equiv 1-(c_s+1)^K (\bmod s)\\ c_s\not\equiv -1 (\bmod s)}} s^{-1}\:.$$
We set 
\begin{align*}\mathfrak{G}:=&\{n\::\: n\in [x,y], |r(n,u)-r^*(u)|\leq (\log x)^{-1/40}\\ 
\ &\text{for all}\ u(\bmod K), (u, K)=1\}.
\end{align*}
For an admissible $r$-tuple to be specified later and for primes $p$ with $x/2<p<x$ we set
$$\mathfrak{G}(p):=\{ n\in \mathfrak{G}\::\: n+(h_i-h_l)p\in \mathfrak{G}, \forall i, l\leq r\}\:.$$
\end{definition}
\begin{theorem}\label{thm62}(Theorem 6.2 of \cite{MR}) (Existence of good sieve weights)\\
Let $x$ be a sufficiently large real number and let $y$ be any quantity obeying (4.8). Let $P, Q$ be defined by (4.10), (4.11). Let $r$ be a positive integer with 
\[
r_0\leq r\leq \log^{\eta} x \tag{4.64}
\]
for some sufficiently large absolute constant $r_0$ and some sufficiently small $\eta>0$.\\
Let $(h_1, \ldots, h_r)$ be an admissible $r$-tuple contained in $[2r^2]$. Then one can find a positive quantity 
$$r\geq x^{-o(1)}$$
and a positive quantity $u=u(r)$ depending only on $r$ with 
\[
u \asymp \log r \tag{4.65}
\]
and a non-negative function
$$w_{(K)}\::\: P\times \mathbb{Z}\rightarrow \mathbb{R}^{+}$$
supported on $P\times(\mathbb{P}\cap [-y, y])$ with the following properties:
\[
w_{(K)}(p, n)=0 \tag{4.66}
\]
unless $n\equiv 1-(d_p+1)^K (\bmod\: p)$ for some $d_p\in\mathbb{Z}$, \mbox{$d_p\not\equiv -1(\bmod\: p)$} and $n\in\mathfrak{G}(p)$.\\
Uniformly for every $p\in P$, one has
\[
 \sum_{n\in\mathbb{Z}} w_{(K)}(p, n)=\left(1+O\left(\frac{1}{\log_2^{10} x}\right)\right) \left(\tau\:\frac{y}{\log x}\right).\tag{4.67}
\]
Uniformly for every $q\in Q$ and $i=1, \ldots, r$ one has
\[
\sum_{p\in P} w_{(K)}(p, q-h_ip)= \left(1+O\left(\frac{1}{\log_2^{10} x}\right)\right)\tau \frac{u}{r}\frac{x}{2\log^r x}.\tag{4.68}
\]
Uniformly for every $h=O(y/x)$ that is not equal to any of the $h_i$, one has
\[
\sum_{q\in Q}\sum_{p\in P} w_{(K)}(p, q-hp)=O\left(\frac{1}{\log_2^{10} x}\tau\frac{xy}{\log^r x \log\log x}\right). \tag{4.69}
\]
Uniformly for all $p\in P$ and $z\in \mathbb{Z}$
\[
w_{(K)}(p,n)=O\left(x^{1/3+o(1)} \right). \tag{4.70}
\]
\end{theorem}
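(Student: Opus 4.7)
The plan is to mirror the construction of Theorem~5 of \cite{ford2}, which in turn builds on Maynard's multidimensional sieve in \cite{maynard}, and to insert two extra indicator functions at the start: one for the $K$-th power congruence condition and one for membership in $\mathfrak{G}(p)$. Concretely, with $R=x^{1/4-o(1)}$ and a smooth symmetric cutoff function $F\colon [0,1]^r\to\mathbb R$ chosen as in \cite{maynard} (so that its ``numerator'' functional $J_r(F)$ over its ``denominator'' functional $I_r(F)$ gives the gain $u\asymp\log r$), I would set
\[
 w_{(K)}(p,n):=\mathbf 1_{n\in\mathfrak G(p)}\cdot\mathbf 1_{\exists\,d_p\not\equiv-1(p):\,n\equiv 1-(d_p+1)^K(p)}\cdot\Bigg(\sum_{\substack{d_1,\dots,d_r\\ d_i\mid n+h_i p}}\lambda_{d_1,\dots,d_r}\Bigg)^{\!2},
\]
with $\lambda_{d_1,\dots,d_r}=\mu(d_1)\cdots\mu(d_r)\,F\!\bigl(\tfrac{\log d_1}{\log R},\dots,\tfrac{\log d_r}{\log R}\bigr)$ supported on $\prod d_i<R$ and $(d_1\cdots d_r,W)=1$ for a small Siegel–Walfisz modulus $W$. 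The factor $\tau$ will then be, up to a harmless constant, the usual $I_r(F)\bigl(\log R\bigr)^r$ normalized by the main term coming from Maynard's computation, and one checks $\tau\ge x^{-o(1)}$ from $R=x^{1/4-o(1)}$.

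The main three sums then reduce to the standard Maynard computations after extracting the two indicator factors. First, the ``singleton'' sum $\sum_n w_{(K)}(p,n)$ expands as $\sum_{d_1,d_1'}\cdots\sum_{d_r,d_r'}\lambda_{\vec d}\lambda_{\vec d'}$ times a count of $n\in[-y,y]$ in a fixed residue class modulo $p[d_i,d_i']$, restricted to the $K$-th power class mod $p$ and to $\mathfrak G(p)$. The $K$-th power restriction contributes a uniform factor (one residue class out of $p$) which cancels against the length $y$ of the interval, producing the $y/\log x$ in (4.67); the $\mathfrak G(p)$ restriction is handled in the next paragraph. For the ``shifted'' sum $\sum_p w_{(K)}(p,q-h_ip)$, with $q\in Q$ fixed, reparametrize by $p\in P$ and observe that the inner double sum over $\vec d,\vec d'$ is exactly the Maynard cross‑term whose main term is $J_r^{(i)}(F)(\log R)^{r+1}$ times a local density factor; distributing the prime $p$ over $P$ via the Bombieri–Vinogradov theorem (admissible level $1/2$ suffices since $R^2<x^{1/2-\epsilon}$) gives the desired $(u/r)\,x/(2\log^r x)$ with explicit $u=J_r(F)/I_r(F)\asymp\log r$. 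The ``off‑shift'' sum in (4.69) for $h\notin\{h_1,\dots,h_r\}$ is bounded by Cauchy–Schwarz and the fact that adding a new shift $h$ to the tuple $(h_1,\dots,h_r)$ strictly increases the ``dimension'' of the sieve, so the corresponding Euler product gains an extra factor $1/\log\log x$ after averaging. Finally, (4.70) follows from $\sum|\lambda_{\vec d}|\ll R^{o(1)}$ together with the divisor bound and $R=x^{1/4-o(1)}$, comfortably $O(x^{1/3+o(1)})$.

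The main obstacle is showing that the two extra indicator factors do not destroy the asymptotics; this is the content of the ``good'' sets $\mathfrak G$ and $\mathfrak G(p)$. For the $K$-th power condition, one must track, for each small modulus $s\in S_u$, the local density of $n\equiv 1-(c_s+1)^K\pmod s$ with $c_s\not\equiv -1$; this density is $(s-1)/(s\cdot d(u))$ where $d(u)=(u-1,K)$, and its presence is exactly what is being encoded in $r^\ast(u)$. For $n\in\mathfrak G$, the sums $r(n,u)$ concentrate about $r^\ast(u)$ by a Tur\'an–Kubilius–type second moment computation on the primes $s\in S_u$: the expected value of $r(n,u)$ is $r^\ast(u)$ up to negligible error, and its variance is $O(\sum_{s\in S_u}s^{-2})\ll(\log x)^{-1/20}$, so only $O(y(\log x)^{-1/40})$ exceptional $n$ lie outside $\mathfrak G$. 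A union bound over $1\le i,l\le r$ shifts $n\mapsto n+(h_i-h_l)p$ with $r\le\log^{\eta_0}x$ then gives the analogous statement for $\mathfrak G(p)$, so the indicator $\mathbf 1_{n\in\mathfrak G(p)}$ only removes a factor $1+O((\log_2 x)^{-10})$ from each of (4.67)–(4.69) after a Cauchy–Schwarz argument to bound the contribution of exceptional $n$.

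Put together, inserting both indicators redefines $\tau$ by a bounded multiplicative factor depending on $K$ but not on $p,q$, and all error terms remain of the required size $(\log_2 x)^{-10}$. The resulting $w_{(K)}$ then satisfies (4.66)–(4.70), which completes the deduction of (\cite{MR}, Theorem 6.2) from the sieve estimates of \cite{maynard} exactly as (\cite{ford2}, Theorem 5) is deduced.
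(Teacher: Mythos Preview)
Your overall architecture is right---define $w_{(K)}$ by inserting the two indicator factors into the Maynard weight, then show neither factor disturbs the asymptotics---and this matches the paper. But two of your key steps are genuine gaps.

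\textbf{The $K$-th power indicator.} The condition ``$n\equiv 1-(d_p+1)^K\pmod p$ for some $d_p\not\equiv -1$'' is \emph{not} a single residue class mod~$p$: it is the set of $n$ with $1-n$ a nonzero $K$-th power residue, hence $(p-1)/D$ classes where $D=(p-1,K)$. The paper compensates by defining $w^*(p,n)=D\cdot w_n\cdot\mathbf 1_{(*)}$, so the density is restored to~$1$. More importantly, the indicator is expanded via the $D$ multiplicative characters $\chi_0,\ldots,\chi_{D-1}$ of order dividing $D$ (Lemma~5.6). In the weight sum $\sum_n w^*(p,n)$ the non-principal pieces are bounded by P\'olya--Vinogradov after opening the square (Theorem~5.7). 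In the prime-detecting sum one must control $\sum_{p\,\mathrm{prime}}\chi_l(1-p)$ against the sieve weights, and the paper invokes the Friedlander--Gong--Shparlinski bound on character sums over shifted primes (Lemma~5.8, Theorem~5.9). You do not mention any character-sum input, and without it there is no reason the $K$-th power restriction equidistributes against either the sieve weight or the prime indicator.

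\textbf{The $\mathfrak G(p)$ indicator.} Your Tur\'an--Kubilius estimate is unweighted: it shows that only $O\!\bigl(y(\log x)^{-1/40}\bigr)$ integers $n$ lie outside $\mathfrak G$. But combining this with the pointwise bound $w_n\ll x^{1/3+o(1)}$ (or with Cauchy--Schwarz against $\sum w_n^2$) gives a contribution of order $y\,x^{1/3+o(1)}(\log x)^{-1/40}$, which swamps the main term $\tau\,y/\log x$ since $\tau=x^{o(1)}$. The paper instead computes the \emph{weighted} second moment
\[
\sum_{n\in\mathcal A(x)} w_n(\mathcal L_p)\,(r(\tilde n,u)-r^*(u))^2,
\]
by expanding the square and applying Maynard's sieve estimates along the arithmetic progressions $n\equiv e\pmod{s_1s_2}$ for $s_1,s_2\in S_u$ (Lemmas~5.14, 5.20). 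Chebyshev's inequality with this weighted variance then bounds $\sum_{n\notin\mathfrak G(p)} w_n$ by the main term times $(\log x)^{-1/10}$ (Theorems~5.18, 5.22). Your Cauchy--Schwarz shortcut does not recover this.
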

We now show how Theorem \ref{thm62} implies Theorem \ref{thm443}.\\
Let $x, c, y, z, S, P, Q$ be as in Theorem \ref{thm43}. We set  
$$\tau:=\lfloor (\log x)^{\eta_0}\rfloor\:,\ \sigma:=\prod_{s\in S}\left(1-\frac{1}{s}\right)\:. $$
 We now invoke Theorem \ref{thm62} to obtain quantities $\tau, u$ and weight \mbox{$w\::\: P\times\mathbb{Z}\rightarrow \mathbb{R}^+$} with the stated properties.\\
 For each $p\in P$, let $\tilde{\bold{n}}_p$ denote the random integer with probability density
 \[
 \mathbb{P}(\tilde{\bold{n}}_p=n)=\frac{w_{(K)}(p, n)}{\sum_{n'\in\mathbb{Z}} w_{(K)}(p, n')} \tag{4.71}
 \]
 for all $n\in\mathbb{Z}$. From (4.67), (4.68) we have
  \[
 \sum_{p\in P}\mathbb{P}(q=\tilde{\bold{n}}_p+h_ip)= \left(1+O\left(\frac{1}{\log_2^{10} x}\right)\right) \frac{u}{r}\frac{x}{2y}\ \ (q\in Q, 1\leq i\leq r). \tag{4.72}
 \]
 Also, from (4.67), (4.71), (4.65) one has
 $$\mathbb{P}(\tilde{\bold{n}}_p=n)\ll x^{-1/2-1/6+o(1)}$$
 for all $p\in P$ and $n\in\mathbb{Z}$.\\
 We choose the random vector $\vec{\bold{a}}:=(a_s \bmod\: s)_{s\in S}$ by selecting each $a_s\bmod\: s$ uniformly at random from $\mathcal{A}_s$ independently in $s$.
\begin{lemma}\label{lem413}
Let $t\leq (\log x)^{3\eta_0}$ be a natural number and let $n_1, \ldots, n_t$ be distinct integers from $\mathfrak{G}$. Then, one has 
$$\mathbb{P}(n_1,\ldots, n_t\in S(\vec{\bold{a}}))= \left(1+O\left(\frac{1}{\log_2^{10} x}\right)\right)\sigma^t.  $$
\end{lemma}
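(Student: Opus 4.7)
The plan is to mimic the proof of Lemma~\ref{lem449}, exploiting independence of the coordinates of $\vec{\bold{a}}$ across $s\in S$, but now with each $a_s$ drawn uniformly from $\mathcal{A}_s$ rather than from $\mathbb{Z}/s\mathbb{Z}$. The slack will be paid off using the hypothesis $n_i\in\mathfrak{G}$, which by Definition~\ref{def412} pins down the proportion of $s\in S_u$ that satisfy $n_i\in\mathcal{A}_s\bmod s$. By independence,
$$\mathbb{P}(n_1,\ldots,n_t\in S(\vec{\bold{a}}))=\prod_{s\in S}\left(1-\frac{k_s}{|\mathcal{A}_s|}\right),$$
where $k_s:=\#\bigl(\{n_1,\ldots,n_t\}\bmod s\,\cap\,\mathcal{A}_s\bigr)$. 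For $s\in S_u$ with $(u,K)=1$ and $s$ large compared to $K$, the set $\mathcal{A}_s$ equals $1-\{\text{nonzero }K\text{-th powers mod }s\}$, so $|\mathcal{A}_s|=(s-1)/d(u)$.

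First I would dispose of the \emph{exceptional} primes $s\in S$ dividing some nonzero difference $n_i-n_j$. Since the $n_i$ lie in $[x,y]$ with $y=x^{1+o(1)}$ and $s\geq(\log x)^{20}$, each difference has $O(\log x/\log_2 x)$ prime divisors in $S$, so there are at most $N_{\mathrm{exc}}=O(t^2\log x/\log_2 x)$ exceptional primes. The uniform bound $|\log(1-k_s/|\mathcal{A}_s|)|=O(tK/s)$ shows that the joint contribution of these primes to $\log\mathbb{P}-\log\sigma^{t}$ is $O(tK\cdot N_{\mathrm{exc}}/(\log x)^{20})$, comfortably absorbed into $O((\log_2 x)^{-10})$ once $t\leq(\log x)^{3\eta_0}$ with $\eta_0$ small.

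For the remaining \emph{good} primes, the $t$ residues $n_i\bmod s$ are distinct, so $k_s=\sum_{i=1}^{t}\mathbf{1}[n_i\in\mathcal{A}_s\bmod s]$. Taylor expansion of $\log(1-x)$ (with $k_s/|\mathcal{A}_s|=O(tK/s)\leq 1/2$) gives
$$\log\mathbb{P}=-\sum_{s\text{ good}}\frac{k_s}{|\mathcal{A}_s|}+O\!\left(\frac{t^2K^2}{(\log x)^{20}}\right)+O((\log_2 x)^{-10}).$$
Partitioning $S=\bigsqcup_{(u,K)=1}S_u$, using $|\mathcal{A}_s|=(1+O(1/s))\,s/d(u)$ for $s\in S_u$, and interchanging the inner $s$-sum with the $i$-sum inside $k_s$,
$$\sum_{s\in S_u\text{ good}}\frac{k_s}{|\mathcal{A}_s|}=(1+O((\log x)^{-20}))\,d(u)\sum_{i=1}^{t}r(n_i,u).$$
Invoking the $\mathfrak{G}$-hypothesis $r(n_i,u)=r^{*}(u)+O((\log x)^{-1/40})$, combined with the identity $d(u)r^{*}(u)=\sum_{s\in S_u}s^{-1}$ built into Definition~\ref{def412}, one arrives at
$$\sum_{s\in S}\frac{k_s}{|\mathcal{A}_s|}=t\sum_{s\in S}\frac{1}{s}+O\!\left(tK(\log x)^{-1/40}\right).$$
Comparing with $\log\sigma^{t}=-t\sum_{s\in S}s^{-1}+O(t\sum_{s\in S}s^{-2})$, and choosing $\eta_0<1/120$ so that $t(\log x)^{-1/40}=O((\log_2 x)^{-10})$, completes the proof.

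The main obstacle is purely quantitative: one must verify that the rather weak $\mathfrak{G}$-tolerance $(\log x)^{-1/40}$ for $r(n,u)$, once multiplied by $t\leq(\log x)^{3\eta_0}$ and summed over the $K$ residue classes mod $K$, still falls below the required $(\log_2 x)^{-10}$ error. This is the precise quantitative role of ``sufficiently small $\eta_0$'' in (4.64); once this is arranged, the rest of the argument is Taylor-expansion bookkeeping together with the exceptional-prime count described above.
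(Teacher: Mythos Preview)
Your proof is correct and follows essentially the same approach as the paper: factor the probability over $s\in S$ by independence, discard the negligible set of exceptional primes dividing some $n_i-n_j$, Taylor-expand the logarithm, and then use the $\mathfrak{G}$-hypothesis (via $r(n_i,u)=r^*(u)+O((\log x)^{-1/40})$ and the identity $d(u)r^*(u)=\sum_{s\in S_u}s^{-1}$) to match the main term to $\log\sigma^t$. If anything, you are slightly more careful than the paper in tracking the $t$-dependence of the error and in making explicit the constraint $\eta_0<1/120$ needed for $t(\log x)^{-1/40}=o((\log_2 x)^{-10})$.
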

\begin{proof}
For $\vec{n}=(n_1,\ldots, n_t)$, let $\mathcal{K}(\vec{n})$ be the set of $s\in S$ for which $s\mid n_l-n_i$, for $i\neq l$. Then, since
$$n_i-n_l=O(x^{O(1)}),$$
we have 
$$|\mathcal{K}(\vec{n})|=O((\log x)^3).$$
Let $\vec{a}\in\mathcal{A}_s$, $1\leq u\leq K-1$, $(u, K)=1$. We write
$$\vec{a}_u=(a_{s_{1,u}}, \ldots, a_{s_{r_u,u}}),$$
where 
$$S_u\cap \mathcal{K}^c=\{ s_{1,u},\ldots, s_{r_u,u} \}\:.$$
We set
\begin{equation*}
    \epsilon(h, s) := 
    \begin{cases}
      & 1,\ \text{if $n_h\equiv 1-(c_{s,h}+1)^K (\bmod\: s)$ has a solution $c_{s,h}\not\equiv -1(\bmod\: s)$}\\
     & 0,\ \text{otherwise.}
    \end{cases}
\end{equation*}
\end{proof}
We have 
$$n_h\in S(\vec{a})\ \ \text{if and only if $n_h\in S(\vec{a},u)$, $\forall n, (u, K)=1.$}  $$
We now use certain well-known facts from the theory of $K$-th power residues.\\
There are
$$\frac{s_{i,u}-1}{d(u)}-1$$
possible choices for the $a_{s_i,u}$. From these, for each $h$, $1\leq h\leq t$ there are $\epsilon(h, s_{i,u})$ choices such that
$$a_{s_i,u}\equiv n_h (\bmod s_{i,u})\:.$$
Thus, the total number of choices for the $a_{s_i,u}$ for which not all $u_h\in S(\vec{a})$, \mbox{$(1\leq h\leq t)$} is 
$$\sum_{h=1}^t \epsilon(h, s_{i,u}).$$
Since the choices for the components $a_s$ are independent, we have
\begin{align*}
&\mathbb{P}(n_1,\ldots, n_t\in S(\vec{\bold{a}}))  \tag{4.73}\\
&=\prod_{u\::\:(u, K)=1}\prod_{s\in S_u}\left(\frac{s-1-d(u)}{d(u)}\right)^{-1}\left(\frac{s-1-d(u)}{d(u)}-\sum_{h=1}^t \epsilon(h,s)\right)\left(1+O\left(\frac{(\log x)^3}{z_0}\right)\right)\\
&= \prod_{u\::\:(u, K)=1}\prod_{s\in S_u} \left(1-d(u)s^{-1}\sum_{h=1}^t \epsilon(h, s)\right)(1+O(s^{-2}))(1+O((\log x)^{-17})).
\end{align*}
We have
 $$\prod_{s\in S_u} \left(1-d(u)s^{-1}\sum_{h=1}^t \epsilon(h, s)\right)=\exp\left(-\sum_{s\in S_u} d(u)s^{-1}\sum_{h=1}^t \epsilon(h, s)+O(s^{-2})\right).$$
 Since $n_h\in\mathfrak{G}$ for $1\leq h\leq t$, we have by the Definition for $\mathfrak{G}$:
 \[
\sum_{s\in S(u)} s^{-1}\epsilon(h, s)=\frac{1}{d(u)}\sum_{s\in S_u} s^{-1}+O((\log x)^{-1/40}).  \tag{4.74}
 \]
 From (4.73) and (4.74) we thus obtain
 $$\mathbb{P}(n_1,\ldots, n_t\in S(\vec{\bold{a}}))=\left(1+O\left(\frac{1}{(\log x)^{1/40}}\right)\right)\sigma^t.$$
 \begin{cor}(to Lemma \ref{lem413})\\
 With probability $1-o(1)$ we have:
 $$\#(Q\cap S(\vec{\bold{a}}))\sim \sigma\frac{y}{\log x}\sim 80c\frac{x}{\log x}\log_2 x.$$
 \end{cor}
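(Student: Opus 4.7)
The proof proceeds by a first-and-second moment / Chebyshev argument that mirrors the analogous corollary to Lemma 4.9 in the non-$K$ case, but with an extra preliminary step to handle the restriction to the good set $\mathfrak{G}$ that is built into Lemma 4.13.

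First I would establish that almost every prime $q \in Q$ lies in $\mathfrak{G}$. For each $u \bmod K$ with $(u,K)=1$, the quantity $r(q,u)$ is a weighted count of the primes $s \in S_u$ for which $q$ lies in one of the $(s-1)/d(u)$ admissible residue classes mod $s$; its mean (over primes $q \in Q$) is exactly $r^{*}(u)$. A Bombieri--Vinogradov-type estimate on the distribution of primes in residue classes to moduli $s \le z = x^{\log_3 x/(4\log_2 x)}$, combined with a standard variance computation over $s \in S_u$, shows that the set of $q \in Q$ violating $|r(q,u)-r^{*}(u)| \le (\log x)^{-1/40}$ has cardinality $o(y/\log x)$. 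Summing over the finitely many admissible residues $u \bmod K$ yields $\#(Q \cap \mathfrak{G}) = (1+o(1))\#Q \sim y/\log x$ by the prime number theorem.

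Now restrict attention to $Q \cap \mathfrak{G}$, where Lemma 4.13 is available. Taking $t=1$ gives
$$\mathbb{E}\,\#(Q \cap \mathfrak{G} \cap S(\vec{\bold{a}})) = \Bigl(1+O(\log_2^{-10}x)\Bigr)\,\sigma\,\#(Q\cap\mathfrak{G}),$$
while applying Lemma 4.13 to ordered pairs $(q_1,q_2)$ of distinct elements of $Q \cap \mathfrak{G}$, and separating the diagonal contribution, gives
$$\mathbb{E}\,\#(Q \cap \mathfrak{G} \cap S(\vec{\bold{a}}))^{2} = \Bigl(1+O(\log_2^{-10}x)\Bigr)\sigma^{2}(\#(Q\cap\mathfrak{G}))^{2} + O\bigl(\sigma\,\#Q\bigr).$$
Subtracting the square of the mean, the variance is $O\bigl(\sigma^{2}(y/\log x)^{2}/\log_2^{10}x\bigr)$, which is $o$ of the square of the expectation. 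Chebyshev's inequality therefore yields $\#(Q\cap\mathfrak{G}\cap S(\vec{\bold{a}})) \sim \sigma\,y/\log x$ with probability $1-o(1)$, and since $Q\setminus\mathfrak{G}$ contributes $o(y/\log x)$ \emph{deterministically}, the same asymptotic holds for $\#(Q\cap S(\vec{\bold{a}}))$.

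The final $\sim 80c(x/\log x)\log_2 x$ then follows by substituting the value of $\sigma$ obtained from Mertens' theorem applied to the product over $S$, exactly as in the non-$K$ computation (4.53): namely $\sigma \sim \log(\log^{20}x)/\log z = 80\log_2 x/\log z$, which combined with (4.8) gives the stated asymptotic. The main obstacle is the preliminary step: one must verify that the rather stringent definition of $\mathfrak{G}$ -- designed to make the equidistribution statement in the proof of Lemma 4.13 work -- discards only a negligible proportion of the primes in $Q$. Once this is in hand, the rest is a routine Chebyshev concentration argument following the template already established in the Ford--Green--Konyagin--Maynard--Tao setting.
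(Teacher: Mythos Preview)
Your overall strategy---compute the first and second moments of $\#(Q\cap S(\vec{\bold a}))$ via Lemma~4.13 and then apply Chebyshev---is exactly the paper's approach. In fact you are \emph{more} careful than the paper on one point: the paper applies Lemma~4.13 to arbitrary $q\in Q$ without comment, whereas you correctly observe that the lemma requires $q\in\mathfrak G$ and insert a preliminary step to control $Q\setminus\mathfrak G$.

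There is, however, a quantitative slip in that preliminary step. You claim $\#(Q\setminus\mathfrak G)=o(y/\log x)$ and then say this error can be absorbed into the asymptotic $\#(Q\cap S(\vec{\bold a}))\sim\sigma y/\log x$. But $\sigma\asymp(\log_2 x)^2/(\log x\,\log_3 x)$ is small, so $\sigma y/\log x\asymp x\log_2 x/\log x$ is \emph{much smaller} than $y/\log x\asymp x\log_3 x/\log_2 x$; an additive error of $o(y/\log x)$ therefore swamps the main term rather than being negligible compared to it. You need the stronger bound $\#(Q\setminus\mathfrak G)=o(\sigma y/\log x)$, i.e.\ a saving of at least a factor $1/\sigma\asymp\log x\,\log_3 x/(\log_2 x)^2$.

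Fortunately your own method gives far more than this. Since the moduli $s$ lie in $(\log^{20}x,\,z]$ with $z=x^{o(1)}$, the variance of $r(q,u)$ over primes $q\in Q$ is governed by $\sum_{s\in S_u}s^{-2}=O(\log^{-20}x)$, and Chebyshev at level $(\log x)^{-1/40}$ then yields $\#(Q\setminus\mathfrak G)\ll y/(\log x)^{A}$ for a large power $A$. So the gap is easily closed; you just need to state and use the sharper bound.
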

 \begin{proof}
 From Lemma \ref{lem413} we have
 $$\mathbb{E}\#(Q\cap S(\vec{\bold{a}}))=\left(1+O\left(\frac{1}{(\log_2 x)^{5}}\right)\right)\sigma\# Q$$
 and
  $$\mathbb{E}\#(Q\cap S(\vec{\bold{a}}))^2=\left(1+O\left(\frac{1}{(\log_2 x)^{5}}\right)\right)(\sigma\# Q+\sigma^2(\# Q)(\#Q-1))$$
and so by the prime number theorem we see  that the random variable $\#(Q\cap S(\vec{\bold{a}}))$ has mean
$$\left(1+O\left(\frac{1}{(\log_2 x)^{5}}\right)\right)\sigma\frac{y}{\log x}$$
and variance 
$$O\left(\frac{1}{(\log_2 x)^{5}}\left(\sigma\frac{y}{\log x}\right)^2\right).$$
The claim then follows from Chebyshev's inequality.
 \end{proof}
 
 For each $p\in P$ we consider the quantity
 \[
X_p(\vec{a}):=\mathbb{P}(\tilde{n}_p+h_ip\in S(\vec{\bold{a}}), \ \text{for all}\ i=1,\ldots, r)   \tag{4.75}
 \]
 and let $P(\vec{a})$ denote the set of primes $p\in P$, such that
 $$X_p(\vec{a}):=\left(1+O\left(\frac{1}{(\log_2 x)^{10}}\right)\right)\sigma^r.$$
 We now define the random variables $\bold{n}_p$ as follows. Suppose we are in the event $\vec{\bold{a}}=\vec{a}$ for some $\vec{a}$ in the range of $\vec{\bold{a}}$. If $p\in P\setminus P(\vec{a})$, we set $n_p:=0$. Otherwise, if $p\in P(\vec{a})$, we define $\bold{n}_p$ to be the random integer with conditional probability distribution
 $$\mathbb{P}(\bold{n}_p=n\:|\: \vec{\bold{a}}=\vec{a}):=\frac{Z_p(\vec{a}; n)}{X_p(\vec{a})}\:,$$
 where
$$Z_p(\vec{a};n):=1_{n+h_jp\in S(\vec{a}),\ \text{for $j=1,\ldots, r$}}\:\mathbb{P}(\tilde{\bold{n}}_p=n)$$
with the $\tilde{\bold{n}}_p$ jointly conditionally independent on the event $\vec{\bold{a}}=\vec{a}$. 
\begin{lemma}\label{lem414}
With probability $1-o(1)$ we have
\[
\sigma^{-r} \sum_{i=1}^r\sum_{p\in P(\vec{a})}Z_p(\vec{a}; q-h_i p)= \left(1+O\left(\frac{1}{(\log_2 x)^{5}}\right)\right)\frac{u}{\sigma}\frac{x}{2y}\tag{4.76}
\]
for all but at most $x/(2\log x\log_2 x)$ of the primes $q\in Q\cap S(\vec{a})$.
\end{lemma}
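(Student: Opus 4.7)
The plan is to mirror the argument used for the non-$K$ version (Lemma 4.10), replacing every use of the correlation bound Lemma 4.9 by its $K$-analog Lemma 4.13, and replacing the role of $P(\vec{a})$ via an analog of Lemma 4.11. The key observation is that, by the support restriction (4.66) in Theorem 4.20, the weight $w_{(K)}(p,n)$ already forces $n\in\mathfrak{G}(p)$, so every integer $q-h_ip$ that arises in the sums below automatically lies in $\mathfrak{G}$; this makes Lemma 4.13 applicable to all the tuples one needs to correlate.

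First I would establish the analog of Lemma 4.11: with probability $1-O(1/\log_2^{10}x)$, the set $P(\vec{\bold{a}})$ of primes $p\in P$ for which $X_p(\vec{a})=(1+O(1/\log_2^{10}x))\sigma^r$ contains all but $O(\#P/\log_2^{10}x)$ of the primes in $P$. This follows from a first- and second-moment computation: by Lemma 4.13,
\[
\mathbb{E}\,X_p(\vec{\bold{a}})=\left(1+O\left(\tfrac{1}{\log_2^{10}x}\right)\right)\sigma^r,\qquad \mathbb{E}\,X_p(\vec{\bold{a}})^2=\left(1+O\left(\tfrac{1}{\log_2^{10}x}\right)\right)\sigma^{2r},
\]
where the second moment uses that two independent copies $\tilde{\bold{n}}_p^{(1)},\tilde{\bold{n}}_p^{(2)}$ yield $2r$ distinct shifts with probability $1-O(x^{-1/2-1/6+o(1)})$ (from the pointwise bound on $\mathbb{P}(\tilde{\bold{n}}_p=n)$); Chebyshev and Markov then give the claim.

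Next I would reduce (4.76) to the corresponding statement with $P(\vec{a})$ replaced by $P$. Fix $i$ and substitute $n=q-h_ip$; by Markov's inequality it suffices to bound $\mathbb{E}\sum_n\sigma^{-r}\sum_{p\in P\setminus P(\vec{\bold{a}})}Z_p(\vec{\bold{a}};n)$. One computes via Lemma 4.13 that
\[
\mathbb{E}\sum_{n}\sigma^{-r}\sum_{p\in P}Z_p(\vec{\bold{a}};n)=\left(1+O\left(\tfrac{1}{\log_2^{10}x}\right)\right)\#P,
\]
and similarly $\mathbb{E}\sum_n\sigma^{-r}\sum_{p\in P(\vec{\bold{a}})}Z_p(\vec{\bold{a}};n)=(1+O(1/\log_2^{10}x))\#P$, so their difference is $O(\#P/\log_2^{10}x)$, which is $o$ of the admissible error $(u/\sigma)(x/(2y))(1/r)(1/\log_2^5 x)(x/(\log x\log_2 x))$ by (4.8) and the choice $r\leq (\log x)^{\eta_0}$.

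The final and main step is to show that, with probability $1-o(1)$, at most $x/(2\log x\log_2 x)$ primes $q\in Q\cap S(\vec{\bold{a}})$ fail
\[
\sum_{i=1}^r\sum_{p\in P}Z_p(\vec{\bold{a}};q-h_ip)=\left(1+O\left(\tfrac{1}{\log_2^{5}x}\right)\right)\sigma^{r-1}u\,\frac{x}{2y}.
\]
I compute the first moment, using Lemma 4.13 on the shifts $q+(h_j-h_i)p$ and (4.72):
\[
\mathbb{E}\sum_{q\in Q\cap S(\vec{\bold{a}})}\sum_{i,p}Z_p(\vec{\bold{a}};q-h_ip)=\left(1+O\left(\tfrac{1}{\log_2^{10}x}\right)\right)\frac{\sigma y}{\log x}\,\sigma^{r-1}u\,\frac{x}{2y},
\]
and an analogous second moment using two independent copies $\tilde{\bold{n}}^{(1)}_p,\tilde{\bold{n}}^{(2)}_p$, with diagonal terms $p_1=p_2$ contributing negligibly by the pointwise density bound. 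Chebyshev's inequality then bounds the number of bad $q$ by $O(\sigma y/(\log x\cdot\log_2^3 x))=O(x/(\log x\log_2^2 x))$ with probability $1-O(1/\log_2 x)$, which is more than enough. Combining with the reduction step yields (4.76).

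The main obstacle is verifying that Lemma 4.13 genuinely applies to every correlation needed, in particular to tuples $\{q+(h_j-h_{i_\ell})p_\ell\}$ arising in the second-moment calculation; this requires that these shifted integers all lie in $\mathfrak{G}$, which is exactly what the $\mathfrak{G}(p)$-support condition (4.66) in Theorem 4.20 was designed to guarantee (so that the $w_{(K)}$-weights only see good integers). Once that check is in hand, the rest of the argument is an essentially mechanical transfer of the Ford–Green–Konyagin–Maynard–Tao argument.
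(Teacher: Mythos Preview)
Your proposal is correct and follows essentially the same route as the paper: reduce from $P(\vec{a})$ to $P$ via first-moment bounds and the analog of Lemma~4.11 (which is Lemma~\ref{lem415} here), then run a first/second moment argument with Chebyshev on $\sum_{i,p}Z_p(\vec{\bold a};q-h_ip)$, invoking Lemma~\ref{lem413} at each correlation step. You also correctly isolate the one new point in the $K$-version, namely that the support restriction $n\in\mathfrak{G}(p)$ built into $w_{(K)}$ is precisely what forces all the shifts $q+(h_j-h_i)p$ to lie in $\mathfrak{G}$, so that Lemma~\ref{lem413} applies; the paper makes exactly this observation.
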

Before proving Lemma \ref{lem414}, we first confirm that $p\in P\setminus P(\vec{a})$ is small with high probability.
\begin{lemma}\label{lem415}
With probability
$$1-O\left(\frac{1}{(\log_2 x)^{10}}\right)\:,$$
$P(\vec{a})$ contains all but
$$O\left(\frac{1}{\log^3 x}\frac{x}{\log x}\right)$$
of the primes $p\in P$. In particular 
$$\mathbb{E}\# P(\vec{\bold{a}})=\# P\left(1+O\left(\frac{1}{\log^3 x}\right)\right).$$
\end{lemma}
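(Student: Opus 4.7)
The plan is to mirror the proof of Lemma 4.11 verbatim in structure, with the one substantive change that Lemma 4.9 is replaced by its $K$-version Lemma 4.13, and that the ``good integer'' condition needed to invoke Lemma 4.13 is supplied for free by the support restriction $w_{(K)}(p,n)=0$ unless $n\in\mathfrak{G}(p)$ (see (4.66)). First I would reduce to a per-prime estimate: by linearity of expectation and Markov's inequality, the quantitative statement in the lemma follows once we show that for every individual $p\in P$, the bound
\[
\mathbb{P}(p\notin P(\vec{\bold{a}}))=O\bigl((\log_2 x)^{-20}\bigr)
\]
holds, since then the expected number of $p\in P\setminus P(\vec{\bold{a}})$ is $O\bigl((\#P)/(\log_2 x)^{20}\bigr)$ and Markov converts one power of $(\log_2 x)^{-10}$ into an event of that probability.

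Next I would obtain this per-prime estimate from a second-moment computation for $X_p(\vec{\bold{a}})$, exactly as in Lemma 4.11. The two moments to establish are
\[
\mathbb{E}X_p(\vec{\bold{a}})=\Bigl(1+O\bigl((\log_2 x)^{-10}\bigr)\Bigr)\sigma^r
\]
and
\[
\mathbb{E}X_p(\vec{\bold{a}})^2=\Bigl(1+O\bigl((\log_2 x)^{-10}\bigr)\Bigr)\sigma^{2r},
\]
after which Chebyshev's inequality immediately yields $X_p(\vec{\bold{a}})=(1+O((\log_2 x)^{-10}))\sigma^r$ off an event of the desired small probability, which is precisely $p\in P(\vec{\bold{a}})$.

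The computation of $\mathbb{E}X_p(\vec{\bold{a}})$ is the main point where the $K$-structure has to be honored. Unfolding the definition and conditioning on $\tilde{\bold{n}}_p$ first,
\[
\mathbb{E}X_p(\vec{\bold{a}})=\sum_{n}\mathbb{P}(\tilde{\bold{n}}_p=n)\,\mathbb{P}\bigl(n+h_ip\in S(\vec{\bold{a}})\text{ for all }i=1,\ldots,r\bigr).
\]
By (4.66) the sum is supported on $n\in\mathfrak{G}(p)$, so by Definition 4.12 each of the $r$ integers $n+h_ip$ lies in $\mathfrak{G}$, and they are clearly distinct integers of size $O(x^{O(1)})$. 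Since $r\le\log^{\eta_0}x$ is well below the threshold $(\log x)^{3\eta_0}$ of Lemma 4.13, that lemma applies and gives the inner probability equal to $(1+O((\log_2 x)^{-10}))\sigma^r$ uniformly in $n$. Summing against the probability density $\mathbb{P}(\tilde{\bold{n}}_p=n)$ then yields the claimed first-moment formula. The second moment is handled in the same way, using two independent copies $\tilde{\bold{n}}_p^{(1)},\tilde{\bold{n}}_p^{(2)}$ of $\tilde{\bold{n}}_p$ (both supported in $\mathfrak{G}(p)$) and invoking Lemma 4.13 with $t=2r$; the only point requiring care is that with probability $1-O(x^{-1/2-1/6+o(1)})$ the $2r$ shifts $\tilde{\bold{n}}_p^{(l)}+h_ip$ are distinct, which is guaranteed by the sparsity bound $\mathbb{P}(\tilde{\bold{n}}_p=n)\ll x^{-1/2-1/6+o(1)}$ derived from (4.67) and (4.70), so the non-distinct contribution is negligible.

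The main obstacle, and the only departure from the proof of Lemma 4.11, is verifying the compatibility between the support of $w_{(K)}$ and the set $\mathfrak{G}$ on which the correlation bound Lemma 4.13 is valid. This is exactly the purpose for which Definition 4.12 and property (4.66) of Theorem 4.19 were designed, so once the compatibility is noted, the second-moment argument goes through unchanged, and the ``in particular'' statement $\mathbb{E}\#P(\vec{\bold{a}})=\#P(1+O(\log^{-3}x))$ follows from the quantitative first assertion by taking expectations.
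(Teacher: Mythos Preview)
Your proposal is correct and follows essentially the same approach as the paper: reduce by linearity of expectation and Markov to a per-prime bound, then compute the first and second moments of $X_p(\vec{\bold a})$ via Lemma~4.13 (using the support condition (4.66) to ensure all shifts $n+h_ip$ lie in $\mathfrak G$), and conclude by Chebyshev and the sparsity bound on $\tilde{\bold n}_p$. The paper's proof is identical in structure, including your key observation that $w_{(K)}(p,n)=0$ for $n\notin\mathfrak G(p)$ is precisely what guarantees the hypothesis of Lemma~4.13.
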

\begin{proof}
By linearity of expectation and Markov's inequality, it suffices that for each $p\in P$ we have $p\in P(\vec{\bold{a}})$ with probability
$$1-O\left(\frac{1}{(\log_2 x)^{20}}\right)\:.$$
By Chebyshev's inequality it suffices to show that 
\begin{align*}
\mathbb{E} X_p(\vec{\bold{a}})&=\mathbb{P}(\tilde{\bold{n}}_p+h_ip\in S(\vec{\bold{a}})\ \text{for all $i=1, \ldots, r$})\tag{4.77}\\
&=\left(1+O\left(\frac{1}{\log_2 x}\right)\right)\sigma^r 
\end{align*}
and
\begin{align*}
\mathbb{E} X_p(\vec{a})^2&=\mathbb{P}(\tilde{\bold{n}}_p^{(1)}+h_ip, \tilde{\bold{n}}_p^{(2)}+h_ip\in S(\vec{\bold{a}})\ \text{for all $i=1, \ldots, r$})\tag{4.78}\\
&=\left(1+O\left(\frac{1}{\log_2 x}\right)\right)\sigma^{2r}, 
\end{align*}
where $\tilde{\bold{n}}_p^{(1)}, \tilde{\bold{n}}_p^{(2)}$ are independent copies of $\tilde{\bold{n}}_p$ that are also independent of $\vec{\bold{a}}$.\\
To prove the claim (4.77) we first select the value $n$ for $\tilde{n}_p$ according to the  distribution (4.71):
$$ \mathbb{P}(\tilde{\bold{n}}_p=n)=\frac{w_{(K)}(p, n)}{\sum_{n'\in\mathbb{Z}} w_{(K)}(p, n')}.$$
Because of the property $w(p,n)=0$ if $n\not\in \mathfrak{G}(p)$ we have with probability 1:
$$n+h_ip\in\mathfrak{G}\ \ \text{for $1\leq i\leq r$.}$$
The relation (4.77) now follows from Lemma \ref{lem413} with $n_i=n+h_ip$, applying the formula for total probability
$$\mathbb{P}(\tilde{\bold{n}}_p+h_ip\in S(\vec{\bold{a}}))=\sum_{n}\mathbb{P}(\tilde{\bold{n}}_p+h_ip\in S(\vec{\bold{a}})\:|\: \tilde{\bold{n}}_p=n).$$
A similar application of Lemma \ref{lem413} allows one to write the left hand side of (4.78) as
$$\left(1+O\left(\frac{1}{(\log_2 x)^5}\right)\right)\mathbb{E}\:\sigma^{\# \{\tilde{\bold{n}}_p^{(l)}+h_ip\::\: i=1,2,\ldots, r, l=1,2\}}.$$
From (4.77) we see that the quantity
$$\# \{\tilde{\bold{n}}_p^{(l)}+h_ip\::\: i=1,2,\ldots, r, l=1,2\}$$
is equal to $2r$ with probability
$$1-O\left(x^{-1/2-1/6+o(1)} \right)$$
and is less than $2r$ otherwise.\\
The claim now follows from $\sigma^{-r}=x^{o(1)}$.
\end{proof} 

\textit{Proof of Lemma \ref{lem414}.} We first show that replacing $P(\vec{\bold{a}})$ with $P$ has negligible effect on the sum with probability $1-o(1)$. Fix $i$ and substitute $n:=q-h_ip$.\qed

By Lemma \ref{lem414} we have
\begin{align*}
\mathbb{E}\sum_n \sigma^{-r}\sum_{p\in P} Z_p(\vec{\bold{a}}; n)&=\sigma^{-r}\sum_{p\in P}\sum_n \mathbb{P}(\tilde{\bold{n}}_p=n)\mathbb{P}(n+h_ip\in S(\vec{\bold{a}})\ \text{for}\ j=1,\ldots r)\\
&=\left(1+O\left(\frac{1}{(\log_2 x)^{10}}\right)\right)\# P.
\end{align*}
Next by 
$$X_p(\vec{a})=\left(1+O\left(\frac{1}{\log^3 x}\right)\right)\sigma^r$$
and Lemma \ref{lem415} we have
\begin{align*}
\mathbb{E}\sum_{r} \sigma^{-r}\sum_{p\in P(\vec{\bold{a}})} Z_p(\vec{a}; n)&=\sigma^{-r}\sum_a \mathbb{P}(\vec{\bold{a}}=\vec{a})\sum_{p\in P(\vec{a})} X_p(\vec{a})\\
&=\left(1+O\left(\frac{1}{(\log_2 x)^{10}}\right)\right)\mathbb{E}\# P(\vec{\bold{a}})\\
&=\left(1+O\left(\frac{1}{\log^3 x}\right)\right)\# P.
\end{align*}
Subtracting, we conclude that the difference of the two expectations above is $O(\# P/\log_2 x)$. The claim then follows from (4.8) and (4.64).\\
By this it suffices to show that 
$$\sigma^{-r}\sum_{i=1}^r \sum_{p\in P} Z_p (\vec{\bold{a}}; q-h_i p)=1+O\left(\frac{1}{\log_2 x}\right)$$
 for all but at most $\frac{x}{2\log x \log_2 x}$ primes $q\in Q\cap S(\vec{a})$ one has
 \[
\sum_{i=1}^r \sum_{p\in P} Z_p(\vec{a}; q-h_i p)=\left(1+O_\leq\left(\frac{1}{(\log_2 x)^3}\right)\right)\sigma^{r-1} u \:\frac{x}{2y}.  \tag{4.79}
 \]
 We call a prime $q\in Q$ $``$bad$"$, if $q\in Q\cap S(\vec{a})$, but (4.79) fails. Using Lemma \ref{lem415} and (4.71) we have
 \begin{align*}
&\mathbb{E}\left(\sum_{q\in Q\cap S(\vec{\bold{a}})} \sum_{i=1}^r \sum_{p\in P} Z_p(\vec{\bold{a}}; q-h_i p)  \right)\\ \tag{4.80}
&= \sum_{q, i, p}\mathbb{P}(q\in (h_j-h_i)p\in S(\vec{\bold{a}})\ \text{for all}\ j=1,\ldots r)\mathbb{P}(\tilde{\bold{n}}_p=q-h_ip).
 \end{align*}
 By the definition of $\mathfrak{G}(p)$ we have
 $$\mathbb{P}(q+(h_j-h_i)p\in S(\vec{\bold{a}}))=0,$$
 unless $q\in \mathfrak{G}(p)$. By Definition \ref{def412} this means that \mbox{$q+(h_j-h_i)p\in\mathfrak{G}$}.\\
 We may thus apply Lemma \ref{lem415} with 
 $$n_j:=(q-h_ip)+h_j p$$
 and obtain for all $i$:
 $$\mathbb{P}(q+(h_i-h_j)p\in S(\vec{\bold{a}})\ \text{for all}\ j=1,\ldots, r)=\sigma^r \left(1+O\left(\frac{1}{(\log_2 x)^{10}}\right)\right).$$ 
With (4.79) we thus obtain
 \begin{align*}
&\mathbb{E}\left(\sum_{q\in Q\cap S(\vec{\bold{a}})} \sum_{i=1}^r \sum_{p\in P} Z_p(\vec{\bold{a}}; q-h_i p)  \right)\\ 
&=\left(1+O\left(\frac{1}{(\log_2 x)^{10}}\right)\right)\frac{\sigma y}{\log x} \sigma^{r-1} u \frac{x}{2y} \:,
 \end{align*}
 Next we obtain
  \begin{align*}
&\mathbb{E}\left(\sum_{q\in Q\cap S(\vec{\bold{a}})}\left( \sum_{i=1}^r \sum_{p\in P} Z_p(\vec{\bold{a}}; q-h_i p)^2 \right)  \right)\\ 
&= \sum_{\substack{p_1, p_2, q \\ i_1, i_2}} \mathbb{P}(q+(h_j-h_{i_l})p_l\in S(\vec{\bold{a}})\ \text{for}\ j=1,\ldots, r; l=1,2\\
&\ \ \times\mathbb{P}(\tilde{\bold{n}}_{p_1}^{(1)}=q-h_{i_1}p_1)\mathbb{P}(\tilde{\bold{n}}_{p_2}^{(2)}=q-h_{i_2}p_2)\\
&=\left(1+O\left(\frac{1}{(\log_2 x)^{10}}\right)\right)\frac{\sigma y}{\log x} \sigma^{r-1} u \frac{x}{2y} \:,
 \end{align*}
 where $(\tilde{\bold{n}}_{p_1}^{(1)})_{p_1\in P}$ and $(\tilde{\bold{n}}_{p_2}^{(2)})_{p_2\in P}$ are independent copies of $(\tilde{\bold{n}}_{p})_{p\in P}$ over $\vec{\bold{a}}$. In the last step we used the fact that the terms with $p_1=p_2$ contribute negligibly.\\
 By Chebyshev's inequality it follows that the number of bad $q$'s is
 \[
\ll \frac{\sigma y}{\log x}\frac{1}{\log_2^2 x}\ll \frac{x}{\log x\log_2^2 x}\:,\ \text{with probability}\ 1-O\left(\frac{1}{\log_2 x}\right).  \tag{4.81}
 \]
 We may now prove Theorem \ref{thm43}.\\
 The relation (4.40) is actually the Corollary to Lemma \ref{lem413}. In order to prove (4.3), we assume that $\vec{a}$ is good and $q\in Q\cap S(\vec{a})$.\\
 Substituting (4.75) into the left hand side of (4.76) using that $\sigma^{-r}=x^{o(1)}$ and observing that $q=n_i+h_ip$ is only possible if $p\in P(\vec{a})$, we find that 
\begin{align*}
\sigma^{-r}\sum_{i=1}^r \sum_{p\in P(\vec{a})} Z_p(\vec{a}; q-h_ip)&=\sigma^{-r}\sum_{i=1}^r \sum_{p\in P(\vec{a})} X_p(\vec{a})\mathbb{P}(\bold{n}_p=q-h_i p\:|\: \vec{\bold{a}}=\vec{a})\\
&=\left(1+O\left(\frac{1}{(\log_2 x)^{2}}\right)\right)\sum_{i=1}^r \sum_{p\in P(\vec{a})} \mathbb{P}(n_p=q-h_i p\:|\: \vec{\bold{a}}=\vec{a})\\
&=\left(1+O\left(\frac{1}{(\log_2 x)^{2}}\right)\right)\sum_{i=1}^r  \sum_{p\in P}\mathbb{P}(q\in e_p(\vec{a})\:|\: \vec{\bold{a}}=\vec{a})\:,
\end{align*}
where 
$$e_p(\vec{a})=\{ n_p+h_ip\::\: 1\leq i\leq r \}\cap Q\cap S(\vec{a})$$
 is as defined in Theorem \ref{thm43}.
The fact that $\vec{a}$ is good with probability $1-o(1)$ follows upon noticing that 
$$C:=\frac{u}{\sigma}\frac{x}{2y}\sim \frac{1}{\sigma}.$$
This concludes the proof of Theorem \ref{thm43}. \qed

\section{Large Gaps with improved order of magnitude and its $K$-version, Part II}

We first state definitions and results from $``$Dense clusters of primes in subsets$"$ by Maynard \cite{maynard}.\\
We make use of the notation given in Section 7: $``$Multidimensional Sieve Estimates$"$ of \cite{maynard}.
\begin{definition}\label{def51}
A linear form is a function $L\::\:\mathbb{Z}\rightarrow\mathbb{Z}$ of the form $L(n)=l_1n+l_2$ with integer coefficients $l_1, l_2$ and $l_1\neq 0$. Let $\mathcal{A}$ be a set of integers. Given a linear form $L(n)=l_1n+l_2$. We define the sets
\begin{align*}
\mathcal{A}(x)&:=\{n\in \mathcal{A}\::\: x\leq n\leq 2x\},\\
\mathcal{A}(x; q, a)&:=\{ n\in \mathcal{A}\::\: n\equiv a (\bmod\: q)\},\\
\mathcal{P}_{L, \mathcal{A}}(x)&:= L(\mathcal{A}(x))\cap \mathcal{P}\\
\mathcal{P}_{L, \mathcal{A}}(x; q, a)&:=L(\mathcal{A}(x; q, a))\cap \mathcal{P}
\end{align*}
for any $x>0$ and congruence class $a\bmod q$, and define the quantity
$$\phi(q):=\phi(|l_1|q)/\phi(|l_1|),$$
where $\phi$ is the Euler totient function.\\
A finite set $\mathcal{L}=\{L_1, \ldots, L_k\}$ of linear forms is said to be admissible if $\prod_{i=1}^k L_i(n)$ has no fixed prime divisor, that is for every prime $p$ there exists an integer $n_p$ such that $\prod_{i=1}^k L_i(n_p)$ is not divisible by $p$.
\end{definition}
\begin{definition}\label{def52}
Let $x$ be a large quantity, let $\mathcal{A}$ be a set of integers, $\mathcal{L}=\{L_1, \ldots, L_k\}$ a finite set of linear forms and $B$ a natural number. We allow $\mathcal{A}, \mathcal{L}, k, B$ to vary with $x$. Let $0<\theta<1$ be a quantity independent of $x_0$. Let $\mathcal{L}'$ be a subset of $\mathcal{L}$. We say that the tuple $(\mathcal{A}, \mathcal{L}, \mathcal{P}, B, x, \theta)$ obeys Hypothesis 1 at $\mathcal{L}'$ if we have the following three estimates:\\
(1) ($\mathcal{A}(x)$ is well-distributed in arithmetic progressions). We have
\[
\sum_{q\leq x^\theta} \max_a \left|\# \mathcal{A}(x; q, a)-\frac{\#\mathcal{A}(x)}{q} \right|  \ll \frac{\#\mathcal{A}(x)}{\log^{100 k^2} x}
\]
(2) ($\mathcal{P}_{L, \mathcal{A}} (x)$ is well-distributed in arithmetic progressions). For any $L\in \mathcal{L}'$, we have
$$\sum_{q\leq x^\theta, (q, P)=1} \max_{(L(a), q)=1}\left| \#\mathcal{P}_{L, \mathcal{A}}(x; q, a)-\frac{\#\mathcal{P}_{L, \mathcal{A}}(x)}{\phi_K(q)}\right| \ll \frac{\# \mathcal{P}_{L, \mathcal{A}}(x)}{(\log x)^{100k^2}} .$$
(3) ($\mathcal{A}(x)$ not too concentrated). For any $q<x^\theta$ and $a\in \mathbb{Z}$ we have
$$\#\mathcal{A}(x; q, a)\ll \frac{\#\mathcal{A}(x)}{q}.$$
\end{definition}
In \cite{maynard} this definition was only given in the case $\mathcal{L}'=\mathcal{L}$, but we will need the (mild) generalization to the case in which $\mathcal{L}'$ is a (possibly empty) subset of $\mathcal{L}$.\\
As is common in analytic number theory, we will have to address the possibility of a Siegel zero. As we want to keep all our estimates effective, we will not rely on Siegel's theorem or its consequences. Instead, we will rely on the Landau-Page theorem, which we now recall. Throughout, $\chi$ denotes a Dirichlet character.
\begin{lemma}\label{lem53} (Landau-Page Theorem)\\
Let $Q\geq 100$. Suppose that $L(s, \chi)=0$ for some primitive character $\chi$ of modulus at most $Q$, and some $s=\sigma+it$. Then either
$$1-\sigma \gg \frac{1}{\log(Q(1+|t|)}\:,$$
or else $t=0$ and $\chi$ is a quadratic character $\chi_Q$, which is unique. Furthermore, if $\chi_Q$ exists, then its conductor $q_Q$ is square-free apart from a factor of at most 4, and obeys the lower bound
$$q_Q\gg \frac{\log^2 Q}{\log_2 ^2 Q}.$$
\end{lemma}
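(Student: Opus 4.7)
The plan is to follow the classical de la Vallée Poussin argument, refined by Landau and Page, with the quantitative conductor bound coming from a Siegel-type comparison. The starting point is the non-negativity identity $3 + 4\cos\theta + \cos 2\theta = 2(1+\cos\theta)^2 \geq 0$. Applied termwise to the Dirichlet series $-L'/L(s,\chi) = \sum_n \Lambda(n)\chi(n) n^{-s}$, it yields, for every real $\sigma > 1$,
\[
3\,\Re\!\left(-\frac{\zeta'}{\zeta}(\sigma)\right) + 4\,\Re\!\left(-\frac{L'}{L}(\sigma+it,\chi)\right) + \Re\!\left(-\frac{L'}{L}(\sigma+2it,\chi^2)\right) \geq 0.
\]
I would then input the three standard estimates: $-\zeta'/\zeta(\sigma) \leq \tfrac{1}{\sigma-1} + O(1)$; near a hypothetical zero $\rho = \beta+i\gamma$ of $L(s,\chi)$, the Hadamard product formula gives $\Re(-L'/L)(\sigma+i\gamma,\chi) \leq -\tfrac{1}{\sigma-\beta} + O(\log(Q(2+|\gamma|)))$; and $\Re(-L'/L)(\sigma+2i\gamma,\chi^2) \leq O(\log(Q(2+|\gamma|)))$ unless $\chi^2$ is principal \emph{and} $\gamma = 0$, in which case $L(s,\chi^2)$ acquires a pole contributing $+\tfrac{1}{\sigma-1}$.

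Next I would choose $\sigma = 1 + \lambda/\log(Q(2+|\gamma|))$ for a small absolute constant $\lambda > 0$, substitute into the inequality, and optimize. In the generic case (either $\chi^2$ non-principal, or $\gamma \neq 0$) the three-term estimate collapses to $\tfrac{4}{\sigma-\beta} \leq \tfrac{3}{\sigma-1} + O(\log(Q(1+|\gamma|)))$, which upon rearrangement gives the desired classical zero-free region $1-\beta \gg 1/\log(Q(1+|\gamma|))$. The argument fails precisely when $\chi$ is real (so $\chi^2 = \chi_0$) and $\gamma = 0$, leaving open the possibility of a real exceptional zero.

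Having isolated the exceptional configuration, I would establish uniqueness of $\chi_Q$ by the standard comparison trick: if two distinct primitive real characters $\chi_1$ mod $q_1$, $\chi_2$ mod $q_2$ (both $\leq Q$) had real zeros $\beta_1,\beta_2$ very close to $1$, then $\chi_1\chi_2$ is a non-principal real character modulo $[q_1,q_2] \leq Q^2$, and the Dirichlet series $\zeta(s)L(s,\chi_1)L(s,\chi_2)L(s,\chi_1\chi_2)$ has non-negative coefficients (it is the Dedekind zeta function of the biquadratic field attached to $\chi_1,\chi_2$). A Landau-type lemma then forces at least one of $1-\beta_i$ to dominate $1/\log Q$, ruling out two simultaneous Siegel zeros. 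The conductor structure (square-free apart from a factor of at most $4$) is not analytic: every primitive real Dirichlet character is a Kronecker symbol whose modulus is a fundamental discriminant, so it is of the shape $D$ square-free with $D\equiv 1\pmod 4$ or $4m$ with $m$ square-free and $m\equiv 2,3\pmod 4$. For the effective lower bound $q_Q \gg \log^2 Q / \log_2^2 Q$ I would invoke Page's argument: the Dirichlet class-number-type inequality $L(1,\chi_Q) \gg 1/\sqrt{q_Q}$ combined with the convexity bound $|L'(s,\chi_Q)| \ll \log^2 q_Q$ on $[\beta,1]$ yields $1-\beta \gg 1/(\sqrt{q_Q}\log^2 q_Q)$; comparing this with the zero-free region applied at level $\approx \log^2 Q$ forces $q_Q$ to exceed $\log^2 Q / \log_2^2 Q$.

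The main obstacle is the final effective conductor bound. The generic zero-free region is routine once the non-negativity inequality is written down, and the uniqueness of $\chi_Q$ uses a clean product trick. But the quantitative Page bound on $q_Q$ requires simultaneously balancing the lower bound on $L(1,\chi_Q)$ (which is effective but weak) against the Deuring--Heilbronn repulsion of a Siegel zero, and one must take care to keep every constant effective so that the result can be cited without reliance on Siegel's theorem. This delicate bookkeeping between several inequalities is where most of the real work lies.
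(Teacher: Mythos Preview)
Your proposal is correct and follows exactly the classical route the paper invokes: the paper's own proof is nothing more than a citation to Davenport, Chapter~14, together with the explicit Page-type bound $1-\beta \gg q^{-1/2}(\log q)^{-2}$ for a real zero, which is precisely the inequality you derive from $L(1,\chi_Q)\gg q_Q^{-1/2}$ and the mean-value bound on $L'$, and then play off against the failure of the generic zero-free region to obtain the conductor lower bound. In short, you have written out in detail what the paper leaves entirely to the reference.
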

\begin{proof}
See e.g. \cite{daven_2000}, Chapter 14. The final estimate follows from the bound 
$$1-\beta\gg q^{-1/2}\log^{-2} q$$
for a real zero $\beta$ of $L(s, \chi)$ with $\chi$ of modulus $q$, which can also be found in \cite{daven_2000}, Chapter 14. \\
We can then eliminate the exceptional character by deleting at most one prime factor of $q_Q$.
\end{proof}
\begin{cor}\label{corQQ}
Let $Q\geq 100$. Then there exists a quantity $B_Q$ which is either equal to 1 or is a prime of size
$$B_Q\gg \log_2 Q$$
with the property that
$$1-\sigma\gg \frac{1}{\log(Q(1+|t|))}$$
whenever $L(\sigma+it, \chi)=0$ and $\chi$ is a character of modulus at most $Q$ and coprime to $B_Q$.
\end{cor}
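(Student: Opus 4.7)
The plan is to derive this directly from the Landau--Page dichotomy (Lemma~5.3), by choosing $B_Q$ to be a well-chosen prime factor of the conductor $q_Q$ of the possible exceptional quadratic character.

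First I would dispose of the easy case: if no Landau--Page exceptional character $\chi_Q$ of modulus $\leq Q$ exists, then Lemma~5.3 already gives the standard zero-free region $1-\sigma \gg 1/\log(Q(1+|t|))$ for every $L(s,\chi)$ with $\chi$ of modulus at most $Q$, so I simply set $B_Q:=1$ and there is nothing to prove.

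Assume from now on that the exceptional $\chi_Q$ does exist, with conductor $q_Q$. By the second half of Lemma~5.3, $q_Q$ is square-free apart from a factor of at most $4$, and $q_Q \gg \log^2 Q / \log_2^2 Q$. Write $q_Q = 2^{a}\,p_1 p_2 \cdots p_k$ with $a\leq 2$ and $p_1<\cdots<p_k$ distinct odd primes, and put $B_Q:=p_k$. To lower-bound $p_k$, observe that if every $p_i$ were at most $y$, then by Mertens/PNT
\[
q_Q \leq 4 \prod_{p\leq y} p = \exp\bigl((1+o(1))\,y\bigr),
\]
so $y \geq (1+o(1))\log q_Q \gg \log_2 Q$. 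Taking $y=p_k$ this gives $B_Q = p_k \gg \log_2 Q$, as required.

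Now I verify that this choice kills the exceptional zero. Let $\chi$ be any Dirichlet character of modulus $q\leq Q$ with $(q,B_Q)=1$, and let $\chi^{*}$ be the primitive character inducing $\chi$, with conductor $q^{*} \mid q$. The zeros of $L(s,\chi)$ in $\Re s>0$ coincide with those of $L(s,\chi^{*})$, since the finitely many local Euler factors $\prod_{p\mid q,\,p\nmid q^{*}}(1-\chi^{*}(p)p^{-s})$ have no zeros there. Because $(q,B_Q)=1$ and $B_Q=p_k\mid q_Q$, we have $q^{*}\mid q$ with $B_Q\nmid q^{*}$, so $q^{*}\neq q_Q$ and thus $\chi^{*}\neq \chi_Q$. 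Applying Lemma~5.3 to $\chi^{*}$ (a primitive character of modulus $\leq Q$ distinct from the unique exceptional $\chi_Q$) therefore yields
\[
1-\sigma \;\gg\; \frac{1}{\log\!\bigl(Q(1+|t|)\bigr)}
\]
for every zero $\sigma+it$ of $L(s,\chi^{*})$, and hence of $L(s,\chi)$.

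The only nontrivial step is the size estimate $B_Q \gg \log_2 Q$, which is the main obstacle: it relies crucially on both pieces of information provided by Lemma~5.3, namely the lower bound on $q_Q$ and the near-squarefreeness of $q_Q$, combined with the Chebyshev/PNT bound $\sum_{p\leq y}\log p \asymp y$. Everything else is routine bookkeeping with induced characters.
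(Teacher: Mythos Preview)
Your proof is correct and follows exactly the same approach as the paper: set $B_Q=1$ if no exceptional character exists, and otherwise take $B_Q$ to be the largest prime factor of $q_Q$, using the near-squarefreeness of $q_Q$ together with the prime number theorem to get $B_Q\gg\log q_Q\gg\log_2 Q$. The paper's proof is just a terser two-sentence version of what you wrote; your additional paragraph verifying via induced characters that $(q,B_Q)=1$ forces $\chi^{*}\neq\chi_Q$ is a detail the paper leaves implicit.
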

\begin{proof}
If the exceptional character $\chi_Q$ from Lemma \ref{lem53} does not exist, then take $B_Q:=1$, otherwise we take $B_Q$ to be the largest prime factor of $q_Q$. As $q_Q$ is square-free apart from a factor of at most 4, we have $\log q_Q\ll B_Q$ by  the prime number theorem and the claim follows.
\end{proof}
\begin{lemma}\label{lem54}
Let $x$ be a large quantity. Then there exists a natural number $B\leq x$, which is either 1 or a prime, such that the following holds.\\
Let $\mathcal{A}:=\mathbb{Z}$, let $\theta:=1/3$ and $\mathcal{L}:=\{L_1, \ldots, L_k\}$ be a finite set of linear forms \mbox{$L_i(n)=a_i+b_i$} (which may depend on $x$) with $k\leq \log^{1/5} x$, $1\leq |a_i|\leq \log x$, and $|b_i|\leq x\log^2 x$.\\
Let $x\leq y \leq x \log^2 x$, and let $\mathcal{L}'$ be a subset of $\mathcal{L}$ such that $L_i$ is non-negative on $[y, 2y]$ and $a_i$ is coprime to $B$ for all $L_i\in \mathcal{L}'$. Then $(\mathcal{A}, \mathcal{L}, P, B, y, \theta)$ obeys Hypothesis 1 at $\mathcal{L}'$ with absolute implied constants (i.e. the bounds in Hypothesis 1 are uniform over all such choices of $\mathcal{L}$ and $y$).
\end{lemma}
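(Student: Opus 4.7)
The plan is to define $B$ via the Corollary to Lemma~\ref{lem53} applied with $Q := x^{10}$ (any polynomial bound comfortably exceeding every modulus that appears below works equally well); then $B$ is either $1$ or a prime of size $\gg \log_2 x$, and every Dirichlet $L$-function of modulus $\leq Q$ coprime to $B$ enjoys an \emph{effective} Landau-type zero-free region. The first task is to verify conditions~(1) and~(3) of Hypothesis~1: both are trivial since $\mathcal{A} = \mathbb{Z}$. One has $\#\mathcal{A}(y;q,a) = y/q + O(1)$, giving condition~(3) at once, and the sum in condition~(1) is bounded by $O(y^{1/3})$, which is dwarfed by $y/(\log y)^{100 k^2}$ because $k^2 \leq (\log x)^{2/5}$ forces $(\log y)^{100 k^2}$ to be sub-exponential in $\log x$.

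The substantive content is condition~(2). Fix $L(n) = a_1 n + b_1 \in \mathcal{L}'$; without loss of generality assume $a_1 > 0$. Under the change of variables $m = L(n)$, the progression $n \equiv a \pmod q$ with $n \in [y,2y]$ maps bijectively onto the progression $m \equiv a_1 a + b_1 \pmod{a_1 q}$ inside the interval $I := [a_1 y + b_1,\, 2 a_1 y + b_1]$ of length $a_1 y$. Hence $\#\mathcal{P}_{L,\mathcal{A}}(y;q,a)$ is precisely the prime count in this restricted progression on $I$, and since $\phi_L(q) = \phi(a_1 q)/\phi(a_1)$ the main term $\#\mathcal{P}_{L,\mathcal{A}}(y)/\phi_L(q)$ coincides, up to a negligible multiplicative correction from the asymptotic $\#\mathcal{P}_{L,\mathcal{A}}(y) \sim \mathrm{Li}(a_1 y)/\phi(a_1)$, with the Bombieri--Vinogradov prediction $(\mathrm{Li}(2 a_1 y + b_1) - \mathrm{Li}(a_1 y + b_1))/\phi(a_1 q)$. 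The coprimality $(a_1, B) = 1$ built into the definition of $\mathcal{L}'$, together with the restriction $(q, B) = 1$ in the sum, guarantees $(a_1 q, B) = 1$; and $a_1 q \leq (\log x)\, y^{1/3} \ll (a_1 y)^{1/2 - \eta}$ for an absolute constant $\eta > 0$, so the combined modulus lies well within the Bombieri--Vinogradov range for the interval~$I$. Re-indexing via $q' = a_1 q$ introduces no overcounting, since the map $q \mapsto a_1 q$ is a bijection from its domain onto the multiples of $a_1$ in $[1, a_1 y^{1/3}]$.

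The main obstacle is the strength of the required error: the target bound $\ll \#\mathcal{P}_{L,\mathcal{A}}(y) / (\log y)^{100 k^2}$ allows $k^2$ to grow as fast as $(\log x)^{2/5}$, which is stronger than any fixed-$A$ Bombieri--Vinogradov statement and would forfeit effectivity if one quoted the usual Siegel--Walfisz input. My proposal is to rerun the classical Vaughan-identity / large-sieve derivation of Bombieri--Vinogradov, but whenever Siegel--Walfisz would be invoked to control character sums, to replace it by the effective Landau--Page zero-free region from the Corollary to Lemma~\ref{lem53}. Because $B$ is chosen precisely to absorb the one possible exceptional character of conductor $\leq Q$, the surviving zero-free region yields an error of the form $\exp(-c \sqrt{\log y})$ with an absolute constant $c > 0$. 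The hypothesis $k \leq (\log x)^{1/5}$ gives $100 k^2 \log_2 y \leq 200 (\log x)^{2/5} \log_2 x = o(\sqrt{\log x})$, so $\exp(-c \sqrt{\log y}) < (\log y)^{-100 k^2}$ for large~$x$, which is exactly the strength demanded. This converts the ineffective Siegel input into an effective one and completes the verification of Hypothesis~1 at~$\mathcal{L}'$.
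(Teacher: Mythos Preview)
Your overall strategy matches the paper's: parts~(1) and~(3) of Hypothesis~1 are immediate for $\mathcal{A}=\mathbb{Z}$, and for part~(2) one reruns the standard large-sieve derivation of Bombieri--Vinogradov with the ineffective Siegel--Walfisz input replaced by the effective zero-free region available once the exceptional character is excised via~$B$, yielding a saving of $\exp(-c\sqrt{\log x})$ which dominates $(\log x)^{-100k^2}$ since $k\le(\log x)^{1/5}$.

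The one point that needs correction is the choice $Q=x^{10}$ (and your parenthetical remark that any polynomial bound works). The Corollary to Lemma~\ref{lem53}, as stated, only furnishes the zero-free region $1-\sigma\gg 1/\log\bigl(Q(1+|t|)\bigr)$ with the \emph{fixed} parameter~$Q$ in the denominator, not the individual modulus~$q$. With $Q=x^{10}$ this width is merely $\asymp 1/\log x$, and feeding it into the explicit formula for $\psi(z,\chi)$ over the small-modulus range saves at best a bounded factor, not the $\exp(-c\sqrt{\log y})$ you claim. Moreover $B\le Q$ then gives only $B\le x^{10}$, not the $B\le x$ the lemma asserts. The paper instead applies the Corollary with $Q=\exp(c_1\sqrt{\log x})$ for a small absolute constant~$c_1$: the zero-free region then has width $\gg 1/\sqrt{\log x}$ on the relevant range of~$t$, which is exactly what produces the effective bound~(5.1) and hence the required $\exp(-c\sqrt{\log x})$ saving, and one also gets $B\le Q\le x$ automatically. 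With $Q$ so corrected, your argument and the paper's coincide.
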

\begin{proof}
Parts (1) and (3) of Hypothesis 1 are easy to see; the only difficult verification is (2). We apply Corollary \ref{corQQ} with 
$$Q:=\exp(c_1\sqrt{\log x})$$
for some small absolute constant $c_1$ to obtain a quantity $B:=B_Q$ with the stated properties. By the Landau-Page theorem (see \cite{daven_2000}, Chapter 20), we have that if $c_1$ is sufficiently small then we have the effective bound
\[
\phi(q)^{-1}\sum^*_{\chi} |\psi(z, \chi)|\ll x\exp(-3c\sqrt{\log x})  \tag{5.1}
\]
for all $1<q<\exp(2c\sqrt{\log x})$ with $(q, B)=1$ and all $z\leq x\log^4 x$. Here the summation is over all primitive $\chi\mod q$ and 
$$\psi(z, \chi)=\sum_{n\leq x}\chi(n)\Lambda(n).$$
Following a standard proof of the Bombieri-Vinogradov Theorem (cf. \cite{daven_2000}, Chapter 28), we have (for a suitable constant $c>0$):
\begin{align*}
&\sum_{\substack{q< x^{1/2-\epsilon} \\ (q, B)=1}} \sup_{\substack{(a, q)=1 \\ z\leq x\log^4 x}} \left|\pi(z; q, a)-\frac{\pi(z)}{\phi(q)}  \right|  \tag{5.2}\\
&\ \ \ll x\exp(-c\sqrt{\log x})+\log x  \sum_{\substack{q< \exp(2c\sqrt{\log x}) \\ (q, B)=1}} \sum_\chi^* \sup_{z\leq x\log^4 x} \frac{|\psi(z, \chi)|}{\phi(q)}
\end{align*}
Combining these two statements and using the triangle inequality gives the bound required for (2).
\end{proof}

We now recall the construction of sieve weights from \cite{maynard}, Section 7.\\
Let 
$$W:=\prod_{\substack{p\leq 2k^2\\ p\nmid B}} p\:.$$
For each prime $p$ not dividing $B$, let 
$$r_{p, 1} (\mathcal{L})<\cdots <r_{p, \omega_{\mathcal{L}(p)}}(\mathcal{L})$$
be the elements $n$ of $[p]$ for which $$p\mid \prod_{i=1}^k L_i(n).$$
If $p$ is also coprime to $w$, then for each $1\leq a\leq  \omega_{\mathcal{L}(p)}$, let $j_{p, u}=j_{p,u}(\mathcal{L})$ denote the least element of $[k]$ such that 
$$p\mid L_{j_{p,u}} (r_{p,u}(\mathcal{L})).$$
Let $D_k(\mathcal{L})$ denote the set 
\begin{align*}
D_k(\mathcal{L})&:=\{ (d_1, \ldots, d_k)\in\mathbb{N}^k\::\: \mu^2(d_1\ldots d_k)=1 \::\:\\
& (d_1\ldots d_k, WB)=1;\ (d_j,p)=1\ \text{whenever}\ p\nmid BW\\ &\text{and}\ j\neq j_{p,1},\ldots, j_{p, \omega_{\mathcal{L}(p)}}\}.
\end{align*}
Define the singular series
$$\mathfrak{S}(\mathcal{L}):=\prod_{p\nmid B} \left(1-\frac{\omega_{\mathcal{L}}(p)}{p}\right)\left(1-\frac{1}{p}\right)^{-k}\:,$$
the function
$$\phi_{\omega_{\mathcal{L}}}:=\prod_{p\mid d} (p-\omega_{\mathcal{L}}(p)),$$
and let $R$ be a quantity of size
$$x^{\theta/10}\leq R\leq x^{\theta/3}.$$
Let $F\::\: \mathbb{R}^k \rightarrow \mathbb{R}$ be a smooth function supported on the simplex
$$R_k:=\{ (t_1, \ldots, t_k)\in \mathbb{R}_+^k\::\: t_1+\cdots+ t_k\leq 1   \}.$$
For any $(d_1, \ldots, d_k)\in D_k(\mathcal{L})$ define
$$Y_{(d_1, \ldots, d_k)}(\mathcal{L}):=\frac{1_{D_k(\mathcal{L})}(r_1, \ldots, r_k) W^k B^k}{\phi(WB)^k}\mathfrak{S}_{WB}(\mathcal{L})\: F\left(\frac{\log r_1}{\log R}, \ldots, \frac{\log r_k}{\log R}\right).$$
For any $(d_1, \ldots, d_k)\in D_k(\mathcal{L})$ define
$$\lambda_{(d_1, \ldots, d_k)}(\mathcal{L}):=\mu(d_1\ldots d_k)d_1\ldots d_k \sum_{d_i\mid r_i\ \text{for}\ i=1, \ldots, k} \frac{Y_{(r_1, \ldots, r_k)}(\mathcal{L})}{\phi_{\omega_{\mathcal{L}}}(r_1\cdots r_k)}\:,$$
and then define the function $w=w_{k, \mathcal{L}, B, R}\::\:\mathbb{Z}\rightarrow \mathbb{R}^+$ by 
\[
w(n):=\left( \sum_{d_1, \ldots, d_k\::\: d_i/L_i(n)\ \text{for all}\ i} \lambda_{(d_1, \ldots, d_k)}(\mathcal{L})  \right)^2\:.  \tag{5.3}
\]
We then have the following slightly modified form of Proposition 6.1 of \cite{maynard}.
\begin{theorem}\label{thm55}
Fix $\theta$, $\alpha>0$. Then there exists a constant $C$ depending only on $\theta, \alpha$ such that the following holds. Suppose that $(\mathcal{A}, \mathcal{L}, P, B, x, \theta)$ obeys Hypothesis $I$ at some subset $\mathcal{L}'$ of $\mathcal{L}$. Write $k:=\#\mathcal{L}$, and suppose that 
$x\geq C$, $B\leq x^\alpha$, and $C\leq k\leq \log^{1/5} x$. Moreover, assume that the coefficients $a_i, b_i$ of the linear forms $L_i(n)=a_i n+b_i$ in $\mathcal{L}$ obey  the size bound $|a_i|, |b_i|\leq x^\alpha$, and $C\leq k\leq \log^{1/5} x$. Moreover, assume that the coefficients $a_i$, $b_i$ of the linear forms $L_i(n)=a_i n+b_i$ in $\mathcal{L}$ obey the size bound $|a_i|, |b_i|\leq x^\alpha$ for all $i=1, \ldots, k$. Then there exists a smooth function $F\::\:\mathbb{R}^k\rightarrow \mathbb{R}$ depending only on $k$ and supported on the simplex $R_k$, and quantities $I_k$, $J_k$ depending only on $k$ with 
$$I_k\gg (2k\log k)^{-k}$$
and
\[
J_k \asymp \frac{\log k}{k}\: I_k  \tag{5.4}
\]
such that, for $w(n)$ given in terms of $F$ as above, the following assertions hold uniformly for $x^{\theta/10}\leq R\leq x^{\theta/3}$.\\
$\bullet$ We have
\[
\sum_{n\in\mathcal{A}(x)} w(n)=\left(1+O\left(\frac{1}{\log^{1/10} x}\right)\right)\frac{B^k}{\phi(B)^k}\mathfrak{S}(\mathcal{L}) \#\mathcal{A}(x)(\log R)^k I_k \:.\tag{5.5}
\]
$\bullet$ For any linear form $L(n)=a_L n+b_L$ in $\mathcal{L}'$ with $a_L$ coprime to $B$ and $L(n)> R$ on $[x, 2x]$, we have
\begin{align*}
&\sum_{n\in\mathcal{A}(x)}1_P(L(n))w(n)   \tag{5.6}\\
&= \left(1+O\left(\frac{1}{\log^{1/10} x}\right)\right) \frac{\Phi(|a_L|)}{|a_L|}\frac{B^{k-1}}{\phi(B)^{k-1}}\frac{B^{k-1}}{\phi(B)^{k-1}}\mathfrak{S}(\mathcal{L})\# P_{L, \mathcal{A}}(x)(\log R)^{k-1} J_h\\
&+ O\left(\frac{B^k}{\phi(B)^k} \mathfrak{S}(\mathfrak{L})\#\mathcal{A}(x)(\log R)^{k-1} I_h  \right)\:.
\end{align*}
$\bullet$ Let $L(n)=a_0 n+b_0$ be a linear form such that the discriminant 
$$\Delta_L:=|a_0|\prod_{j=1}^k |a_0b_j-a_jb_0|$$
is non-zero (in particular $L$ is not in $\mathcal{L}$). Then 
\[
\sum_{n\in \mathcal{A}(x)} 1_{P\cap [x^{\theta/10}, +\infty)} (L(n))w(n)\ll \frac{\Delta_L}{\phi(\Delta_L)}\frac{B^k}{\phi(B)^k}\mathfrak{S} (\mathcal{L})\#\mathcal{A}(x) (\log R)^{n-1} I_k.   \tag{5.7}
\]
$\bullet$ We have the crude upper bound
\[
w(n)\ll x^{2\theta/3+o(1)}  \tag{5.8}
\]
for all n$\in \mathbb{Z}$.
\end{theorem}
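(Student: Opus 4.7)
The plan is to follow the standard Selberg sieve expansion for the weights $w(n)=(\sum_{\vec{d}}\lambda_{\vec{d}}(\mathcal{L}))^{2}$, track how the restrictions to $\mathcal{L}'$ and the coprimality-to-$B$ constraints propagate, and then optimize the smooth function $F$ on the simplex $R_{k}$. First I would open up the square in (5.3) to write
\[
\sum_{n\in\mathcal{A}(x)} w(n)=\sum_{\vec{d},\vec{e}\in D_k(\mathcal{L})}\lambda_{\vec{d}}(\mathcal{L})\lambda_{\vec{e}}(\mathcal{L})\sum_{\substack{n\in\mathcal{A}(x)\\ [d_i,e_i]\mid L_i(n)\,\forall i}}1.
\]
The combined congruence condition fixes a single residue class modulo $q=\prod_i [d_i,e_i]$, using $(d_1\cdots d_k, WB)=1$ and the fact that $\mathcal{L}$ is admissible. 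Hypothesis I(1) (together with I(3) for the sparse tail) then replaces the inner sum by $\#\mathcal{A}(x)/q$ up to an aggregate error of $O(\#\mathcal{A}(x)/\log^{100k^{2}}x)$, since only $q\leq R^{2}\leq x^{2\theta/3}<x^{\theta}$ appear.

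Next I would diagonalize the main term via the Möbius change of variables $\lambda_{\vec d}(\mathcal{L})=\mu(d_1\cdots d_k)\,d_1\cdots d_k\sum_{d_i\mid r_i}Y_{\vec r}(\mathcal{L})/\phi_{\omega_{\mathcal{L}}}(r_1\cdots r_k)$, which is the standard Selberg trick reducing the $\lambda$-sum to $\sum_{\vec r}Y_{\vec r}(\mathcal{L})^{2}/\prod_i \phi_{\omega_{\mathcal{L}}}(r_i)$. Substituting the definition of $Y_{\vec r}(\mathcal{L})$ in terms of $F(\log r_i/\log R)$ and converting the arithmetic sums to integrals by Perron/Mellin (routine for smooth $F$ supported on the simplex), the local Euler factors collapse to the singular series $\mathfrak{S}(\mathcal{L})$, leaving the $B$-local factor $B^{k}/\phi(B)^{k}$ isolated and yielding (5.5) with
\[
I_k=\int_{R_k} F(t_1,\ldots,t_k)^{2}\,dt_1\cdots dt_k.
\]
Choosing $F$ of the symmetric polynomial shape used in \cite{maynard} gives the lower bound $I_k\gg (2k\log k)^{-k}$ and the ratio (5.4).

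For (5.6) I would repeat this expansion but weight by $1_{P}(L(n))$ for a fixed $L\in\mathcal{L}'$. The inner sum becomes $\#\mathcal{P}_{L,\mathcal{A}}(x;q,a_{\vec d,\vec e})/\phi_{K}(q)$, and here Hypothesis I(2), which is assumed precisely at $L\in\mathcal{L}'$, controls the aggregate error. The coprimality $(a_L,B)=1$ is essential so that the residue class $L(a_{\vec d,\vec e})$ is coprime to $q$ and the Bombieri–Vinogradov-type bound in I(2) applies. The diagonalization now drops one variable (the index corresponding to $L$), producing $J_k$ as a $(k-1)$-fold integral of a squared inner integral of $F$, together with the factor $\Phi(|a_L|)/|a_L|\cdot B^{k-1}/\phi(B)^{k-1}$ from separating the Euler factors at primes dividing $a_L$ and $B$. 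For (5.7), the extraneous $L$ contributes $\Delta_L$-dependent local factors at the primes dividing $\Delta_L$, and the sieve remains bounded by the $I_k$-type integral since $L(n)$ prime forces a divisibility constraint coprime to the $d_i$ except at divisors of $\Delta_L$. The crude bound (5.8) is immediate from the support $d_1\cdots d_k\leq R\leq x^{\theta/3}$, the trivial estimate $|\lambda_{\vec d}(\mathcal{L})|\ll (d_1\cdots d_k)^{o(1)}$, and Cauchy–Schwarz, yielding $w(n)\ll R^{2+o(1)}=x^{2\theta/3+o(1)}$.

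The main obstacle I anticipate is the proof of (5.6) under the weaker hypothesis that (2) is known only for $L\in\mathcal{L}'$ and only when $(a_L,B)=1$; one has to verify that when the square is expanded and one specific form $L\in\mathcal{L}'$ is singled out, the required level-of-distribution estimate for $\mathcal{P}_{L,\mathcal{A}}$ is the only one invoked, with I(1) handling all the other $L_i$'s through purely divisor-function arithmetic. Carrying the factor $B^{k}/\phi(B)^{k}$ cleanly through the singular-series manipulation — in particular showing that it appears as an overall multiplicative correction rather than perturbing $\mathfrak{S}(\mathcal{L})$ or the integrals $I_k,J_k$ — is the most delicate bookkeeping step and the one place where this statement genuinely differs from Proposition 6.1 of \cite{maynard}.
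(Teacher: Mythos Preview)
Your outline is a faithful sketch of how the estimates (5.5)--(5.8) are actually derived in Maynard's paper \cite{maynard}: expand the square, use Hypothesis~I to replace inner sums by main terms, diagonalize via the $Y_{\vec r}$ variables, and pass to integrals over the simplex. The identification of $I_k$ and $J_k$ and the argument for the crude bound (5.8) are all correct in spirit.

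However, the paper does not reprove any of this. Its proof of Theorem~\ref{thm55} consists entirely of citations: (5.5) is Proposition~9.1 of \cite{maynard}, (5.6) is Proposition~9.2 of \cite{maynard} in the case $(a_L,B)=1$, (5.7) is Proposition~9.4 of \cite{maynard} with $\xi=\theta/10$ and $D=1$, (5.8) is Lemma~8.5(iii) of \cite{maynard}, and the bounds on $I_k,J_k$ come from Lemma~8.6 of \cite{maynard}. The only genuine content is the observation that Maynard's proofs go through verbatim when Hypothesis~I(2) is assumed only at the subset $\mathcal{L}'\subseteq\mathcal{L}$, since (5.6) is the sole estimate that invokes I(2) and it does so only for the single form $L\in\mathcal{L}'$ under consideration---exactly the point you flag as the ``main obstacle.'' So your analysis is correct, and what you call the delicate bookkeeping step is in fact the entire substance of the paper's argument; everything else you wrote is a reconstruction of proofs the paper simply imports.
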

\begin{proof}
The first estimate (5.5) is given by \cite{maynard}, Proposition 9.1, (5.6) follows from \cite{maynard}, Proposition 9.2, in the case $(a_L, B)=1$, (5.7) is given by \cite{maynard}, Proposition 9.4, (taking $\xi:=\theta/10$ and $D:=1$), and the final statement (5.8) is given by part (iii) of \cite{maynard}, Lemma 8.5.
The bounds for $J_k$ and $I_k$ are given by \cite{maynard}, Lemma 8.6.\\
We can now prove Theorem \ref{thm55}. Let $x, y, r, h_1, \ldots, h_r$ be as in that theorem. We set
\begin{align*}
\mathcal{A}&:= \mathbb{Z},\\
\theta&:= 1/3,\\
k&:= r,\\
R&:=(x/4)^{\theta/3}
\end{align*}
and let $B=x^{o(1)}$ be the quantity from Lemma \ref{lem54}.\\
We define the function $w\::\: P\times \mathbb{Z}\rightarrow \mathbb{R}^+$ by setting 
$$w(p, n):=1_{[-y,y]} (n) w_{k,\mathcal{L}_p, B, R}(n)$$
for $p\in P$ and $n\in\mathbb{Z}$, where $\mathcal{L}_p$ is the (ordered) collection of linear forms $n\mapsto n+h_i p$ for $i=1, \ldots, r$ and $w_{k,\mathcal{L}_p, B, R}$ was defined in (5.3). Note that the admissibility of the $r$-tuple $(h_1, \ldots, h_r)$ implies the admissibility of the linear forms $n\mapsto n+h_i p$.\\
An important point is that many of the key components of $w_{k,\mathcal{L}_p, B, R}$ are essentially uniform in $p$. Indeed, for any primes, the polynomial 
$$\prod_{i=1}^k (n+h_ip)$$
is divisible by $s$ only at the residue classes - $h_ip\:\bmod\: s$. From this we see that 
$$\omega_{\mathcal{L}_p}(s):=\# \{ h_i(\bmod\: s)\}\ \text{whenever}\ s\neq p.$$
In particular, $\omega_{\mathcal{L}_p}(s)$ is independent of $p$ as long as $s$ is distinct from $p$, therefore
\begin{align*}
\mathfrak{S}(\mathcal{L}_p)&=\left(1+O\left(\frac{k}{x}\right)\right)\mathfrak{S}\:,\tag{5.9}\\
\mathfrak{S}_{BW}(\mathcal{L}_p)&=\left(1+O\left(\frac{k}{x}\right)\right)\mathfrak{S}_{BW}\:,
\end{align*}
for some $\mathfrak{S}, \mathfrak{S}_{BW}$ independent of $p$, with the error terms uniform in $p$. Moreover, if $s\nmid WP$ then $s> 2k^2$, so all the $h_i$ are distinct $\mod\: s$ (since the $h_i$ are less than $2k^2$). Therefore, if $s\nmid pWB$ we have $\omega_{\mathcal{L}_p}(s)=k$ and
$$\{ j_{s,1}(\mathcal{L}_p), \ldots, j_{s, \omega(s)}(\mathcal{L}_p)\}=\{ 1, \ldots, k \}.$$
Since all $p\in P$ are at least $x/2> R$, we have $s\neq p$ whenever $s\leq R$. From this we see that 
$$D_R(\mathcal{L}_p)\cap \left\{(d_1, \ldots, d_k)\::\: \prod_{i=1}^k d_i\leq R\right\}$$
is independent of $p$, and where the error term is independent of $d_1, \ldots, d_k$.\\
It is clear that $w$ is non-negative and supported on $P\times [-y, y]$, and from (5.8) we have (4.45). We set
\[
\tau:= 2\:\frac{B^k}{\phi(B)^k}\:\mathfrak{S}(\log R)^k(\log x)^k I_k \tag{5.11}
\]
and
$$u:=\frac{\phi(B)}{B}\:\frac{\log R\: k J_k}{\log x\: 2I_k}\:.$$
Since $B$ is either 1 or prime, we have
$$\frac{\phi(B)}{B} \asymp 1\:,$$
and from the definition of $R$ we also have 
\[
\frac{\log R}{\log x} \asymp 1. \tag{5.12}
\]
From (5.4) we thus obtain (4.45). From \cite{maynard}, Lemma 8.1(i) we have
$$\mathfrak{S}\geq x^{-o(1)}$$
and from \cite{maynard}, Lemma 8.6, we have
$$I_k= x^{o(1)}$$
and so we have the lower bound (4.44). (In fact we also have a matching upper bound) $\tau\leq x^{o(1)}$, but we will not need this).\\
It remains to verify the estimates (4.46) and (4.47). We begin with (4.46). Let $p$ be an element of $\mathcal{P}$. We shift the $n$ variable by $3y$ and rewrite
$$\sum_{n\in\mathbb{Z}} w(p, n) = \sum_{n\in \mathcal{A}(2y)} w_{k, \mathcal{L}_p-3y, B, R}(n)+O(x^{1-c+o(1)}),$$
where $ \mathcal{L}_p-3y$ denotes the set of linear forms $n\mapsto n+h_ip-3y$ for $i=1, \ldots, k$. (The $x^{1-c+o(1)}$ error arises from (4.48) and roundoff effect if $y$ is not an integer). This set of linear forms remains admissible, and 
$$\mathfrak{S}(\mathcal{L}_p-3y)=\mathfrak{S}(\mathcal{L}_p)=\left(1+O\left(\frac{k}{x}\right)\right)\mathfrak{S}.$$
The claim (4.46) now follows from (5.2) and the first conclusion (5.5) of 
Theorem \ref{thm55} (with $x$ replaced by $2y$, $\mathcal{L}'=\emptyset$, and $\mathcal{L}=\mathcal{L}_p-3y$), using Lemma \ref{lem54} to obtain Hypothesis 1.\\
Now we prove (4.47). Fix $q\in Q$ and $i\in\{1, \ldots, k\}$. We introduce the set $\tilde{\mathcal{L}}_{q, i}$ of linear forms $\tilde{\mathcal{L}}_{q, i, 1}, \ldots, \tilde{\mathcal{L}}_{q, i, k}$, where
$$\tilde{\mathcal{L}}_{q, i, i}:=n$$
and 
$$\tilde{\mathcal{L}}_{q, i, j}(n):= q+(h_j-h_i)n\ \ (1\leq j\leq k, \: j\neq i).$$
We claim that this set of linear forms is admissible. Indeed, for any prime $s\neq q$, the solutions of 
$$n\:\prod_{j\neq i} (q+(h_j-h_i)n) \equiv 0 (\bmod\: s)$$
are $n\equiv 0 $ and $n\equiv -y(h_j-h_i)^{-1} (\bmod\: s)\ \text{for}\ h_j\not\equiv h_i (\bmod \: s)\:,$ the number of which is equal to $\#\{h_j(\bmod\: s)\}$. Thus
\begin{align*}
\mathfrak{S}(\tilde{\mathcal{L}}_{q, i})&=\left(1+O\left(\frac{k}{x}\right)\right)\mathfrak{S}\:,\\
\mathfrak{S}_{BW}(\tilde{\mathcal{L}}_{q, i})&=\left(1+O\left(\frac{k}{x}\right)\right)\mathfrak{S}_{BW}\:,
\end{align*}
as before. Again, for $s\nmid WB$ we have that the $h_i$ are distinct $(\bmod\:s)$, and so if $s<R$ and $s\nmid WB$ we have $\omega_{\tilde{\mathcal{L}}_{q, i}(s)}=k$ and 
$$\{ j_{s, 1}(\tilde{\mathcal{L}}_{q, i}),\ldots, j_{s, \omega(s)}(\tilde{\mathcal{L}}_{q, i})\}= \{1, \ldots, k\}. $$
In particular 
$$ D_k(\tilde{\mathcal{L}}_{q, i})\cap \left\{(d_1, \ldots, d_k)\::\: \prod_{i=1}^k
d_i\leq R\right\}$$
is independent of $q, i$ and so
$$\lambda_{(d_1, \ldots, d_k)}(\tilde{\mathcal{L}}_{q, i})=\left(1+O\left(\frac{k}{x}\right)\right)\lambda_{(d_1, \ldots, d_k)}$$
where again the $O(k/x)$ error is independent of $d_1, \ldots, d_k$. From this, since $q-h_ip$ takes values in $[-y, y]$, we have that 
$$w_{k,\tilde{\mathcal{L}}, B, R}(p)=\left(1+O\left(\frac{k}{x}\right)\right)w_{k, \mathcal{L}_p, B, R}(q-h_ip)$$
whenever $p\in \mathcal{P}$ (note that the $d_i$ summation variable implicit on both sides of this equation is necessarily equal to 1). Thus, recalling that $P= \mathcal{P}\cap \left(\frac{x}{2}, x\right)$ we can write the left-hand side of (4.47) as 
$$\left(1+O\left(\frac{k}{x}\right)\right)\sum_{n\in \mathcal{A}(x/2)} 1_\mathcal{P} (\tilde{\mathcal{L}}_{q, i, i}(n) w_{k, \tilde{\mathcal{L}}_{q}, B, R}(n).$$
Applying the second conclusion on (5.6) of Theorem \ref{thm55} (with $x$ replaced by $x/2$, $\mathcal{L}'=\{\tilde{\mathcal{L}}_{q, i, i}\}$, and ${\mathcal{L}}=\tilde{\mathcal{L}}_{q, i}$) and using Lemma \ref{lem54} to obtain Hypothesis 1, this expression becomes
\begin{align*}
&\left(1+O\left(\frac{1}{\log_2^{10} x}\right)\right)\:\frac{B^{k-1}}{\phi(B)^{k-1}}\:\mathfrak{S}\#\mathcal{P}_{\tilde{\mathcal{L}}, q, i, i, \mathcal{A}}\left(\frac{x}{2}\right)(\log R)^{k+1} J_k\\
&+O\left(\frac{B^k}{\phi(B)^k}\:\mathfrak{S}\#\mathcal{A}\left(\frac{x}{2}\right)(\log R)^{k-1} I_k\right).
\end{align*}
Clearly $\#\mathcal{A}(x/2)=O(x)$, and from the prime number theorem one has
$$\#\mathcal{P}_{L_{q, i, i, \mathcal{A}}}\left(\frac{x}{2}\right)=\left(1+O\left(\frac{1}{\log_2^{10} x}\right)\right)\:\frac{x}{2\log x}$$
for any fixed $C>0$. Using (5.11), we can thus write the left-hand side of (5.6) as
$$\left(1+O\left(\frac{1}{\log_2^{10} x}\right)\right)\:\frac{u}{k}\:\tau\:\frac{x}{2\log^k x}\: \frac{x}{2\log^k x}+O\left(\frac{1}{\log R}\:\tau\:\frac{x}{\log^k x}\right)\:.$$
From (4.42), (4.44), the second error term may be absorbed into the first, and (4.46) follows.\\
Finally, we prove (4.47). Fix $h=O(y/x)$ not equal to any of the $h_i$, and fix $p\in \mathcal{P}$. By the prime number theorem, it suffices to show that 
$$\sum_{q\in Q} w(p, q-hp) \ll \frac{1}{\log_2^{10} x}\:\tau\: \frac{y}{\log^k x}\:.$$
By construction, the left-hand side is the same as 
$$\sum_{x-hp< n\leq y-hp} 1_{\mathcal{P}}(n+hp) w_{k,\mathcal{L}_p, B, R}(n)\:,$$
which we can shift as
$$\sum_{n\in\mathcal{A}(y-x)}1_{\mathcal{P}\cap[x^{\theta/10}, +\infty])}(n-y+2x) w_{k, \mathcal{L}_p-y+2x-hp, B, R}(n)+O(x^{1-c+o(1)}),$$
where again the $O(x^{1-c+o(1)})$ error is a generous upper bound for round off errors. This error is acceptable and may be discarded. Applying (5.7), we may then bound the main term by
\begin{align*}
&\ll\frac{\delta}{\phi(\Delta)}\:\frac{B^k}{\phi(B)^k}\:\mathfrak{G}(\mathcal{L}_p-y+2x-hp)y(\log R)^{k-1}I_k\\
&=\frac{\delta}{\phi(\Delta)}\:\frac{B^k}{\phi(B)^k}\:\mathfrak{G}(\mathcal{L}_p)y(\log R)^{k-1} I_k\:,
\end{align*}
where
$$\Delta:=\prod_{j=1}^k |hp-h_ip|\:.$$
Applying (5.10), (5.11), we may simplify the above upper bound as
$$\frac{\Delta}{\phi(\Delta)}\:\frac{y}{(\log R)(\log x)^k}\tau.$$
Now $h-h_i=O(y/x)=O(\log x)$ for each $i$, hence 
$\Delta\leq O(x(\log x)^k)$, and it follows from (5.9), and (4.43), observing $\frac{\log R}{\log x}\asymp 1.$
$$\frac{\Delta}{\phi(\Delta)} \ll \log_2 \Delta \ll \log_2 x\ll \frac{\log R}{\log_2^{10} x}\:.$$
This concludes the proof of Theorem \ref{thm55}, and hence Theorem \ref{thm1}.
\end{proof}

\textit{The $K$-version deduction of Theorem \ref{thm62} (of \cite{MR}).}

\noindent We now modify the weights $w_n$ to incorporate (for fixed primes $p$) the conditions
\[
n\equiv 1-(d_p+1)^K (\bmod\: p)\:,\ d_p\not\equiv -1 (\bmod \: p) \tag{5.13}
\]
and $n\in \mathcal{G}(p).$\\
We carry out the modification in two steps. In a first step we replace $w_n=w_n(\mathcal{L})$ by $w^*(p,n)=w^*(p, n, \mathcal{L})$. Here $p$ is a fixed prime with $x/2<p\leq x$.\\
Here we have to be more specific about the set $\mathcal{A}$. We set $\mathcal{A}:=\mathbb{Z}$.
\begin{definition}
Let $w_n$ be as in (5.3), $\mathcal{A}=\mathbb{Z}$, $p$ a fixed prime with $x/2<p\leq x$. Let also $D=(K-1, p)$. We set
\begin{equation*}
    w^*(p,n) := 
    \begin{cases}
      & Dw_n,\ \text{if there is $d_p\in\mathbb{Z}$}\ \text{with}\ n\equiv 1-(d_p+1)^K(\bmod\: p),\tag{*}\\
      &\ \ \ \ \ \ \ \ \ \ \ \ \ \ \  \ \ \ d_p\not\equiv -1(\bmod\: p)\\
     & 0,\ \text{otherwise.}
    \end{cases}
\end{equation*}
\end{definition}

We first express the solvability of (*) by the use of Dirichlet characters.

\begin{lemma}\label{lem56}
Let $p$ be a prime number. Let $D=(p-1, K)$, $\chi_0$ the principal character $\bmod\: D$. There are $D-1$ non-principal characters $\chi_1, \ldots, \chi_{D-1}\: \bmod\: D$, such that for all $n\in\mathbb{Z}$ we have
\begin{equation*}
    \frac{1}{D}\sum_{l=0}^{D-1}\chi_l(1-n)= 
    \begin{cases}
      & 1,\ \text{if $n\equiv 1-c^K (\bmod p)$ is solvable with $p\nmid c$}\\
     & 0,\ \text{otherwise.}
    \end{cases}
\end{equation*}
\end{lemma}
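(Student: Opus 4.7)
The plan is to apply orthogonality of characters to detect $K$-th power residues modulo $p$. I read the statement as using characters modulo $p$ (not modulo $D$, which I take to be a typo, since otherwise $\chi_l(1-n)$ would not even distinguish the relevant property); concretely, the $\chi_l$ should be the $D$ characters on $(\mathbb{Z}/p\mathbb{Z})^*$ whose $D$-th power is trivial, extended by $0$ to multiples of $p$. These form the subgroup of the dual group of $(\mathbb{Z}/p\mathbb{Z})^*$ dual to the subgroup of $D$-th powers.

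First I would fix a primitive root $g$ modulo $p$ and define, for $l=0,1,\ldots,D-1$, the character $\chi_l$ by $\chi_l(g^j):=e^{2\pi i l j/D}$, with $\chi_l(0):=0$. Thus $\chi_0$ is principal and the remaining $D-1$ characters are non-principal. Next I would split on whether $p\mid 1-n$. If $p\mid 1-n$, then $\chi_l(1-n)=0$ for every $l$, so the character sum vanishes, and on the other side $n\equiv 1\pmod p$ forces $c^K\equiv 0\pmod p$, which is incompatible with $p\nmid c$ (equivalently $d_p\not\equiv -1\pmod p$ under the substitution $c=d_p+1$); both sides of the claimed identity equal $0$. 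Otherwise, writing $1-n\equiv g^t\pmod p$, standard geometric summation gives
$$\frac{1}{D}\sum_{l=0}^{D-1}\chi_l(1-n)=\frac{1}{D}\sum_{l=0}^{D-1}e^{2\pi i l t/D}=\begin{cases} 1 & \text{if } D\mid t,\\ 0 & \text{otherwise.}\end{cases}$$

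Finally I would invoke the standard fact that in the cyclic group $(\mathbb{Z}/p\mathbb{Z})^*$ the image of the map $c\mapsto c^K$ equals $\langle g^K\rangle=\langle g^{\gcd(K,p-1)}\rangle=\langle g^D\rangle$, which is exactly $\{g^t:D\mid t\}$. Hence $D\mid t$ is equivalent to the solvability of $c^K\equiv 1-n\pmod p$ with $p\nmid c$, i.e. to $n\equiv 1-c^K\pmod p$ being solvable in the required range. Combining the two cases yields the claimed dichotomy.

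The main obstacle is purely interpretational: making sure the characters $\chi_l$ are taken modulo $p$ (as the group $(\mathbb{Z}/p\mathbb{Z})^*$ is where $K$-th power residues live) rather than modulo $D$ as the printed statement literally reads. Once this is clarified, the argument reduces to the standard orthogonality identity for the annihilator of the $D$-th power subgroup, together with the elementary identification of $K$-th powers with $D$-th powers modulo $p$ when $D=\gcd(K,p-1)$.
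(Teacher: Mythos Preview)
Your argument is correct and is essentially the same as the paper's: both fix a primitive root $\rho$ (your $g$) modulo $p$, write $1-n\equiv \rho^s$, reduce solvability of $c^K\equiv 1-n\pmod p$ to the condition $D\mid s$ via the linear congruence $Ky\equiv s\pmod{p-1}$, and then detect $D\mid s$ with the orthogonality relation $\tfrac{1}{D}\sum_{l=0}^{D-1}e(ls/D)$, packaging the result as characters $\chi_l$ defined by $\chi_l(\rho^s)=e(ls/D)$. Your reading that the $\chi_l$ are characters modulo $p$ (not modulo $D$) matches what the paper's construction actually produces; moreover, you handle the degenerate case $p\mid 1-n$ explicitly, which the paper's proof tacitly omits when it writes $1-n\equiv\rho^s$.
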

\begin{proof}
Let $\rho$ be a primitive root $\bmod\: p$,
$$1-n\equiv \rho^s (\bmod\: p),\ 0\leq s\leq p-2.$$
Setting
$$c\equiv \rho^y(\bmod\: p)$$
we see that the congruence  
\[
c^K \equiv 1-n (\bmod\: p) \tag{5.14}
\]
is solvable if and only if
\[
Ky\equiv s(\bmod\: p-1) \tag{5.15}
\]
has a solution $y$.\\
By the theory of linear congruences, this is equivalent to $D\mid s$. We have
\begin{equation*}
    \frac{1}{D}\sum_{l=0}^{D-1}e\left(\frac{ls}{D}\right)= 
    \begin{cases}
      & 1,\ \text{if $D\mid s$,}\\
     & 0,\ \text{otherwise.}
    \end{cases}
\end{equation*}
We now define the Dirichlet character $\chi_l$, ($0\leq l\leq D-1$),
$$\chi_l(-n)= e\left(\frac{ls}{D}\right)$$
and obtain the claim of Lemma \ref{lem56}.
\end{proof} 
\begin{theorem}\label{thm57}
Let $p, w^*(p, n), D$, as in the Definition of $w^*(p,n)$, $\mathcal{A}:=\mathbb{Z}$. Then we have
$$\sum_{n\in\mathcal{A}(x)}w^*(p,n)=\left(1+O\left(\frac{1}{(\log x)^{1/10}}\right)\right)\:\frac{B^k}{\phi(B)^k}\:\mathfrak{G}_B(\mathcal{L})\mathcal{A}(x)(\log R)^k I_k(F).$$
\end{theorem}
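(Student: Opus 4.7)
The plan is to use Lemma \ref{lem56} to detect the solvability condition defining $w^*(p,n)$ by a sum of Dirichlet characters, thereby reducing $\sum_{n}w^*(p,n)$ to a linear combination of character-twisted sieve sums. The principal-character contribution will supply the main term through Theorem \ref{thm55} (5.5), while the non-principal contributions will be absorbed into the error term via character-sum cancellation.

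Concretely, Lemma \ref{lem56} yields
$$w^*(p,n) \;=\; Dw_n \cdot \frac{1}{D}\sum_{l=0}^{D-1}\chi_l(1-n) \;=\; w_n\sum_{l=0}^{D-1}\chi_l(1-n),$$
where $\chi_0$ is principal and $\chi_1,\ldots,\chi_{D-1}$ are the non-principal characters mod $p$ of order dividing $D$ constructed via a primitive root in the proof of Lemma \ref{lem56}. Summing over $n\in\mathcal{A}(x)$ splits $\sum_n w^*(p,n)=\sum_{l=0}^{D-1}\Sigma_l$ with $\Sigma_l:=\sum_{n\in\mathcal{A}(x)} w_n\chi_l(1-n)$. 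For $l=0$ we have $\chi_0(1-n)=1$ except when $p\mid 1-n$; since $\mathcal{A}(x)\subseteq[x,2x]$ and $p\in(x/2,x]$, this exceptional set has cardinality $O(1)$, and by the crude bound (5.8) its contribution is $O(x^{2\theta/3+o(1)})$. Hence $\Sigma_0=\sum_{n\in\mathcal{A}(x)}w_n+O(x^{2\theta/3+o(1)})$, and (5.5) of Theorem \ref{thm55} applied with $\mathcal{L}'=\emptyset$ (Hypothesis 1 being supplied by Lemma \ref{lem54} with $\mathcal{A}=\mathbb{Z}$) delivers the required main term.

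For $l\geq 1$ one must show $\Sigma_l=o(\Sigma_0)$. Expanding the square in $w_n$ and swapping summations yields
$$\Sigma_l \;=\; \sum_{\vec d,\vec e} \lambda_{\vec d}(\mathcal{L})\,\lambda_{\vec e}(\mathcal{L}) \sum_{\substack{n\in[x,2x]\\ n\equiv r\,(\mathrm{mod}\,q)}}\chi_l(1-n),$$
where $q=q(\vec d,\vec e)=\prod_i\mathrm{lcm}(d_i,e_i)\leq R^2<x^{2\theta/3}<p$ and $r=r(\vec d,\vec e)$ is the CRT residue. Because $\gcd(q,p)=1$, the inner character sum along an arithmetic progression in $[x,2x]$ is bounded by P\'olya--Vinogradov by $O(p^{1/2}\log p)=O(x^{1/2+o(1)})$. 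Combining with $|\lambda_{\vec d}|\ll(\log R)^k$ and the $O(R^{2+o(1)})$ pairs $(\vec d,\vec e)$ that contribute gives $|\Sigma_l|\ll x^{1/2+2\theta/3+o(1)}$, which is negligible compared to $\Sigma_0\asymp x(\log x)^{k+o(1)}$.

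The main obstacle is precisely the estimate on $\Sigma_l$ for $l\neq 0$: one must extract enough cancellation from a character of conductor $p\asymp x$ against a Selberg sieve weight of level $R=x^{\theta/3}$. The P\'olya--Vinogradov bound suffices in the present regime exactly because $\theta=1/3$ keeps $R^2$ well below $p^{1/2}$, leaving a polynomial-in-$x$ saving; for significantly larger sieve levels one would need Burgess-type bounds or the deeper distributional information encoded in Hypothesis 1 part (2).
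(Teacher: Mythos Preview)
Your proposal is correct and follows essentially the same route as the paper: decompose $w^*(p,n)$ via Lemma~\ref{lem56} into a sum over the characters $\chi_0,\ldots,\chi_{D-1}$, extract the main term from the principal part through (5.5) of Theorem~\ref{thm55}, and kill the non-principal parts by expanding the Selberg square and applying P\'olya--Vinogradov. The only cosmetic difference is that the paper first splits into residue classes $n\equiv v_0\pmod W$ and then encodes the residual congruence $n\equiv r\pmod{W[d,e]}$ by auxiliary characters $\psi$ modulo $v=W[d,e]$, bounding the resulting product characters $\chi_l\psi$; you instead sum $\chi_l(1-n)$ directly along the arithmetic progression $n\equiv r\pmod q$ using $(q,p)=1$ and a change of variable, which is slightly cleaner and yields the same saving $x^{1/2+2\theta/3+o(1)}$.
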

\begin{proof}
By Lemma \ref{lem56} we have
$$\sum_{n\in\mathcal{A}(x)} w^*(p,n)=\sum_{l=0}^{D-1}\sum_{n\in\mathcal{A}(x)} w_n\chi_l(1-n).$$
The sum belonging to the principal character
$$\chi_0=\sum_{n\in\mathcal{A}(x)} w_n\chi_0(1-n)$$
differs from the sum
$$\sum_{n\in\mathcal{A}(x)} w_n$$
only by $O(x^{1/2})$, since there are only $\frac{|\mathcal{A}(x)|}{p}$ terms with $n\equiv 1 (\bmod\: p)$, each of them has size at most $x^{1/3}$. We therefore have
\[
\sum_{n\in\mathcal{A}(x)} w_n \chi_0(1-n)=\sum_{n\in \mathcal{A}(x)} w_n+O(x^{1/2}).  \tag{5.16}
\]
Let now $1\leq l\leq D-1$. Here we closely follow the proof of Proposition 9.1 of \cite{maynard}. We split the sum into residue classes $n\equiv v_0 (\bmod W)$. We recall that 
$$W=\prod_{\substack{p\leq 2g^2\\ p\nmid B}} p <\exp ((\log x)^{2/5}).$$
If $$\left(\prod_{i=1}^g L_i(v_0), W\right)\neq 1,$$ then we have $w_n=0$ and so we restrict our attention to $v_0$ with $$\left(\prod_{i=1}^g L_i(v_0), W\right)= 1.$$
We substitute the definition of $w_n$, expand the square and swap the order of summation. This gives
$$\sum_{n\in\mathcal{A}(x)}\chi_l(1-n)=\sum_{v_0(\bmod W)}\sum_{d, e\in D_g} \lambda_d \lambda_e \sum_{\substack{n\in\mathcal{A}(x)\\ n\equiv v_0(\bmod W)\\ [d_i, e_i]\mid L_i(n), \forall i}} \chi_l(1-n).$$
The congruence conditions in the inner sum may be combined via the Chinese Remainder Theorem by a single congruence condition
$$1-n \equiv c(\bmod v),\ \text{where $v=W[d, e]$,}$$
where $[\cdot, \cdot]$ stands for the least common multiple.\\
There are $w\leq v$ Dirichlet characters $\psi_1, \ldots, \psi_w (\bmod W)$ such that 
$$1-n\equiv c\: (\bmod v)\ \ \text{if and only if}\ \ \frac{1}{w}\sum_{l=1}^w \overline{\psi(c)}\psi_l(1-n)=1.$$
We thus may write
$$\left|  \sum_{\substack{n\in\mathcal{A}(x)\\ n\equiv v_0(\bmod w)\\ [d_i, e_i]\mid L_i(n), \forall i}} \chi_l(1-n)  \right| \leq A\sum_{l=1}^z \left| \sum_{n\in I} \xi_l(1-n)  \right|,$$
with a suitable absolute constant $A$, an interval $I$ of length 
$$|I|\leq x(\log x)^2$$
and the $D(v)$ non-principal Dirichlet characters $\xi_{j, l}=\chi_j \psi_l$ of conductor $\geq p$ and modulus $\leq xv$.\\
By the P\'olya-Vinogradov bound we obtain:
\[
 \sum_{\substack{n\in\mathcal{A}(x)\\ -n\equiv v_0(\bmod w)\\ [d_i, e_i]\mid L_i(n), \forall i}} \chi(1-n)\ll x^{1/2} v.  \tag{5.17}
\]
The claim of Theorem \ref{thm57} now follows from (5.16) and (5.17).
\end{proof}

As a preparation for the proof pf Theorem \ref{thm59} which is a modification of Proposition 9.2 of \cite{maynard}, we state a Lemma on character sums over shifted primes.

\begin{lemma}\label{lem58}
Let $\chi$ be a Dirichlet character  $(\bmod q)$. Then for $N\leq q^{16/9}$ we have
$$\sum_{n\leq N}\Lambda(n)\chi(n+a)\leq (N^{7/8} q^{1/9}+N^{33/32}q^{-1/18}) q^{o(1)}.$$
\end{lemma}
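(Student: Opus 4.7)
The plan is to prove the bound by Vinogradov's method: combine a combinatorial decomposition of $\Lambda(n)$ with sharp estimates for multiplicative character sums over intervals.

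First I would apply Vaughan's identity with parameters $U, V \ll N^{1/2}$ to write
\begin{equation*}
\sum_{n \le N} \Lambda(n) \chi(n+a) = S_0 + S_{\mathrm I} + S_{\mathrm{II}} + O(1),
\end{equation*}
where $S_0$ is a trivially bounded sum over $n \le U$ of size at most $U$; $S_{\mathrm I}$ is a Type~I sum of the shape $\sum_{m \le UV} c_m \sum_{\ell \le N/m} \chi(m\ell + a)$ with coefficients $|c_m| \ll \tau_3(m) \log N$; and $S_{\mathrm{II}}$ is a Type~II bilinear sum $\sum_{U < m \le N/V} \sum_{V < \ell \le N/m} \alpha_m \beta_\ell \chi(m\ell + a)$ with $|\alpha_m|, |\beta_\ell| \ll \log N$.

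For the Type~I sum, after discarding the negligible contribution from pairs with $(m,q) > 1$, I would treat each inner sum $\sum_{\ell \le N/m} \chi(m\ell + a)$ as a character sum over an arithmetic progression. Burgess' inequality gives
\begin{equation*}
\biggl| \sum_{\ell \le L} \chi(m\ell + a) \biggr| \ll L^{1-1/r} q^{(r+1)/(4r^2) + o(1)}
\end{equation*}
for any integer $r \ge 2$. Summing over $m \le UV$ produces a bound $\ll (UV)^{1/r} N^{1-1/r} q^{(r+1)/(4r^2) + o(1)}$; a careful choice of $r$ together with the size of $UV$ yields the first term $N^{7/8} q^{1/9+o(1)}$.

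For the Type~II sum, I would first apply Cauchy-Schwarz in the outer variable to reduce to
\begin{equation*}
|S_{\mathrm{II}}|^2 \ll (\log N)^{O(1)} \frac{N}{V} \sum_{m \sim M} \biggl| \sum_{\ell} \beta_\ell \chi(m\ell + a) \biggr|^{2}.
\end{equation*}
Expanding the square and swapping the order of summation, the off-diagonal pairs $\ell_1 \ne \ell_2$ produce character sums of the rational function $m \mapsto (m\ell_1 + a)(m\ell_2 + a)^{-1}$ over an interval of length $\sim M$; completing these and invoking the Weil bound yields $q^{1/2+o(1)}$ per such pair, while the diagonal $\ell_1 = \ell_2$ contributes $O(ML)$. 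Reassembling the diagonal and off-diagonal contributions, unwinding Cauchy-Schwarz, and optimizing the Vaughan parameters $U, V$ produces the second term $N^{33/32} q^{-1/18 + o(1)}$.

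The main obstacle is the precise balancing of the three parameters $U, V, r$ to reproduce exactly the pair of exponents in the statement; once the scheme above is set up this becomes a mechanical but intricate optimization. The Type~II step is the most delicate: the exponent $33/32 > 1$ means no trivial bound is available and the $q^{-1/18}$ saving must be extracted carefully from the Weil bound through Cauchy-Schwarz, and it is precisely this balance that constrains the valid range to $N \le q^{16/9}$.
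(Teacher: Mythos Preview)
The paper does not actually prove this lemma; it simply records it as Theorem~1 of Friedlander--Gong--Shparlinski~\cite{fried}.  Your sketch is in the spirit of that reference: Vaughan's identity, Burgess with $r=3$ (whence the $q^{1/9}$) for the Type~I sums, and a bilinear estimate for Type~II.

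There is, however, a gap in the Type~II step as you describe it.  A single Cauchy--Schwarz in $m$ followed by completion and the Weil bound gives, for a dyadic block $m\sim M$, $\ell\sim L$ with $ML\asymp N$, an estimate of the shape
\[
|S_{\mathrm{II}}|\ \ll\ ML^{1/2}+M^{1/2}Lq^{1/4}+Nq^{-1/4},
\]
and for the balanced block $M\sim L\sim N^{1/2}$ this is $N^{3/4}q^{1/4}$.  That term carries a \emph{positive} power of $q$ and is not dominated by either $N^{7/8}q^{1/9}$ or $N^{33/32}q^{-1/18}$ once $N<q^{10/9}$, so your scheme cannot recover the stated bound across the full range $N\le q^{16/9}$.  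In \cite{fried} the Type~II sums are handled not by a single Cauchy--Schwarz plus Weil but by a Burgess-type bilinear argument (H\"older to a higher moment, then Weil on the amplified sum), and it is precisely this extra averaging that converts the $q$-loss into the small saving $q^{-1/18}$ paired with $N^{33/32}$.  Your overall architecture is correct, but the Type~II mechanism needs to be upgraded from ``Weil after Cauchy--Schwarz'' to the sharper bilinear character-sum estimate of~\cite{fried}.
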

\begin{proof}
This is Theorem 1 of \cite{fried}.
\end{proof}

\begin{theorem}\label{thm59}
Let $\mathcal{A}=\mathbb{Z}$,
$$L(n)=a_{m}n+b_m\in\mathcal{L}$$
satisfy $L(n)>R$ for $n\in[x, 2x]$ and
$$\sum_{\substack{q< x^\theta \\ (q, B)=1}}\max_{L(a, q)=1}\left|\#P_{L, \mathcal{A}}(x; q, a)-\frac{\#P_{L, \mathcal{A}}(x)}{\phi_L(q)}  \right| \ll 
\frac{\#P_{L, \mathcal{A}}(x)}{(\log x)^{100 g^2}}.$$
Then we have for sufficiently small $\theta$:
\begin{align*}
\sum_{n\in\mathcal{A}(x)} 1_P(L(n)) w^*(p, n)&=\left(1+O\left(\frac{1}{(\log x)^{1/10}}\right)\right)\frac{B^{g-1}}{\phi(B)^{g-1}}\mathfrak{S}(\mathcal{L})\\
&\times\# P_{L, \mathcal{A}}(x)(\log R)^{g+1} J_g(F)\prod_{\substack{p\mid a_m\\ p\nmid B}}\frac{p-1}{p}\\
&+O\left(\frac{B^g}{\phi(B)^g}\mathfrak{S}_B(\mathcal{L})\#\mathcal{A}(x)(\log R)^{g-1} I_g(F)\right).
\end{align*}
\end{theorem}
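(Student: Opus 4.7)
The plan is to reduce Theorem~\ref{thm59} to its analogue without the $K$-th power residue constraint, namely estimate (5.6) of Theorem~\ref{thm55}, by means of the character identity of Lemma~\ref{lem56}. Setting $D=(p-1,K)$, that lemma gives
$$w^{*}(p,n)\;=\;D\,w_n\cdot \mathbf{1}\bigl[\exists\, d_p\not\equiv -1\,(\bmod p):\ n\equiv 1-(d_p+1)^K\,(\bmod p)\bigr]\;=\;w_n\sum_{l=0}^{D-1}\chi_l(1-n),$$
so that
$$\sum_{n\in\mathcal{A}(x)} 1_P(L(n))\,w^{*}(p,n)\;=\;\sum_{l=0}^{D-1}\mathcal{S}_l,\qquad \mathcal{S}_l:=\sum_{n\in\mathcal{A}(x)} 1_P(L(n))\,w_n\,\chi_l(1-n),$$
where the $\chi_l$ are characters modulo $p$ of order dividing $D$.

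For the principal character $l=0$, $\chi_0(1-n)=1$ unless $p\mid 1-n$. The contribution of the $O(x/p)=O(1)$ integers $n\in[x,2x]$ with $n\equiv 1\,(\bmod p)$ is at most $O(1)\cdot O(x^{2\theta/3+o(1)})$ by the crude bound (5.8), which is absorbed into the error. Hence $\mathcal{S}_0=\sum_n 1_P(L(n))w_n+O(x^{2\theta/3+o(1)})$, and formula (5.6) of Theorem~\ref{thm55}, applied with $\mathcal{L}':=\{L\}$ under the given Bombieri--Vinogradov-type hypothesis (which is tailored exactly to Hypothesis~1 at $\mathcal{L}'$), yields precisely the main term claimed in Theorem~\ref{thm59}.

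The crux is then to show $\mathcal{S}_l=o(\text{main term})$ for each non-principal $\chi_l$ ($1\leq l\leq D-1$), a character of modulus $p\asymp x$. Following the scheme used in the proof of Theorem~\ref{thm57}, I would expand $w_n=\bigl(\sum_{d\in D_k(\mathcal{L})}\lambda_d\bigr)^2$, swap the order of summation, and use the Chinese Remainder Theorem to collapse the divisibility conditions $[d_i,e_i]\mid L_i(n)$ together with the residue class $n\equiv v_0\,(\bmod W)$ into a single congruence $n\equiv c\,(\bmod v)$, where $v=W[d,e]$ with $[d,e]:=\prod_i[d_i,e_i]\leq R^2=x^{2\theta/3}$. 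The inner sum becomes
$$\sum_{\substack{x\leq n\leq 2x\\ n\equiv c\,(\bmod v)}} 1_P(L(n))\,\chi_l(1-n),$$
and, after the substitution $q=L(n)=a_m n+b_m$, takes the shape of a character sum over shifted primes $\sum_{q}\Lambda(q)\,\widetilde{\chi}_l(q+a^{*})$ of modulus $p$ and length $\asymp x/v$. Lemma~\ref{lem58} then bounds it by $\bigl((x/v)^{7/8}p^{1/9}+(x/v)^{33/32}p^{-1/18}\bigr)p^{o(1)}\ll x^{1-\delta+o(1)}/v$ for some absolute $\delta>0$. Summing over $(d,e)$ with $|\lambda_d|\ll x^{o(1)}$ produces $\mathcal{S}_l\ll x^{1-\delta+2\theta/3+o(1)}$.

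The main obstacle is the quantitative comparison of this bound with the main term, which has order $\asymp x/(\log x)^g$. Since Lemma~\ref{lem58} offers only modest power savings ($x^{-1/72}$ or $x^{-7/288}$), the admissible range of $\theta$ is forced by the condition $2\theta/3<\delta$; once $\theta$ is so chosen, the non-principal $\mathcal{S}_l$ are absolutely negligible against the main term. Unlike Theorem~\ref{thm57}, whose character sum carried no prime indicator and was dispatched by P\'olya--Vinogradov, the present prime constraint forces reliance on the deeper shifted-prime bound of Lemma~\ref{lem58}, and it is precisely the interplay between this saving and the $R^2$-loss from the $(d,e)$-summation that determines the smallness requirement on $\theta$. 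Summing the main-term contribution from $l=0$ and the negligible contributions from $l=1,\dots,D-1$ then yields Theorem~\ref{thm59}.
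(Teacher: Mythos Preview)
Your proposal is correct and follows essentially the same route as the paper: decompose $w^*(p,n)$ via the character identity of Lemma~\ref{lem56}, obtain the main term from the principal character through (5.6) of Theorem~\ref{thm55} (i.e.\ Maynard's Proposition~9.2), and bound each non-principal contribution by expanding $w_n$, reducing the inner sum to a character sum over shifted primes, and invoking Lemma~\ref{lem58}. The only cosmetic difference is that the paper converts the residual arithmetic-progression constraint into additional Dirichlet characters (mod $|a_mW|$) before applying Lemma~\ref{lem58}, whereas you leave this step implicit; once that detail is filled in, the two arguments coincide.
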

\begin{proof}
By Lemma \ref{lem56} we have
$$\sum_{n\in\mathcal{A}(x)} 1_P(L(n))w^*(p, n)=\frac{1}{D}\sum_{l=0}^{D-1}\sum_{n\in\mathcal{A}(x)} 1_P(L(n))w_n\chi_l(1-n).$$
The sum belonging to the principal character $\chi_0$ differs from the sum
$$\sum_{n\in\mathcal{A}(x)} 1_P(L(n))w_n$$
only by $O(\#\mathcal{A}(x))p^{-1}$ and thus in \cite{maynard}, Proposition 9.2, we have
\begin{align*}
\sum_{n\in\mathcal{A}(x)}1_P(L(n)) w^*(p, n)\chi_0(1-n)&=\left(1+O\left(\frac{1}{(\log x)^{1/10}}\right)\right)
\frac{B^{g-1}}{\phi(B)^{g-1}}\mathfrak{S}_B(\mathcal{L})\tag{5.18}\\
&\times\# P_{L, \mathcal{A}}(x)(\log R)^{g+1} I_g(F)\prod_{\substack{p\mid a_m\\ p\nmid B}}\frac{p-1}{p} \\
&+O\left(\frac{B^g}{\phi(B^g)}\mathfrak{S}_B(\mathcal{L})\#\mathcal{A}(x)(\log R)^{g-1} I_g(F)\right).
\end{align*}
For $1\leq l\leq D-1$ we follow closely the proof of Proposition 9.2 in \cite{maynard}. We again split the sum into residue classes $n\equiv v_0(\bmod W).$
If 
$$\left(\prod_{i=1}^g L_i(v_0), W\right)>1,$$ then we have $w_n=0$ and so we restrict our attention to $v_0$ with 
$$\left(\prod_{i=1}^g L_i(v_0), W\right)=1.$$ We substitute the definition of $w_n$, expand the square and swap the order of summation. Setting $\tilde{n}=n-1$, we obtain
\[
\sum_{\substack{n\in\mathcal{A}(x)}\\ n\equiv v_0 (\bmod W)} 1_p(L(n)) w_n\chi_l(1-n)= \sum_{d, e}\chi_d \lambda_l   \sum_{\substack{n\in\mathcal{A}(x)\\ n\equiv v_0(\bmod W)\\ [d_i, e_i]\mid L_i(n), \forall i}}1_p(L(n))  \chi_l(1-n)  \tag{5.19}
\]
If $\tilde{n}$ runs through the arithmetic progression 
$$\tilde{n}=Wh+v_0\ (h\in I_0),$$
then also $L(\tilde{n}+1)$ runs through an arithmetic progression
$$L(\tilde{n}+1)=a_nWh+a_m(v_0+1)+b.$$
Thus, we have
\begin{align*}
&\sum_{\substack{n\in\mathcal{A}(x)\\ n\equiv v_0 (\bmod W)}} 1_p(L(n)) \chi_l(1-n)\\
&\ \ \ =\sum_{\substack{\tilde{p}\equiv a_m(v_0+1)+b(\bmod a_mW)\\ \tilde{p}\ \text{prime},\ \tilde{p}\in I}} \chi_l(\tilde{p}+a_m(v_0+1)+b).
\end{align*}
Also the condition $\tilde{p}\equiv a_m(v_0+1)+b(\bmod a_mW)$ may be expressed with the help of Dirichlet characters
$$\omega_1,\ldots, \omega_{\phi(|a_mW|)}\: (\bmod\:|a_mW|)\:,$$
using orthogonality relations.\\
Theorem \ref{thm59} thus follows from (5.18) and Lemma \ref{lem58}.
\end{proof}

For the definition of the  weight $w_{(K)}(p,n)$ whose existence is claimed in Theorem \ref{thm62} we now have to be more specific about the set $\mathcal{L}$ of linear forms.

\begin{definition}\label{defn510}
Let the tuple $(h_1, \ldots, h_r)$ be given. For $p\in P$ and $n\in \mathbb{Z}$ let $\mathcal{L}_p$ be the (ordered) collection of linear forms $n\mapsto n+h_ip$ for $i=1, \ldots, r$ and set
\begin{equation*}
    w_{(K)}= 
    \begin{cases}
      & w^*(p, n, \mathcal{L}_p),\ \text{if $n\in \mathcal{G}(p)$,}\\
     & 0,\ \text{otherwise.}
    \end{cases}
\end{equation*}
\end{definition}
In the sequel we now show that in the sums 
$$\sum_{n\in\mathbb{Z}} w_{(K)}(p, n)\ \ \text{resp.}\ \ \sum_{p\in \mathcal{P}} w_{(K)}(p, q-h_ip)$$
appearing in (4.66) resp. (4.67) of Theorem \ref{thm62}, the function 
$w_{(K)}(p, \cdot)$ may be replaced by the function $w^*(p, \cdot, \mathcal{L}_p)$ with a negligible error.\\
Since these sums have been treated in Theorem \ref{thm57} resp. Theorem \ref{thm59}, this will essentially conclude the proof of Theorem \ref{thm62} and thus of Theorem \ref{thm11}. \qed

\begin{lemma}\label{lem511}
We have
$$\sum_{\substack{n\in\mathcal{A}(x)\\ n\not\in\mathcal{G}(p)}} w^*(p, n, \mathcal{L}_p)\leq \sum_{\substack{n\in\mathcal{A}(x)\\ n\not\in\mathcal{G}(p)}} w_n(\mathcal{L}_p).  $$
\end{lemma}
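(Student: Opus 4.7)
The plan is to reduce the lemma to a pointwise (termwise) bound, namely
$$0\leq w^*(p, n, \mathcal{L}_p) \leq w_n(\mathcal{L}_p)\qquad \text{for every } n\in\mathbb{Z},$$
after which the lemma follows immediately by summing over those $n\in\mathcal{A}(x)$ with $n\notin\mathcal{G}(p)$. Thus the entire task is to verify this pointwise inequality; the restriction to $n\notin\mathcal{G}(p)$ plays no role in the argument and appears only in the statement because this is the form in which the estimate will be invoked later.

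To establish the pointwise bound I would first recall that the standard Selberg-type weight $w_n=w_n(\mathcal{L}_p)$ is defined by (5.3) as a square,
$$w_n(\mathcal{L}_p)=\Bigl(\sum_{d_i\mid L_i(n)\ \forall i}\lambda_{(d_1,\ldots,d_k)}(\mathcal{L}_p)\Bigr)^{2},$$
so $w_n(\mathcal{L}_p)\geq 0$ identically. Next, by the very definition of $w^*$ introduced just before Lemma \ref{lem56}, one has $w^*(p,n,\mathcal{L}_p)=0$ whenever the congruence $n\equiv 1-(d_p+1)^K\pmod p$ has no solution with $d_p\not\equiv -1\pmod p$, while otherwise $w^*(p,n,\mathcal{L}_p)=D\,w_n(\mathcal{L}_p)$ with $D=(K-1,p)$.

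The key observation is that in the regime under consideration we have $D=1$. Indeed, the prime $p$ is drawn from $P=\{p:x/2<p\leq x\}$, while $K\geq 2$ is a fixed integer; for $x$ sufficiently large one therefore has $p>K-1$, so $\gcd(K-1,p)=1$. Consequently
$$w^*(p,n,\mathcal{L}_p)\in\{0,\ w_n(\mathcal{L}_p)\}$$
for every $n\in\mathbb{Z}$, which, together with $w_n(\mathcal{L}_p)\geq 0$, yields the pointwise inequality $w^*(p,n,\mathcal{L}_p)\leq w_n(\mathcal{L}_p)$.

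There is essentially no obstacle here: the lemma is the sum of an entirely trivial pointwise bound. The only potential point of confusion is bookkeeping around the constant $D$, where one must be careful to distinguish the $D=(K-1,p)$ appearing in the definition of $w^*$ from the $D=(p-1,K)$ appearing in Lemma \ref{lem56}; for the inequality above only the former matters, and it collapses to $1$ because $p$ is large relative to the fixed parameter $K$. Once this has been noted, summing the termwise inequality over $\{n\in\mathcal{A}(x):n\notin\mathcal{G}(p)\}$ concludes the proof. \qed
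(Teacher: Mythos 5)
Your reduction to a pointwise bound is the right instinct, and the observation that $w_n$ is a square (hence nonnegative) is correct. However, the whole proof hinges on the claim that the $D$ in the definition of $w^*$ equals $(K-1,p)=1$, and this is where the argument is fragile. You noticed the discrepancy between $D=(K-1,p)$ in the definition of $w^*$ and $D=(p-1,K)$ in Lemma~\ref{lem56}, but you dismissed it by asserting that only the former matters. That dismissal is unjustified: the surrounding material forces $D=(p-1,K)$ to be the intended normalization in the definition of $w^*$ as well. In the proof of Theorem~\ref{thm57}, the identity
\[
\sum_{n\in\mathcal{A}(x)} w^*(p,n)=\sum_{l=0}^{D-1}\sum_{n\in\mathcal{A}(x)} w_n\chi_l(1-n)
\]
is obtained by substituting Lemma~\ref{lem56}; the factor $D$ in $w^*=Dw_n$ cancels the factor $1/D$ in Lemma~\ref{lem56} \emph{only if both $D$'s agree}, i.e.\ only if the $D$ in the definition of $w^*$ is $(p-1,K)$. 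This is also the only sensible normalization: the proportion of $n$ for which $n\equiv 1-c^K\ (\bmod\ p)$ is solvable with $p\nmid c$ is approximately $1/(p-1,K)$, so one multiplies by $D=(p-1,K)$ to keep the mean of $w^*$ comparable to that of $w_n$. The formula $D=(K-1,p)$ in the displayed definition of $w^*$ is therefore almost certainly a typo.

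With $D=(p-1,K)$, which can be as large as $K$ (e.g.\ $D=2$ for $K=2$ and any odd prime $p$), the pointwise bound $w^*(p,n,\mathcal{L}_p)\leq w_n(\mathcal{L}_p)$ fails on the set of $n$ for which the congruence is solvable. The best pointwise bound is $w^*\leq D\,w_n\leq K\,w_n$, which yields only
\[
\sum_{\substack{n\in\mathcal{A}(x)\\ n\not\in\mathcal{G}(p)}} w^*(p,n,\mathcal{L}_p)\ll_K \sum_{\substack{n\in\mathcal{A}(x)\\ n\not\in\mathcal{G}(p)}} w_n(\mathcal{L}_p),
\]
not the claimed $\leq$. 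To rescue the literal $\leq$ one would have to argue that among the exceptional $n\not\in\mathcal{G}(p)$ the (weighted) proportion of solvable $n$ is at most $1/D$; this is far from automatic since $\mathcal{G}$ is precisely the set where the $K$-th-power-residue counts $r(n,u)$ behave typically, and its complement is by design atypical. In short: the lemma as used downstream only needs $\ll_K$, and your proof establishes $\ll_K$ once the typo in $D$ is corrected; but the clean inequality $\leq$ that you claim follows from $D=1$ rests entirely on taking the (inconsistent) formula $D=(K-1,p)$ at face value, and you should flag that as a likely error in the source rather than build the proof on it.
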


\begin{definition}\label{defn512}
Let $(h_1, \ldots, h_r)$ be an admissible $r$-tuple, $p\in(x/2, x)$. For $n\in\mathbb{Z}$, $1\leq i, l\leq r$ let 
\begin{align*}
\tilde{n}=\tilde{n}(n, i, l, p)&= n+(h_i-h_l)p\\
\mathcal{A}(i, l, p)&=\sum_{n\::\: \tilde{n}\not\in \mathcal{G}} w_n(\mathcal{L}_p).
\end{align*}
Let 
\begin{align*}
\sum (i, l, p)&:=\sum_{n\in\mathcal{A}(x)} w_n(\mathcal{L}_p)(r(\tilde{n}, u)-r^*(u))^2\\
\sum (i, l, p, j)&:=\sum_{n\in\mathcal{A}(x)} w_n(\mathcal{L}_p)r(\tilde{n}, u)^j\ \ (j\in \mathbb{N}_0).
\end{align*}
\end{definition}
\begin{lemma}\label{lem513}
$$\sum_{\substack{n\in\mathcal{A}(x)\\ n\not\in\mathcal{G}(p)}}  w_n(\mathcal{L}_p)=\sum_{1\leq i, l\leq r}\mathcal{A}(i, l, p).$$
\end{lemma}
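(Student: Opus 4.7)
The plan is to carry out a union-bound calculation driven directly by the definition of the set $\mathcal{G}(p)$ given in Definition 5.10 (equivalently Definition 4.12). By that definition, $n \in \mathcal{G}(p)$ precisely when $n + (h_i-h_l)p \in \mathcal{G}$ for \emph{every} pair $1 \le i,l \le r$; taking the contrapositive, $n \not\in \mathcal{G}(p)$ exactly when there exists at least one pair $(i,l)$ with $\tilde{n}(n,i,l,p) = n + (h_i - h_l)p \not\in \mathcal{G}$. I would emphasise that the diagonal case $i=l$ is included in this family and reduces to the condition $n \not\in \mathcal{G}$, so the ``ambient'' bad condition $n \not\in \mathcal{G}$ is automatically absorbed into the right-hand side.

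Converting this logical characterisation into a pointwise inequality of indicator functions yields
$$1_{n \not\in \mathcal{G}(p)} \;\le\; \sum_{1 \le i,l \le r} 1_{\tilde{n}(n,i,l,p) \not\in \mathcal{G}}.$$
Because $w_n(\mathcal{L}_p) \ge 0$ by its squared-Selberg-weight construction in (5.3), I can multiply through by $w_n(\mathcal{L}_p)$, sum over $n \in \mathcal{A}(x)$, and swap the order of summation on the right. Comparing the resulting inner sum with Definition 5.12, each of the $r^2$ terms is precisely $\mathcal{A}(i,l,p)$, and the identity of the lemma drops out:
$$\sum_{\substack{n\in\mathcal{A}(x)\\ n\not\in\mathcal{G}(p)}} w_n(\mathcal{L}_p) \;\le\; \sum_{1\le i,l\le r}\,\sum_{\substack{n\in\mathcal{A}(x)\\ \tilde{n}(n,i,l,p)\not\in\mathcal{G}}} w_n(\mathcal{L}_p) \;=\; \sum_{1\le i,l\le r} \mathcal{A}(i,l,p).$$

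The main subtlety is that the asserted ``$=$'' is actually a ``$\le$'': the reverse direction fails whenever an integer $n$ violates the $\mathcal{G}$-condition at several pairs $(i,l)$ simultaneously, since such $n$ then contributes with multiplicity on the right-hand side. This overcounting is harmless for the intended application—the subsequent argument in Theorem 6.2 only uses the bound to show that replacing $w_{(K)}(p,\cdot)$ by $w^*(p,\cdot,\mathcal{L}_p)$ incurs a negligible error, and any upper bound on the ``bad'' contribution suffices. Accordingly, the only real obstacle in the write-up is notational: one must verify that $(i,l)$ in Definition 5.12 genuinely ranges over all ordered pairs (including $i=l$) so that the pointwise indicator inequality above is literally correct, after which each $\mathcal{A}(i,l,p)$ is estimated in the subsequent lemmas via Cauchy--Schwarz against the second moment $\sum(i,l,p)$.
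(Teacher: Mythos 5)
Your proof is correct and fleshes out the paper's terse one-line argument (``follows immediately from the definitions''). Moreover, you are right that the asserted equality is in fact only an inequality ``$\le$'': an integer $n$ that fails the $\mathcal{G}$-condition at more than one ordered pair $(i,l)$ is counted with multiplicity on the right-hand side, so the union bound over $r^2$ events yields an upper bound rather than an identity. This slip in the paper's statement is harmless for the intended application, since the lemma is only used (in Theorem \ref{thm51818}) to produce an upper bound for the left-hand side. Your observation that the diagonal terms $i=l$ absorb the ambient requirement $n\in\mathcal{G}$ is also correct, so the pointwise indicator inequality holds exactly as you wrote it. A minor unrelated point: the paper's proof cites Definition \ref{defn42} (admissible tuples), which is evidently a reference typo for Definition \ref{def412}, the definition of $\mathcal{G}(p)$; your proof implicitly uses the correct definition.
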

\begin{proof}
This follows immediately from Definition \ref{defn42} and \ref{defn512}.
\end{proof}
\begin{lemma}\label{lem514}
Let $\mathcal{A}$, $w_n$ be as in (5.3), $\mathcal{L}_p$ as in Definition \ref{defn510}. Let $j\in\{1, 2\}$. Then
$$\sum(i, l, p, j)=\left(1+O\left(\frac{1}{(\log x)^{1/10}}\right)\right)\: \frac{B^g}{\phi(B)^g}\:\mathfrak{G}_B(\mathcal{L}_p)\#\mathcal{A}(x)(\log R)^g I_g(F) r^*(u)^j.$$
\end{lemma}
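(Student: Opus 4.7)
The plan is to expand $r(\tilde n,u)^j$ as an iterated sum over $s\in S_u$ (resp. over pairs $(s_1,s_2)\in S_u^2$), interchange orders of summation, and reduce each term to a sum of $w_n(\mathcal{L}_p)$ with extra congruence conditions imposed on $\tilde n = n+(h_i-h_l)p$ modulo $s$ (resp. $s_1s_2$). The outer $s$-sums will then supply the factor $r^*(u)^j$, while Theorem~\ref{thm57} (equivalently the first conclusion of Theorem~\ref{thm55}) will deliver the asymptotic for the unconstrained sieve sum.

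First, for each $s\in S_u$ the set $\mathcal{N}_s\subset \mathbb{Z}/s\mathbb{Z}$ of residues $a$ with $a\equiv 1-(c+1)^K\,(\bmod\,s)$ for some $c\not\equiv -1\,(\bmod\,s)$ is the image of $b\mapsto 1-b^K$ on $(\mathbb{Z}/s\mathbb{Z})^*$; by the standard theory of $K$-th power residues, $|\mathcal{N}_s|=(s-1)/d(u)$ where $d(u)=(K,s-1)=(K,u-1)$ (using $s\equiv u\,(\bmod\,K)$). Writing
\[
T_s(a):=\sum_{\substack{n\in\mathcal{A}(x)\\ \tilde n\equiv a\,(\bmod\,s)}} w_n(\mathcal{L}_p),
\]
the task reduces to showing that uniformly in $s\in S_u$ and $a\,(\bmod\,s)$,
\[
T_s(a)=\frac{1}{s}\left(1+O\!\left(\frac{1}{(\log x)^{1/10}}\right)\right)\sum_{n\in\mathcal{A}(x)}w_n(\mathcal{L}_p).
\]
This estimate is to be obtained by re-running the proof of Proposition~9.1 of \cite{maynard} (which underlies Theorem~\ref{thm55}) with $W$ replaced by $Ws$ and the extra congruence $\tilde n\equiv a\,(\bmod\,s)$ appended to the inner sum. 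Because $s>(\log x)^{20}>2k^2$ is coprime to $WB$, and the support condition $(\prod d_i, WB)=1$ together with $\prod d_i\leq R$ ensures $s$ is coprime to all relevant $d_i$, the additional modulus contributes only a single local factor of $1/s$; the $\mathcal{A}=\mathbb{Z}$ input to Hypothesis~1 is automatic.

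Given the uniform estimate for $T_s(a)$, the case $j=1$ is immediate: summing over $a\in\mathcal{N}_s$ yields $(s-1)/(s\,d(u))\cdot(1+O((\log x)^{-1/10}))\sum_n w_n(\mathcal{L}_p)$, and then summing against $1/s$ over $s\in S_u$ gives
\[
\sum(i,l,p,1)=r^*(u)\,(1+O((\log x)^{-1/10}))\,\sum_{n\in\mathcal{A}(x)}w_n(\mathcal{L}_p),
\]
after which Theorem~\ref{thm57} finishes this case. For $j=2$ I split the double sum over $(s_1,s_2)$ as diagonal plus off-diagonal. The diagonal $s_1=s_2$ contributes at most $\left(\sum_{s\in S_u}s^{-2}\right)\cdot r^*(u)\cdot \sum_n w_n(\mathcal{L}_p)\ll (\log x)^{-20}$ relative to the main term and is absorbed into the error. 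For $s_1\neq s_2$ we have $(s_1,s_2)=1$, so the Chinese Remainder Theorem allows us to iterate the $T_s$-analysis with modulus $s_1s_2$, the two congruences contributing independent factors of $(1-1/s_i)/d(u)$; summing over distinct $s_1,s_2\in S_u$ then produces $r^*(u)^2(1+O((\log x)^{-1/10}))\sum_n w_n(\mathcal{L}_p)$.

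The main obstacle is the uniform-distribution estimate for $T_s(a)$: one must verify that the asymptotic holds with an error that does not degrade faster than $O((\log x)^{-1/10})$ as $s$ ranges over $S_u$ (a set of cardinality comparable to $z/\log z$), and that the cumulative error after summation in $s$ still relatively beats the main term of size $r^*(u)\asymp 1/\log_2 x$. Since $s\leq z=x^{o(1)}$ is much smaller than the level $R\geq x^{\theta/10}$ of the Maynard sieve, the extra local factor at $s$ lies well within the range where Maynard's level-of-distribution theory applies; the genuine work is the careful bookkeeping to ensure that the relative error is dominated by the $(\log x)^{-1/10}$ loss already present in Theorem~\ref{thm55}.
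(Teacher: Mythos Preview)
Your proposal is correct and follows essentially the same route as the paper: expand $r(\tilde n,u)^j$, interchange summation, treat the diagonal $s_1=s_2$ as negligible, and reduce the off-diagonal to a sieve sum with an extra congruence condition modulo $s_1s_2$.  The one presentational difference is that you phrase the key step as ``re-running Proposition~9.1 of \cite{maynard} with $W$ replaced by $Ws$'', whereas the paper simply applies Theorem~\ref{thm55} as a black box with $\mathcal{A}$ replaced by the arithmetic progression $\mathcal{A}^{(s_1,s_2)}=\{n:n\equiv e+(h_l-h_i)p\ (\bmod\ s_1s_2)\}$ and $\mathcal{L}'=\emptyset$; this is cleaner since parts (1) and (3) of Hypothesis~1 are immediate for an arithmetic progression in $\mathbb{Z}$, so no re-proof is needed.  (Also, a minor slip: $r^*(u)\asymp\log_2 x$, not $1/\log_2 x$, which only makes your error-accumulation concern easier, not harder.)
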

\begin{proof}
We only give the proof for the hardest case $j=2$ and shortly indicate the proof for $j=1$.
\begin{align*}
&\sum(i, l, p, 2)=\sum_{n\in\mathcal{A}(x)} w_n(\mathcal{L}_p) r(\tilde{n}, u)^2\\
&=\sum_{n\in\mathcal{A}(x)} w_n(\mathcal{L}_p) \left(\frac{1}{d(u)^2}\sum_{\substack{s_1\in S_u, c_{s_1}\in\{0, 1, \ldots, s_1-2\}\\ n\equiv 1-(c_{s_1}+1)^K(\bmod s_1)}} s_1^{-1}\right)
 \left(\sum_{\substack{s_2\in S_u, c_{s_2}\in\{0, 1, \ldots, s_2-2\}\\ n\equiv 1-(c_{s_2}+1)^K(\bmod s_2)}} s_2^{-1}\right)\\
&=\frac{1}{d(u)^2}\sum_{s_1, s_2\in S_u} s_1^{-1} s_2^{-1}\sum_{c_{s_1}=1}^{s_1-2}\sum_{c_{s_2}=1}^{s_2-2}\sum_{\substack{n\equiv 1-(c_{s_1}+1)^K+(h_l-h_1)p (\bmod s_1)\\ n\equiv 1-(c_{s_2}+1)^K+(h_l-h_2)p (\bmod s_2)}} w_n(\mathcal{L}_p).
\end{align*}
\end{proof}

In the inner sum we only deal with the case $s_1\neq s_2$, the case $s_1=s_2$ is giving a negligible contribution. The inner sum is non-empty if and only if the system
\begin{equation*}
    \begin{cases}
      & \tilde{n}\equiv 1-(c_{s_1}+1)^K (\bmod s_1)\\
     & \tilde{n}\equiv 1-(c_{s_2}+1)^K (\bmod s_2)
    \end{cases}\tag{*}
\end{equation*}
is solvable. In this case (*) is equivalent to a single congruence 
$${n}\equiv c+(h_l-h_i)p (\bmod s_1s_2),$$
where $e=e(s_1, s_2, c_1, c_2)$ is uniquely determined by the system (*) and $$0\leq e\leq s_1s_2-1.$$
We apply Theorem \ref{thm55} with $B$ independent of $s_1, s_2$ and with 
$$\mathcal{A}=\mathcal{A}^{(s_1, s_2)}=\{n\::\: x/2<n\leq x, n \equiv e+(h_l-h_i)p (\bmod s_1s_2)\}.$$
We have
$$\#\mathcal{A}^{(s_1, s_2)}(x)=s_1^{-1}s_2^{-1}\mathcal{A}(x)+O(1)$$
and obtain
\begin{align*}
&\sum_{n\in\mathcal{A}(x)} w_n(\mathcal{L}_p) r(\tilde{n}, u)^2\\
&= \left(1+O\left(\frac{1}{(\log x)^{1/10}}\right)\right)\frac{B^g}{\phi(B)^g}\mathfrak{G}_B(\mathcal{L}_p)\#\mathcal{A}(x)(\log R) I_g(F) r^*(u)^2.   \tag{5.20}
\end{align*}
This proves the claim for $j=2$. The proof of the case $j=1$ is analogous but simpler, since there is only the single variable of summation $s_1$. \qed

\begin{lemma}
Let the conditions be as in Lemma \ref{lem514}. Then we have
$$\sum (i, l, p)\ll \frac{B^g}{\phi(B)^g}\mathfrak{G}_B(\mathcal{L}_p)\#\mathcal{A}(x)(\log R)^g I_g(F) r^*(u)^2 (\log x)^{1/8}.$$
\end{lemma}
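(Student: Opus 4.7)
My plan is to expand the square and reduce the estimate to the moment computations already established in Lemma \ref{lem514}, exploiting a cancellation of main terms analogous to the standard variance computation in sieve theory.

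First I would write
\begin{align*}
\sum(i,l,p) &= \sum_{n\in\mathcal{A}(x)} w_n(\mathcal{L}_p)\bigl(r(\tilde n,u)-r^*(u)\bigr)^{2}\\
&= \sum(i,l,p,2) - 2r^*(u)\sum(i,l,p,1) + r^*(u)^{2}\sum(i,l,p,0).
\end{align*}
For $j=1,2$ Lemma \ref{lem514} gives
$\sum(i,l,p,j) = (1+O((\log x)^{-1/10}))\,M_0\,r^*(u)^{j}$, where I abbreviate
$M_0 := \frac{B^{g}}{\phi(B)^{g}}\mathfrak{G}_B(\mathcal{L}_p)\#\mathcal{A}(x)(\log R)^{g}I_g(F)$. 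For the remaining term $j=0$ we have $\sum(i,l,p,0)=\sum_{n\in\mathcal{A}(x)} w_n(\mathcal{L}_p)$, and Theorem \ref{thm57} (the analogue of Theorem \ref{thm55}\,(5.5) for $w^{*}(p,n)$, which agrees with $w_n(\mathcal{L}_p)$ on the relevant residues up to a harmless factor $D$) supplies $\sum(i,l,p,0)=(1+O((\log x)^{-1/10}))M_0$.

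Substituting these three formulas, the leading constants $1-2+1=0$ cancel identically, and the three $O((\log x)^{-1/10})$-errors each contribute at most $M_0\,r^*(u)^{2}(\log x)^{-1/10}$. Hence
\[
\sum(i,l,p) \ll M_0\,r^*(u)^{2}\,(\log x)^{-1/10},
\]
which is an even stronger bound than the claimed one, since $(\log x)^{-1/10}\le(\log x)^{1/8}$ for all large $x$. The $(\log x)^{1/8}$ in the statement is therefore a convenient weakening, presumably chosen to absorb additional loss factors appearing later when this estimate is applied inside Lemma \ref{lem511}.

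The only potential obstacle I see is ensuring that Lemma \ref{lem514} as stated for $j=1,2$ can legitimately be paired with the $j=0$ estimate coming from Theorem \ref{thm57}: the three relative errors must be of the same order so that the cancellation of main terms is not spoiled. Since Theorem \ref{thm57} is derived along exactly the same lines as Lemma \ref{lem514} (both ultimately rest on Theorem \ref{thm55} applied either to $\mathcal{A}$ or to the arithmetic subprogression $\mathcal{A}^{(s_1,s_2)}$), both come with the uniform error $(1+O((\log x)^{-1/10}))$, and no further arithmetic input is needed beyond noting that the number of pairs $(s_1,s_2)\in S_u\times S_u$ with $s_1=s_2$ contributes at most $O(\sum_{s\in S_u}s^{-2}) = o(r^*(u)^{2})$, so the diagonal terms are harmless in the $j=2$ step and do not survive in the final bound.
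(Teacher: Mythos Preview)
Your approach is correct and matches the paper's: the paper does not supply an explicit proof for this lemma, but the companion result for $\Omega(i,l,p)$ is proved by exactly the same expansion $\Omega(i,l,p)=\Omega(i,l,p,2)-2r^*(u)\Omega(i,l,p,1)+r^*(u)^2\Omega(i,l,p,0)$, so your square-expansion argument is precisely what is intended. Two minor remarks: for the $j=0$ term you should cite \eqref{thm55}\,(5.5) directly rather than Theorem~\ref{thm57}, since $\sum(i,l,p,0)=\sum_{n\in\mathcal{A}(x)}w_n(\mathcal{L}_p)$ involves $w_n$, not $w^*(p,n)$; and you are right that the exponent $1/8$ in the statement is almost certainly a misprint for $-1/8$, as the application in Theorem~\ref{thm51818} requires a saving of $(\log x)^{-1/20}$, which your bound $(\log x)^{-1/10}$ comfortably supplies.
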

\begin{theorem}\label{thm51818}
Let the conditions be as in the previous lemmas. For sufficiently small $\eta_0$ we have
$$\sum_{\substack{n\in \mathcal{A}(x)\\ n\not\in \mathcal{G}(p)(\mathcal{L}_p)}} w_n(\mathcal{L}_p) \ll \frac{B^g}{\phi(B)^g}\mathfrak{G}_B(\mathcal{L}_p)\#\mathcal{A}(x)(\log R)^g I_g(F)(\log x)^{-1/10}.$$
\end{theorem}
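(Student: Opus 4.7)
The approach is a second-moment argument: convert the bad-event indicator $\mathbf{1}_{n\notin\mathcal{G}(p)}$ into a sum of squared deviations $(r(\tilde n,u)-r^*(u))^2$, and then exploit the fact that Lemma~\ref{lem514} computes the three moments $j=0,1,2$ with the same relative error rate, producing cancellation in the expansion of the square.

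First, I would apply Lemma~\ref{lem513} to decompose
\[
\sum_{\substack{n\in\mathcal{A}(x)\\ n\notin\mathcal{G}(p)}}w_n(\mathcal{L}_p)=\sum_{1\le i,l\le r}\mathcal{A}(i,l,p),
\]
reducing the task to bounding each $\mathcal{A}(i,l,p)$ individually, with the final summation over the at most $r^2\le(\log x)^{2\eta_0}$ pairs. For fixed $(i,l)$ and $\tilde n=\tilde n(n,i,l,p)$, unpacking the definition of $\mathcal{G}$ in Definition~\ref{def412} shows that $\tilde n\notin\mathcal{G}$ forces the existence of some residue $u\pmod K$ with $(u,K)=1$ satisfying $(r(\tilde n,u)-r^*(u))^2>(\log x)^{-1/20}$. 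A Chebyshev-type bound then gives
\[
\mathcal{A}(i,l,p)\le (\log x)^{1/20}\sum_{\substack{u\,(\bmod\: K)\\(u,K)=1}}\sum(i,l,p),
\]
in the notation of Definition~\ref{defn512}, with the outer sum containing only $\phi(K)=O_K(1)$ terms.

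Next, I would invoke the second-moment bound on $\sum(i,l,p)$ from the preceding (unlabelled) lemma. That lemma is itself to be established by expanding
\[
(r(\tilde n,u)-r^*(u))^2 \;=\; r(\tilde n,u)^2-2r^*(u)\,r(\tilde n,u)+r^*(u)^2
\]
and applying Lemma~\ref{lem514} with $j=2,1,0$. The three resulting main terms each equal $M\,r^*(u)^2$ where $M:=\frac{B^g}{\phi(B)^g}\mathfrak{G}_B(\mathcal{L}_p)\#\mathcal{A}(x)(\log R)^g I_g(F)$, and they annihilate via the combination $1-2+1=0$, leaving only the error terms of size $O((\log x)^{-1/10})$. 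Combined with the estimate $r^*(u)\ll\log_2 x$ (Mertens' theorem applied to $S_u$), this yields $\mathcal{A}(i,l,p)\ll M\,(\log x)^{-\delta+o(1)}$ for a positive $\delta$ inherited from Lemma~\ref{lem514}.

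Finally, summing over the $\le(\log x)^{2\eta_0}$ pairs $(i,l)$ and choosing $\eta_0$ small enough (independently of $K$) drops the net exponent of $\log x$ below $-1/10$. The main obstacle is precisely this cancellation step: each of the three main terms $M\,r^*(u)^2$ in the expansion is vastly larger than the target bound, so neither a crude Cauchy--Schwarz argument nor separate estimates for $\sum w_n r^2$, $\sum w_n r$, $\sum w_n$ would suffice. One must verify that Lemma~\ref{lem514} produces a uniform relative error across all three values $j=0,1,2$, so that the leading $M\,r^*(u)^2$ contributions precisely cancel; the rest of the argument is routine bookkeeping.
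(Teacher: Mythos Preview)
Your proposal is correct and follows essentially the same route as the paper: decompose via Lemma~\ref{lem513} into the pieces $\mathcal{A}(i,l,p)$, bound each by Chebyshev against $\sum(i,l,p)$, obtain the latter by expanding the square and cancelling the three main terms from Lemma~\ref{lem514} for $j=0,1,2$, then sum over the $\le r^2\le(\log x)^{2\eta_0}$ pairs with $\eta_0$ chosen small. Your identification of the cancellation $1-2+1=0$ among the leading terms as the crux matches exactly how the paper derives the (unlabelled) variance lemma that feeds into the proof.
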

\begin{proof}
Let $1\leq i, R \leq r$. By Definition \ref{defn512} we have 
$$\tilde{n}=  n+(h_i-h_l)p\not\in\mathcal{G}$$
which yields
$$|r(\tilde{n}, u)-r^*(u)|\geq r^*(u)(\log x)^{-1/40}.$$
Thus 
\begin{align*}
&r^*(u)^2(\log x)^{-1/20}\sum_{n\in\mathcal{A}(x)\::\: n+(h_i-h_l)p\not\in\mathcal{G}(p)} w_n(\mathcal{L}_p)\\
&\leq \sum_{n\in\mathcal{A}(x)} w_n(\mathcal{L}_p)(r(\tilde{n},u)-r^*(u))^2\\
&\leq \frac{B^g}{\phi(B)^g}\mathfrak{G}_B(\mathcal{L}_p)\#\mathcal{A}(x)(\log B)^g I_g(F)(\log x)^{-1/20} r^*(u)^2
\end{align*}
and therefore
$$\sum_{n\in\mathcal{A}(x)\::\: n+(h_i-h_l)p\not\in\mathcal{G}(p)} w_n(\mathcal{L}_p) \leq \frac{B^g}{\phi(B)^g}\mathfrak{G}_B(\mathcal{L}_p)\#\mathcal{A}(x)(\log R)^g I_g(F)(\log x)^{-1/20}.$$
The claim of Theorem \ref{thm51818} follows by summation over all pairs $(i, l)$, if $\eta_0$ is sufficiently small.
\end{proof}

We now investigate the sum (4.68)  of Theorem \ref{thm62}.

\begin{definition}\label{defn519519}
Let $x/2<p\leq x$, $L(n)=n+h_fp$. Let $L\in \mathcal{L}_p$: $1\leq i,l,\leq r$. Then we define
\begin{align*}
C(i,l,p)&:= \sum_{n\in \mathcal{A}(x)\::\:\tilde{n}\not\in\mathcal{G}} 1_p(L(n)) w_n(\mathcal{L}_p)\\
\Omega(i, l, p)&:=\sum_{n\in \mathcal{A}(x)} 1_p(L(n)) w_n(\mathcal{L}_p)(r(\tilde{n}, u)-r^*(u))\\
\Omega(i, l, p, j)&:=\sum_{n\in \mathcal{A}(x)} 1_p(L(n)) w_n(\mathcal{L}_p)r(\tilde{n}, u)^j\\
\end{align*}
\end{definition}

\begin{lemma}\label{lem520520}
Let $L, i, l, r, p$ be as in Definition \ref{defn512}. Let $j\in\{1, 2\}$. Then we have
\begin{align*}
\Omega(i, l, p, j)&:=\frac{B^{g-1}}{\phi(B)^{g-1}}\mathfrak{G}_B(\mathcal{L}_p)\#\mathcal{P}_{L, \mathcal{A}}(x)(\log R)^{g+1}\\
&\ \ \times J_g(F)r^*(u)^j (1+((\log x)^{-1/10}))\\
&\ \ +O\left(\frac{B^{g}}{\phi(B)^{g}}\mathfrak{G}_B(\mathcal{L}_p)\#\mathcal{A}(x)(\log R)^{g-1} J_g(F)r^*(u)^j\right).
\end{align*}
\end{lemma}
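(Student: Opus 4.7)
The plan is to mirror the proof of Lemma \ref{lem514}, replacing the application of Theorem \ref{thm55} with that of Theorem \ref{thm59} (the prime-detecting analogue). First I would expand $r(\tilde{n},u)^{j}$ using its definition,
\begin{align*}
r(\tilde{n},u)^{j}=\frac{1}{d(u)^{j}}\sum_{s_{1},\dots,s_{j}\in S_{u}}(s_{1}\cdots s_{j})^{-1}\sum_{\substack{c_{\nu}\bmod s_{\nu}\\c_{\nu}\not\equiv -1}}\prod_{\nu=1}^{j}\mathbf{1}\bigl[\tilde{n}\equiv 1-(c_{\nu}+1)^{K}\!\!\pmod{s_{\nu}}\bigr],
\end{align*}
insert this into the definition of $\Omega(i,l,p,j)$, and interchange the order of summation so that the innermost sum runs over $n\in\mathcal{A}(x)$ satisfying a single congruence modulo $s_{1}\cdots s_{j}$, weighted by $\mathbf{1}_{P}(L(n))\,w_{n}(\mathcal{L}_{p})$.

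For $j=1$ the innermost sum becomes
\begin{align*}
\sum_{\substack{n\in\mathcal{A}(x)\\ n\equiv e(s,c)\,(\bmod\,s)}}\mathbf{1}_{P}(L(n))\,w_{n}(\mathcal{L}_{p}),\qquad e(s,c):=1-(c+1)^{K}+(h_{l}-h_{i})p.
\end{align*}
I would apply Theorem \ref{thm59} on the restricted set $\mathcal{A}^{(s,c)}:=\{n\in\mathcal{A}(x):n\equiv e(s,c)\,(\bmod\,s)\}$. Lemma \ref{lem54}, applied to the reparametrised linear forms in $m$ obtained by writing $n=sm+e(s,c)$, supplies Hypothesis~1 uniformly in $s,c$, because $s\le z=x^{o(1)}$ so the new coefficients remain polynomially bounded and the Bombieri--Vinogradov range is not exhausted. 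One then has $\#\mathcal{A}^{(s,c)}(x)=s^{-1}\#\mathcal{A}(x)+O(1)$ and, via the same mechanism, $\#\mathcal{P}_{L,\mathcal{A}^{(s,c)}}(x)=s^{-1}\#\mathcal{P}_{L,\mathcal{A}}(x)(1+O((\log x)^{-1/10}))$. Summing the Theorem \ref{thm59} output over the admissible values of $c$ and over $s\in S_{u}$, and invoking $\sum_{s\in S_{u}}s^{-1}=d(u)\,r^{\ast}(u)$ together with the matching $1/d(u)$ prefactor, produces the asserted main and error terms for $j=1$.

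For $j=2$ the same scheme applies with two additions. The diagonal contribution $s_{1}=s_{2}$ has size $O(|S_{u}|^{-1})$ relative to the $j=1$ output and is absorbed into the error. For $s_{1}\neq s_{2}$ the two congruences on $n$ are combined via the Chinese Remainder Theorem into a single condition $n\equiv e\,(\bmod\,s_{1}s_{2})$ with $0\le e<s_{1}s_{2}$; since $s_{1}s_{2}\le z^{2}=x^{o(1)}$ stays well below $x^{\theta}$, Theorem \ref{thm59} applies on $\mathcal{A}^{(s_{1},s_{2})}$ exactly as in the $j=1$ case. The double sum over $(s_{1},c_{1},s_{2},c_{2})$ then factorises and yields a factor $r^{\ast}(u)^{2}$ multiplying the Theorem \ref{thm59} main term, which is the desired bound.

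The principal obstacle will be verifying that Theorem \ref{thm59} supplies a relative error small enough to survive summation over $(s_{1},s_{2})\in S_{u}\times S_{u}$ without being swamped by the $|S_{u}|^{2}$ individual error terms. This hinges on the uniformity of the $O((\log x)^{-1/10})$ error in Hypothesis~1 guaranteed by Lemma \ref{lem54}, together with the fact that $\sum_{s\in S_{u}}s^{-1}\ll \log_{3}x$ is bounded by a slow function, so that the accumulated error remains a negligible fraction of the main term. Once this uniformity is in hand, the remaining calculation is purely combinatorial, and the product of $j$ copies of $d(u)^{-1}\sum_{s\in S_{u}}s^{-1}=r^{\ast}(u)$ generates the stated $r^{\ast}(u)^{j}$ factor.
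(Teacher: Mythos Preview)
Your overall plan matches the paper's: expand $r(\tilde n,u)^j$, reduce to congruence-restricted sums $\sum_{n\equiv e\,(\bmod s)} 1_P(L(n))\,w_n(\mathcal{L}_p)$, reparametrise via $n=sm+e$ to obtain a new system $\mathcal L_{p,s}$ of linear forms in $m$, and apply the prime-detecting sieve estimate. The diagonal/off-diagonal split for $j=2$ and the Chinese Remainder reduction are likewise the same.

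The gap is in your uniformity claim. You assert that Lemma~\ref{lem54} supplies Hypothesis~1 for the reparametrised forms ``uniformly in $s,c$'', but this fails on two counts. First, Lemma~\ref{lem54} as stated requires leading coefficients $|a_i|\le\log x$, whereas after writing $n=sm+e$ each form in $\mathcal L_{p,s}$ has leading coefficient $s$, which ranges up to $z$ (or $z^2$ when $j=2$); since $z=x^{\log_3 x/(4\log_2 x)}$ exceeds any fixed power of $\log x$, the hypothesis of the lemma is violated. Second, and more fundamentally, Bombieri--Vinogradov is an \emph{average} over moduli and cannot give the required bound for every individual $s$: after reparametrisation one needs an error of size $\ll (x/s)(\log x)^{-100k^2}$ for each fixed $s$, which does not follow from the global bound $\ll x(\log x)^{-A}$. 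The paper's proof deals with precisely this point: it invokes Bombieri's theorem directly (not Lemma~\ref{lem54}) and isolates an exceptional set $\mathcal E$ of moduli $s$ for which the needed equidistribution fails, with $\sum_{s\in\mathcal E}s^{-1}\ll(\log x)^{-4}$. For $s\in\mathcal E$ it drops the prime indicator via $1_P(L^*(m,s))=O(1)$ and appeals to the density estimate (5.5) instead; the smallness of $\sum_{s\in\mathcal E}s^{-1}$ then makes this contribution negligible. Your sketch lacks this exceptional-set mechanism, and without it the per-$s$ errors are not under control.
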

\begin{proof}
We only give the proof for the hardest case $j=2$. The case $j=1$ is analogous but simpler. We have 
\begin{align*}
\Omega(i, l, p, 2)&=\sum_{n\in \mathcal{A}(x)} 1_p(L(n))w_n(\mathcal{L}_p) r(\tilde{n}, u)^2\\
&=\sum_{n\in \mathcal{A}(x)} 1_p(L(n))w_n(\mathcal{L}_p) \left(\frac{1}{d(u)^2}\sum_{\substack{s_1\in S_u, c_{s_1}\in\{1, \ldots, s_1-2\}\\ \tilde{n}\equiv 1-(c_{s_1}+1)^K(\bmod s_1)}} s_1^{-1}\right)\\
&\ \ \times\left(\sum_{s_2\in S_u, c_{s_2}\in\{1, \ldots, s_2-2\}} s_2^{-1}\right)\\
&=  \frac{1}{d(u)^2} \sum_{s_1, s_2\in S_u} s_1^{-1} s_2^{-1} \sum_{c_{s_1}=1}^{s_1-2}\sum_{c_{s_2}=1}^{s_2-2}\sum_{\substack{\tilde{n}\equiv 1-(c_{s_1}+1)^K+(h_l-h_i)p(\bmod s_1)\\ \tilde{n}\equiv 1-(c_{s_2}+1)^K+(h_l-h_i)p(\bmod s_2)}} 1.
\end{align*}

We deal only with the case $s_1\neq s_2$ for the inner sum, the case $s_1=s_2$ giving a negligible contribution. The inner sum is non-empty if and only if the system
\begin{equation*}
    \begin{cases}
      & {n}\equiv 1-(c_{s_1}+1)^K (\bmod s_1)\\
     & {n}\equiv 1-(c_{s_2}+1)^K (\bmod s_2)
    \end{cases}\tag{*}
\end{equation*}
is solvable.\\
In this case the system is equivalent to a single congruence \mbox{$n\equiv e(s_1, s_2, c_1, c_2)$} is uniquely determined by the system (*) and \mbox{$0\leq e\leq s_1s_2-1$.} The inner sum then takes the form
$$\sum_{\substack{n\equiv c(\bmod s_1s_2)\\ n\in\mathcal{A}(x)}} 1_p(L(n))w_n(\mathcal{L}_p).$$
By the substitution $n=ms+c$, we get
$$L(n)=L^*(m,s)=ms+e+h_fp.$$
We set $\mathcal{L}_p=\{L_{h_i}\}$, where $L_{h_i}(n)=n+h_ip$ is replaced by the set $\mathcal{L}_{p,s}=\{L_{h_i, s}\}$, where
$$L_{h_i, s}(m)=ms+e+(h_i+h_f)p.$$
We thus have 
\begin{align*}
\sum (s)&:=\sum_{\substack{n\equiv e(\bmod s)\\ n\in\mathcal{A}(x)}} 1_p(L(n)) w_n(\mathcal{L}_p)\\
&=\sum_{m\in \mathcal{A}\left(\frac{x}{s}\right)} w_m(\mathcal{L}_{p,s}) 1_p(L^*(m,s))+O(1).
\end{align*}

We apply Theorem \ref{thm59} with $\mathcal{A}=\mathbb{N}$, $x/s$ instead of $x$, $L(\cdot)=L^*(\cdot, s)$, $\mathcal{L}=\mathcal{L}_{p,s}$. We have
$$\mathfrak{G}_B(\mathcal{L}_p)=\mathfrak{G}_B(\mathcal{L}_{p,s})\left(1+O\left(\frac{1}{\log x}\right)\right).$$
From Bombieri's Theorem it can easily be seen that the conditions (5.5) are satisfied for all $s$ with the possible exception of $s\in\mathcal{E}$, $\mathcal{E}$ being an exceptional set, satisfying
$$\sum_{s\in\mathcal{E}} s^{-1} \ll (\log x)^{-4}.$$
For $s\in \mathcal{E}$ we use the trivial bound $1_p(L^*(m,s))=O(1)$. Thus, we obtain the claim of Lemma \ref{lem520520} for the case $j=2$.\\
The proof for $j=1$ is analogous but simpler, since we have only to sum over the single variable $s_1$.
\end{proof}

\begin{lemma}
Let $i, l, p$ as in Definition \ref{defn512}. We have
\begin{align*}
\Omega(i,l,p)&=O\left(\frac{B^{g-1}}{\phi(B)^{g-1}}|\mathfrak{G}_B(\mathcal{L}_p)|\# P_{L, \mathcal{A}}(x) (\log R)^{g+1} J_g(F) r^*(u)^2 (\log x)^{-1/10}\right)\\
&\ \ +O\left(\frac{B^{g}}{\phi(B)^{g}}|\mathfrak{G}_B(\mathcal{L}_p)|\# \mathcal{A}(x)(\log R)^{g-1} J_g(F)r^*(u)^2\right).
\end{align*}
\end{lemma}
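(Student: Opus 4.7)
The argument parallels the unweighted analogue for $\sum(i,l,p)$ immediately preceding Theorem \ref{thm51818}: one expands the square $(r(\tilde{n},u)-r^*(u))^2$ and exploits the cancellation of main terms supplied by Lemma \ref{lem520520}. The first step is to write
\begin{equation*}
\Omega(i,l,p) \;=\; \Omega(i,l,p,2) \;-\; 2\,r^*(u)\,\Omega(i,l,p,1) \;+\; r^*(u)^2\,\Omega(i,l,p,0),
\end{equation*}
where I adopt the natural $j=0$ extension $\Omega(i,l,p,0):=\sum_{n\in\mathcal{A}(x)} 1_P(L(n))\,w_n(\mathcal{L}_p)$ of Definition \ref{defn519519}, whose asymptotic is furnished directly by Theorem \ref{thm59}.

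The second step is to invoke Lemma \ref{lem520520} (together with Theorem \ref{thm59} for $j=0$) to obtain, uniformly in $j\in\{0,1,2\}$, an asymptotic of the shape
\begin{equation*}
\Omega(i,l,p,j) \;=\; M\,r^*(u)^j\bigl(1 + O((\log x)^{-1/10})\bigr) \;+\; O\bigl(E\,r^*(u)^j\bigr),
\end{equation*}
with
\begin{equation*}
M := \tfrac{B^{g-1}}{\phi(B)^{g-1}}\,|\mathfrak{G}_B(\mathcal{L}_p)|\,\#P_{L,\mathcal{A}}(x)\,(\log R)^{g+1}\,J_g(F),
\end{equation*}
\begin{equation*}
E := \tfrac{B^{g}}{\phi(B)^{g}}\,|\mathfrak{G}_B(\mathcal{L}_p)|\,\#\mathcal{A}(x)\,(\log R)^{g-1}\,J_g(F).
\end{equation*}
Substituting, the three leading contributions collapse as $M\,r^*(u)^2\,[\,1 - 2 + 1\,] = 0$, while each secondary piece contributes at most $O(M\,r^*(u)^2(\log x)^{-1/10}) + O(E\,r^*(u)^2)$: the factor $r^*(u)^{2-j}$ from the algebraic coefficient of $\Omega(i,l,p,j)$ combines with the $r^*(u)^j$ from Lemma \ref{lem520520} to produce exactly $r^*(u)^2$ in every case. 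This gives precisely the two claimed $O$-terms.

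The only genuine technical point is ensuring that the $O((\log x)^{-1/10})$ relative error in Lemma \ref{lem520520} is uniform in $j$. This follows because the arithmetic input in the proof of Lemma \ref{lem520520} is the Bombieri--Vinogradov type estimate (5.5) applied to the shifted linear forms $L^*(\cdot,s)$ arising from the $j$-fold sum over $s_1,\dots,s_j\in S_u$, and the exceptional set $\mathcal{E}$ of moduli on which Bombieri--Vinogradov fails satisfies $\sum_{s\in\mathcal{E}} s^{-1}\ll (\log x)^{-4}$ regardless of the number of variables; on $\mathcal{E}$ the trivial bound $1_P(L^*(m,s))=O(1)$ already delivers an acceptable contribution. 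Since this is the \emph{only} step where the parameter $j$ enters, the resulting implied constants are independent of $j\in\{0,1,2\}$ and the cancellation argument goes through as described.
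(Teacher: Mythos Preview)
Your proof is correct and follows exactly the paper's approach: expand $(r(\tilde{n},u)-r^*(u))^2$ to write $\Omega(i,l,p)=\Omega(i,l,p,2)-2r^*(u)\,\Omega(i,l,p,1)+r^*(u)^2\,\Omega(i,l,p,0)$, then invoke Lemma~\ref{lem520520} (and Theorem~\ref{thm59} for $j=0$) so that the main terms cancel and only the stated error terms survive. In fact your write-up is cleaner than the paper's one-line proof, which contains evident typos in the indices of the decomposition.
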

\begin{proof}
By Definition \ref{defn519519}, we have
$$\Omega(i,l,p)=\Omega(i,l,2)-2r^*(u)\:\Omega(i,l,p,2)+r^*(u)^2\:\Omega(i,l,p,0).$$
\end{proof}

\begin{theorem}\label{thm522522}
Let $p, L(n)$ be as in Definition \ref{defn519519}. Then we have
$$\sum_{\substack{n\in\mathcal{A}(x)\\ n\not\in \mathcal{G}(p)}} 1_p(L(n)) w_n(\mathcal{L}_p)\ll \frac{B^{g-1}}{\phi(B)^{g-1}}\:\mathfrak{G}_B(\mathcal{L}_p)\# P_{L, \mathcal{A}}(x)(\log R)^{g+1} J_g(F)(\log x)^{-1/10}.$$
\end{theorem}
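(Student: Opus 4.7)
The proof will follow the template of Theorem \ref{thm51818}, with the sum $\sum(i,l,p)$ replaced throughout by $\Omega(i,l,p)$ from Definition \ref{defn519519}, and with the bound on $\Omega(i,l,p)$ established in the lemma immediately preceding Theorem \ref{thm522522} playing the role taken by Lemma \ref{lem514} there.

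First, I unpack the bad set. By Definition \ref{defn510} (equivalently Definition \ref{def412}), the condition $n\notin\mathcal{G}(p)$ is equivalent to the existence of indices $i,l\in\{1,\ldots,r\}$ with $\tilde n := n+(h_i-h_l)p \notin \mathcal{G}$. For any such $\tilde n$, the defining inequality of $\mathcal{G}$ in terms of $r(\cdot,u)$ and $r^*(u)$ produces a residue $u$ with $(u,K)=1$ satisfying
$$(r(\tilde n,u)-r^*(u))^{2} \;\geq\; r^*(u)^{2}(\log x)^{-1/20}.$$
Multiplying by the nonnegative quantity $1_p(L(n))\,w_n(\mathcal{L}_p)$ and summing over $n\in\mathcal{A}(x)$ with $\tilde n\notin\mathcal{G}$, I obtain
$$r^*(u)^{2}(\log x)^{-1/20}\sum_{\substack{n\in\mathcal{A}(x)\\ \tilde n\notin\mathcal{G}}} 1_p(L(n))\,w_n(\mathcal{L}_p) \;\leq\; \Omega(i,l,p).$$

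Next, I invoke Lemma \ref{lem520520} to bound $\Omega(i,l,p)$ and then divide through by $r^*(u)^{2}(\log x)^{-1/20}$. The main $O$-term in that lemma carries a $(\log x)^{-1/10}$ saving and, after the division, matches the shape of the target bound exactly. The secondary $O$-term (which lacks the saving) is absorbed into the main one: using $\#\mathcal{A}(x)\asymp x$, $\#P_{L,\mathcal{A}}(x)\asymp x/\log x$, $\log R\asymp \log x$, $J_g\asymp (\log g/g)\,I_g$, and $B/\phi(B)\asymp 1$, the ratio of the two terms is seen to be a negative power of $\log x$. Finally I sum over the at most $r^{2}$ pairs $(i,l)$; since $r\leq(\log x)^{\eta_0}$, the loss $r^{2}\leq(\log x)^{2\eta_0}$ is absorbed into the remaining negative power of $\log x$ provided $\eta_0$ is sufficiently small, yielding the exponent $-1/10$ claimed.

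The main obstacle, exactly as in the proof of Theorem \ref{thm51818}, is the bookkeeping of exponents after the normalisation by $r^*(u)^{-2}(\log x)^{1/20}$: one must verify that both the secondary error in Lemma \ref{lem520520} and the $r^{2}$ loss from summing over pairs are absorbed into the stated $(\log x)^{-1/10}$ decay. Once the size relations recorded in Theorem \ref{thm55} and the size bounds on $B$ from the Landau--Page discussion are marshalled, this is a routine comparison.
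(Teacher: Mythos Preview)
Your proposal is correct and follows essentially the same route as the paper's own proof: Chebyshev-type use of the second-moment quantity $\Omega(i,l,p)$ to bound $C(i,l,p)$, absorption of the secondary error via $\log R\asymp\log x$, and summation over the $O(r^{2})$ pairs $(i,l)$. One small slip: in your second paragraph you cite Lemma~\ref{lem520520}, but that lemma estimates $\Omega(i,l,p,j)$; the bound on $\Omega(i,l,p)$ with the $(\log x)^{-1/10}$ saving is the unnamed lemma immediately before Theorem~\ref{thm522522}, which you correctly identified in your opening paragraph.
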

\begin{proof}
Let $1\leq i, l\leq r$. By Definition \ref{defn512} we have
$$\tilde{n}=n+(h_i-h_l)p\in\mathcal{G}.$$
It follows that 
$$|r(\tilde{n}, u)-r^*(u)|\geq r^*(u)(\log x)^{-1/40}.$$
Thus
\begin{align*}
&r^*(u)^2(\log x)^{-1/20} \sum_{n\in\mathcal{A}(x)\::\: n\not\in \mathcal{G}(p)} 1_p(L(n)) w_n(\mathcal{L}_p)\\
&\ll \frac{B^{g-1}}{\phi(B)^{g-1}}\:\mathfrak{G}_B(\mathcal{L}_p)\# P_{L, \mathcal{A}}(x)(\log R)^{g+1} J_g(F) r^*(u)^2(\log x)^{-1/10}\\
&+\frac{B^{g}}{\phi(B)^{g}}\:\mathfrak{G}_B(\mathcal{L}_p)\# D(x)(\log R)^{g-1} J_g(F) r^*(u)^2.
\end{align*}
The second term is absorbed in the first one, since by Definition:
$$x^{\theta/10}\leq R\leq x^{\theta/3}$$
and thus 
$$\log R \asymp \log x.$$
Therefore 
$$\mathcal{L}(i, l, p)\ll  \frac{B^{g-1}}{\phi(B)^{g-1}} \:\mathfrak{G}_B(\mathcal{L}_p)\# P_{L, \mathcal{A}}(x)(\log R)^{g+1} J_g(F) (\log x)^{-1/20}.$$
The claim of the Theorem \ref{thm522522} now follows by summing over all pairs $(i, j)$. 
\end{proof}
We now can conclude the proof of Theorem \ref{thm62} and therefore also the proof of Theorem 1.1.\\
By Theorems \ref{thm57}, \ref{thm59}, \ref{thm51818} and \ref{thm522522} we have 
\[
\sum_{n\in\mathcal{A}(x)}  w_{(K)}(p, n)=\left(1+O\left(\frac{1}{(\log x)^{1/100}}\right)\right) \sum_{n\in\mathcal{A}(x)}  w_n(\mathcal{L}_n)  \tag{5.21}
\]
and 
\[
\sum_{n\in\mathcal{A}(x)} 1_p(L(n))  w_{(K)}(p, n)=\left(1+O\left(\frac{1}{(\log x)^{1/100}}\right)\right) \sum_{n\in\mathcal{A}(x)} 1_p(L(n))  w_n(\mathcal{L}_n).  \tag{5.22}
\]

The deduction of the equation (4.66) and (4.67) of Theorem \ref{thm62} can thus be deduced from results on the sums on the right hand side of equation (5.21) and (5.22).


\section{The $K$-version of Large Gap results for primes from special sequences}

In joint work with Maier \cite{MR_Beatty}, \cite{MR_PS}, the author of this paper established the $K$-version for special sequences of primes: Beatty primes and Piatetski-Shapiro primes.\\
We recall the following definitions:
\begin{definition}\label{defn61}
For two fixed real numbers $\alpha$, $\beta$ the corresponding non-homogeneous Beatty sequence is the sequence of integers defined by
$$B_{\alpha,\beta}:=([\alpha n+\beta])_{n=1}^\infty.$$
\end{definition}
\begin{definition}\label{defn62}
For an irrational number $\gamma$ we define its type $\tau$ by the relation
$$\tau:=\sup\{\rho\in\mathbb{R}\::\: \lim\inf n^\rho \| \gamma n\| =0\}.$$
\end{definition}
\begin{definition}\label{defn63}
Let $c>1$ be a fixed constant. A prime of the form $[l^c]$ is called Piatetski-Shapiro prime.
\end{definition}


In the paper \cite{MR_Beatty}, the following Theorem is proved:
\begin{theorem} (Theorem 1.3 of \cite{maynard2})\\
Let $k\geq 2$ be an integer. Let $\alpha, \beta$ be fixed real numbers with $\alpha$ being a positive irrational and of finite type. Then there is a constant $C>0$, depending only on $\alpha$ and $\beta$, such that for infinitely many $n$ we have:
$$p_{n+1}-p_n\geq C\: \frac{\log p_n\:\log_2 p_n\:\log_4 p_n}{\log_3 p_n}$$
and the interval $[p_n, p_{n+1}]$ contains the $K$-th power of a prime $\tilde{p}\in{B}_{\alpha,\beta}$.
\end{theorem}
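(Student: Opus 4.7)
The plan is to combine the $K$-version matrix machinery from Section~4 (the deduction of Theorem~1.1 of \cite{MR} from its Theorem~3.1) with a Siegel--Walfisz-style estimate for Beatty primes in the single arithmetic progression $m_0+1\pmod{P(x)}$ to a good modulus. First, run the $K$-version construction of Section~4 verbatim to produce vectors $\vec{a}\in\mathcal{A}_{(K)}$, $\vec{b}\in\mathcal{B}_{(K)}$, the residue $m_0\bmod P(C_0 x)$ satisfying (4.21), and the Maier matrix
\[
\mathcal{M}=(a_{r,u})_{1\leq r\leq P(x)^{D-1},\;1\leq u\leq y},\qquad a_{r,u}=(m_0+1+rP(x))^{K}+u-1.
\]
Lemma~\ref{lem46} then guarantees that every entry $a_{r,u}$ with $2\leq u\leq y$ and $u\notin V$ is composite, so for any $r\in \mathcal{R}_0(\mathcal{M})\setminus\mathcal{R}_1(\mathcal{M})$ the $K$-th power $(m_0+1+rP(x))^{K}$ sits inside a gap of the required size.

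The new step is to restrict to the subset
\[
\mathcal{R}_0^{B}(\mathcal{M}):=\bigl\{r\in\mathcal{R}_0(\mathcal{M})\;:\;m_0+1+rP(x)\in B_{\alpha,\beta}\bigr\}
\]
and to prove the Beatty-analog of Lemma~\ref{lem47}, namely
\[
\#\mathcal{R}_0^{B}(\mathcal{M})\;\gg\;\frac{1}{\alpha}\cdot\frac{P(x)^{D}}{\phi(P(x))\log(P(x)^{D})}.
\]
To establish this, write $\mathbf{1}_{B_{\alpha,\beta}}(n)$ via Vaaler's Fourier approximation as $\alpha^{-1}+\sum_{0<|h|\leq H} c_{h}\,e(h\alpha^{-1}(n-\beta))+E_{H}(n)$, where the truncation error $E_H$ is of size $O(H^{-1})$ on average. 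Insert this into the count $\#\{r\leq P(x)^{D-1}:m_0+1+rP(x)\in\mathbb{P}\cap B_{\alpha,\beta}\}$. The diagonal ($h=0$) contribution is handled by Gallagher's theorem (Lemma~\ref{lem45}) applied to the good modulus $P(x)$ (whose existence is Lemma~\ref{lem444}), and produces the displayed main term. The oscillatory terms reduce to twisted sums
\[
\sum_{\substack{q\leq P(x)^{D}\\ q\equiv m_0+1\,(\bmod\,P(x))\\ q\text{ prime}}} e(h\alpha^{-1}q),\qquad 1\leq|h|\leq H,
\]
over a single residue class of the enormous modulus $P(x)$.

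Because $\alpha$ is of finite type $\tau$, one has $\|h\alpha^{-1}\|\gg h^{-\tau-\varepsilon}$ for every $h\geq 1$, which via a Vinogradov-type exponential sum estimate over primes (combined with the good-modulus zero-free region underlying Lemma~\ref{lem45}) saves an arbitrary power of $\log x$ on choosing $H=(\log x)^{A}$ with $A$ sufficiently large in terms of $\tau$ and $D$. The total $h\neq 0$ contribution is then $o$ of the main term, yielding the displayed lower bound for $\#\mathcal{R}_0^{B}(\mathcal{M})$. The upper bound for $\#\mathcal{R}_1(\mathcal{M})$ is imported verbatim from Section~4: by Lemma~\ref{lem_eupu} the exceptional set $V$ satisfies $\#V\ll x^{1/2+2\varepsilon}$, and the count $t(v)$ of simultaneous primalities is controlled by Lemma~6.1 of \cite{konyagin}, giving $\#\mathcal{R}_1(\mathcal{M})=o(\#\mathcal{R}_0^{B}(\mathcal{M}))$. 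Hence $\mathcal{R}_0^{B}(\mathcal{M})\setminus\mathcal{R}_1(\mathcal{M})\neq\emptyset$, and any $r$ in this set provides both the large gap and the Beatty prime $\tilde{p}=m_0+1+rP(x)\in B_{\alpha,\beta}$ with $\tilde{p}^{\,K}\in[p_n,p_{n+1}]$.

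The principal obstacle is the hybrid exponential-sum estimate sketched above: one must bound the twisted prime sums over the progression $m_0+1\pmod{P(x)}$ uniformly for $1\leq|h|\leq H$, despite the modulus $P(x)$ being far beyond the Bombieri--Vinogradov range. The key is that Gallagher's argument (Lemma~\ref{lem45}) survives the twist by $e(h\alpha^{-1}\cdot)$ precisely because $P(x)$ is a \emph{good} modulus in the sense of Definition~\ref{defn444}, while the finite-type hypothesis supplies the Diophantine savings $\|h\alpha^{-1}\|\gg h^{-\tau-\varepsilon}$ needed to absorb the oscillatory terms; without the finite-type assumption these Weyl-type bounds would fail and the Beatty constraint could not be imposed. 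Everything else is a routine modification of the $K$-version argument of Section~4.
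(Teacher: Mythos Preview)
Your overall strategy matches the paper's: run the $K$-version matrix construction from Section~4 unchanged, and modify only the count in the first column $C(1)$, replacing $\mathcal{R}_0(\mathcal{M})$ by its Beatty subset $\mathcal{R}_0^{B}(\mathcal{M})$. The paper's proof sketch in Section~6 says exactly this.

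Where you diverge is in how the lower bound for $\#\mathcal{R}_0^{B}(\mathcal{M})$ is obtained. The paper does not carry out the Vaaler expansion and the hybrid exponential-sum estimate you describe; it invokes the black box Lemma~3.1 of \cite{MR_Beatty}, which asserts that for \emph{any} modulus $q\leq N^{\kappa}$ (with $\kappa=\kappa(\alpha,\beta)>0$) one has
\[
\sum_{\substack{n\leq N\\ [\alpha n+\beta]\equiv a\,(\bmod q)}}\Lambda([\alpha n+\beta])
=\alpha^{-1}\sum_{\substack{m\leq [\alpha N+\beta]\\ m\equiv a\,(\bmod q)}}\Lambda(m)+O(N^{1-\kappa}).
\]
One then simply chooses the matrix exponent $D>1/\kappa$, so that $q=P(x)\leq (P(x)^{D})^{\kappa}=N^{\kappa}$, applies this lemma, and feeds the resulting ordinary prime count into Gallagher's Lemma~\ref{lem45}. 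The two ingredients are decoupled: the Beatty lemma uses only the finite-type hypothesis (via standard Vinogradov/Banks--Shparlinski exponential-sum bounds over primes in progressions) and makes no use of the good-modulus property; Gallagher uses only the good-modulus property and makes no use of $\alpha$.

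Your sketch conflates these two tools. You write that ``Gallagher's argument survives the twist by $e(h\alpha^{-1}\cdot)$ precisely because $P(x)$ is a good modulus''; but the zero-free region behind Lemma~\ref{lem45} is an assertion about $L(s,\chi)$ and does not by itself control the hybrid sums $\sum_{p\equiv a\,(\bmod\,P(x))}e(h\alpha^{-1}p)$. What actually bounds those sums is the content of Lemma~3.1, valid for \emph{all} moduli up to $N^{\kappa}$ (good or not), with a genuine power saving $O(N^{1-\kappa})$ rather than the arbitrary-log-power saving you claim. Relatedly, your choice $H=(\log x)^{A}$ is far too small to be useful in the usual proofs of such lemmas; one typically takes $H$ a small power of $N$. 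None of this makes your outline unrecoverable---you are essentially re-deriving Lemma~3.1 inside the argument---but as written the attribution of the oscillatory saving is wrong, and the crucial constraint $D>1/\kappa$ (needed so that the Beatty lemma applies to the modulus $P(x)$) is missing from your discussion.
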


In the paper \cite{baker_b}, the following Theorem is proved:
\begin{theorem}  (Theorem 2.1 of \cite{baker_b})\\
Let $c\in(1,18/17)$ be fixed, $K\in\mathbb{N}$, $K\geq 2$. Then there is a constant $C>0$, depending only on $K$ and $C$, such that for infinitely many $n$ we have
$$p_{n+1}-p_n\geq C\: \frac{\log p_n\: \log_2 p_n\: \log_4 p_n}{\log_3 p_n}$$
and the interval $[p_n, p_{n+1}]$ contains the $K$-th power of a prime $\tilde{p}=[l^c]$.
\end{theorem}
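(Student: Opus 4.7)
The plan is to follow the same overall architecture as the deduction of (\cite{MR}, Theorem 1.1) from (\cite{MR}, Theorem 3.1) via the Maier matrix method, replacing the r\^ole of ordinary primes in the leftmost column of the matrix $\mathcal{M}$ by Piatetski--Shapiro primes $[l^c]$. Concretely, I would fix $c\in(1,18/17)$, let $x$ be sufficiently large with $P(C_0 x)$ a good modulus in the sense of Definition \ref{defn444}, and pick $\vec{a}\in \mathcal{A}_{(K)}$, $\vec{b}\in \mathcal{B}_{(K)}$ as produced by (\cite{MR}, Theorem 3.1). As in Section 4, choose the residue $m_0 \bmod P(C_0 x)$ by the Chinese Remainder Theorem so that the congruences (4.21) hold; then set up the matrix
$$\mathcal{M}=(a_{r,u})_{\substack{1\leq r\leq P(x)^{D-1}\\ 1\leq u\leq y}}\,,\qquad a_{r,u}=(m_0+1+rP(x))^K+u-1\,.$$
The same argument as in Lemma \ref{lem46} shows that every entry $a_{r,u}$ with $2\leq u\leq y$, $u\notin V$, is composite. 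Thus it suffices to find some $r$ for which $m_0+1+rP(x)$ is a Piatetski--Shapiro prime of exponent $c$ and for which $R(r)$ contains no other prime.

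The main new input needed is an analogue of Lemma \ref{lem45} (Gallagher's lower bound) for the counting function
$$\pi_c(x;q,a):=\#\{l\leq x^{1/c}\::\: [l^c]\equiv a\:(\bmod\: q),\ [l^c]\ \text{prime}\}$$
when $q=P(C_0 x)$ is a good modulus. I would invoke the Bombieri--Vinogradov type theorem for Piatetski--Shapiro primes available in the range $c\in(1,18/17)$ (which underlies the results of Baker and coauthors on Piatetski--Shapiro primes in arithmetic progressions); combined with a Siegel--Walfisz analogue for good moduli in the style of Maier \cite{maier}, this gives
$$\pi_c(P(x)^D;P(x),a)\gg_{c}\frac{P(x)^{D/c}}{\phi(P(x))\log (P(x)^D)}$$
uniformly in $(a,P(x))=1$, for $D$ large enough depending on $c_2$. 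Restricting $r$ to the class $m_0+1+rP(x)=[l^c]$ with $l$ prime then yields
$$\#\mathcal{R}_0^{(c)}(\mathcal{M})\gg \frac{P(x)^{D/c}}{\phi(P(x))\log(P(x)^D)}\,,$$
where $\mathcal{R}_0^{(c)}(\mathcal{M})$ is the subset of $\mathcal{R}_0(\mathcal{M})$ for which the leading entry is a Piatetski--Shapiro prime.

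For the upper bound on $\mathcal{R}_1^{(c)}(\mathcal{M})$, I would argue as in the deduction sketched after Lemma \ref{lem47}: if $R(r)$ contains a further prime then there exists $v\in V$ with
$$m_0+1+rP(x)=[l^c]\quad\text{and}\quad (m_0+1+rP(x))^K+v-1\text{ prime}.$$
A standard upper-bound sieve (Selberg, Brun, or the linear sieve as in Lemma 6.1 of \cite{konyagin}), combined with Harman's sieve machinery for the Piatetski--Shapiro constraint, bounds the number of such $r$ for fixed $v$ by $O\bigl(P(x)^{D/c}/(\phi(P(x))\log^2(P(x)^D))\bigr)$. Summing over $v\in V$ and using the bound $\#V\ll x^{1/2+2\epsilon}$ from Lemma \ref{lem_eupu} shows that $\#\mathcal{R}_1^{(c)}(\mathcal{M})$ is of smaller order than $\#\mathcal{R}_0^{(c)}(\mathcal{M})$, so $\mathcal{R}_0^{(c)}(\mathcal{M})\setminus \mathcal{R}_1^{(c)}(\mathcal{M})$ is nonempty. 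Any $r$ in this set gives the desired gap with a $K$-th power of a Piatetski--Shapiro prime inside.

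The hard part will be securing the analogue of Lemma \ref{lem45} for Piatetski--Shapiro primes to the modulus $q=P(C_0 x)$, which is exponentially large in $x$. The restriction $c<18/17$ enters precisely here: it is the range in which one has a sufficiently strong exponential sum/Type-I--Type-II decomposition for the indicator of $\{[l^c]\}$ to permit a Bombieri--Vinogradov--Gallagher style bound on good moduli. Overcoming this requires combining Maier's good-modulus machinery with the van der Corput/Rivat--Sargos exponent pair estimates used by Baker, Banks and others, and is where the bulk of the technical work in \cite{baker_b} is concentrated.
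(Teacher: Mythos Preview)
Your overall architecture is exactly what the paper does: run the Maier matrix method as in the deduction of (\cite{MR}, Theorem~1.1) from (\cite{MR}, Theorem~3.1), but count only Piatetski--Shapiro primes in the first column $C(1)$ of $\mathcal{M}$, and then control $\mathcal{R}_1$ by an upper-bound sieve over the small exceptional set $V$.

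Where your proposal diverges from the paper is in the key arithmetic input for the lower bound on $\#\mathcal{R}_0^{(c)}(\mathcal{M})$. You anticipate having to build a new Gallagher--type estimate for Piatetski--Shapiro primes to the exponentially large modulus $P(C_0x)$, and you flag this as ``the hard part'', requiring a fusion of Maier's good-modulus machinery with van der Corput/Rivat--Sargos exponent-pair methods. The paper bypasses this entirely. It quotes Theorem~8 of Baker--Banks--Br\"udern--Shparlinski--Weingartner \cite{baker_b}, which gives the pointwise asymptotic
\[
\pi_{c_0}(w;d,a)=\gamma w^{\gamma-1}\pi(w;d,a)+\gamma(1-\gamma)\int_2^w u^{\gamma-2}\pi(u;d,a)\,du+O\bigl(w^{17/39+7\gamma/13+\epsilon}\bigr),
\]
with $\gamma=1/c_0$. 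The crucial feature is that the error term is \emph{uniform in the modulus $d$}: all of the exponential-sum work is absorbed into a power-saving remainder that does not see $d$ at all. One then simply inserts the existing Gallagher lower bound (Lemma~\ref{lem45}) for $\pi(w;d,a)$ on the good modulus $d=P(C_0x)$. The condition $c<18/17$ is precisely what makes the exponent $17/39+7\gamma/13$ strictly less than $\gamma$, so that for $D$ large enough the main term dominates. Thus no new Bombieri--Vinogradov or Siegel--Walfisz variant for Piatetski--Shapiro primes is needed, and the ``hard part'' you identify does not arise.

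A smaller remark: for the upper bound on $\mathcal{R}_1^{(c)}(\mathcal{M})$ you invoke Harman's sieve to handle the Piatetski--Shapiro constraint simultaneously with the two primality conditions. This is more than necessary; since you only need an upper bound, you may either drop the Piatetski--Shapiro condition altogether and apply the standard sieve as in Lemma~6.1 of \cite{konyagin}, or again appeal to the pointwise formula above to reduce to the ordinary prime count.
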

We now give a short sketch of the proof of these theorems.\\
These proofs are modifications of the proofs of the $K$-versions of the Large Gap result in Sections 4 and 5. One applies the matrix method.\\
The matrices $\mathcal{M}$ are defined in a manner similar to their definition in the deduction of Theorem 2.1 of \cite{baker_b}. Once again choose $x$, such that $P(C_0x)$ is a good modulus.\\
The only major modification is that one does not count primes of the form 
$$a_{r,1}=(m_0+1+r) P(x)$$
in the first column ((1) of the matrix $\mathcal{M}$ but only such primes from $B_{\alpha, \beta}$ (Beatty primes) and from $P^{(c)}=\{[l^c]\ \text{prime}\}.$\\
For the count of Beatty resp., Piatetski-Shapiro primes in the column C(1):

\begin{lemma} (Lemma 3.1 of \cite{MR_Beatty})\\
Let $\alpha$ and $\beta$ be fixed real numbers with $\alpha$ a positive irrational and of finite type. Then there is a constant $\kappa>0$, such that for all integers
\[
0\leq a<q\leq N^\kappa \tag{3.1}
\]
with $(a,q)=1$, we have
$$\sum_{\substack{n\leq N \\ [ \alpha n+\beta] \equiv a\:(\bmod\: q)}} \Lambda([ \alpha n+\beta])=\alpha^{-1} \sum_{\substack{m\leq [ \alpha N+\beta] \\ m\equiv a\:(\bmod\: q)}} \Lambda(m)+O(N^{1-\kappa})\:,$$
where the implied constant depends only on $\alpha$ and $\beta$.
\end{lemma}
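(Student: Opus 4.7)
The plan is to detect the Beatty condition ``$m = [\alpha n + \beta]$'' by a fractional-part indicator, reduce to estimating von Mangoldt sums twisted by $e(h\bar\alpha m)$ in arithmetic progressions, and then exploit the finite-type hypothesis on $\alpha$ together with Bombieri--Vinogradov--style bounds.

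First I would normalize: set $\bar\alpha := 1/\alpha$ and observe that a positive integer $m$ lies in $B_{\alpha,\beta}$ precisely when the sawtooth function
\[
\psi(m) := \bigl\lfloor -\bar\alpha(m - \beta) + \bar\alpha\bigr\rfloor - \bigl\lfloor -\bar\alpha(m - \beta)\bigr\rfloor
\]
equals $1$, so that the left-hand side of the lemma can be rewritten as
\[
\sum_{\substack{m \leq [\alpha N + \beta]\\ m \equiv a\,(\bmod\,q)}} \Lambda(m)\,\mathbf{1}_{B_{\alpha,\beta}}(m),
\]
up to an $O(\log N)$ boundary error coming from the change of variable $m = [\alpha n + \beta]$. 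The target main term $\alpha^{-1}\sum_{m\equiv a(q)}\Lambda(m)$ is precisely what replacing $\mathbf{1}_{B_{\alpha,\beta}}(m)$ by its ``mean value'' $\bar\alpha$ produces, so the task reduces to controlling the oscillatory part of $\mathbf{1}_{B_{\alpha,\beta}}(m) - \bar\alpha$.

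Next I would apply the Erd\H{o}s--Tur\'an / Vaaler approximation to the sawtooth so that, with a parameter $H$ to be chosen, $\mathbf{1}_{B_{\alpha,\beta}}(m) - \bar\alpha$ is replaced by a trigonometric polynomial $\sum_{0 < |h| \leq H} c_h\, e(h\bar\alpha m)$ with $|c_h| \ll \min(\bar\alpha, 1/|h|)$, plus a positive majorant/minorant whose integral contributes $O(1/H)$. The problem thus reduces to uniform bounds, for each $h$ with $1\leq h\leq H$, for
\[
T_h(q,a) := \sum_{\substack{m \leq [\alpha N + \beta]\\ m\equiv a\,(\bmod\,q)}} \Lambda(m)\,e(h\bar\alpha m).
\]
For $T_h(q,a)$ I would insert Dirichlet characters modulo $q$ to write $T_h$ as $\phi(q)^{-1}\sum_\chi \bar\chi(a)\sum_m \Lambda(m)\chi(m) e(h\bar\alpha m)$ and then apply Vaughan's identity to the inner sum, producing Type I and Type II bilinear sums twisted by $e(h\bar\alpha m)$. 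The Type I sums reduce, by partial summation, to bounds for linear exponential sums $\sum_k e(kh\bar\alpha)$, which are $\ll \min(N/q, \|h\bar\alpha\|^{-1})$; the finite-type hypothesis gives $\|h\bar\alpha\|^{-1} \ll h^{\tau + o(1)}$, which is harmless provided $H \leq N^{\kappa}$ for some small $\kappa = \kappa(\alpha) > 0$. The Type II sums are handled by a standard Cauchy--Schwarz + Weyl-differencing argument; the diagonal contributes the main term, and the off-diagonal reduces once more to linear exponential sums in $\bar\alpha$, again controlled by the finite-type bound.

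The main obstacle is arranging the Vaughan/Heath-Brown decomposition so that the $q$-aspect and the $h$-aspect (up to $H$) can be saved simultaneously by a common power $N^{-\kappa}$, because the finite-type bound injects $h^{\tau}$ losses and the character sums inject $q^{1/2}$ losses. The trick is to take $H$ a small fixed power of $N$, take $\kappa$ small enough that $(qH)^{\tau+1/2} \ll N^{1/2 - \kappa'}$ for some $\kappa' > 0$, and then balance the Vaughan splitting parameters accordingly. Summing the individual $T_h$ estimates weighted by $|c_h|$ and choosing $H = N^{\kappa}$ optimally yields an overall saving of $N^{1-\kappa}$, which together with the Erd\H{o}s--Tur\'an tail $O(N/H) = O(N^{1-\kappa})$ establishes the claimed error term uniformly in the range~(3.1), where the implied constants depend only on $\alpha, \beta$ through the type $\tau$ and the Diophantine constants of $\alpha$.
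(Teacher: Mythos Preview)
The paper does not actually prove this lemma; it merely quotes it as Lemma~3.1 of \cite{MR_Beatty} and uses it as a black box for counting Beatty primes in the first column of the matrix $\mathcal{M}$. There is therefore no ``paper's own proof'' to compare against here.

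That said, your outline is the standard route to results of this shape (originating with Banks--Shparlinski for primes in Beatty sequences, which is what \cite{MR_Beatty} cites): rewrite membership in $B_{\alpha,\beta}$ via a fractional-part condition, approximate the resulting indicator by a finite Fourier expansion using Vaaler's construction (or Erd\H{o}s--Tur\'an), and reduce to bounding sums $\sum_{m\le M,\,m\equiv a(q)}\Lambda(m)e(h\bar\alpha m)$ via Vaughan's identity, with the finite-type hypothesis on $\alpha$ supplying the needed lower bounds on $\|h\bar\alpha\|$. One small remark: in practice it is usually cleaner to substitute $m=a+qk$ before applying Vaughan rather than to pass through Dirichlet characters, since the latter introduces an extra $\phi(q)$ which you then have to win back; with the direct substitution the exponential-sum estimate is applied to $\sum_k \Lambda(a+qk)e(h\bar\alpha qk)$, and the finite-type bound on $\|h q\bar\alpha\|$ is what drives the saving. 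Either way, your plan would go through and produce the stated power saving for $q\le N^\kappa$ with $\kappa$ depending only on the type of $\alpha$.
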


\begin{theorem} (Theorem 8 of \cite{baker_b})\\
Let $a$ and $d$ be coprime integers, $d\geq 1$. For fixed $c_0\in(1, 18/17)$ we have (with $\gamma=1/c_0$):
\begin{align*}
\pi_{c_0}(w; d, a)=&\gamma w^{\gamma-1}\pi(w; d, a)\\
&\ +\gamma(1-\gamma)\int_2^w  u^{\gamma-2} \pi(u; d, a)  du+O\left(w^{17/39+7\gamma/13+\epsilon}\right).
\end{align*}
$$(\pi_{c_0}(w; d, a)=\#\{p\in \mathcal{P}^{(c_0)}\::\: p\leq w, p\equiv a \bmod d).$$
\end{theorem}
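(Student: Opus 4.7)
\emph{Approach.} My plan is to express the indicator of being Piatetski--Shapiro via a fractional-part identity, extract the claimed main term by partial summation against $\pi(u;d,a)$, and estimate the oscillatory remainder via exponential sum methods for primes in arithmetic progressions. Set $\gamma:=1/c_0$. For a prime $p$ with $p^\gamma\notin\mathbb{Z}$, one has
$$1_{\mathcal{P}^{(c_0)}}(p)\;=\;\lfloor -p^\gamma\rfloor-\lfloor -(p+1)^\gamma\rfloor,$$
and with the sawtooth $\psi(t):=t-\lfloor t\rfloor-\tfrac12$ this becomes
$$1_{\mathcal{P}^{(c_0)}}(p)\;=\;\bigl((p+1)^\gamma-p^\gamma\bigr)+\bigl(\psi(-(p+1)^\gamma)-\psi(-p^\gamma)\bigr).$$
The handful of primes with $p^\gamma\in\mathbb{Z}$ contribute negligibly.

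\medskip

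\emph{Main term.} Summing the smooth piece over primes $p\leq w$ with $p\equiv a\pmod d$, I expand $(p+1)^\gamma-p^\gamma=\gamma p^{\gamma-1}+O(p^{\gamma-2})$ and apply Abel summation against the measure $d\pi(u;d,a)$ to recover
$$\sum_{\substack{p\leq w\\p\equiv a\,(\bmod\,d)}}\gamma p^{\gamma-1}\;=\;\gamma w^{\gamma-1}\pi(w;d,a)+\gamma(1-\gamma)\int_2^w u^{\gamma-2}\pi(u;d,a)\,du,$$
which is precisely the claimed main term; the residual contribution from the $O(p^{\gamma-2})$ term is $O(w^{\gamma-1})$ and is comfortably absorbed by the final error.

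\medskip

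\emph{Oscillatory remainder and main obstacle.} For the $\psi$-contribution I would invoke the Vaaler/Beurling approximation: decompose $\psi(t)=\sum_{0<|h|\leq H}a_h\,e(ht)+E_H(t)$ with $|a_h|\ll|h|^{-1}$ and $|E_H|$ dominated by a non-negative trigonometric polynomial of mass $O(H^{-1})$. The $E_H$-part is absorbed by Brun--Titchmarsh, so the matter reduces to bounding, uniformly for $1\leq h\leq H$, the twisted prime exponential sums
$$S_h(w;d,a)\;:=\;\sum_{\substack{p\leq w\\p\equiv a\,(\bmod\,d)}}e(h\,p^\gamma)$$
(and its analogue with $p$ replaced by $p+1$). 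This is the genuinely hard step. I would detach the congruence by orthogonality of Dirichlet characters modulo $d$, decompose each resulting character-twisted prime sum by a Vaughan or Heath-Brown identity into Type~I and Type~II bilinear pieces, and bound the Type~I sums by iterating van der Corput's $A$ and $B$ processes with a suitably optimized exponent pair, while the Type~II sums are handled by Cauchy--Schwarz combined with Weyl differencing of the smooth phase $h\cdot u^\gamma$. This machinery delivers $S_h(w;d,a)\ll w^{17/39+7\gamma/13+\epsilon}$ uniformly in $h\leq H$ and in $a$ (for $d$ in an admissible range), after which summing against $|a_h|$ and choosing $H$ as a small power of $w$ yields the stated error. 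The hypothesis $c_0\in(1,18/17)$, i.e.\ $\gamma>17/18$, is exactly the range in which $17/39+7\gamma/13<\gamma$, so that the error is of strictly lower order than the main term $\asymp\gamma\,w^\gamma/(\phi(d)\log w)$ and the asymptotic is nontrivial; one checks that equality $17/39+7\gamma/13=\gamma$ occurs precisely at $\gamma=17/18$, which identifies the endpoint of the admissible interval for $c_0$.
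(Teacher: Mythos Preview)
The paper does not actually prove this statement; it merely quotes it as Theorem~8 of \cite{baker_b} and uses it as a black box to count Piatetski--Shapiro primes in the first column of the matrix~$\mathcal{M}$. So there is no ``paper's own proof'' to compare against.

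That said, your outline is correct and is exactly the standard route taken in the cited source \cite{baker_b}. The detection identity $1_{\mathcal{P}^{(c_0)}}(p)=\lfloor -p^\gamma\rfloor-\lfloor -(p+1)^\gamma\rfloor$, the Abel summation giving the main term $\gamma w^{\gamma-1}\pi(w;d,a)+\gamma(1-\gamma)\int_2^w u^{\gamma-2}\pi(u;d,a)\,du$, the Vaaler decomposition of $\psi$, and the reduction to Type~I/Type~II exponential sums over primes via a Heath--Brown or Vaughan identity are all precisely the steps carried out there. Your endpoint computation $17/39+7\gamma/13=\gamma\iff\gamma=17/18$ is also correct and explains the range $c_0<18/17$. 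One minor remark: in \cite{baker_b} the arithmetic progression condition is typically carried through the bilinear estimates directly rather than detached via characters, but either device works and leads to the same exponent $17/39+7\gamma/13$.
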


\noindent\textbf{Acknowledgments.}\\ 
The author wishes to express his gratitude to H. Maier for  extensive discussions and close communication during the preparation of this paper. His support has been invaluable. The author wishes to also thank the anonymous referees for reading the manuscript in detail and for providing very constructive comments which helped improve the presentation of this work.

\vspace{10mm}

\end{document}